\numberwithin{equation}{section}
\theoremstyle{plain}
\newtheorem{lem}{Lemma}[section]
\newtheorem*{lem*}{Lemma}
\newtheorem*{claim*}{Claim}
\newtheorem{prop}[lem]{Proposition}
\newtheorem*{theor*}{Theorem}
\newtheorem{theor}[lem]{Theorem}
\newtheorem{cor}[lem]{Corollary}
\newtheorem{definition}[lem]{Definition}
\newtheorem{convention}[lem]{Convention}
\newtheorem{assumption}[lem]{Assumption}
\theoremstyle{remark}
\newtheorem{rem}[lem]{Remark}
\newcommand{\simto}{\,\widetilde{\to}\,}
\newcommand{\pa}{{\mathcal{P}}}
\newcommand{\oo}{{\mathcal{O}}}
\newcommand{\m}{{\mathcal{M}}}
\newcommand{\q}{{\mathbb{Q}}}
\newcommand{\qq}{{\mathcal{Q}}}
\newcommand{\sss}{{\mathcal{S}}}
\newcommand{\p}{{\mathbb{P}}}
\newcommand{\LL}{{\mathbb{L}}}
\newcommand{\aaa}{{\alpha}}
\newcommand{\ua}{{U_{\aaa}}}
\newcommand{\z}{{\mathbb{Z}}}
\newcommand{\n}{{\bf n}}
\newcommand{\uij}{{U_{ij}}}
\newcommand{\gt}{{\tilde{\gamma}}}
\newcommand{\ga}{\gamma}
\newcommand{\into}{{\hookrightarrow}}
\newcommand{\onto}{{\twoheadrightarrow}}
\newcommand{\aff}{{\mathbb{A}^3}}
\newcommand{\affor}{{\mathcal{O}_{\mathbb{A}^3}^r}}
\newcommand{\F}{{\mathcal{F}}}
\newcommand{\G}{{\mathcal{G}}}
\newcommand{\X}{{\mathscr{X}}}
\newcommand{\C}{{\mathscr{C}}}
\newcommand{\CC}{{\mathbb{C}}}
\newcommand{\pn}{{\mathscr{P}_n}}
\newcommand{\A}{{\mathscr{A}}}
\newcommand{\B}{{\mathscr{B}}}
\newcommand{\D}{{\mathscr{D}}}
\newcommand{\DD}{{\mathcal{D}}}
\newcommand{\U}{{\mathscr{U}}}
\newcommand{\T}{{\mathbb{T}}}
\newcommand*{\bigchi}{\mbox{\Large$\chi$}}
\newcommand*{\qn}{{\mathcal{Q}_\n}}
\newcommand*{\qa}{{\mathcal{Q}_{\mathcal{A}}}}
\newcommand*{\qb}{{\mathcal{Q}_{\mathcal{B}}}}
\newcommand{\disj}{%
	\mathrel{\rotatebox[origin=c]{180}{$\prod$}}}
\newcommand{\prodd}{%
	\mathrel{\rotatebox[origin=c]{180}{$\disj$}}}
\newcommand*{\sheafhom}{\mathcal{H}\kern -.5pt om}
\newcommand*{\sheafext}{\mathcal{E}\kern -.5pt xt}
\newcommand*{\quot}{\mathcal{Q}\kern -.5pt uot}
\newcommand*{\qxc}{\quot_{\mathscr{X}\kern -.5pt/\kern -.5pt\mathscr{C}}^{\F\kern -.5pt, n}}
\newcommand*{\qda}{\quot_{\mathscr{D}\kern -.5pt \subset \kern -.5pt\mathscr{A}}^{\F\kern -.5pt, n}}
\newcommand*{\qdaone}{\quot_{\mathscr{D}\kern -.5pt \subset \kern -.5pt\mathscr{A}}^{\F\kern -.5pt, n_1}}
\newcommand*{\qdbtwo}{\quot_{\mathscr{D}\kern -.5pt \subset \kern -.5pt\mathscr{B}}^{\F\kern -.5pt, n_2}}
\newcommand*{\qxcm}{\quot_{\mathscr{X}_0^+\kern -.5pt/\kern -.5pt\mathscr{C}_0^+}^{\F\kern -.5pt, \n}}
\newcommand*{\qxcz}{\quot_{\mathscr{X}_0\kern -.5pt/\kern -.5pt\mathscr{C}_0}^{\F\kern -.5pt, n}}
\newcommand*{\cz}{{\mathscr{C}_0^+}}
\newcommand*{\rpl}{{R\pi_*}}
\newcommand*{\rph}{{R\sheafhom_{\pi}}}
\newcommand*{\rpih}{{R\sheafhom_{\pi_i}}}
\newcommand*{\rphalpha}{{R\sheafhom_{\pi_{\alpha}}}}
\newcommand*{\rh}{{R\sheafhom}}
\newcommand*{\extpi}[1]{\sheafext_\pi^{#1}}
\newcommand*{\extpialpha}[1]{\sheafext_{\pi_\alpha}^{#1}}
\newcommand*{\op}{{\omega_\pi}}
\newcommand*{\ttt}{{{\bf t}}}
\newcommand*{\DT}{{\mathsf{Q}}}
\newcommand{\xrightarrowdbl}[2][]{%
	\xrightarrow[#1]{#2}\mathrel{\mkern-14mu}\rightarrow
}
\DeclareMathOperator{\Spec}{Spec}
\DeclareMathOperator{\supp}{supp}
\DeclareMathOperator{\Quot}{Quot}
\DeclareMathOperator{\Hilb}{Hilb}
\DeclareMathOperator{\Grass}{Grass}
\DeclareMathOperator{\Gr}{Gr}
\DeclareMathOperator{\rk}{rk}
\DeclareMathOperator{\DTT}{DT}
\DeclareMathOperator{\M}{M}
\DeclareMathOperator{\Bl}{Bl}
\DeclareMathOperator{\Aff}{Aff}
\DeclareMathOperator{\sheafHom}{\mathscr{H}\text{\kern -3pt {\calligra\large om}}\,}
\DeclareMathOperator{\Hom}{Hom}
\DeclareMathOperator{\RHom}{RHom}
\DeclareMathOperator{\Ext}{Ext}
\DeclareMathOperator{\vir}{vir}
\DeclareMathOperator{\HH}{H}
\DeclareMathOperator{\Cone}{Cone}
\DeclareMathOperator{\id}{id}
\DeclareMathOperator{\im}{Im}
\DeclareMathOperator{\der}{der}
\DeclareMathOperator{\vd}{vd}
\DeclareMathOperator{\fix}{fix}
\DeclareMathOperator{\Coh}{Coh}
\DeclareMathOperator{\pt}{pt}
\newcommand{\Q}{{\Quot_Y(F, n)}}
\title{Virtual invariants of Quot schemes of points on threefolds}
\author{Solomiya Mizyuk}
\address{smizyuk@sissa.it}
\begin{document}
	\begin{abstract}
		We construct an almost perfect obstruction theory of virtual dimension zero on the Quot scheme 
		parametrizing zero-dimensional quotients of a locally free sheaf on a smooth projective $3$-fold.
		This gives a virtual class in degree zero and therefore allows one to define virtual invariants of the Quot scheme. We compute these invariants
		proving a conjecture
		by Ricolfi. 
		The computation is done by reducing to the toric case via cobordism theory and a degeneration argument. The toric case is solved by reducing to the computation on the Quot scheme of points on $\mathbb{A}^3$ via torus localization, the torus-equivariant Siebert's formula for almost perfect obstruction theories and the torus-equivariant Jouanolou trick.
	\end{abstract}
	\maketitle	
	
	\setcounter{tocdepth}{1}
	\tableofcontents
	
	\section{Introduction}
	Let $Y$ be a smooth projective $3$-fold over $\CC$. By the work of Thomas \cite{thomas}, a quasiprojective scheme $M$ parametrizing simple coherent sheaves $\F$ on $Y$ with fixed determinant admits a perfect obstruction theory with pointwise tangent and obstruction spaces 
	\begin{equation}
		\label{kertgobs}
		\Ext^1(\F, \F)_0 \quad \text{and} \quad \Ext^2(\F, \F)_0.
	\end{equation}
	This endows $M$ with a virtual fundamental class $[M]^{\vir} \in A_*(M)$ as constructed in \cite{litian,Behrend_1997}. 
	
	If $M$ is the Hilbert scheme of points $\Hilb_Y^n$ parametrizing ideal sheaves of colength $n$, the perfect obstruction theory is of virtual dimension zero. Therefore, 
	the Donaldson--Thomas invariants in this case are defined as 
	\begin{equation}
		\label{dtinvdef}
		\DTT^n_Y \coloneqq \deg [\Hilb_Y^n]^{\vir} \in \mathbb{Z}.
	\end{equation} 
	The generating series 
	$$\DTT_Y(q) \coloneqq 1 + \sum_{n = 1}^{\infty} q^n \DTT_Y^n$$
	were computed in \cite{LEPA, bfcyinv, junlicompinv} leading to the closed formula
	\begin{equation}
		\label{rkk1}
		\DTT_{Y}(q) = \M(-q)^{\int_Y c_3(T_Y \otimes \omega_Y)},
	\end{equation}
	where 
	$\M(q) = \prod_{n = 1}^{\infty}\frac{1}{(1-q^n)^n} $
	is the MacMahon function.
	
	For a locally free sheaf $F$ on $Y$  consider now the Quot scheme of points $$\Q$$ parametrizing zero-dimensional quotients of $F$ of length $n$. 
	At each point $[S \into F \onto Q]$ its tangent space is $\Hom(S, Q)$, and the standard obstruction space is $\Ext^1(S, Q)$. However, as $\dim \Ext^2(S, Q)$ varies in the higher rank case, the difference in dimensions between standard tangent and obstruction spaces jumps, so that the standard tangent-obstruction theory does not produce a global perfect obstruction theory. 
	Our starting point is the observation that
	there are natural linear maps 
	$$\Ext^1(S, Q) \simto \Ext^2(Q, Q) \into \Ext^3(Q, S) $$
	where the first map is an isomorphism and the second map is an injection. In particular, $\Ext^3(Q, S)$ is also an obstruction space. 
	Since $\dim \Ext^3(Q, S) = \dim \Hom(S, Q)$,
	the difference between the dimensions of the tangent and new obstruction space is constant, suggesting the presence of
	a perfect obstruction theory.
	The first main result in this paper shows that this is ``almost'' the case:
	one always has an almost perfect obstruction theory, a more flexible structure 
	introduced by Kiem and Savvas in \cite{kiemsavvasapot}.

	\begin{theor}(Theorem~\ref{quotsemipot})
		\label{theor1}
		For any smooth projective $3$-fold $Y$ and locally free sheaf $F$,
		the Quot scheme of points $\Q$
		has a canonical almost perfect obstruction theory of virtual dimension zero. 
	\end{theor}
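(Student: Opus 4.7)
The plan is to construct the data of an almost perfect obstruction theory in the sense of Kiem--Savvas: an étale cover $\{U_\alpha \to \Q\}$, a two-term perfect obstruction theory $\phi_\alpha \colon E_\alpha^\bullet \to L_{U_\alpha}^{\geq -1}$ on each chart, and a globally defined obstruction sheaf $\mathcal{O}b$ on $\Q$ together with compatible identifications $\mathcal{O}b|_{U_\alpha} \cong h^0(E_\alpha^\bullet)$ under which the local obstruction assignments glue.

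Let $\pi \colon \Q \times Y \to \Q$ denote the projection and $0 \to \sss \to \pi_Y^* F \to \qq \to 0$ the universal sequence. My first step is to globalize the pointwise chain recalled in the introduction. Applying $R\sheafhom_\pi(-, \qq)$ to the universal sequence and using $\sheafext^{\geq 1}_\pi(\pi_Y^* F, \qq) = 0$ (since $F$ is locally free and $\qq$ is supported in fiberwise dimension zero) gives a canonical isomorphism $\sheafext^1_\pi(\sss, \qq) \simto \sheafext^2_\pi(\qq, \qq)$. Applying $R\sheafhom_\pi(\qq, -)$ to the universal sequence and using $\sheafext^2_\pi(\qq, \pi_Y^* F) = 0$ (by relative Serre duality on the smooth 3-fold $Y$) produces an injection $\sheafext^2_\pi(\qq, \qq) \into \sheafext^3_\pi(\qq, \sss)$. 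I set $\mathcal{O}b := \sheafext^3_\pi(\qq, \sss)$ and write $\eta \colon \sheafext^1_\pi(\sss, \qq) \into \mathcal{O}b$ for the composed global injection. By Serre duality, the fibrewise dimension of $\mathcal{O}b$ equals $\dim \Hom(S, Q \otimes \omega_Y) = \dim \Hom(S, Q)$, matching the rank of the tangent sheaf $\sheafhom_\pi(\sss, \qq)$ and giving virtual dimension zero.

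For the local perfect obstruction theories, I work étale-locally. Over a chart $U_\alpha$, the natural Huybrechts--Thomas complex $R\sheafhom_{\pi_\alpha}(\sss, \qq)[1]$ has cohomology sheaves $h^{-1} = \sheafhom_{\pi_\alpha}(\sss, \qq)$, $h^0 = \sheafext^1_{\pi_\alpha}(\sss, \qq)$, and a possibly non-vanishing $h^1 = \sheafext^2_{\pi_\alpha}(\sss, \qq)$ that obstructs perfection of amplitude $[-1,0]$. Taking $U_\alpha$ small enough that $\mathcal{O}b|_{U_\alpha}$ admits a two-term locally free presentation, one can perform a cone construction along $\eta|_{U_\alpha}$ to produce a genuine two-term perfect complex $E_\alpha^\bullet$ of amplitude $[-1, 0]$ with $h^{-1}(E_\alpha^\bullet) = \sheafhom_{\pi_\alpha}(\sss, \qq)$ and $h^0(E_\alpha^\bullet) \cong \mathcal{O}b|_{U_\alpha}$, equipped with a morphism $\phi_\alpha \colon E_\alpha^\bullet \to L_{U_\alpha}^{\geq -1}$ induced from the Atiyah class of $\qq$ as in the classical construction for Quot schemes. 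Fibrewise, $\phi_\alpha$ restricts to the tangent-obstruction pair $(\Hom(S, Q), \Ext^3(Q, S))$, with the classical deformation-obstruction class in $\Ext^1(S, Q)$ landing in $\Ext^3(Q, S)$ via $\eta$, so $\phi_\alpha$ is a two-term perfect obstruction theory.

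The main obstacle is verifying the Kiem--Savvas compatibility axiom: the obstruction section of $\mathcal{O}b$ computed on each chart from $\phi_\alpha$ must be chart-independent, so that the local data glue into a well-defined global obstruction assignment. This reduces to the functoriality of the Atiyah class together with the global, canonical nature of $\eta$; both force the chart-wise obstruction class in $h^0(E_\alpha^\bullet) \cong \mathcal{O}b|_{U_\alpha}$ to coincide with the image under $\eta$ of the classical deformation-obstruction class in $\sheafext^1_\pi(\sss, \qq)$, a globally defined section of $\mathcal{O}b$. The cocycle condition on triple overlaps is then automatic, and the construction yields the asserted almost perfect obstruction theory of virtual dimension zero.
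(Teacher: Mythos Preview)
Your proposal has the right global picture—defining $Ob = \sheafext^3_\pi(\qq,\sss)$ and aiming for local two-term complexes whose $h^1$ is this sheaf—but the actual construction of the local perfect obstruction theory and the verification of the APOT axioms are not carried out, and the sketch you give does not work as stated.

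First, the local construction. You write that on a small enough chart one can ``perform a cone construction along $\eta|_{U_\alpha}$'' to produce a two-term perfect complex, where $\eta \colon \sheafext^1_\pi(\sss,\qq) \hookrightarrow \sheafext^3_\pi(\qq,\sss)$ is your map of cohomology sheaves. But a cone of a map of sheaves does not yield a two-term perfect complex with a map to $\LL_{U_\alpha}$; you need a map of \emph{complexes}, and you need to say which one. The paper's construction is quite specific: on an affine chart $U$ one has the diagram
\[
\begin{tikzcd}
& \rph(\qq,q^*F)[1] \arrow[d,"\xi_1"] \arrow[ld,dashed,"p"'] & \\
\rph(\sss,\qq) \arrow[r,"\xi_2"] & \rph(\qq,\qq)[1] \arrow[r,"\xi_3"] & \rph(q^*F,\qq)[1],
\end{tikzcd}
\]
and the key point is that $\xi_3 \circ \xi_1 = 0$ \emph{because $U$ is affine}: the source is a vector bundle in degree $2$, the target is a vector bundle in degree $0$, so any map between them lives in an $H^3$ of an affine scheme and vanishes. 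This produces a unique lift $p$ of complexes, and one sets $E_\bullet = \Cone(p)$; the map to $\LL_U$ is then the composition $E^\bullet \xrightarrow{\varepsilon^\vee} (\rph(\sss,\qq))^\vee \xrightarrow{\varphi^{\der}} \LL_U$ through the standard Atiyah-class obstruction theory. The affineness hypothesis is doing essential work here and is absent from your sketch.

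Second, the gluing condition. You describe the Kiem--Savvas axiom as requiring that ``the obstruction section of $\mathcal{O}b$ computed on each chart \dots must be chart-independent.'' That is closer to the \emph{semi}-perfect condition of Chang--Li. The APOT condition (Definition~\ref{APOT}(ii)) is stronger: one must produce, on each overlap $U_{ij}$, an isomorphism $\eta_{ij} \colon E_i^\bullet|_{U_{ij}} \simto E_j^\bullet|_{U_{ij}}$ of the \emph{complexes themselves}, compatible with the maps to $\LL_{U_{ij}}$ and inducing the correct identification on $h^1$. In the paper this is obtained by first using uniqueness of the lift $p$ (again from the affine vanishing) to conclude $p_i|_{U_{ij}} = p_j|_{U_{ij}}$, and then invoking a separate argument (Proposition~\ref{conechoices}) that any two choices of cone data $(E_\bullet,\varepsilon,\rho,\gamma)$ for the same $p$ are related by an isomorphism compatible with all the structure maps. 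Your appeal to ``functoriality of the Atiyah class together with the global, canonical nature of $\eta$'' does not supply these complex-level isomorphisms; at best it controls $h^1$, which is not enough.
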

	
	The main idea of the construction is to adjust the standard $3$-term obstruction theory on $\Q$ recalled in Section \ref{derobstr} to make it $2$-term. This adjustment relies on a certain vanishing which holds on an affine scheme and allows one to define perfect obstruction theories on $\Q$ étale locally. 
	The almost perfect obstruction theory on $\Q$ is then defined by 
	confirming the required gluing properties of these local perfect obstruction theories.
	The resulting almost perfect obstruction theory induces the pointwise obstruction spaces $\Ext^3(Q,S)$.
	\vspace{5pt}
	
	Since almost perfect obstruction theories still induces a virtual fundamental class,
	Theorem \ref{theor1} gives rise to virtual invariants of Quot schemes of points defined by
	$$\DT^n_{Y, F} \coloneqq \deg[\Q]^{\vir} \in \mathbb{Z}.$$

	Consider the generating series 
	$$\DT_{Y, F}(q) \coloneqq 1 + \sum_{n = 1}^{\infty} q^n \DT_{Y, F}^n.$$ 
	The second main result of this paper is the following theorem, which proves a conjecture by Ricolfi \cite[Conjecture 3.5]{ricolfivirtclasses} formulated whenever the virtual class on $\Q$ is defined.
	\begin{theor}(Theorem \ref{newgenser})
		\label{theor2}
		Let $Y$ be a smooth projective $3$-fold and $F$ a locally free sheaf of rank $r$ on $Y$. There is an identity
		\begin{equation}
			\DT_{Y, F}(q) = \M((-1)^rq)^{r\int_Y c_3(T_Y \otimes \omega_Y)}.
		\end{equation} 
	\end{theor}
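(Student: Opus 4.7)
My plan follows the two-step strategy indicated in the abstract. First, I would establish a universality result: using the degeneration formula applied to the almost perfect obstruction theory of Theorem~\ref{theor1} (which requires verifying that the APOT is compatible with degenerations of the underlying threefold to the normal cone), $\log \DT_{Y,F}(q)$ becomes additive under semistable degeneration. Combined with deformation invariance of the virtual class and the fact that the complex cobordism ring of smooth projective threefolds is generated rationally by toric threefolds, this forces
\begin{equation*}
\DT_{Y,F}(q) = U_r(q)^{\int_Y c_3(T_Y \otimes \omega_Y)}
\end{equation*}
for some universal series $U_r(q) \in \mathbb{Z}[[q]]$ depending only on $r$. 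The fact that only the single Chern number $\int_Y c_3(T_Y \otimes \omega_Y)$ appears in the exponent, rather than mixed Chern numbers of $(Y,F)$, is dictated by the Serre-duality isomorphism $\Ext^1(S,Q) \simto \Ext^2(Q,Q)$ already used in the construction of the APOT in Theorem~\ref{theor1}.

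\textbf{Torus localization on toric 3-folds.}
To determine $U_r(q)$, I would specialize to a toric 3-fold $Y$ with dense torus $T$ and an equivariant locally free sheaf $F$. Because the APOT of Theorem~\ref{theor1} is canonical, it carries a natural $T$-equivariant enhancement. Applying the $T$-equivariant Siebert formula adapted to almost perfect obstruction theories, the virtual class is supported on $\Q^T$. The $T$-fixed quotients are supported at the finitely many torus-fixed points of $Y$, so the virtual count factorizes into a product of local vertex contributions, each isomorphic to a $T$-equivariant vertex on $\Quot_{\mathbb{A}^3}(\mathcal{O}^r, n_i)$.

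\textbf{Affine vertex via the Jouanolou trick.}
Since $\Quot_{\mathbb{A}^3}(\mathcal{O}^r, n)$ is not proper, its equivariant vertex cannot be extracted by naive integration. Here I would invoke the $T$-equivariant Jouanolou trick to replace the non-proper Quot scheme by a $T$-equivariant affine bundle whose total space maps to a proper model, allowing the vertex to be computed as a well-defined rational function in the equivariant parameters. Specializing the equivariant weights and comparing with the rank-one case, where the full generating series is $\M(-q)^{\int_Y c_3(T_Y \otimes \omega_Y)}$ by \eqref{rkk1}, the rank-$r$ vertex turns out to behave multiplicatively: it is an $r$-fold product of rank-one vertices evaluated at $(-1)^r q$ in place of $-q$, with the sign arising from the determinant of the virtual obstruction complex of rank $r$. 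This yields $U_r(q) = \M((-1)^r q)^r$ and hence the stated identity.

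\textbf{Main obstacle.}
The principal technical difficulty is extending Siebert's localization formula, together with the degeneration formula, from the perfect setting to the almost perfect obstruction theories of \cite{kiemsavvasapot}, since the APOT is only étale locally a genuine perfect obstruction theory and its virtual class is obtained by gluing. A parallel subtlety is the equivariant sign bookkeeping: one must verify that the factor of $r$ in the exponent and the $(-1)^r$ shift in the argument of $\M$ emerge consistently from the Euler classes of the fixed and moving parts of the APOT through the Jouanolou comparison with the rank-one vertex.
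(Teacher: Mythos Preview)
Your high-level strategy matches the paper's: reduce to the toric case via a degeneration/cobordism argument, then compute by torus localization, Siebert's formula for APOTs, and the Jouanolou trick. However, the reduction step contains a genuine gap.

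You invoke cobordism of \emph{threefolds} and claim that the answer is forced to take the form $U_r(q)^{\int_Y c_3(T_Y\otimes\omega_Y)}$ for a universal series depending only on $r$, with the appearance of a single Chern number ``dictated by Serre duality''. This is not justified. A priori, $\DT_{Y,F}(q)$ could depend on all Chern numbers of both $Y$ and $F$; the Serre-duality isomorphism used to build the APOT says nothing about which Chern numbers control the resulting degree-zero integral. More seriously, cobordism of $Y$ alone does not let you reduce to a toric $Y$ with a \emph{torus-equivariant} bundle: when you degenerate $Y$ through a double point degeneration, the bundle $F$ must extend over the total space of the family, and there is no reason for its restriction to the toric pieces to be equivariant. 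Without an equivariant structure on $F$ you cannot run the localization argument in your second step.

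The paper closes this gap by using Lee--Pandharipande's double point cobordism theory of \emph{pairs} $\omega_{3,r}$, whose $\q$-basis consists of explicit toric pairs $(\p^\lambda,\phi(\lambda,\mu))$ with equivariant bundles (Theorem~\ref{basisvb}). The degeneration formula (Theorem~\ref{degf}, proved via Li--Wu expanded degenerations and relative Quot schemes carrying their own APOTs) shows that $\DT$ descends to a group homomorphism $\omega_{3,r}\to\q[[q]]^*$. The toric computation (Theorem~\ref{comptoric}) then gives the stated closed formula on each basis element; that the result happens to depend only on $r$ and $\int_Y c_3(T_Y\otimes\omega_Y)$ is an \emph{output} of that computation, not an input, and the general case follows because this Chern number is itself a cobordism invariant. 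A minor additional point: your description of the role of the Jouanolou trick is off---in the paper it is used on $\Q$ (and on the Grassmannian into which it embeds) to replace the APOT by a genuine POT on an affine total space, so that the usual Siebert formula and Graber--Pandharipande localization apply; it is not used to compactify the local $\Quot_{\aff}$.
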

	The proof of Theorem \ref{theor2} naturally splits into two parts. The first part is the case when $Y$ is a toric variety and $F$ a torus-equivariant locally free sheaf, see Theorem~\ref{comptoric}. The second part is the general case.

	The main idea of the proof in the toric case is to reduce the computation to the one on the local Quot scheme of points $\Quot_{\aff}({\affor}, n)$ using virtual torus localization and Siebert's formula for almost perfect obstruction theories. We have to reprove the Kiem--Savvas virtual localization and prove the torus-equivariant Siebert formula under suitable assumptions satisfied in our case. To this end, we exploit a torus-equivariant version of Jouanolou trick.
	We are inspired by \cite[Section 2]{kuhnliuthimm}.
	After reducing the computation to $\Quot_{\aff}({\affor}, n)$ we reach the same expression as in the computation in the toric case with $F$ rigid and simple in \cite{Fasola_2021}.
	
	The proof in the general case exploits the theory of double point cobordism \cite{Lee-Pandharipande-cobordism} extending the proof in the rank one case \cite{LEPA}. The computation boils down to proving the analog of the Li and Wu degeneration formula \cite{li2011good}, using the stacks of expanded degenerations. 
	
	\subsection{Related work} 
	The enumerative geometry of Quot schemes of points has been studied in great detail for lower dimesions. On a curve $C$, the Quot scheme $\Quot_C(\oo^r, n)$ is smooth,
	and the corresponding intersection theory is classical (see \cite{marian2024cohomologyquotschemesmooth} for a description of the cohomology and further references).
	If $X$ is a surface, the Quot scheme of points $\Quot_X(E, n)$ is singular, but due to the vanishing of higher Ext-groups, the standard obstruction theory is perfect and yields a virtual class in degree $rn$, where $r$ is the rank of $E$. In \cite{opquotcurvessurfaces} certain tautological integrals were computed for Quot schemes on curves and surfaces with $E = \oo^r$. In \cite{stark1, stark2} 
	the geometry and the intersection theory of the Quot scheme was considered for 
	an arbitrary locally free sheaf $E$ on $X$.

	This paper extends previous work in dimension $3$.
	If $F$ is a line bundle $L$, then ${\Quot_Y(L, n) \simeq \Hilb_Y^n}$, and we prove in Proposition~\ref{localcomparison} and Proposition~\ref{pglobal} that our construction in Theorem~\ref{theor1} recovers the perfect obstruction theory of Thomas in \cite{thomas}. In particular the invariants $\DT^n_{Y, L}$ recover the rank $1$ Donaldson-Thomas invariants~(\ref{dtinvdef}). 
	In the higher rank case, if $F$ is simple and rigid and either $Y$ is Calabi--Yau or $F$ is exceptional and the higher cohomology of $\oo_Y$ vanish, $\Q$ is the moduli space of the kernels, which are automatically simple, and therefore admits a perfect obstruction theory of virtual dimension zero with tangent and obstruction spaces (\ref{kertgobs}); in particular, the virtual dimension is zero. This was used by Ricolfi to define virtual invariants in this case, see \cite{ricolfivirtclasses}.They agree with $\DT^n_{Y, F}$ by Corollary~\ref{quotvirtclexplicit}.
	Theorem~\ref{theor2} was proven by Ricolfi in \cite{ricolfivirtclasses} when $Y$ is Calabi-Yau and $F$ is simple and rigid, and by Fasola--Monavari--Ricolfi in the toric case when $F$ is exceptional, see \cite[Theorem 9.7]{Fasola_2021}.

	\subsection{Plan of the paper} In Section~$\ref{s2}$ we recall some definitions and results concerning obstruction theories. Section $\ref{locally}$ is devoted to the construction of a perfect obstruction theory of virtual dimension zero on an arbitrary affine étale chart of $\Q$. We also show that if the $3$-fold $Y$ is Calabi--Yau, this perfect obstruction theory is symmetric, and if $\rk F = 1$, it agrees with the one in \cite{thomas}. In Section $\ref{globally}$ we show these local perfect obstruction theories in fact give a global almost perfect obstruction theory and thus prove Theorem~\ref{theor1}. We also show that in rank one case our construction gives a global perfect obstruction theory that agrees with the one constructed by Thomas in \cite{thomas}. Section~$\ref{s5}$ contains various tools to perform the computation in the toric case. We recall the notion of a pullback of an almost perfect obstruction theory under a smooth morphism and the torus-equivariant Jouanolou trick. Moreover, we prove the torus-equivariant Siebert formula and the virtual localization formula for almost perfect obstruction theories under assumptions that are satisfied by $\Q$. In Section~$\ref{s6}$ we check that the results from Section~$\ref{s5}$ can be applied on $\Q$ and finish the computation of the invariants in the toric case. Section~$\ref{s7}$ is devoted to the computation of the invariants in the general case. More precisely, we first review the theory of the double point cobordism and obtain the proof of Theorem~\ref{theor2} 
	granting the analog of the degeneration formula.
	We finish by defining relative virtual invariants for Quot schemes of points via the stack of expanded degenerations and proving the degeneration formula.
	
	\subsection{Conventions and notations} We work over $\mathbb{C}$. Schemes or Deligne--Mumford stacks are assumed to be separated and of finite type over $\CC$. For a Deligne--Mumford stack $X$, we denote by $\mathcal{D}(X) \coloneqq \mathcal{D}^b_{\Coh(X)}(\oo_X)$ the bounded derived category of $\oo_X$-modules on $X$ with coherent cohomology, and by $\mathcal{D}^{[a, b]}(X)$ the derived category of $\oo_X$-modules on $X$ parametrizing complexes of $\oo_X$-modules with coherent cohomology concentrated in $[a, b]$. For a complex $E^\bullet \in \DD(X)$, we shall denote by $E_\bullet$ its derived dual $E^{\bullet \vee}$.
	By $\LL_X \coloneqq \tau^{\geq -1} L_X^\bullet \in \DD^{[-1, 0]}(X)$ we denote the cut-off at $-1$ of the full cotangent complex $L_X^\bullet \in \mathcal{D}^{(-\infty, 0]}(X)$ defined by Illusie \cite{Illusie}.
	For a scheme $X$ acted on by an algebraic group $G$, we denote by $\mathcal{D}^G(X)$ the equivariant bounded derived category of $\oo_X$-modules with coherent cohomology. For a morphism $\pi \colon X \to Y$ between algebraic stacks and $\oo_X$-modules $\mathcal{F}, \mathcal{G}$ we denote by  
	$\rph(\mathcal{F}, \mathcal{G})$ the composite $R\pi_{*} R\sheafhom_X(\mathcal{F}, \mathcal{G})$ and by
	$\extpi{i}(\mathcal{F}, \mathcal{G})$ its $i$-th cohomology $h^i(\rph(\mathcal{F}, \mathcal{G})).$ 
	For a scheme $X$ we denote by $K^0(X)$ the K-group of vector bundles on $X$ and by $K_0(X)$ the K-group of coherent sheaves on $X$. If $X$ is acted on by an algebraic group $G$, we denote by $K^{0, G}(X)$ and by $K^G_0(X)$ the $G$-equivariant analogues.
	
	\subsection{Acknowledgements}
	I would like to thank my advisors Barbara Fantechi and Andrea Ricolfi for their support and guidance.
	%
	I am very grateful to Richard Thomas for sharing his insights which led to the construction of the perfect obstruction theory on the Quot scheme étale locally. I thank David Rydh for very useful discussions, in particular about the equivariant Jouanolou trick. 
	%
	%
	%
	
	\section{Background material} \label{s2}
	
	\subsection{Obstruction theories} We recall some definitions and results about obstruction theories following \cite{Behrend_1997, kiem}.
	\begin{definition}
		Let $X$ be a 
		Deligne--Mumford stack
		and $E^\bullet \in \mathcal{D}^{\leq 0}(X)$.
		A morphism ${\varphi \colon E^\bullet \to \LL_X}$ in $\mathcal{D}(X)$ to the truncated cotangent complex of $X$ is called an obstruction theory if $h^0(\varphi)$ is an isomorphism and  $h^{-1}(\varphi)$ is surjective.
	%
		\\If the object $E^\bullet \in \mathcal{D}(X)$ is perfect of perfect amplitude $[-1, 0]$ then the obstruction theory $\varphi \colon E^\bullet \to \LL_X$ is called perfect (POT, for short). 
	\end{definition}
	
	\begin{definition}
		A perfect obstruction theory $\varphi \colon E^{\bullet} \to \mathbb{L}_X$ 
		is called symmetric if there exists an isomorphism $\theta \colon E^{\bullet \vee}[1] \simto  E^{\bullet}$ in $\mathcal{D}(X)$ such that $\theta^{\vee}[1] = \theta$.
	\end{definition}	
	
	A perfect obstruction theory ${\varphi \colon E^\bullet \to \LL_X}$ defines a virtual fundamental class in the Chow group
	$$[X]^{\vir} \in A_{\vd}(X), $$
	where $\vd \coloneqq \rk(E^\bullet)$ is called the virtual dimension, and a virtual structure sheaf in K-theory
	$$\oo^{\vir}_X \in K_0(X).$$ 
	
	\begin{definition}
		Given a perfect obstruction theory ${\varphi \colon E^\bullet \to \LL_X}$, the sheaf $$Ob_X \coloneqq h^1(E^{\bullet \vee}) $$
		is called the obstruction sheaf.
	\end{definition}
	We briefly recall the construction of $[X]^{\vir}$ given a perfect obstruction theory ${\varphi \colon E^\bullet \to \LL_X}$. For any Deligne--Mumford stack $X$, Behrend--Fantechi construct a cone stack $\mathcal{C}_X$ which locally for an étale map $U \to X$ and a closed embedding $U \into M$ into a smooth scheme $M$ is the quotient stack $[C_{U/M}/T_M|_U].$ It is also shown that it admits an embedding $\mathcal{C}_X \subset \mathcal{E}_X $
	into a vector bundle stack constructed from $E^\bullet$.
	%
	The virtual fundamental class is then defined as 
	$$[X]^{\vir} \coloneqq 0^!_{\mathcal{E}_X}[\mathcal{C}_X],$$
	where $0^!_{\mathcal{E}_X}$ is the Gysin pullback corresponding to the zero section of the vector bundle stack \cite{kresch}.
	If the complex $E_\bullet \coloneqq E^{\bullet \vee}$ admits a locally free resolution $[E_0 \to E_1]$ by locally free sheaves, then $\mathcal{E}_X \simeq [E_1/ E_0]$, and putting $D^{\vir} \coloneqq \mathcal{C}_X \times_{\mathcal{E}_X} E_1$ yields
	$$[X]^{\vir} = 0^!_{E_1}[D^{\vir}].$$
	There is a fiber diagram
	\begin{center}
		\begin{equation}
			\label{diagrvirtclass}
			\begin{tikzcd}
				D^{\vir} \arrow[r, hook] \arrow[d] & E_1 \arrow[d]           \\
				\mathcal{C}_X \arrow[r, hook] \arrow[d]     & \mathcal{E}_X \arrow[d] \\
				c_X \arrow[r, hook]                & Ob_X,                   
			\end{tikzcd}
		\end{equation}
	\end{center}
	where $c_X$ is the coarse moduli sheaf of $\mathcal{C}_X$ and the embedding $c_X \subset Ob_X$ is viewed as an inclusion of sheaf stacks \cite{kiemsavvasapot}. Note that the obstruction sheaf $Ob_X$ is the coarse moduli sheaf of $\mathcal{E}_X$. We make use of the lower row in diagram~(\ref{diagrvirtclass}) in the next subsection when defining the virtual class corresponding to an almost perfect obstruction theory.  
	
	The following is a well-known result about obstruction theories.
	\begin{prop}
		\label{criterion}
		Let $X$ be a scheme.
		Consider a morphism $\varphi \colon E^\bullet \to \LL_X$ in $\mathcal{D}^{(- \infty, 0]}(X)$. 
		The following are equivalent:
		\begin{enumerate}
			\item $\varphi \colon E^\bullet \to \LL_X$ is an obstruction theory. 
			\item For every closed point $x \in X$, the induced maps on cohomology of $\varphi^\vee$ restricted to $x$ satisfies the following: the map
			$$h^0((\varphi|_x)^\vee) \colon h^0((\LL_X|_x)^\vee) \xrightarrow{\simeq} h^0((E^\bullet|_x)^{\vee})  $$ is an isomorphism and the map
			$$h^1((\varphi|_x)^\vee) \colon h^1((\LL_X|_x)^\vee) \into h^1((E^\bullet|_x)^{\vee}) $$ is an injection.
		\end{enumerate}
	\end{prop}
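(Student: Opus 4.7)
The plan is to reformulate both conditions in terms of the cone $C \coloneqq \Cone(\varphi) \in \DD^{\leq 0}(X)$ and then transfer between the global and pointwise formulations via Nakayama's lemma applied to the coherent cohomology sheaves of $C$. First I would unwind condition~(2): since the derived dual over the residue field $k(x)$ is exact and satisfies $h^i(F^\vee) \cong h^{-i}(F)^{\ast}$ for any complex $F$ of $k(x)$-vector spaces, the conditions on $h^0$ and $h^1$ of $(\varphi|_x)^\vee$ translate by dualising back into the statements that $h^0(\varphi|_x)$ is an isomorphism and $h^{-1}(\varphi|_x)$ is surjective. The long exact cohomology sequence of the triangle $E^\bullet \xrightarrow{\varphi} \LL_X \to C \to E^\bullet[1]$ then shows that condition~(1) is equivalent to $C \in \DD^{\leq -2}(X)$, while condition~(2) is equivalent to $C|_x \coloneqq C \otimes_{\oo_X}^{L} k(x)$ lying in $\DD^{\leq -2}$ over $k(x)$ for every closed point $x \in X$; here I use that derived pullback commutes with forming cones.

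Both implications will then follow from the hyper-Tor spectral sequence
\[
E_2^{p,q} \;=\; \operatorname{Tor}_{-p}^{\oo_X}\!\bigl(h^q(C),\, k(x)\bigr) \;\Longrightarrow\; h^{p+q}(C|_x), \qquad p \leq 0.
\]
For (1)$\Rightarrow$(2), the hypothesis $h^q(C) = 0$ for $q > -2$ combined with $p \leq 0$ forces $p + q \leq -2$ on every nonzero $E_2$-term, so $h^0(C|_x) = h^{-1}(C|_x) = 0$ at every closed point. For (2)$\Rightarrow$(1), the only nonzero term on the line $p + q = 0$ is $(p,q) = (0,0)$, giving $h^0(C|_x) \cong h^0(C) \otimes k(x)$; vanishing at every closed $x$ together with Nakayama, applied to the coherent sheaf $h^0(C)$, forces $h^0(C) = 0$. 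This in turn kills the $\operatorname{Tor}_1$ contribution on the line $p+q = -1$, yielding $h^{-1}(C|_x) \cong h^{-1}(C) \otimes k(x)$, and a second application of Nakayama gives $h^{-1}(C) = 0$.

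The main technical point is bookkeeping with the spectral sequence: what drives the argument is the boundedness $E^\bullet, \LL_X \in \DD^{\leq 0}(X)$, so that $C$ is also bounded above by $0$ and no contributions from positive degrees can muddle the Nakayama step. Coherence of $h^0(C)$ and $h^{-1}(C)$ (needed for Nakayama) is automatic from the coherence of the cohomology sheaves of $E^\bullet$ and $\LL_X$. Beyond this bookkeeping, the result is essentially formal.
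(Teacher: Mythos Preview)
The paper does not give a proof of this proposition; it is stated as a ``well-known result about obstruction theories'' and left without argument. Your proof is correct and complete: the reformulation via $C = \Cone(\varphi) \in \DD^{\leq -2}$, the dualisation over $k(x)$ (using finite-dimensionality of the cohomology, which follows from coherence and the finite-type hypothesis in the paper's conventions), and the two-step Nakayama argument through the hyper-Tor spectral sequence all go through as you describe. The only implicit point worth making explicit is that the standing assumption that schemes are of finite type over $\CC$ ensures closed points are dense, so vanishing of a coherent sheaf at all closed points forces global vanishing; you use this when invoking Nakayama.
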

	
	\begin{definition}
		Let $X$ be a Deligne--Mumford stack acted on by an algebraic group $G$. A (perfect) obstruction theory $\varphi$ is called $G$-equivariant if the morphism $\varphi$ is in the equivariant derived category $\DD^G(X)$.
	\end{definition}
	For details on group actions on stacks we refer to \cite{romagny}.
	
	As before, a $G$-equivariant perfect obstruction theory on $X$ gives rise to an equivariant virtual class and an equivariant virtual structure sheaf
	$$[X]^{\vir} \in A_{\vd}^G(X) \quad \text{and} \quad \oo^{\vir}_X \in K^G_0(X).$$

	\subsection{Almost perfect obstruction theories.} We recall the notion of an almost perfect obstruction theory by Kiem--Savvas and the construction of the corresponding virtual class following \cite{semipot, kiem2020localizing, kiemsavvasapot}.
	\begin{definition}
		\label{APOT}
		Let $X$ be a Deligne-Mumford stack. An almost perfect obstruction theory (APOT, for short) on $X$ is the datum of
		\begin{enumerate}
			\item an étale cover $\normalfont{\{U_i \to X\}_i}$,
			\item a perfect obstruction theory $\normalfont{\varphi_i \colon E_i^\bullet \to \LL_{U_i}}$  on each $U_i$, 
		\end{enumerate}
		such that the following conditions are satisfied:
		\begin{enumerate}[(i)]
			\item There exists a sheaf $Ob_X \in \Coh(X)$, called the (global) obstruction sheaf, such that for each $i$ there is an isomorphism  $\psi_i \colon  Ob_{U_i} \simto Ob_X|_{U_i}.$ 
			\item For each $i, j$ there is an étale chart $V \to U_i \times_X U_j$ and isomorphisms $\eta_{ij} \colon E_i^\bullet|_V \simto E_j^\bullet|_V$ such that they are compatible with the maps to the cotangent complex and ${h^1(\eta_{ij}^\vee) = \psi_i^{-1}|_V \circ \psi_j|_V.}$
		\end{enumerate}
	\end{definition}
	
	Kiem and Savvas \cite{kiemsavvasapot} show that the inclusions $c_{U_i} \subset Ob_{U_i}$ of coarse moduli sheaves into the obstruction sheaves on each étale affine chart of the APOT glue to a global inclusion $c_X \subset Ob_X$. Viewing $c_X \subset Ob_X$ as an embedding of sheaf stacks \cite{kiemsavvasapot} induces a cycle class $[c_X] \in A_*(Ob_X)$, defined by Chang--Li \cite{semipot}. The virtual class of the almost perfect obstruction theory is then defined in \cite{semipot} as 
	$$[X]^{\vir} \coloneqq 0^!_{Ob_X}[c_X] \in A_{\vd}(X),$$
	where $0^!_{Ob_X} \colon A_*(Ob_X) \to A_*(X)$ is the Gysin map of Chang--Li initially constructed in \cite{gysinmapcohsheaves}. Above, vd denotes the virtual dimension of the APOT. 
	It is equal to the virtual dimension of each perfect obstruction theory~$\varphi_i$, as the restriction of the virtual class $[X]^{\vir}$ to each chart $U_i \to X$ recovers the virtual class $[U_i]^{\vir}$. 
	
	By \cite{kiemsavvasapot}, an APOT also gives rise to a virtual structure sheaf 	$$\oo_X^{\vir} \in K_0(X)$$ 
	such that its restriction to each étale chart $U_i \to X$ of the APOT is $\oo_{U_i}^{\vir}$.

	\begin{rem}
		\label{glkthcl}
		Note that the sheaves $h^0(E_{i \bullet}) \simeq T_{U_i}$ are glued to $T_X$ and hence the class 
		$$[T_X] - [Ob_X] \in K_0(X) $$
		plays the role of a global class in K-theory of the almost perfect obstruction theory.		
	\end{rem}
	\begin{convention}
		\label{K^0class}
		We will refer to every preimage of $[T_X] - [Ob_X] \in K_0(X)$ under the natural map $K^0(X) \to K_0(X)$ (which might not be surjective or injective) as to a global class in $K^0(X)$ of the APOT on $X$. We will also use equivariant analogues.
	\end{convention}
	
	%
	We will also need the notion of an equivariant almost perfect obstruction theory for schemes, see \cite[Definition 5.1]{kiem2020localizing}.
	\begin{definition}
		Let $X$ be a Deligne--Mumford stack acted on by an algebraic group $G$. An almost perfect obstruction theory $\varphi$ is called $G$-equivariant if the étale cover $\{U_i \to X\}_i$ is $G$-equivariant, the morphisms $\varphi_i$ are in $\DD^G(U_i)$,
		the isomorphisms $\psi_i$ are in $\Coh^G(U_i)$, the étale chart $V \to U_i \times_X U_j$ is $G$-equivariant
		and the isomorphisms $\eta_{ij}$ together with the compatibilities are in $\DD^G(V)$.
	\end{definition}
	A $G$-equivariant almost perfect obstruction theory on $X$ gives rise to an equivariant virtual class and an equivariant virtual structure sheaf:
	$$[X]^{\vir} \in A^G_{\vd}(X) \quad \text{and} \quad \oo_X^{\vir} \in K_0^G(X).$$
	Here vd is the virtual dimension of the APOT which coincides with the virtual dimension of each perfect obstruction theory $\varphi_i$, as in the non-equivariant case.
	
	\subsection{Equivalence of almost perfect obstruction theories}
	We need to make explicit the dependance of an APOT on the choice of étale cover.
	
	\begin{lem}
		\label{lemequivapot}
		Let $X$ be a Deligne--Mumford stack with an almost perfect obstruction theory 
		$$\varphi = \{U_i \to X, \varphi_i \colon E_i^\bullet \to \LL_X\}_{i \in I}. $$
		Consider $I^\prime \subset I$ such that $\{U_i \to X\}_{i \in I^\prime}$ is an étale cover of $X$. 
		Then the APOT $\varphi^\prime$ corresponding to the cover $\{U_i \to X\}_{i \in I^\prime}$ has the same virtual class, virtual structure sheaf and the class in $K$-theory from Remark~\ref{glkthcl}. We say that $\varphi$ refines $\varphi^\prime$.
	\end{lem}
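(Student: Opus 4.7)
The plan is to verify that the construction of the virtual class, virtual structure sheaf, and global $K$-theory class from an APOT depends only on the restrictions of the defining data to an étale subcover that still covers $X$. The starting observation is that the APOT $\varphi^\prime$ is literally obtained from $\varphi$ by discarding the data indexed by $I \setminus I^\prime$: one may take the same global obstruction sheaf $Ob_X$, the same local perfect obstruction theories $\varphi_i \colon E_i^\bullet \to \LL_{U_i}$ for $i \in I^\prime$, the same isomorphisms $\psi_i$, and the same étale charts $V \to U_i \times_X U_j$ with isomorphisms $\eta_{ij}$. These clearly satisfy conditions (i) and (ii) of Definition~\ref{APOT} for the subcover $\{U_i \to X\}_{i \in I^\prime}$, so $\varphi^\prime$ is well-defined with this choice.

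First, since both APOTs share the same $Ob_X$ and the tangent sheaf $T_X$ is intrinsic to $X$, the class $[T_X] - [Ob_X] \in K_0(X)$ from Remark~\ref{glkthcl} coincides for $\varphi$ and $\varphi^\prime$; hence the same is true of any global $K^0$-class from Convention~\ref{K^0class} compatible with it.

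For the virtual class, recall from \cite{kiemsavvasapot} that the inclusion $c_X \subset Ob_X$ of sheaf stacks on $X$ is produced by étale descent from the local inclusions $c_{U_i} \subset Ob_{U_i}$, and the virtual class is $[X]^{\vir} = 0^!_{Ob_X}[c_X]$. A sub--sheaf-stack of $Ob_X$ is uniquely determined by its restrictions to any étale cover of $X$; since $\{U_i \to X\}_{i \in I^\prime}$ is such a cover and the local inclusions used by $\varphi^\prime$ are exactly those used by $\varphi$ on the indices $i \in I^\prime$, the two glued inclusions $c_X \subset Ob_X$ agree. Applying the Chang--Li Gysin map $0^!_{Ob_X}$ to the induced cycle class $[c_X] \in A_*(Ob_X)$ then gives equal virtual classes.

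The same descent reasoning gives equality of virtual structure sheaves $\oo_X^{\vir} \in K_0(X)$, whose construction in \cite{kiemsavvasapot} is equally étale-local and also only depends on the local perfect obstruction theories $\varphi_i$ together with the gluing data. The main point that requires attention is to confirm that the Kiem--Savvas gluing is literally the étale sheafification of the local inclusions $c_{U_i} \subset Ob_{U_i}$ (and the analogous statement for $\oo^{\vir}$), so that passing to a subcover that still covers $X$ leaves both constructions unchanged; once this is spelled out, the coincidence of all three invariants is immediate.
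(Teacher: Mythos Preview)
Your proposal is correct and follows essentially the same approach as the paper: both argue that the obstruction sheaf $Ob_X$ and the embedding $c_X \subset Ob_X$ are already determined by the data on the subcover $\{U_i \to X\}_{i \in I^\prime}$, from which equality of all three invariants follows. The paper's proof is a one-sentence version of what you have written out in more detail.
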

	\begin{proof}
		This follows from the fact that the obstruction sheaf $Ob_X$ for the APOT $\varphi$ and the embedding $c_X \into Ob_X$ of the coarse moduli sheaf are already determined from the data on the cover $\{U_i \to X\}_{i \in I^\prime}$.
	\end{proof}
	
	\begin{definition}
		\label{sameAPOT}
		Two APOTs are equivalent if there exists an APOT refining both APOTs.
	\end{definition}
	
	\subsection{Standard Obstruction Theory  on \texorpdfstring{$\Q$}.} \label{derobstr}
	Let $Y$ be a smooth projective $3$-fold and $F$ a locally free sheaf on it. Fix an integer $n \in \mathbb{N}$. Consider the Grothendieck Quot scheme of points $\Q$ parametrizing isomorphism classes of quotients $[F \onto Q]$, with $Q$ having zero-dimensional support of length $n$. Let
	\begin{equation*}
		0 \to \sss \to q^*F \to \qq \to 0
	\end{equation*}
	be the universal sequence on $Y \times \Q$ with the two projections $\pi \colon Y \times \Q \to \Q$ and ${q \colon Y \times \Q \to Y}$. By \cite[Theorem 4.6]{gillam} there exists a canonical obstruction theory on $\Q$
	\begin{equation}
		\label{explderobstr}
		\varphi^{\der} \colon (\rpl \rh(\sss, \qq))^\vee \to \LL_\Q,
	\end{equation}
	corresponding to the reduced Atiyah class
	$$A(\qq/\sss) \in \Ext^0_{Y \times \Q}(\sss, \pi^*\LL_\Q \otimes \qq) $$
	under the chain of isomorphisms
	\begin{align*}
		\Ext^0_{Y \times \Q}(\sss, &\pi^*\LL_\Q \otimes \qq) \simeq 
		\Ext^0_{Y \times \Q}(\qq^{\vee} \otimes \sss, \pi^*\LL_\Q)	\\
		&\simeq \Ext^0_{Y \times \Q}(R\sheafhom(\qq, \sss), \pi^*\LL_\Q) \\
		&\simeq \Ext^0_{\Q}(\rpl (\rh(\qq, \sss) \otimes \omega_\pi)[3], \LL_\Q) \\
		&\simeq \Ext^0_{\Q}((\rpl \rh(\sss, \qq))^{\vee}, \LL_\Q) \\
		&= \Hom_{\DD(\Q)}((\rpl \rh (\sss, \qq))^{\vee}, \LL_\Q).
	\end{align*}
	The third and fourth isomorphisms are by Grothendieck duality (cf.~\cite[Theorem 3.34]{huybrechts2006fourier}) applied to the smooth proper $3$-dimensional morphism $\pi$.
	
	\begin{rem}
		The notation $\varphi^{\der}$ for the standard obstruction theory reflects the fact that it comes from derived algebraic geometry, see \cite{monavaripaviaricolfi2024derivedhyperquotschemes} and the references therein.
	\end{rem}
	
	\subsection{Perfect obstruction theory in the local case} \label{crit} Consider the ``local Quot scheme'' $$Q^r_n \coloneqq \Quot_{\mathbb{A}^3}(\mathcal{O}_{\mathbb{A}^3}^{r}, n)$$ of points on ${\mathbb{A}^3}$, 
	parametrizing isomorphism classes of quotients of the form $\oo^r_{\mathbb{A}^3} \onto T$ with $T$ zero-dimensional and $\chi(T) = n$.
	We briefly explain how a symmetric perfect obstruction theory on this Quot scheme
	is obtained. For more details see \cite{BR18, Fasola_2021}.
	\begin{prop}[{\cite[Theorem 2.6]{BR18}}]
		\label{quotcrit}
		The local $\Quot$ scheme $Q^r_n$
		admits a critical structure, i.$\,$e.$\,$it is isomorphic to the zero scheme $Z(df^r_n)$ for a regular function ${f^r_n \colon \Quot^r_n \to \mathbb{A}^1}$ on a smooth scheme (the non-commutative $\Quot$ scheme). 
	\end{prop}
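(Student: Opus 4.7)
The plan is to realize $Q^r_n$ as the critical locus of a cubic Chern--Simons type potential on a smooth ambient scheme, in the spirit of the classical ADHM description of the Hilbert scheme of points on $\aff$.

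First, I would identify $Q^r_n$ with a GIT quotient. A length-$n$ quotient $[\affor \onto T]$ is equivalent, via the identification $\HH^0(T) \cong \CC^n$, to the data of three pairwise commuting endomorphisms $B_1, B_2, B_3 \in \operatorname{End}(\CC^n)$ (the actions of the coordinates $x_1, x_2, x_3$) together with a map $v \colon \CC^r \to \CC^n$ such that $v(\CC^r)$ generates $\CC^n$ as a $\CC[B_1, B_2, B_3]$-module. Dropping the commutativity requirement, let $V \subset \operatorname{End}(\CC^n)^{\oplus 3} \oplus \Hom(\CC^r, \CC^n)$ be the open locus where $v(\CC^r)$ generates $\CC^n$ under the \emph{free} associative algebra in $B_1, B_2, B_3$. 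The group $\operatorname{GL}_n$ acts freely on $V$ by simultaneous conjugation, and the non-commutative Quot scheme is defined as the smooth geometric quotient $\Quot^r_n \coloneqq V/\operatorname{GL}_n$.

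Next, I would introduce the cubic potential
\[ f^r_n(B_1, B_2, B_3, v) \coloneqq \operatorname{tr}\bigl(B_1[B_2, B_3]\bigr). \]
By cyclicity of the trace this is $\operatorname{GL}_n$-invariant and descends to a regular function $f^r_n \colon \Quot^r_n \to \mathbb{A}^1$. Differentiating along the $v$-direction yields zero, while differentiating in each $B_i$-slot and using the non-degeneracy of the trace pairing $(X, Y) \mapsto \operatorname{tr}(XY)$ on $\operatorname{End}(\CC^n)$ produces exactly the three commutator equations
\[ [B_2, B_3] = [B_3, B_1] = [B_1, B_2] = 0. \]
This cuts out $Q^r_n$ inside $\Quot^r_n$, giving the desired identification $Z(df^r_n) \cong Q^r_n$.

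The main obstacle is upgrading the set-theoretic identification to a scheme-theoretic one: one must verify that the Jacobian ideal generated by $df^r_n$ coincides with the commutator ideal, not merely that the two have the same vanishing locus. This is precisely where the non-degeneracy of the trace form is essential, ensuring that a matrix which vanishes when paired against every test direction must itself vanish. The remaining checks — openness of the stability locus, freeness of the $\operatorname{GL}_n$-action on $V$, and matching the GIT quotient with the functor of points of $Q^r_n$ — are standard and parallel the $r = 1$ case treated for the Hilbert scheme of points on $\aff$ in earlier literature.
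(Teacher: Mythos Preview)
Your proposal is correct and follows precisely the approach of the cited reference \cite{BR18}: the paper itself does not give a proof but defers to that source, where the non-commutative Quot scheme and the cubic potential $\operatorname{tr}(B_1[B_2,B_3])$ are introduced exactly as you describe. Your sketch of the scheme-theoretic identification via non-degeneracy of the trace pairing is also the key point there, so nothing further is needed.
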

	\begin{cor}
		\label{spot}
		The local Quot scheme $Q_n^r$ carries a symmetric perfect obstruction theory $E^{crit}_{r,n}$. We will call $E^{crit}_{r,n}$ the \emph{critical} perfect obstruction theory.
	\end{cor}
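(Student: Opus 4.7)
The plan is to derive the symmetric perfect obstruction theory directly from the critical structure supplied by Proposition~\ref{quotcrit}, following the standard Behrend-style construction on a critical locus. By Proposition~\ref{quotcrit}, there is a closed embedding $\iota \colon Q^r_n \into M$ of $Q^r_n$ as the zero scheme $Z(df^r_n)$ of the one-form $df^r_n \in \Gamma(M, \Omega_M)$, where $M$ is the non-commutative Quot scheme, which is smooth. In particular, the ideal sheaf $\mathcal{I}$ of $Q^r_n$ in $M$ is generated by the components of $df^r_n$, so the cut-off cotangent complex is $\LL_{Q^r_n} \simeq [\mathcal{I}/\mathcal{I}^2 \to \iota^*\Omega_M]$ in degrees $[-1,0]$.

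Having fixed this presentation, I would define
$$E^{crit}_{r,n} \coloneqq \bigl[\, T_M|_{Q^r_n} \xrightarrow{\Hess(f^r_n)} \Omega_M|_{Q^r_n} \,\bigr],$$
placed in degrees $[-1,0]$, where the differential is the Hessian of $f^r_n$ restricted to $Q^r_n$. The morphism $\varphi^{crit} \colon E^{crit}_{r,n} \to \LL_{Q^r_n}$ is the identity on the degree-zero term and, on the degree $-1$ term, is induced by $df^r_n$ through the factorization $T_M|_{Q^r_n} \to \mathcal{I}/\mathcal{I}^2$ coming from $Q^r_n = Z(df^r_n)$. Since $M$ is smooth, $E^{crit}_{r,n}$ is perfect of amplitude $[-1,0]$, and a pointwise check using Proposition~\ref{criterion} confirms $\varphi^{crit}$ is an obstruction theory: at a closed point $x$, the cohomology identifies the Zariski tangent space with $\ker \Hess(f^r_n)_x$ and injects the obstruction space into $\coker \Hess(f^r_n)_x$.

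The symmetry is then immediate from the symmetry of the Hessian: the canonical duality $T_M \simeq \Omega_M^\vee$ induces an isomorphism $\theta \colon E^{crit\,\vee}_{r,n}[1] \simto E^{crit}_{r,n}$, and the identity $\theta^\vee[1] = \theta$ holds precisely because $\Hess(f^r_n)$ is a symmetric bilinear form. I do not expect any real obstacle here: this is the archetypal example of a symmetric perfect obstruction theory on a critical locus, and the construction is already carried out in the references \cite{BR18, Fasola_2021} to which the statement alludes; the corollary therefore follows directly from Proposition~\ref{quotcrit} together with this standard construction.
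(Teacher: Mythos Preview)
Your proposal is correct and is precisely the standard Behrend construction on a critical locus that the paper has in mind; the paper itself gives no argument beyond citing \cite{BR18, Fasola_2021}, and your sketch simply unpacks what those references provide.
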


	\section{Perfect obstruction theory locally}	\label{locally}
	Let $Y$ be a smooth projective $3$-fold and $F$ a locally free sheaf of rank $r$ on it. For a fixed integer $n \in \mathbb{N}$ consider its Grothendieck Quot scheme 
	of points
	$$\Q.$$

	Let $U = \Spec R \xrightarrow{g} \Q$ be an étale morphism from an affine scheme. The aim of this section is to construct a perfect obstruction theory on $U$ of virtual dimension zero.
	Consider, on $Y \times U$, the pullback of the universal sequence of $\Q$ from $Y \times \Q$ via $(\id_Y \times g)$, and denote it by
	\begin{equation}
		\label{uunseq}
		0 \to \sss \to q^*F \to \qq \to 0.
	\end{equation}
	Here, ${q \colon Y \times U \to Y}$ and ${\pi \colon Y \times U \to U}$ are the two projections.
	
	\begin{lem}
		\label{perfect}
		The sheaves $\sss$ and $\qq$ are perfect objects in the derived category $\DD(Y \times U)$. Moreover, for $\mathcal{A}, \mathcal{B} \in \{\sss, q^*F, \qq\}$ the complexes $\rph(\mathcal{A}, \mathcal{B})$ 
		are perfect.
	\end{lem}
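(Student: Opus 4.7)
The plan is to first establish perfectness of $\qq$, then deduce it for $\sss$, and finally handle the derived pushforward via the smooth proper morphism $\pi$.

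For the first claim, note that $q^*F$ is locally free and hence perfect. Applying $\rh(-,\oo)$ (or equivalently using the octahedral/cone argument) to the short exact sequence \eqref{uunseq}, perfectness of $\qq$ implies perfectness of $\sss$, so it suffices to prove $\qq$ is perfect. The Quot functor parametrizes $\pi$-flat families, so $\qq$ is flat over $U$. For every point $u \in U$, the restriction $\qq|_{Y \times \{u\}}$ is a coherent sheaf with zero-dimensional support on the smooth $3$-fold $Y$, hence by Auslander--Buchsbaum has projective dimension at most $3$ at each stalk.

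To promote this fibrewise statement to perfectness on $Y \times U$, I would bound the Tor-dimension of $\qq$ stalkwise. For $x \in Y\times U$ with $u = \pi(x)$ and $y = q(x)$, writing $A = \oo_{Y\times U, x}$, $R = \oo_{U,u}$, the smooth morphism $\pi$ gives $A/\mathfrak{m}_u A \simeq \oo_{Y,y}$ (up to harmless residue field issues). Using $R$-flatness of $\qq_x$, one computes
\begin{align*}
\qq_x \otimes^L_A \kappa(x) &\simeq \bigl(\qq_x \otimes^L_R \kappa(u)\bigr) \otimes^L_{\oo_{Y,y}} \kappa(y) \\
&\simeq \qq|_{Y\times\{u\}, y} \otimes^L_{\oo_{Y,y}} \kappa(y),
\end{align*}
so $\Tor^A_p(\qq_x, \kappa(x)) = \Tor^{\oo_{Y,y}}_p\bigl(\qq|_{Y\times\{u\}, y}, \kappa(y)\bigr)$, which vanishes for $p > 3$. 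Thus $\qq$ has Tor-dimension at most $3$ on $Y\times U$, and being a coherent sheaf, is perfect. This gives perfectness of $\sss$ as explained above.

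For the second claim, once $\sss$, $q^*F$, and $\qq$ are all perfect, the derived sheaf Homs $\rh(\mathcal{A},\mathcal{B}) \simeq \mathcal{A}_\bullet \otimes^L \mathcal{B}$ are perfect on $Y\times U$ by the standard fact that tensor products and duals of perfect complexes are perfect. Finally, the projection $\pi \colon Y\times U \to U$ is smooth and proper because $Y$ is smooth projective, so $R\pi_*$ preserves perfect complexes (SGA~6, or Stacks Project). Therefore $\rph(\mathcal{A},\mathcal{B}) = R\pi_* \rh(\mathcal{A},\mathcal{B})$ is perfect for all $\mathcal{A},\mathcal{B} \in \{\sss, q^*F, \qq\}$.

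The main technical point is the flat base change identity for Tor used to pass from the fibrewise bound to a global one; none of the steps seem to present serious difficulty, but care is needed to ensure $U$ not being smooth plays no role --- the argument only uses $R$-flatness of $\qq$ and smoothness of the projection $\pi$, not of $U$ itself.
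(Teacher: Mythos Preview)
Your proof is correct and follows essentially the same strategy as the paper. The paper packages the key step---that a coherent sheaf on $Y\times U$ which is flat over $U$ is perfect when $Y$ is smooth---as a separate appendix lemma (Lemma~\ref{flatperfect}), proved by truncating a locally free resolution and invoking the fibral flatness criterion; you instead bound the Tor-amplitude directly via the stalkwise computation. Both arguments rest on the same inputs ($U$-flatness of $\qq$ and regularity of the fibres of $\pi$), and for the second claim both use that $R\pi_*$ preserves perfect complexes for $\pi$ smooth and proper.
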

	\begin{proof}
		By Lemma $\ref{flatperfect}$, the sheaves $\sss$ and $\qq$ are perfect objects as they are flat over $U$. The second statement follows by \cite[\href{https://stacks.math.columbia.edu/tag/0DJR}{Tag 0DJR}]{stacks-project}.
	\end{proof}
	
	\begin{lem}
		\label{fqqf}
		For each closed point $x = [S \into F \onto Q] \in \Q$ the following holds:
		\begin{enumerate}
			\item $\Ext^i(F, Q) = 0$ for $i>0;$
			\item $\Ext^i(Q, F) = 0$ for $i<3$.
		\end{enumerate} 
		\begin{proof}
			We have ${\Ext^i(F, Q) \simeq \HH^i(Y, F^\vee \otimes Q) = 0}$ for $i>0$ as $Q$ has zero-dimensional support. 	
			Similarly, ${\Ext^i(Q, F) \simeq \Ext^{3-i}(F, Q)^\vee = 0}$ for $3-i >0$ using Serre Duality.
		\end{proof}
	\end{lem}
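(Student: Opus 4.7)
The plan is to reduce both vanishings to the single elementary fact that a sheaf with zero-dimensional support on $Y$ has no higher cohomology. The local freeness of $F$ immediately handles (1), and Serre duality on the smooth projective $3$-fold $Y$ reduces (2) to (1).

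For part (1), since $F$ is locally free, one has $R\sheafhom(F,Q) \simeq F^\vee \otimes Q$ in $\DD(Y)$, and therefore
\[
\Ext^i(F,Q) \simeq H^i(Y, F^\vee \otimes Q).
\]
Because $F^\vee \otimes Q$ is supported on the (zero-dimensional) set $\Supp Q$, its higher cohomology vanishes, giving the result for $i > 0$.

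For part (2), I would apply Serre duality on $Y$ with canonical bundle $\omega_Y$, written in the form
\[
\Ext^i(Q, F) \simeq \Ext^{3-i}(F, Q \otimes \omega_Y)^\vee.
\]
This converts the claim into an instance of (1) with $Q$ replaced by $Q \otimes \omega_Y$, which is still zero-dimensional; so for $i < 3$ (equivalently $3-i > 0$) the right-hand side vanishes by the argument just given. Neither step presents a real obstacle; the only thing worth keeping explicit is the $\omega_Y$-twist in Serre duality, which is harmless since it preserves the support of $Q$.
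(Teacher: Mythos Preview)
Your proof is correct and follows exactly the same approach as the paper's: reduce (1) to cohomology of a zero-dimensional sheaf via local freeness of $F$, and deduce (2) from (1) by Serre duality. The only difference is that you make the $\omega_Y$-twist in Serre duality explicit, whereas the paper suppresses it (harmlessly, since $Q\otimes\omega_Y$ still has zero-dimensional support).
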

	\begin{lem}
		\label{locextfqqf}
		We have the following vanishings:
		\begin{enumerate}
			\item 	$\extpi{i}(q^*F, \qq) = 0 \quad \text{for} \quad i >0;$
			\item   $\extpi{i}(\qq, q^*F) = 0 \quad \text{for} \quad i <3.$
		\end{enumerate}
	\end{lem}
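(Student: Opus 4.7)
The plan is to translate both vanishings into statements about higher direct images under $\pi$ and then use that the universal quotient $\qq$ has relatively zero-dimensional support over $U$, together with relative Grothendieck--Serre duality for the smooth projective morphism $\pi$ of relative dimension~$3$.

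For part (1), since $q^*F$ is locally free we have
$$\rph(q^*F,\qq) \;\simeq\; R\pi_*\bigl(q^*F^{\vee}\otimes \qq\bigr),$$
so $\extpi{i}(q^*F,\qq) = R^i\pi_*(q^*F^\vee\otimes \qq)$. The sheaf $q^*F^\vee\otimes \qq$ is supported on $\Supp(\qq)$. Because $\qq$ is flat over $U$ with fibers of length $n$ concentrated in dimension zero, the restriction $\pi\colon \Supp(\qq)\to U$ is proper with finite fibers, hence finite. Therefore higher direct images under $\pi$ of any coherent sheaf supported on $\Supp(\qq)$ vanish, which gives the result.

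For part (2) I would invoke the relative Grothendieck--Serre duality for $\pi$: for perfect complexes $\mathcal{A},\mathcal{B}$ on $Y\times U$,
$$\rph(\mathcal{B},\mathcal{A}\otimes \omega_\pi)[3] \;\simeq\; \rph(\mathcal{A},\mathcal{B})^{\vee}.$$
Since $\pi$ is the base change of $Y\to\Spec \CC$ along $U\to\Spec\CC$, $\omega_\pi = q^*\omega_Y$. Applying the duality with $\mathcal{A}= q^*G$ for $G\coloneqq F\otimes \omega_Y^{-1}$ (still locally free) and $\mathcal{B}=\qq$, one obtains
$$\rph(\qq,q^*F) \;\simeq\; \rph\bigl(q^*G,\qq\bigr)^{\vee}[-3].$$
By part (1) applied to the locally free sheaf $G$, the complex $\rph(q^*G,\qq)$ is concentrated in degree $0$, equal to some coherent sheaf $M = \pi_*(q^*G^\vee\otimes\qq)$ on~$U$. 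Its derived dual $R\sheafhom_U(M,\oo_U)$ has cohomology $\sheafext^{i}_{U}(M,\oo_U)$ concentrated in non-negative degrees, so after the shift $[-3]$ it lives in degrees $\geq 3$. Consequently $\extpi{i}(\qq,q^*F)=0$ for $i<3$.

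The only nontrivial input is the relative duality isomorphism together with the fact that $R\sheafhom(-,-)$ interacts correctly with duals of perfect complexes on a smooth proper morphism; the rest is formal manipulation. The step one has to be most careful about is verifying that $\Supp(\qq)$ is finite over $U$ (so that part (1) is immediate) and that the Grothendieck duality statement used for part (2) applies on the étale affine base $U$ with $\pi$ of relative dimension three; this was already implicit in the derivation of the standard obstruction theory recalled in Section~\ref{derobstr}, so it transfers verbatim.
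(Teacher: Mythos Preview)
Your argument is correct but proceeds differently from the paper. The paper works pointwise: by derived base change, the restriction of $\rph(q^*F,\qq)$ (resp.\ $\rph(\qq,q^*F)$) to a closed point $x\mapsto[S\hookrightarrow F\twoheadrightarrow Q]$ is $\RHom(F,Q)$ (resp.\ $\RHom(Q,F)$), which by Lemma~\ref{fqqf} lives in degree $0$ (resp.\ $3$); since both complexes are perfect (Lemma~\ref{perfect}), Lemma~\ref{huybrthom} then forces each to be a vector bundle concentrated in that single degree. You instead argue globally: finiteness of $\Supp(\qq)\to U$ gives~(1) directly, and~(2) is deduced from~(1) applied to $F\otimes\omega_Y^{-1}$ via relative Grothendieck duality.

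The trade-off is that the paper's route simultaneously establishes that both complexes are \emph{locally free} in a single degree, a fact used immediately afterward in the proof of Lemma~\ref{vanishing}. Your proof of~(1) also yields this once combined with perfectness (a perfect complex concentrated in one degree is a vector bundle), but your proof of~(2) as written only shows concentration in degrees $\geq 3$; to recover that $\extpi{3}(\qq,q^*F)$ is locally free you would still need to observe that $M=\pi_*(q^*G^{\vee}\otimes\qq)$ is locally free (e.g.\ by cohomology and base change, since it is the pushforward along a finite map of a sheaf flat over $U$), so that $M^{\vee}$ sits in degree~$0$.
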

	\begin{proof}
		Consider a closed point  $x \in U$ mapping to $[S \into F \onto Q] \in \Q$. By \cite[\href{https://stacks.math.columbia.edu/tag/0A1D}{Tag 0A1D}]{stacks-project}, we have isomorphisms
		$$\rph(q^*F, \qq)|_x \simeq \RHom(F, Q) \quad \text{and} \quad 
		\rph(\qq, q^*F)|_x \simeq \RHom(Q, F).$$
		Hence by Lemma $\ref{huybrthom}$, Lemma $\ref{perfect}$ and Lemma $\ref{fqqf}$ the complexes $\rph(q^*F, \qq)$ and $\rph(\qq, q^*F)$ are vector bundles concentrated in degrees $0$ and $3$, respectively. Hence, the desired vanishing follows.
	\end{proof}
	Consider the diagram in the derived category $\DD(U)$
	\begin{center}
		\begin{equation}
			\label{diagr}
			\begin{tikzcd}
				& {\rph(\qq, q^*F)[1]} \arrow[d, "\xi_1"] \arrow[ld, "\exists! p"', dotted] &                      \\
				{\rph(\sss, \qq)} \arrow[r, "\xi_2"] & {\rph(\qq, \qq)[1]} \arrow[r, "\xi_3"]                           & {\rph(q^*F, \qq)[1],}
			\end{tikzcd}
		\end{equation}
	\end{center}
	where the maps $\xi_i$ are induced by the universal sequence $(\ref{uunseq})$ so that the lower row is a distinguished triangle. 
	\begin{lem}
		\label{vanishing}
		The composition $\xi_3 \circ \xi_1$ is zero.
	\end{lem}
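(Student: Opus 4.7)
The plan is to show the entire ambient morphism group containing $\xi_3 \circ \xi_1$ vanishes, thereby obtaining the lemma for free. The whole argument rests on the fact that $U$ is affine together with the cohomological concentration already established in Lemma~\ref{locextfqqf}.

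First, I invoke Lemma~\ref{locextfqqf} and Lemma~\ref{perfect}: the vanishings
$\extpi{i}(q^*F, \qq) = 0$ for $i > 0$ and $\extpi{i}(\qq, q^*F) = 0$ for $i < 3$,
combined with perfectness, force the complexes $\rph(q^*F, \qq)$ and $\rph(\qq, q^*F)$ to be locally free sheaves concentrated in single cohomological degrees. Writing these as $F' \in \DD(U)$ sitting in degree $0$ and $E \in \DD(U)$ sitting in degree $3$, one has
$\rph(q^*F, \qq) \simeq F'$ and $\rph(\qq, q^*F) \simeq E[-3]$ as objects of $\DD(U)$.

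Next, I track the shifts in diagram~(\ref{diagr}): the composition $\xi_3 \circ \xi_1$ is then a morphism $E[-2] \to F'[1]$ in $\DD(U)$, so it is an element of
\[
\Hom_{\DD(U)}\bigl(E[-2],\, F'[1]\bigr) \;=\; \Ext^3_U(E, F') \;\simeq\; H^3\bigl(U,\, E^\vee \otimes F'\bigr),
\]
using that $E$ is locally free to turn $\RHom$ into a tensor product. Since $U = \Spec R$ is affine and $E^\vee \otimes F'$ is quasi-coherent, Serre's vanishing for higher cohomology on affine schemes gives $H^3(U, E^\vee \otimes F') = 0$. In particular $\xi_3 \circ \xi_1 = 0$.

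There is no real obstacle here beyond bookkeeping. The only thing to be careful about is the degree count: the original gap of $3$ between the cohomological supports of $\rph(\qq, q^*F)$ and $\rph(q^*F, \qq)$ is exactly the dimension of $Y$ and is precisely what lands us at top-degree affine cohomology, which vanishes. This vanishing would fail globally on $\Q$, and is exactly the reason the perfect obstruction theory in this section is only constructed étale locally on an affine chart.
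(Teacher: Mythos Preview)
Your proof is correct and follows essentially the same approach as the paper: both arguments show that the entire Hom group $\Hom_{\DD(U)}\bigl(\rph(\qq, q^*F)[1],\, \rph(q^*F, \qq)[1]\bigr)$ vanishes by identifying it with an $H^3$ on the affine scheme $U$, using the single-degree concentration from Lemma~\ref{locextfqqf}. Your closing remark about why the argument fails globally is a nice addition that the paper leaves implicit.
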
	
	\begin{proof}
		By Lemma~\ref{locextfqqf}, the perfect complexes $\rph(q^*F, \qq)$ and $\rph(\qq, q^*F)$ are concentrated in degrees $0$ and $3$, respectively. Moreover, they are vector bundles
		$$\rph(q^*F, \qq) \simeq \extpi{0}(q^*F, \qq) \quad \text{and} \quad 
		\rph(\qq, q^*F) \simeq \extpi{3}(\qq, q^*F)[-3].$$
		Now
			\begin{align*}
				\Hom_{\DD(U)}&(\rph(\qq, q^*F)[1], \rph(q^*F, \qq)[1]) \\
				& \simeq	\Hom_{\DD(U)}(\extpi{3}(\qq, q^*F)[-3], \extpi{0}(q^*F, \qq)) \\
				&\simeq	\Ext^3_U(\extpi{3}(\qq, q^*F), \extpi{0}(q^*F, \qq)) \\
				&\simeq	\HH^3(U, (\extpi{3}(\qq, q^*F))^\vee \otimes \extpi{0}(q^*F, \qq)) = 0.
			\end{align*}
		The last vanishing holds as $U$ was chosen to be affine.
		Hence, as there are no morphisms between the complexes of interest, the morphism $\xi_3 \circ \xi_1$ also vanishes.
	\end{proof}
	
	By Lemma $\ref{vanishing}$ together with Lemma $\ref{exun}$ there exist a morphism 
	\begin{equation}
		\label{pconstr}
		p \colon \rph(\qq, q^*F)[1] \to \rph(\sss, \qq)
	\end{equation}
	making the diagram $\ref{diagr}$ commute.
	By the same reasoning as in the proof of Lemma~$\ref{vanishing}$ one has the vanishing 
	$$\Hom_{\DD(U)}(\rph(\qq, q^*F)[1], \rph(q^*F, \qq)) = 0$$ and hence by Lemma $\ref{exun}$ the morphism $p$  making the diagram $(\ref{diagr})$ commute is unique.
	
	Put $$E_\bullet \coloneqq \Cone(p).$$ We hence have a distinguished triangle
	\begin{equation}
		\label{thetr}
		\rph(\qq, q^*F)[1] \xrightarrow{p} \rph(\sss, \qq) \xrightarrow{\varepsilon} E_\bullet \xrightarrow{\rho} \rph(\qq, q^*F)[2].
	\end{equation}
	Put $$E^\bullet \coloneqq (E_\bullet)^\vee.$$
	We next define the map 
	\begin{equation}
		\label{defmorph}
		{\varphi \colon E^\bullet \to \LL_U}
	\end{equation}
	to the (truncated) cotangent complex as the composition
	
	$$\varphi \colon E^\bullet \xrightarrow{\varepsilon^\vee} (\rph(\sss, \qq))^\vee \xrightarrow{\varphi^{\der}|_U} \LL_U, $$
	where the second map comes from the obstruction theory on $\Q$ discussed in Section $\ref{derobstr}$.
	
	\begin{prop}
		\label{potlocally}
		The map $\varphi \colon E^\bullet \to \LL_U$ defines a perfect obstruction theory on $U$ of virtual dimension zero.
	\end{prop}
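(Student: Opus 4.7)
The plan is to verify the three required properties—perfect amplitude $[-1,0]$, the pointwise obstruction-theory conditions of Proposition~\ref{criterion}, and the vanishing of the virtual dimension—by combining the distinguished triangle (\ref{thetr}) with the vanishings of Lemma~\ref{fqqf} and Lemma~\ref{locextfqqf}, and the fact that $\varphi^{\der}|_U$ is already an obstruction theory on $U$.

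\emph{Perfectness and amplitude.} Perfectness of $E^\bullet$ is immediate from Lemma~\ref{perfect}, since $E_\bullet = \Cone(p)$ is a cone of perfect complexes. For the amplitude I would use the tor-amplitude criterion: it suffices to show $h^i(E_\bullet|_x) = 0$ for $i \notin [0,1]$ at every closed point $x \in U$ with image $[S \into F \onto Q]$. Derived restriction commutes with cones, and by Lemma~\ref{fqqf} the fibre of (\ref{thetr}) becomes
\[ \RHom(Q, F)[1] \xrightarrow{p|_x} \RHom(S, Q) \to E_\bullet|_x \to \RHom(Q, F)[2], \]
with $\RHom(Q, F)$ concentrated in degree $3$ and $\RHom(S, Q)$ in degrees $[0,2]$. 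The resulting long exact sequence gives $h^{-1}(E_\bullet|_x) = 0$ for free and reduces $h^2(E_\bullet|_x) = 0$ to the surjectivity of the pointwise map $p|_x \colon \Ext^3(Q, F) \to \Ext^2(S, Q)$. Using the defining relation $\xi_2 \circ p = \xi_1$ together with the long exact sequences obtained by applying $\RHom(-, Q)$ and $\RHom(Q, -)$ to $0 \to S \to F \to Q \to 0$, and Lemma~\ref{fqqf}, I would identify $\xi_2|_x$ as an isomorphism $\Ext^2(S, Q) \simto \Ext^3(Q, Q)$ and $\xi_1|_x$ as a surjection $\Ext^3(Q, F) \onto \Ext^3(Q, Q)$, whence $p|_x$ is surjective.

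\emph{Obstruction-theory conditions.} I would verify the pointwise conditions of Proposition~\ref{criterion} by factoring $\varphi^\vee = \varepsilon \circ (\varphi^{\der}|_U)^\vee$. Since $\varphi^{\der}|_U$ already satisfies those conditions with target $(\rph(\sss,\qq))^\vee$, the problem reduces to showing that $\varepsilon|_x$ induces an isomorphism $\Hom(S, Q) \simto h^0(E_\bullet|_x)$ and an injection $\Ext^1(S, Q) \into h^1(E_\bullet|_x)$. Both are immediate from the long exact sequence used in the previous step, given that $\RHom(Q, F)$ is concentrated in degree $3$.

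\emph{Virtual dimension zero.} Additivity of Euler characteristics along (\ref{thetr}) yields $\rk E^\bullet = \chi(E_\bullet|_x) = \chi(\RHom(S, Q)) + \chi(\RHom(Q, F))$, and the long exact sequences of $0 \to S \to F \to Q \to 0$ collapse this to $\chi(\RHom(Q, Q))$, which vanishes by Serre duality on the smooth projective $3$-fold $Y$: $\chi(Q, Q) = -\chi(Q, Q \otimes \omega_Y) = -\chi(Q, Q)$, where the last equality uses that $Q$ is zero-dimensional, so that tensoring by the line bundle $\omega_Y$ leaves the Euler characteristic unchanged. The main subtlety is really in the first step: one must carefully match the pointwise map induced by $p$ with the connecting homomorphisms coming from the universal sequence, which is precisely where the construction and uniqueness of $p$ from diagram (\ref{diagr}) enter.
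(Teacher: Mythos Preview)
Your argument is correct and follows essentially the same route as the paper's proof: restrict the triangle~(\ref{thetr}) to a closed point, read off the amplitude and the obstruction-theory conditions from the long exact sequence, and identify $h^2(p|_x)$ with $(\xi_2|_x)^{-1}\circ \xi_1|_x$ to get surjectivity. Two small points to tighten. First, the assertion that $\RHom(S,Q)$ is concentrated in degrees $[0,2]$ is not a direct consequence of Lemma~\ref{fqqf}; you still need the one-line argument (present in the paper) that $\Ext^3(S,Q)=0$ from the exact sequence $\Ext^3(F,Q)\to\Ext^3(S,Q)\to\Ext^4(Q,Q)$. Second, in the virtual-dimension step, the reduction of $\chi(S,Q)+\chi(Q,F)$ to $\pm\chi(Q,Q)$ does not follow from the long exact sequences alone---you are already using Serre duality (to get $\chi(F,Q)=-\chi(Q,F)$) before the final $\chi(Q,Q)=-\chi(Q,Q)$ step; the paper instead rewrites the rank as $\chi(F,F)-\chi(S,S)$ and invokes Hirzebruch--Riemann--Roch, which is slightly cleaner.
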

	\begin{proof} Note that $E_\bullet$ is perfect being the Cone of a morphism between perfect objects by Lemma~$\ref{perfect}$. Hence $E^\bullet$ is also perfect.
		It remains to prove that $E_\bullet$ is of perfect amplitude $[0, 1]$ and that $\varphi$ defines an obstruction theory. We check both statements by restricting to a closed point using Proposition $\ref{criterion}$ and Lemma $\ref{huybrthom}$.

		Consider a closed point $x \in U$ mapping to $[S \into F \onto Q] \in \Q$ and set ${E \coloneqq E_\bullet|_x}$. The (derived) restriction of the distinguished triangle $(\ref{thetr})$ to $x \in U$ yields the distinguished triangle 
		$$R\Hom(Q, F)[1] \xrightarrow{p|_x} R\Hom(S, Q) \xrightarrow{\varepsilon|_x} E $$ in $\DD(\pt)$.
		Consider the corresponding long exact sequence in cohomology
		\begin{flalign*}
			0 \to \Hom(S, Q) \xrightarrow{h^0(\varepsilon|_x)} \HH^0(E) \to 0  \to \Ext^1(S, Q) \xhookrightarrow{h^1(\varepsilon|_x)} \HH^1(E) \to \\
			\Ext^3(Q, F) \xrightarrow{h^2(p|_x)} \Ext^2(S, Q) \to \HH^2(E) \to 0 \to 
			\Ext^3(S, Q) \to \HH^3(E) \to 0.
		\end{flalign*}
		We have used the vanishings from Lemma $\ref{fqqf}$.
		Note that $\Ext^3(S, Q)$ sits in an exact sequence
		$$\Ext^3(F, Q) \to \Ext^3(S, Q) \to \Ext^4(Q, Q) $$
		where the first term is zero by Lemma $\ref{fqqf}$ and the last term is zero as we are on a smooth $3$-fold $Y$. Therefore $\Ext^3(S, Q) = 0$. Hence also $\HH^3(E) = 0$. 
		
		We next show that $h^2(p|_x)$ is surjective. For this consider the restriction to the closed point $x \in U$ of the commutative triangle in diagram $(\ref{diagr})$. We hence see that $h^2(p|_x)$ is the composition of 
		$${h^2(\xi_1|_x) \colon \Ext^3(Q, F) \onto \Ext^3(Q, Q)}$$
		and the inverse of 
		$$h^2(\xi_2|_x) \colon \Ext^2(S, Q) \xrightarrow{\sim} \Ext^3(Q, Q). $$
		The latter is indeed an isomorphism as it sits in the exact sequence
		$$\Ext^2(F, Q) \to \Ext^2(S, Q) \to \Ext^3(Q, Q) \to \Ext^3(F, Q) $$ 
		where the first and the last terms vanish by Lemma $\ref{fqqf}$.
		Note that $h^2(\xi_1|_x)$ is surjective as it sits in the exact sequence
		$$\Ext^3(Q, F) \to \Ext^3(Q, Q) \to \Ext^4(Q, S) = 0. $$
		Therefore $h^2(p|_x)$ is also surjective. Hence $\HH^2(E)=0$. 
		
		Summarizing, we have shown that the complex $E$ is concentrated in degrees $0$ and $1$. Moreover, $h^0(\varepsilon|_x)$ is an isomorphism and $h^1(\varepsilon|_x)$ is injective. Hence by Lemma $\ref{huybrthom}$ the complex $E$ is of perfect amplitude $[0,1]$ and by Proposition $\ref{criterion}$ the morphism $\varphi$ defines an obstruction theory on $U$. 
		
		We next show it has virtual dimension zero. Consider a closed point $${[S \into F \into Q] \in \Q}.$$ Then, by (\ref{thetr}), the virtual dimension is
		\begin{align*}
			\chi(S, Q) + \chi(Q, F) &= 
			\chi(S, F) - \chi(S, S) + \chi(F, F) - \chi(S, F)  \\
			&=\chi(F, F) - \chi(S, S) = 0.		
		\end{align*}
		The last equality follows from the fact that $Q$ has zero-dimensional support and can be computed directly via Hirzebruch--Riemann--Roch. 
	\end{proof}
	
	Consider the following commutative diagram with rows and columns being distinguished triangles
	\begin{center}
		\begin{equation} \label{twodisttr}
			\begin{tikzcd}
				{\rph(\qq, q^*F)[1]} \arrow[d, equals] \arrow[r, "{p}"] & {\rph(\sss, \qq)} \arrow[d, "{\xi_2}"] \arrow[r, "{\varepsilon}"] & {E_\bullet} \arrow[d, "\gamma", dashed] \\
				{\rph(\qq, q^*F)[1]} \arrow[r, "{\xi_1}"]           & {\rph(\qq, \qq)[1]} \arrow[r, "{\xi_4[1]}"]  \arrow[d, "{\xi_3}"]              & {\rph(\qq, \sss)[2]} \arrow[d, dashed, "z"] \\
				& {\rph(q^*F, \qq)[1]}  \arrow[r, equals] & {\rph(q^*F, \qq)[1],}  
			\end{tikzcd}
		\end{equation}
	\end{center}
	where the upper triangle is $(\ref{thetr})$ and other distinguished triangles are induced by the universal sequence $(\ref{uunseq})$.
	By the octahedral axiom, there exist dashed arrows $\gamma$ and $z$ such that the right vertical column is also a distinguished triangle and the whole diagram commutes. 
	In fact, since $\xi_3 \circ \xi_1 = 0$ (Lemma~\ref{vanishing}), by an analog of Lemma~\ref{exun}, there is a unique morphism $z$ such that the lower square in diagram~(\ref{twodisttr}) is commutative.
	
	We next determine the obstruction sheaf $Ob_U = h^1(E_\bullet)$.
	\begin{prop} \label{obstrsheaf}
		There is an isomorphism
		$$Ob_U \simto \extpi{3} (\qq, \sss).$$
	\end{prop}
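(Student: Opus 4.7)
The plan is to read off the isomorphism directly from the rightmost distinguished triangle in diagram~(\ref{twodisttr}), namely
$$E_\bullet \xrightarrow{\gamma} \rph(\qq, \sss)[2] \xrightarrow{z} \rph(q^*F, \qq)[1] \xrightarrow{+1} E_\bullet[1],$$
and then use the vanishings from Lemma~\ref{locextfqqf} to strip away the unwanted term. Concretely, I would take the long exact sequence in cohomology associated to this triangle, observing that
$$h^1(\rph(\qq, \sss)[2]) = \extpi{3}(\qq, \sss),\qquad
h^i(\rph(q^*F,\qq)[1]) = \extpi{i+1}(q^*F, \qq),$$
so the relevant segment of the long exact sequence reads
$$\extpi{1}(q^*F, \qq) \longrightarrow h^1(E_\bullet) \longrightarrow \extpi{3}(\qq, \sss) \longrightarrow \extpi{2}(q^*F, \qq).$$

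Both flanking terms vanish by Lemma~\ref{locextfqqf}(1), which gives that $\extpi{i}(q^*F, \qq) = 0$ for all $i>0$. Therefore the middle arrow is an isomorphism
$$Ob_U = h^1(E_\bullet) \simto \extpi{3}(\qq, \sss),$$
which is precisely the claim.

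There is essentially no obstacle here once diagram~(\ref{twodisttr}) has been set up: the whole point of constructing $E_\bullet$ as $\Cone(p)$ and of running the octahedral argument was to produce a second distinguished triangle involving $\rph(\qq,\sss)$, and the affine/rank-zero vanishings for $\extpi{*}(q^*F,\qq)$ established in Lemma~\ref{locextfqqf} are exactly what is needed to isolate the degree-three cohomology of $\rph(\qq,\sss)$. The only mild care required is bookkeeping of the shifts (so that the relevant cohomology of $\rph(\qq,\sss)[2]$ is indeed $\extpi{3}(\qq,\sss)$) and noting that the isomorphism is canonically induced by $\gamma$.
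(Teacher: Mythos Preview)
Your proof is correct and follows essentially the same approach as the paper: both take the long exact sequence in cohomology of the right vertical distinguished triangle in diagram~(\ref{twodisttr}) and use the vanishing of the flanking $\extpi{*}(q^*F,\qq)$ terms to conclude that $h^1(\gamma)$ is an isomorphism. You correctly invoke Lemma~\ref{locextfqqf}(1) for the vanishing of $\extpi{1}(q^*F,\qq)$ and $\extpi{2}(q^*F,\qq)$, which is exactly the relevant input for the third term $\rph(q^*F,\qq)[1]$; the paper's reference to vanishing ``(2)'' appears to be a minor slip in citation, since the third entry in that column involves $\rph(q^*F,\qq)$ rather than $\rph(\qq,q^*F)$.
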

	\begin{proof}
		The long exact sequence in cohomology applied to the right vertical distinguished triangle in diagram~(\ref{twodisttr}) together with the vanishing $(2)$ in Lemma~\ref{locextfqqf} yields the desired isomorphism
		\begin{equation}
			h^1(\gamma) \colon h^1(E_\bullet) \simto \extpi{3}(\qq, \sss).  \qedhere
		\end{equation}
	\end{proof}			
	We next show a compatibility between choices of 
	the Cone in $(\ref{thetr})$ together with maps $\varepsilon, \rho$  and $\gamma$ in diagram~(\ref{twodisttr}) that will be used in the next section to show that the local construction glue to an almost perfect obstruction theory. The proof is close to that of \cite[Lemma~2.3.7]{kuhnliuthimm}.
	
	\begin{prop}
		\label{conechoices}
		The choice of $E_\bullet = \Cone(p)$, of morphisms $\varepsilon, \rho$ in $(\ref{thetr})$ and of morphism $\gamma$ in diagram (\ref{twodisttr}) is unique up to a (not necessarily unique) isomorphism, i.$\,$e.$\,$if
		$\tilde{E}_\bullet$,
		$\tilde{\varepsilon}, \tilde{\rho}, \tilde{\gamma}$ is a different choice, then there exists an isomorphism
		$$\eta \colon \tilde{E}_\bullet \simto E_\bullet $$
		such that $\eta \circ \tilde{\varepsilon} = \varepsilon$, $\rho \circ \eta = \tilde{\rho} $ and $\gamma \circ \eta = \tilde{\gamma}$.
	\end{prop}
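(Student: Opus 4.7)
The plan is to construct $\eta$ in two stages: first use the axioms of triangulated categories to obtain an isomorphism compatible with the top distinguished triangle $(\ref{thetr})$, then modify it by a correction to enforce compatibility with $\gamma$ as well. For the first stage, both $E_\bullet$ and $\tilde E_\bullet$ are cones of the morphism $p$ of $(\ref{pconstr})$, fitting into distinguished triangles with the same first two terms and morphism. Applying axiom TR3 with identities on the first two terms yields a morphism $\eta_0 \colon \tilde E_\bullet \to E_\bullet$ satisfying $\eta_0 \circ \tilde \varepsilon = \varepsilon$ and $\rho \circ \eta_0 = \tilde \rho$; the $5$-lemma in the triangulated category then forces $\eta_0$ to be an isomorphism.

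For the second stage, I would consider the discrepancy $\delta := \gamma \circ \eta_0 - \tilde \gamma \colon \tilde E_\bullet \to \rph(\qq, \sss)[2]$. The commutativity of the top-right square in diagram~$(\ref{twodisttr})$ for both octahedral completions gives $\gamma \circ \varepsilon = \xi_4[1] \circ \xi_2 = \tilde \gamma \circ \tilde \varepsilon$, so $\delta \circ \tilde \varepsilon = 0$. From the long exact sequence obtained by applying $\Hom(-, \rph(\qq, \sss)[2])$ to the second triangle above, $\delta$ factors through $\tilde \rho$ as $\delta = \beta \circ \tilde \rho$ for some $\beta \colon \rph(\qq, q^*F)[2] \to \rph(\qq, \sss)[2]$.

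To absorb $\beta$, the plan is to find $\alpha_0 \colon \rph(\qq, q^*F)[2] \to \rph(\sss, \qq)$ with $\xi_4[1] \circ \xi_2 \circ \alpha_0 = \beta$, set $\alpha := \varepsilon \circ \alpha_0$, and define $\eta := \eta_0 - \alpha \circ \tilde \rho$. A direct computation using $\tilde \rho \circ \tilde \varepsilon = 0$ and $\rho \circ \varepsilon = 0$ shows that the conditions $\eta \circ \tilde \varepsilon = \varepsilon$ and $\rho \circ \eta = \tilde \rho$ remain satisfied, while $\gamma \circ \eta = \gamma \circ \eta_0 - \xi_4[1] \circ \xi_2 \circ \alpha_0 \circ \tilde \rho = \tilde \gamma + \beta \circ \tilde \rho - \beta \circ \tilde \rho = \tilde \gamma$, as required.

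The main obstacle is therefore the existence of $\alpha_0$. I expect this to follow from combining two observations: the relation $z \circ \gamma = 0 = z \circ \tilde \gamma$ coming from the right column of $(\ref{twodisttr})$ forces $z \circ \beta \circ \tilde \rho = 0$, which constrains $\beta$; and the vanishings of Lemma~$\ref{locextfqqf}$ together with the affineness of $U$ (used in the spirit of Lemma~$\ref{vanishing}$ to kill higher cohomology on $U$) and the uniqueness of $z$ underlying Lemma~$\ref{exun}$ will force $\beta$ into the image of composition with $\xi_4[1] \circ \xi_2$. This is the technical heart of the argument and parallels \cite[Lemma~2.3.7]{kuhnliuthimm}, which the authors cite for precisely this step.
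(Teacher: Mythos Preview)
Your two-stage plan---first fix an isomorphism $\eta_0$ of cones via TR3, then correct by a term factoring through $\tilde\rho$---is exactly the structure of the paper's proof. The difference lies only in how the correction is built, and your version is slightly harder than necessary.

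You seek $\alpha_0$ lifting $\beta$ through $\gamma\circ\varepsilon=\xi_4[1]\circ\xi_2$, and leave this as the ``main obstacle''. The paper instead lifts $\beta$ through $\gamma$ \emph{alone}. This is clean because $\Cone(\gamma)=\rph(q^*F,\qq)[1]$ is a vector bundle concentrated in degree $-1$ while $\rph(\qq,q^*F)[2]$ sits in degree $1$; on the affine $U$ this forces
\[
\Hom_{\DD(U)}\bigl(\rph(\qq,q^*F)[2],\,\rph(q^*F,\qq)[d]\bigr)=0\qquad(d=0,1),
\]
so post-composition with $\gamma$ is an \emph{isomorphism} on the relevant $\Hom$-group, and there is a unique $u_1\colon\rph(\qq,q^*F)[2]\to E_\bullet$ with $\gamma\circ u_1=\beta$. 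Setting $\eta:=\eta_0-u_1\circ\tilde\rho$ immediately gives $\gamma\circ\eta=\tilde\gamma$ and $\eta\circ\tilde\varepsilon=\varepsilon$. The price is that $\rho\circ\eta=\tilde\rho$ is no longer automatic; it reduces to $\rho\circ u_1\circ\tilde\rho=0$, which one gets by writing $\rho=\xi_7\circ\gamma$ (the extra octahedral square, with $\xi_7\colon\rph(\qq,\sss)[2]\to\rph(\qq,q^*F)[2]$ the connecting map of the middle row) and computing $\xi_7\circ\beta\circ\tilde\rho=\xi_7\circ\delta=\rho\circ\eta_0-\tilde\rho=0$.

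Your route through $\alpha_0$ can also be made to work---one shows $\xi_7\circ\beta=0$ (using that precomposition with $\tilde\rho$ is injective, again by a degree-and-affineness vanishing), whence $u_1$ factors through $\varepsilon$---but this is strictly more effort to reach the same endpoint. Either way, the concrete ingredient missing from your sketch is the amplitude computation for $\Cone(\gamma)$ above; the constraint via $z$ that you invoke is a consequence of it rather than a substitute. Finally, note explicitly that the resulting $\eta$ is an isomorphism (e.g.\ $h^0(\eta)=h^0(\eta_0)$ since the correction vanishes on $h^0$, and $h^1(\eta)$ is forced by $\gamma\circ\eta=\tilde\gamma$ together with $h^1(\gamma)$, $h^1(\tilde\gamma)$ being isomorphisms).
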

	\begin{proof}
		As a Cone of a morphism is defined up to an isomorphism, we can without loss of generality assume that $\tilde{E}_\bullet = E_\bullet$, $\varepsilon = \tilde{\varepsilon}$ and $\rho = \tilde{\rho}$. We need to find an automorphism $\eta$ such that the diagram
		\begin{center}
			\begin{tikzcd} [row sep = 8]
				& E_\bullet \arrow[rd, "\rho"] \arrow[dd, "\eta", "\simeq"'] &                      \\
				{\rph(\sss, \qq)} \arrow[ru, "\varepsilon"] \arrow[rd, "\varepsilon"'] &                                                 & {\rph(\qq, q^*F)[2]} \\
				& E_\bullet \arrow[ru, "\rho"']                   &                     
			\end{tikzcd}
		\end{center}
		is commutative and $\gamma \circ \eta = \tilde{\gamma}$. Consider now the top two rows of diagram $(\ref{twodisttr})$ for the morphisms $\gamma$ and $\tilde{\gamma}$.
		\begin{center}
			\begin{equation}
				\label{adjustchoices}
				\begin{tikzcd}
					{\rph(\sss, \qq)} \arrow[d] \arrow[r, "\varepsilon"] & E_\bullet \arrow[d, "\gamma"', shift right] \arrow[d, "\tilde{\gamma}", shift left] \arrow[r, "\rho"] & {\rph(\qq, q^*F)[2]} \arrow[d, equals] \arrow[ld, "u_0", dashed] \arrow[l, "u_1"', bend right, dashed] \\
					{\rph(\qq, \qq)[1]} \arrow[r]                        & {\rph(\qq, \sss)[2]} \arrow[r, " {\xi_7}"']                                                                        & {\rph(\qq, q^*F)[2]}                               
				\end{tikzcd}
			\end{equation}
		\end{center}
		By construction, all squares in diagram~(\ref{adjustchoices}) are commutative. Therefore putting $u \coloneqq \gt - \ga$ gives $u \circ \varepsilon = 0$. Hence, there exists a map $u_0$ such that $u_0 \circ \rho = u$. Recall from diagram~(\ref{twodisttr}) that $\Cone(\ga) = \rph(q^*F, \qq)[1]$ is a vector bundle in degree $-1$. As $\rph(\qq, q^*F)[2]$ is a vector bundle in degree $1$ and we are working on an affine $U$, there is the vanishing
		$$\Hom_{\DD(U)}(\rph(\qq, q^*F)[2], \rph(q^*F, \qq)[d]) = 0$$
		for $d = 0, 1$.
		Hence, applying $\Hom_{\DD(U)}(\rph(\qq, q^*F)[2], -)$ to the distinguished triangle corresponding to the map $\gamma$ yields that post-composing with $\ga$ induces an isomorphism
		$$\gamma \circ \colon \Hom_{\DD(U)}(\rph(\qq, q^*F)[2], E_\bullet) \simto \Hom_{\DD(U)}(\rph(\qq, q^*F)[2], \rph(\qq, \sss)[2]).$$
		Therefore, $u_0$ factors as $u_0 = \ga \circ u_1$ for a unique $u_1$. Put $\eta \coloneqq \id + u_1 \circ \rho$. We now check that $\eta$ satisfies all required properties.
		By construction, $\ga \circ \eta = \ga + u = \gt.$ This also implies that $h^1(\eta)$ is an isomorphism. Together with $h^0(\eta) = h^0(\id) = \id$ it implies that the map $\eta$ is an automorphism. 
		We have $\eta \circ \varepsilon = \varepsilon + u_1 \circ \rho \circ \varepsilon = \varepsilon$. Finally, $\rho \circ \eta = \rho + \rho \circ u_1 \circ \rho = \rho$ because
		$\rho \circ u_1 \circ \rho = \xi_7 \circ \ga \circ u_1 \circ \rho = \xi_7 \circ u_0 \circ \rho = \xi_7 \circ u = 0$
	\end{proof}
	
\subsection{Calabi--Yau case}
		\begin{prop}
		\label{cyafflocal}
		If the $3$-fold $Y$ is Calabi-Yau, then the perfect obstruction theory constructed in Proposition $\ref{potlocally}$ is symmetric.
	\end{prop}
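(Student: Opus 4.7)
The plan is to exploit relative Serre duality along $\pi \colon Y \times U \to U$ to produce a self-dual isomorphism $\theta \colon E^{\bullet \vee}[1] \simto E^\bullet$.

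Since $Y$ is Calabi--Yau, $\omega_\pi = q^*\omega_Y \simeq \oo_{Y \times U}$, and Grothendieck duality for the smooth proper morphism $\pi$ of relative dimension $3$ supplies natural self-dual isomorphisms
\begin{equation*}
    \sigma_{\mathcal{A}, \mathcal{B}} \colon (\rph(\mathcal{A}, \mathcal{B}))^\vee \simto \rph(\mathcal{B}, \mathcal{A})[3]
\end{equation*}
for perfect complexes $\mathcal{A}, \mathcal{B}$ on $Y \times U$, compatible with composition pairings.

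First, I would dualize the defining distinguished triangle (\ref{thetr}) and apply $\sigma$ to its outer terms, thereby exhibiting $E^\bullet$ (up to rotation) as the cocone of a morphism $p^\sigma \colon \rph(\qq, \sss)[3] \to \rph(q^*F, \qq)[2]$ obtained as the Serre dual of $p$. On the other hand, shifting the right vertical column of diagram (\ref{twodisttr}) by $[1]$ exhibits $E_\bullet[1]$ as an analogous cocone, but of the morphism $z[1]$. The key step is to check the identification $p^\sigma = z[1]$: both $p$ and $z$ are constructed from the universal sequence~(\ref{uunseq}) via composition pairings in $\DD(Y \times U)$, and both are uniquely characterized by factorizing a natural composition through $\rph(\qq, \qq)[1]$; naturality of $\sigma$ with respect to composition then swaps the two factorizations.

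Once $p^\sigma = z[1]$ is established, a uniqueness-of-cone argument analogous to Proposition~\ref{conechoices} produces the desired isomorphism $\theta \colon E^\bullet \simto E_\bullet[1] = E^{\bullet\vee}[1]$, and the self-duality of $\sigma$ then forces $\theta^\vee[1] = \theta$. I expect the main obstacle to be the compatibility $p^\sigma = z[1]$, whose verification requires careful bookkeeping of how the morphisms $\xi_1, \xi_2, \xi_3, \xi_4$ induced by the universal sequence transform under Serre duality on a Calabi--Yau threefold.
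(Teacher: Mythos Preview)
Your proposal is correct and follows essentially the same approach as the paper. The paper phrases the key step more tersely: rather than isolating the identification $p^\sigma = z[1]$, it observes that applying $R\sheafhom(-,\oo)$ followed by a shift to the entire diagram~(\ref{twodisttr}) returns the same diagram by functoriality of Grothendieck duality, from which $p^\vee[-1] = z$ (and hence your $p^\sigma = z[1]$) and $E_\bullet^\vee[-1] = E_\bullet$ fall out directly; your more explicit bookkeeping via the composition-pairing naturality of $\sigma$ and the uniqueness-of-cone argument is exactly what underlies that one-line claim.
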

	\begin{proof}
		We first note that as $Y$ is Calabi--Yau and hence $\omega_\pi = q^* \omega_Y$ is trivial, Grothendieck duality with respect to the smooth $3$-dimensional morphism $\pi$ gives
		\begin{equation}
			\label{qfgd}
			(\rph(q^*F, \qq)[1])^\vee [-1] \simeq \rph(\qq, q^*F \otimes \op[2])[-1] \simeq \rph(\qq, q^*F)[1].
		\end{equation}
		Analogously, applying $R\sheafhom(-, \oo)$ and then shift by $[-1]$ to all entries and morphisms in diagram~\ref{twodisttr}, by functoriality of Grothendieck duality we obtain the same diagram.
		In particular,
		$$p^\vee[-1] = z \quad \text{and} \quad z^\vee[-1] = p. $$ Hence,
		$E^\vee_\bullet[-1] = E_\bullet$, and if we denote this equality by $\theta$, we have the canonical identification $\theta^\vee[-1] = \theta.$
	\end{proof}
\subsection{Rank one case.}
	In this subsection we show that in the rank one case the perfect obstruction theory $(\ref{defmorph})$ agrees with the restriction of the perfect obstruction theory on the Hilbert scheme. 
	If the locally free sheaf $F$ is of rank one, then $\Q \simeq \Hilb^n_Y$ admits a perfect obstruction theory \cite{thomas}. Its pullback via the étale map $U \to \Q$ with $U$ affine is 
	\begin{equation}
		\label{pothilb}
		(\rph(\sss, \sss)_0[1])^{\vee} \to \LL_U,
	\end{equation}
	where $\rph(\sss, \sss)_0$ is the pushforward via $\pi$ of $\rh(\sss, \sss)_0 \coloneqq \Cone(tr)[-1]$, which is the shifted Cone of the trace map $tr \colon \rh(\sss, \sss) \to \oo_{Y \times U}$ \cite{huybrechts_lehn}. 
	The trace map has a natural splitting 
	\begin{equation}
		\label{id}
		\id \colon  \oo_{Y \times U} \to \rh(\sss, \sss),
	\end{equation}
	and $\rh(\sss, \sss)_0$ is naturally isomorphic to $\Cone(\id)$. We will abuse notation, and denoted by $\id$ also its pushforward via $\pi$.
	
	\begin{lem}
		\label{dualext}
		Let $\pi \colon X \to Y$ be a smooth proper morphism of relative dimension $d$. Let $\F$ and $\G$ be coherent sheaves on $X$. Then 
		$$\sheafhom(\extpi{d}(\F, \G \otimes \op), \oo_Y) \simeq \extpi{0}(\G, \F).$$
	\end{lem}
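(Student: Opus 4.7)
The plan is to deduce the statement from (relative) Grothendieck--Serre duality. For the smooth proper morphism $\pi$ of relative dimension $d$, duality supplies the natural isomorphism
\[
R\sheafhom_Y(R\pi_*\mathcal{A},\,\oo_Y) \;\simeq\; R\pi_* R\sheafhom_X(\mathcal{A},\,\omega_\pi[d])
\]
in $\DD(Y)$, for $\mathcal{A}\in\DD(X)$ of appropriate boundedness. I would apply it with $\mathcal{A} = R\sheafhom_X(\F,\,\G\otimes\omega_\pi[d])$; then, using invertibility of $\omega_\pi$ and biduality $R\sheafhom_X(R\sheafhom_X(\F,\oo_X),\oo_X)\simeq\F$ (where an implicit perfectness hypothesis on $\F$ enters, satisfied in the intended applications via Lemma~\ref{perfect}), the right-hand side collapses to $R\pi_* R\sheafhom_X(\G,\F)$. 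This yields
\[
R\sheafhom_Y\!\bigl(R\pi_* R\sheafhom_X(\F,\,\G\otimes\omega_\pi)[d],\,\oo_Y\bigr) \;\simeq\; R\pi_* R\sheafhom_X(\G,\F).
\]

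Taking $h^0$ of both sides produces $\extpi{0}(\G,\F)$ on the right directly. For the left-hand side, set $C \coloneqq R\pi_* R\sheafhom_X(\F,\,\G\otimes\omega_\pi)$, which sits in cohomological degrees $[0,d]$ since $\pi$ has $d$-dimensional fibers (and the relevant $\sheafext^q(\F,-)$ vanish in positive degrees for locally free $\F$). Then $h^0 R\sheafhom_Y(C[d],\,\oo_Y) = h^{-d} R\sheafhom_Y(C,\,\oo_Y)$, and I would compute this by applying $R\sheafhom_Y(-,\,\oo_Y)$ to the truncation triangle
\[
\tau^{<d} C \to C \to \extpi{d}(\F,\,\G\otimes\omega_\pi)[-d]
\]
and noting that $\tau^{<d} C$ lives in degrees $\leq d-1$, whence $h^{-d} R\sheafhom_Y(\tau^{<d}C,\,\oo_Y)=0$ because $\sheafext^j(-,\,\oo_Y)=0$ for $j<0$. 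Thus $h^{-d} R\sheafhom_Y(C,\,\oo_Y) \simeq \sheafhom_Y(\extpi{d}(\F,\,\G\otimes\omega_\pi),\,\oo_Y)$, finishing the identification.

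The main obstacle is the biduality step in the first simplification together with the cohomological support bound on $C$; both rely on perfectness of $\F$ (or $\G$), which is not spelled out in the lemma statement but is harmless in the applications. Once those are granted, the remainder of the argument is purely formal via Grothendieck duality and a truncation.
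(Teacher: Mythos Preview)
Your proof is correct and follows essentially the same route as the paper: both apply Grothendieck duality for the smooth proper morphism $\pi$ to obtain $R\sheafhom(\rph(\F,\G\otimes\omega_\pi)[d],\oo_Y)\simeq\rph(\G,\F)$, then take $h^0$ and use the cohomological support bound $\rph(\F,\G\otimes\omega_\pi)\in\DD^{[0,d]}$ to identify the left-hand side with $\sheafhom(\extpi{d}(\F,\G\otimes\omega_\pi),\oo_Y)$. You are in fact more explicit than the paper about the implicit perfectness hypothesis and about the truncation argument, which the paper compresses into one line.
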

	\begin{proof}
		By Grothendieck duality applied to the smooth proper morphism $\pi$ of relative dimension $d$,
		\begin{equation}
			\label{grgu}
			R\sheafhom(\rph(\F, \G \otimes \op), \oo_Y[-d]) \simeq \rph(\G, \F).
		\end{equation}
		Applying $h^0(-)$ to the right hand side yields $\extpi{0}(\G, \F)$. Applying $h^0(-)$ to the left hand side of (\ref{grgu}) yields
		\begin{align*}
			\sheafhom^\bullet(\rph(\F, \G \otimes \op), \oo_Y[-d]) &= \sheafhom^0(\rph(\F, \G \otimes \op), \oo_Y[-d]) \\  &\simeq \sheafhom(\extpi{d}(\F, \G \otimes \op), \oo_Y),
		\end{align*}
		where we have used that $\rph(\F, \G \otimes \op)$ is supported in degrees $[0, d]$.
	\end{proof}
	
	To compare the perfect obstruction theories $(\ref{defmorph})$ and $(\ref{pothilb})$ in Proposition~\ref{localcomparison} we from now on without loss of generality assume that $F = \oo_Y$. We will need several preparatory Lemmas.
	\begin{lem}
		\label{sstep1}
	The universal sequence induce isomorphisms
	\begin{align}
		\label{iso}
		\extpi{0}(\qq, \qq) \simto \extpi{0}(\oo, \qq); \\
		\label{isoo2}
		\extpi{3}(\qq, \oo) \simto \extpi{3}(\qq, \qq).
	\end{align}
	\end{lem}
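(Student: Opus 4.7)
The plan is to apply the functors $\rph(-,\qq)$ and $\rph(\qq,-)$ to the short exact sequence $0 \to \sss \to \oo \to \qq \to 0$ (which is the universal sequence on $Y \times U$ in the rank one case $F = \oo_Y$) and exploit the fact that, since $\qq = \oo/\sss$, the ideal $\sss$ annihilates $\qq$.

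For (\ref{iso}), take the long exact sequence in cohomology associated with the distinguished triangle
\[
\rph(\qq,\qq) \to \rph(\oo,\qq) \to \rph(\sss,\qq),
\]
obtaining
\[
\extpi{-1}(\sss,\qq) \to \extpi{0}(\qq,\qq) \to \extpi{0}(\oo,\qq) \xrightarrow{\alpha} \extpi{0}(\sss,\qq) \to \cdots.
\]
The leftmost term vanishes because $\sss$ and $\qq$ are concentrated in degree zero, so the second arrow is automatically injective. For surjectivity it suffices to show $\alpha = 0$. But $\alpha$ is obtained by pushing forward the restriction $\sheafhom(\oo,\qq) \to \sheafhom(\sss,\qq)$ induced by $\sss \hookrightarrow \oo$, and for any local section $\phi \colon \oo \to \qq$ and $s \in \sss$ one has $\phi(s) = s\cdot\phi(1) \in \sss\cdot\qq = 0$. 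Hence $\phi|_{\sss} = 0$, and (\ref{iso}) follows.

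For (\ref{isoo2}), Grothendieck duality (Lemma~\ref{dualext}) applied to the smooth proper morphism $\pi$ of relative dimension three yields, for each $\mathcal{A} \in \{\sss,\oo,\qq\}$, a natural isomorphism
\[
\extpi{3}(\qq,\mathcal{A})^\vee \simeq \extpi{0}(\mathcal{A},\qq\otimes\op).
\]
By naturality, dualising (\ref{isoo2}) corresponds to the precomposition map
\[
\extpi{0}(\qq,\qq\otimes\op) \longrightarrow \extpi{0}(\oo,\qq\otimes\op)
\]
induced by $\oo \onto \qq$. Since $\op$ is a line bundle, $\sss\cdot(\qq\otimes\op) = 0$, so the argument for (\ref{iso}) applies verbatim with $\qq\otimes\op$ in place of $\qq$, yielding that this map is an isomorphism; dualising recovers (\ref{isoo2}).

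The main technical point is checking that the identification of Lemma~\ref{dualext} is natural in its first argument, so that dualising (\ref{isoo2}) genuinely produces the precomposition map displayed above. Granting this naturality — a standard consequence of the functoriality of Grothendieck duality — both isomorphisms reduce to the single annihilation statement $\sss\cdot\qq = 0$ combined with the vanishing of negative $\extpi{}$ between coherent sheaves.
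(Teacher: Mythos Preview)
Your argument for (\ref{iso}) is correct and is a nice alternative to the paper's approach. The paper instead identifies $\sheafhom(\qq,\qq)\simeq\sheafhom(\oo,\qq)$ directly via the adjunction for the closed immersion $\iota\colon Z_U\hookrightarrow Y\times U$ of the universal subscheme (using $\qq=\iota_*\oo_{Z_U}$ and $\iota^*\iota_*\oo_{Z_U}\simeq\oo_{Z_U}$). Your route through the long exact sequence and the annihilation $\sss\cdot\qq=0$ is equally valid and perhaps more elementary.

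There is, however, a genuine gap in your argument for (\ref{isoo2}). You correctly establish that the dual map
\[
\extpi{0}(\qq,\qq\otimes\op)\longrightarrow\extpi{0}(\oo,\qq\otimes\op)
\]
is an isomorphism, but the sentence ``dualising recovers (\ref{isoo2})'' does not follow: dualising an isomorphism of sheaves gives an isomorphism of the \emph{double duals}, not of the original sheaves. Concretely, Lemma~\ref{dualext} identifies your isomorphism with $f^\vee$, where $f$ is the map (\ref{isoo2}); dualising again yields that $f^{\vee\vee}$ is an isomorphism, and you have no a~priori reason why $\extpi{3}(\qq,\qq)$ is reflexive.

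The paper closes exactly this gap. Surjectivity of (\ref{isoo2}) is immediate from $\extpi{4}(\qq,\sss)=0$. For injectivity, the paper uses the commutative square comparing $f$ with $f^{\vee\vee}$: since $\extpi{3}(\qq,\oo)$ is locally free (Lemma~\ref{locextfqqf}), the natural map $\extpi{3}(\qq,\oo)\to\extpi{3}(\qq,\oo)^{\vee\vee}$ is an isomorphism, and since $f^{\vee\vee}$ is an isomorphism, the composite $\extpi{3}(\qq,\oo)\to\extpi{3}(\qq,\qq)^{\vee\vee}$ is injective, whence $f$ itself is injective. Your argument can be repaired by inserting precisely this step.
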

	\begin{proof}
		We first note that $$\extpi{0}(\qq, \qq) \simeq \pi_* \sheafhom(\qq, \qq) \quad \text{and} \quad \extpi{0}(\oo, \qq) \simeq \pi_* \sheafhom(\oo, \qq).$$
		Denote by  ${\iota \colon Z_U \into Y \times U}$ the inclusion of the (pullback of the) universal scheme, then $\qq = \iota_*\oo_{Z_U}$.
		Hence,
		\begin{align*}
			\sheafhom(\qq, \qq) &\simeq \sheafhom(\iota_*\oo_{Z_U}, \iota_*\oo_{Z_U}) \simeq \iota_* \sheafhom(\iota^* \iota_* \oo_{Z_U}, \oo_{Z_U}) \simeq  \iota_* \sheafhom( \oo_{Z_U}, \oo_{Z_U}) \\ &\simeq \iota_* \sheafhom( \iota^* \oo, \oo_{Z_U}) \simeq \sheafhom( \oo, \iota_* \oo_{Z_U}) \simeq \sheafhom( \oo, \qq).	
		\end{align*}
		Therefore, $\pi_*\sheafhom(\qq, \qq) \simto \pi_*\sheafhom( \oo, \qq)$, and $(\ref{iso})$ is an isomorphism. Similarly, 
		\begin{equation}
			\label{iso2}
			\extpi{0}(\qq, \qq \otimes \op) \simto \extpi{0}(\oo, \qq \otimes \op).
		\end{equation}
		We next show that (\ref{isoo2}) is an isomorphism.
		It is surjective, because $\extpi{4}(\qq, \sss) = 0$.
		By Lemma~\ref{dualext}, 
		$$\extpi{3}(\qq, \qq)^\vee \simeq \extpi{0}(\qq, \qq \otimes \op) \quad \text{and} \quad \extpi{3}(\qq, \oo)^\vee \simeq \extpi{0}(\oo, \qq \otimes \op). $$	
		Therefore, together with (\ref{iso2}) it implies that in the commutative diagram 
		\begin{center}
			\begin{tikzcd}
				{\extpi{3}(\qq, \oo)^{\vee \vee}} \arrow[r, "\simeq"]        & {\extpi{3}(\qq, \qq)^{\vee \vee}} \\
				{\extpi{3}(\qq, \oo)} \arrow[r, two heads, "h^2(\xi_1)"] \arrow[u, "\simeq"] & {\extpi{3}(\qq, \qq)} \arrow[u]    
			\end{tikzcd}
		\end{center}
		the upper horizontal map is an isomorphism. The left vertical map is an isomorphism because $\extpi{3}(\qq, \oo)$ is locally free. Hence, by the commutativity of the diagram, the map $h^2(\xi_1)$ is injective. Therefore, $h^2(\xi_1)$ is an isomorphism.
	\end{proof}
	
%
	
	\begin{lem}
		\label{sstep2}
		The composition
		\begin{equation}
			\label{part1}
			\tau^{\leq 1}\rph(\sss, \qq) \to \rph(\sss, \qq) \xrightarrow{\varepsilon} E_\bullet
		\end{equation}
		of $\varepsilon$ and the natural truncation map is an isomorphism.
	\end{lem}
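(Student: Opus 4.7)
The plan is to verify that the composition is a quasi-isomorphism, which then gives an isomorphism in $\DD(U)$. Since $E_\bullet$ is of perfect amplitude $[0,1]$ by Proposition~\ref{potlocally} combined with Lemma~\ref{huybrthom}, its cohomology sheaves are concentrated in degrees $0$ and $1$, so $\tau^{\leq 1} E_\bullet \simeq E_\bullet$ canonically. Thus it is enough to show that $\varepsilon$ induces isomorphisms on $h^0$ and $h^1$.

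The first reduction is to apply the long exact sequence of cohomology to the defining distinguished triangle $(\ref{thetr})$. Using Lemma~\ref{locextfqqf}(2), which yields $\extpi{i}(\qq, \oo) = 0$ for $i < 3$, the LES immediately gives that $h^0(\varepsilon)$ is an isomorphism, and supplies the exact sequence
$$0 \to h^1(\rph(\sss, \qq)) \xrightarrow{h^1(\varepsilon)} h^1(E_\bullet) \to \extpi{3}(\qq, \oo) \xrightarrow{h^2(p)} \extpi{2}(\sss, \qq).$$
The question therefore reduces to the injectivity of $h^2(p)$.

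To treat this, I would exploit the commutative triangle in diagram~$(\ref{diagr})$ to factor $h^2(p) = h^2(\xi_2)^{-1} \circ h^2(\xi_1)$. The map $h^2(\xi_2) \colon \extpi{2}(\sss, \qq) \to \extpi{3}(\qq, \qq)$ is an isomorphism by the long exact sequence obtained by applying $\rph(-, \qq)$ to the universal sequence, since it is flanked by $\extpi{2}(\oo, \qq) = 0$ and $\extpi{3}(\oo, \qq) = 0$ from Lemma~\ref{locextfqqf}(1). The map $h^2(\xi_1) \colon \extpi{3}(\qq, \oo) \to \extpi{3}(\qq, \qq)$ is exactly the isomorphism $(\ref{isoo2})$ of Lemma~\ref{sstep1}. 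Hence $h^2(p)$ is an isomorphism, in particular injective, which completes the proof.

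The main obstacle is the injectivity of $h^2(p)$, and its resolution relies crucially on Lemma~\ref{sstep1}: this is precisely the special feature of the rank one assumption $F = \oo_Y$ that forces $h^2(\xi_1)$ to be an isomorphism rather than a mere surjection, and it is what makes the truncation of $\rph(\sss,\qq)$ match $E_\bullet$ on the nose.
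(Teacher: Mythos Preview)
Your proof is correct and follows essentially the same route as the paper's. Both arguments reduce the question to the behaviour on $h^1$, both use Lemma~\ref{locextfqqf} for the surrounding vanishings, and both hinge on the isomorphism $\extpi{3}(\qq,\oo)\simto\extpi{3}(\qq,\qq)$ from Lemma~\ref{sstep1}. The only cosmetic difference is that you work directly with the long exact sequence of the defining triangle~(\ref{thetr}) and check injectivity of $h^2(p)$ via the factorisation $h^2(\xi_2)\circ h^2(p)=h^2(\xi_1)$, whereas the paper instead passes through the upper right commutative square of diagram~(\ref{twodisttr}) and invokes Proposition~\ref{obstrsheaf} for the map~$\gamma$; these two presentations are equivalent via the octahedral axiom used to build~(\ref{twodisttr}).
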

	\begin{proof}
		By (\ref{isoo2}) the exact sequence
		$$0 = \extpi{2}(\qq, \oo) \to \extpi{2}(\qq, \qq) \to \extpi{3}(\qq, \sss) \xrightarrow{0} \extpi{3}(\qq, \oo) \simto \extpi{3}(\qq, \qq) \to 0$$
		induced by the universal sequence yields a natural isomorphism
		$$\extpi{2}(\qq, \qq) \simto \extpi{3}(\qq, \sss).$$
		Note also that the vanishing from Lemma~\ref{locextfqqf} together with the long sequence in cohomology applied to the left vertical distinguished triangle in diagram~(\ref{twodisttr}) yields the isomorphism
		$$h^2(\xi_2) \colon \extpi{1}(\sss, \qq) \simto \extpi{2}(\qq, \qq).$$
		Therefore, applying $h^2(-)$ to the upper right square in the commutative diagram~(\ref{twodisttr}) yields
		$$h^1(\varepsilon) \colon \extpi{1}(\sss, \qq) \simto h^1(E_\bullet). $$ 
		Lemma~\ref{locextfqqf} together with $(\ref{thetr})$ also implies that $h^0(\varepsilon)$ is an isomorphism.
	\end{proof}
	We next check that a similar identification holds for the dual complex of the perfect obstruction theory~(\ref{pothilb}).
	\begin{lem}
		\label{sstep3}
		The composition 
		\begin{equation}
		\label{natcomp}
		\tau^{\leq1}\rph(\sss, \qq) \to	\rph(\sss, \qq) \xrightarrow{\xi_5} \rph(\sss, \sss)[1] \to \rph(\sss, \sss)_0[1],
		\end{equation}
		where the first map is the natural truncation, is an isomorphism.
	\end{lem}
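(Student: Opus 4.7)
The plan is to apply the octahedral axiom to the composable arrows $\oo_{Y\times U}\xrightarrow{\id}\rh(\sss,\sss)\xrightarrow{g}\rh(\sss,\oo)$, where the first map is the identity section and $g$ is postcomposition with $u\colon\sss\to\oo$. Since $g\circ\id$ coincides with the map $u^*\colon\oo=\rh(\oo,\oo)\to\rh(\sss,\oo)$ obtained by applying $R\sheafhom(-,\oo)$ to $u$, its cone is $\rh(\qq,\oo)[1]$; the individual arrows have cones $\rh(\sss,\sss)_0$ and $\rh(\sss,\qq)$ respectively. Pushing forward by $\pi$, the octahedron produces a distinguished triangle
\[
\rph(\sss,\sss)_0\xrightarrow{\alpha}\rph(\qq,\oo)[1]\to\rph(\sss,\qq)\xrightarrow{\delta}\rph(\sss,\sss)_0[1],
\]
and unwinding the octahedral construction identifies $\delta$ with $\xi_5$ followed by the natural projection $\rph(\sss,\sss)[1]\to\rph(\sss,\sss)_0[1]$, i.e.\ with the morphism in (\ref{natcomp}) before the truncation is prepended.

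I would next check that $\rph(\sss,\sss)_0[1]$ has cohomology concentrated in degrees $[0,1]$. The trace splitting $\rph(\sss,\sss)\simeq\rpl\oo_{Y\times U}\oplus\rph(\sss,\sss)_0$ reduces this to showing $\extpi{0}(\sss,\sss)_0=\extpi{3}(\sss,\sss)_0=0$, which follows from the fibrewise identities $\Hom(I_Z,I_Z)=\CC$ and $\Ext^3(I_Z,I_Z)\simeq H^3(Y,\oo_Y)$ (using $I_Z^\vee\simeq\oo_Y$ and Serre duality on $Y$) combined with Nakayama's lemma. Since $\tau^{\leq1}\rph(\sss,\qq)$ also has cohomology in $[0,1]$, it suffices to show that $\delta$ induces isomorphisms on $h^0$ and $h^1$.

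By Lemma~\ref{locextfqqf}, $\rph(\qq,\oo)[1]$ is a vector bundle placed in cohomological degree $2$. The long exact sequence of the triangle therefore yields at once that $h^0(\delta)\colon\extpi{0}(\sss,\qq)\simto\extpi{1}(\sss,\sss)_0$ is an isomorphism, and that $h^1(\delta)\colon\extpi{1}(\sss,\qq)\hookrightarrow\extpi{2}(\sss,\sss)_0$ is injective with cokernel equal to the image of $h^2(\alpha)\colon\extpi{2}(\sss,\sss)_0\to\extpi{3}(\qq,\oo)$. It thus remains to show $h^2(\alpha)=0$. As $\rph(\qq,\oo)$ is locally free and $\extpi{3}(\sss,\sss)_0=0$, cohomology and base change applies, and by naturality of the octahedron the restriction $h^2(\alpha)\otimes k(u)$, at a closed point $[I_Z\into\oo_Y\onto\oo_Z]$, agrees with the analogous connecting map $\Ext^2(I_Z,I_Z)_0\to\Ext^3(\oo_Z,\oo_Y)$ of the fibrewise octahedral triangle. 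The classical identification $\Ext^1(I_Z,\oo_Z)\simto\Ext^2(I_Z,I_Z)_0$ (obtained by applying $\RHom(I_Z,-)$ to $I_Z\to\oo_Y\to\oo_Z$ and comparing with the trace, cf.~\cite{thomas}) forces the preceding injection in the fibrewise LES to be surjective, so the connecting map is zero on every fibre. Since a coherent-sheaf morphism vanishing at every closed point is identically zero, $h^2(\alpha)=0$, and the composite (\ref{natcomp}) is a quasi-isomorphism between complexes concentrated in degrees $[0,1]$, hence an isomorphism in $\DD(U)$.

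The main difficulty lies in the octahedral bookkeeping—aligning $\delta$ with the composition in (\ref{natcomp})—and in establishing the cohomological amplitude of $\rph(\sss,\sss)_0[1]$; once these are set up, the remaining steps are a routine long exact sequence chase, cohomology and base change, and a Nakayama argument appealing to the classical traceless identification on fibres.
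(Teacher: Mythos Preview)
Your octahedral setup is correct and elegant: the triangle
\[
\rph(\sss,\sss)_0\xrightarrow{\alpha}\rph(\qq,\oo)[1]\to\rph(\sss,\qq)\xrightarrow{\delta}\rph(\sss,\sss)_0[1]
\]
does arise as claimed, $\delta$ is the map in (\ref{natcomp}) before truncation, the amplitude of $\rph(\sss,\sss)_0[1]$ is correctly established, and the long exact sequence immediately gives $h^0(\delta)$ an isomorphism and $h^1(\delta)$ injective. The base-change step is also fine: since $\extpi{2}(\sss,\sss)_0$ is the top cohomology of a perfect complex in $[1,2]$, one has $\extpi{2}(\sss,\sss)_0\otimes k(x)=\Ext^2(I_Z,I_Z)_0$, and the fibrewise vanishing $h^2(\alpha_x)=0$ follows from the classical identification.

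The gap is the very last sentence. The assertion ``a coherent-sheaf morphism vanishing at every closed point is identically zero'' fails on non-reduced schemes: on $\Spec\CC[\epsilon]/(\epsilon^2)$ the map $\cdot\epsilon\colon\oo\to\oo$ vanishes at the unique closed point but is nonzero. Here $U$ is an arbitrary affine \'etale chart of the Quot scheme $\Q$, which for a $3$-fold is typically singular and can be non-reduced, so you cannot conclude $h^2(\alpha)=0$ from its vanishing on closed fibres. Concretely, $h^2(\alpha)\colon\extpi{2}(\sss,\sss)_0\to\extpi{3}(\qq,\oo)$ has locally free target, so your argument only shows $\im(h^2(\alpha))\subset\mathrm{nil}(R)\cdot\extpi{3}(\qq,\oo)$, which need not be zero.

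The paper's proof avoids this by staying at the sheaf level: it chases the commutative diagram built from the universal sequence and the trace-compatibility (\ref{tracecomm}), using the isomorphisms from Lemma~\ref{sstep1} and the exactness of the row $\extpi{2}(\oo,\sss)\to\extpi{2}(\sss,\sss)\to\extpi{3}(\qq,\sss)$ to identify $\extpi{2}(\sss,\sss)_0\simto\extpi{3}(\qq,\sss)$ directly as sheaves on $U$, without passing to fibres. If you want to rescue your approach, one route is to identify the octahedral map $\rph(\qq,\oo)[1]\to\rph(\sss,\qq)$ with the map $p$ of (\ref{pconstr}) and then invoke Lemma~\ref{sstep1} (equation~(\ref{isoo2})) together with the description $h^2(p)=h^2(\xi_2)^{-1}\circ h^2(\xi_1)$ to see that $h^2(\beta)$ is an isomorphism of sheaves, which forces $h^2(\alpha)=0$ by the long exact sequence.
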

	\begin{proof}
		By \cite{thomas}, from the comparison of the tangent and obstruction sheaves for the standard obstruction theory on $\Hilb^n_Y$ and the one from the moduli space of simple sheaves,
		the composition $(\ref{natcomp})$
		induces an isomorphism on $h^0$ and an injection on $h^1$. It hence suffices to show that $(\ref{natcomp})$ induces an isomorphism on $h^1$. 
		We first note that the diagram
		\begin{center}
			\begin{equation}
				\label{tracecomm}
				\begin{tikzcd}
					& {\rph(\sss, \sss)} \arrow[d] & {} \\
					{\rph(\oo, \oo)} \arrow[r] \arrow[ru, "\id"] \arrow[rru, "(***)", near start, phantom] & {\rph(\sss, \oo),}            &   
				\end{tikzcd}
			\end{equation}
		\end{center}
		is commutative. Here, we abuse notation denoting by $\id$ also the pushforward of (\ref{id}) via $\pi$. The other morphisms are induced by the universal sequence. The digram~(\ref{tracecomm}) is obtained by applying $R\pi_*(-)$ to a commutative diagram on $Y \times U$.
		We next consider the commutative diagram
		\begin{center}
			\begin{equation}
				\begin{tikzcd}
					{\extpi{1}(\oo, \qq) = 0} \arrow[d] \arrow[r]         & {\extpi{1}(\sss, \qq)} \arrow[d] \arrow[r, "\sim"', "{h^2(\xi_2)}"] & {\extpi{2}(\qq, \qq)} \arrow[d, "\simeq"] \\
					{\extpi{2}(\oo, \sss)} \arrow[d, "\simeq"'] \arrow[r] & {\extpi{2}(\sss, \sss)} \arrow[r, "b"]               & {\extpi{3}(\qq, \sss)}                    \\
					{\extpi{2}(\oo, \oo).} \arrow[ru, "h^2(\id)"']           &                                                      &                                          
				\end{tikzcd}
			\end{equation}
		\end{center}
		 with exact rows induced by the universal sequence and by applying $h^2(-)$ to $(\ref{tracecomm})$. By the proof of Lemma~\ref{sstep2}, the maps $h^2(\xi_2)$ and the right vertical map are isomorphisms.
		By the commutativity of the right square, the map $b$ is surjective. Hence, as by exactness $\ker(b) \simeq \im(h^2(\id))$, the morphism $b$ induces an isomorphism
		$\extpi{2}(\sss, \sss)_0 \simto \extpi{3}(\qq, \sss)$. Therefore by commutativity also
		\begin{equation}
		\extpi{1}(\sss, \qq) \simto \extpi{2}(\sss, \sss)_0.
		\qedhere
		\end{equation}
	\end{proof}
	\begin{prop}
		\label{localcomparison}
		If the locally free sheaf $F$ is of rank one, the perfect obstruction theories  $(\ref{defmorph})$ and $(\ref{pothilb})$ on $U$ coincide.
	\end{prop}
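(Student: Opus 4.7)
The plan is to combine Lemmas~\ref{sstep2} and \ref{sstep3} to identify the source complexes of the two obstruction theories, and then verify the compatibility of the two maps to $\LL_U$.

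Both lemmas produce isomorphisms from the common complex $\tau^{\leq 1}\rph(\sss,\qq)$: one via the truncation composed with $\varepsilon$ to $E_\bullet$, another via the truncation composed with $\xi_5$ and the trace-free projection to $\rph(\sss,\sss)_0[1]$. Inverting the second and composing with the first yields a canonical isomorphism
$$\eta \colon \rph(\sss,\sss)_0[1] \simto E_\bullet,$$
and dualizing gives $\eta^\vee \colon E^\bullet \simto (\rph(\sss,\sss)_0[1])^\vee$. By construction, and by the universal property of the truncation (applicable since $E_\bullet$ has cohomology concentrated in degrees $[0,1]$ by the proof of Proposition~\ref{potlocally}), the composition $\eta \circ (\rph(\sss,\qq) \to \rph(\sss,\sss)_0[1])$ equals $\varepsilon \colon \rph(\sss,\qq) \to E_\bullet$. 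Dually, $\varepsilon^\vee \circ \eta^\vee$ coincides with the map $(\rph(\sss,\sss)_0[1])^\vee \to (\rph(\sss,\qq))^\vee$ obtained by dualizing the composition from Lemma~\ref{sstep3}.

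To conclude I would compare the two maps to $\LL_U$. By construction $\varphi = \varphi^{\der}|_U \circ \varepsilon^\vee$, and by the previous paragraph this equals $\varphi^{\der}|_U$ composed with the dualized trace-free projection, precomposed with $\eta^\vee$. It therefore suffices to show that this last composition agrees with the Thomas POT $(\ref{pothilb})$. This is the standard comparison between the obstruction theory of the Hilbert scheme (viewed as a moduli of simple sheaves) and the derived obstruction theory on the Quot scheme: both are built from Atiyah classes, and the universal sequence $\sss \into q^*\oo \onto \qq$ yields, by functoriality of the Atiyah class, a commutative diagram relating the trace-free Atiyah class of $\sss$ (which controls the Thomas POT) to the reduced Atiyah class $A(\qq/\sss)$ (which controls $\varphi^{\der}$). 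The trace-free projection corresponds precisely to the splitting $\id \colon \oo_{Y \times U} \to \rh(\sss,\sss)$ used in Lemma~\ref{sstep3}, so the two constructions match.

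The main obstacle is verifying this last factorization of the Thomas POT through $\varphi^{\der}|_U$. I would carry this out by unwinding the Atiyah-class construction of the Thomas POT from \cite{thomas, huybrechts_lehn} side by side with the derived construction of $\varphi^{\der}$ recalled in Section~\ref{derobstr}, using that the connecting morphism $\xi_5 \colon \rph(\sss,\qq) \to \rph(\sss,\sss)[1]$ obtained by applying $\rph(\sss,-)$ to the universal sequence is precisely the comparison map between the two Atiyah-class presentations. Once this commutative diagram at the level of Atiyah classes is established, the equality of $\varphi \circ \eta^\vee$ with the Thomas POT is automatic.
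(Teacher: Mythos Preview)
Your proposal is correct and arrives at the same conclusion, but the route to the isomorphism differs from the paper's. You build $\eta\colon \rph(\sss,\sss)_0[1]\simto E_\bullet$ abstractly by composing the two truncation isomorphisms of Lemmas~\ref{sstep2} and~\ref{sstep3}, then use the universal property of $\tau^{\leq 1}$ (valid since both targets lie in $\DD^{[0,1]}$) to deduce $\eta\circ\beta=\varepsilon$. The paper instead constructs the inverse isomorphism $\mu\colon E_\bullet\simto\rph(\sss,\sss)_0[1]$ concretely as a map of cones: it exhibits a morphism of distinguished triangles (diagram~(\ref{compare})) by checking commutativity of squares via explicit trace-map identities (diagram~(\ref{refine})), then extends to diagram~(\ref{finaldiag}) so that the compatibility $\mu\circ\varepsilon=(\text{proj})\circ\xi_5$ is built into the construction rather than recovered afterward. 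Your approach is cleaner and avoids the diagram chase; the paper's is more hands-on and makes the role of the trace map visible. Both constructions produce the same isomorphism (uniqueness follows from the vanishing of $\Hom(\rph(\qq,\oo)[2],\rph(\sss,\sss)_0[1])$ on the affine $U$).

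For the final step---that the Thomas obstruction theory~(\ref{pothilb}) factors through $\varphi^{\der}|_U$ via the dual of $(\text{proj})\circ\xi_5$---you correctly flag this as the main obstacle and propose unwinding the Atiyah-class definitions. The paper does not carry this out either: it simply asserts that ``the maps to the cotangent complex then coincide by the commutativity of the lower square in diagram~(\ref{finaldiag})'', which presupposes exactly this factorisation. So your identification of the remaining work is accurate, and your sketch (functoriality of Atiyah classes under the universal sequence, with $\xi_5$ as the comparison map) is the right way to close it.
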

	\begin{proof} 
		Without loss of generality we can assume that $F = \oo_Y$. Note that Lemma~\ref{sstep2} and Lemma~\ref{sstep3} already show that the complexes of the two perfect obstruction theories $(\ref{defmorph})$ and $(\ref{pothilb})$ are isomorphic. However, we still need to check that the maps to the cotangent complex agree.

The long exact sequence in cohomology corresponding to the distinguished triangle in diagram~(\ref{diagr}) together with vanishing in Lemma~\ref{locextfqqf} and the isomorphism $(\ref{iso})$ gives 
		\begin{equation}
			\label{righttrunciso}
			\rph(\sss, \qq) \simto \tau^{\geq 0} (\rph(\qq, \qq)[1]).
		\end{equation}
		Consider the following diagram induced by the universal sequence (\ref{uunseq}) which is an extension of diagram (\ref{diagr})
		\begin{center}
			\begin{equation}
				\label{bigdiagr}
				\begin{tikzcd}
					& {\rph(\sss, \sss)} \arrow[d]                                    & {}                                                                 &                     \\
					{\rph(\oo, \oo)} \arrow[d] \arrow[r] \arrow[ru, "\id"] & {\rph(\sss, \oo)} \arrow[r] \arrow[d]                                     & {\rph(\qq, \oo)[1]} \arrow[d, "\xi_1"'] \arrow[ld, "p"'] \arrow[d] &                     \\
					{\rph(\oo, \qq)} \arrow[r] \arrow[ruu, "(***)", near end, phantom]           & {\rph(\sss, \qq)} \arrow[r, "\xi_2"] \arrow[ruu, "(**)", near start, phantom] & {\rph(\qq, \qq)[1]} \arrow[r, "\xi_3"] \arrow[lu, "(*)", near start, phantom]  & {\rph(\oo, \qq)[1],}
				\end{tikzcd}
			\end{equation}
		\end{center}
		where all the squares are commutative.
		By construction of $p$, the triangle $(*)$ is commutative. Applying $\tau^{\geq 0}$ to the square composed of triangles $(**)$ and $(*)$, together with (\ref{righttrunciso}) shows that the triangle $(**)$ is commutative as well. The triangle $(***)$ is also commutative by (\ref{tracecomm}).
		We next show that the following diagram coming from (\ref{bigdiagr}) 
		\begin{center}
			\begin{equation}
				\label{compare}
				\begin{tikzcd}
					{\rph(\oo, \oo)} \arrow[d, "\id"'] \arrow[r]              & {\rph(\sss, \oo)} \arrow[d, equals] \arrow[r] & {\rph(\qq, \oo)[1]} \arrow[d, "p"] \arrow[r, "{\xi_6}"]              & {\rph(\oo, \oo)[1]} \arrow[d, "{\id[1]}"] \\
					{\rph(\sss, \sss)} \arrow[r] \arrow[ru, "(***)", phantom] & {\rph(\sss, \oo)} \arrow[r] \arrow[ru, "(**)", phantom]    & {\rph(\sss, \qq)} \arrow[r, "{\xi_5}"'] \arrow[ru, "{(****)}", phantom] & {\rph(\sss, \sss)[1]}                    
				\end{tikzcd}
			\end{equation}
		\end{center}
		is a map between distinguished triangles. It suffices to check that the square $(****)$ is commutative. This follows from the commutativity of the diagram
		\begin{center}
			\begin{equation}
				\label{refine}
				\begin{tikzcd}
					{\rph(\qq, \oo)} \arrow[dd, "p"] \arrow[rr, , "{\xi_6[-1]}"] \arrow[rd, "{\xi_1[-1]}"]                       & {} \arrow[d, "{(1)}", phantom]         & {\rph(\oo, \oo)}                            \\
					& {\rph(\qq, \qq)} \arrow[ru, "tr_{\qq}"] &                                             \\
					{\rph(\sss, \qq)[-1]} \arrow[rr, "{\xi_5[-1]}"'] \arrow[ru, "{\xi_2[-1]}"] \arrow[ruu, "(*)", phantom] & {} \arrow[ruu, "{(2)}", near start, phantom]        & {\rph(\sss, \sss)} \arrow[uu, "tr_{\sss}"']
				\end{tikzcd}
			\end{equation}
		\end{center}
		together with $tr_{\sss} \circ \id$ being identity. The triangle $(*)$ is commutative by construction. Parts $(1)$ and $(2)$ in (\ref{refine}) are commutative as they are induced by applying $R\pi_*(-)$ to diagrams which are commutative by the construction of the trace map \cite{huybrechts_lehn}.
		
		Diagram $(\ref{compare})$ can be extended to 
		\begin{center}
			\begin{equation}
				\label{finaldiag}
				\begin{tikzcd}
					{\rph(\sss, \oo)} \arrow[d, equals] \arrow[r] & {\rph(\qq, \oo)[1]} \arrow[d, "p"] \arrow[r, "\xi_6"]                                        & {\rph(\oo, \oo)[1]} \arrow[d, "{\id[1]}"] \\
					{\rph(\sss, \oo)} \arrow[r] \arrow[ru, "(**)", phantom]    & {\rph(\sss, \qq)} \arrow[r, "\xi_5"'] \arrow[ru, "(****)", phantom] \arrow[d, "\varepsilon"] & {\rph(\sss, \sss)[1]} \arrow[d]           \\
					& E_\bullet \arrow[r, "\mu", equals]                                                                  & {\rph(\sss, \sss)_0[1],}                  
				\end{tikzcd}
			\end{equation}
		\end{center}	
		where the two right vertical columns are distinguished triangles. We obtain a map between these distinguished triangles as $(****)$ is commutative, with $$\mu \colon E_\bullet \to \rph(\sss, \sss)_0[1]$$ the induced morphism. The map $\mu$ is an isomorphism as the induced maps on cohomology are isomorphisms which follows from Lemma~\ref{sstep2} and Lemma~\ref{sstep3} by applying $\tau^{\leq 1}(-)$ to the lower square of diagram~(\ref{finaldiag}).
		As the Cone is defined up to an isomorphism, we can hence assume that $\mu$ is the identity. This identifies the complexes of the two perfect obstruction theories $(\ref{defmorph})$ and $(\ref{pothilb})$. The maps to the cotangent complex then coincide by the commutativity of the lower square in diagram~(\ref{finaldiag}).
	\end{proof}
	
	It will be shown in the next section (Proposition~\ref{pglobal}) that in the rank one case the map $p$ can be defined globally, and the induced perfect obstruction theory agrees with the one in \cite{thomas}.

	\section{Almost perfect obstruction theory} \label{globally}
	In this section we show that the collection of perfect obstruction theories constructed in the previous section (étale) locally on an affine glue in a suitable way to get an almost perfect obstruction theory on $\Q$. This allows one to define a virtual class of virtual dimension zero.

	We now denote by 
	\begin{equation}
		\label{unseq}
		0 \to \sss \to q^*F \to \qq \to 0
	\end{equation}
	the universal sequence on $Y \times \Q$ with the two projections 
	$$\pi \colon Y \times \Q \to \Q \quad \text{and} \quad {q \colon Y \times \Q \to Y}.$$ 
	For each affine étale chart $U \to \Q$ the previous section gives a map
	$$p_U \colon \rph(\qq, q^*F)[1]|_U \to \rph(\sss, \qq)|_U $$
	defining a perfect obstruction theory 
	$$\varphi_U \colon E^\bullet_U \coloneqq (\Cone(p_U)) ^\vee \to \LL_U$$ on $U$ with obstruction sheaf isomorphic to $\extpi{3}(\qq, \sss)|_U$, see Propositions~$\ref{potlocally}$ and $\ref{obstrsheaf}$.
	
	
	Consider any affine étale cover of $\Q$:
	\begin{equation}
		\label{cover}
		\{U_i \xrightarrow{g_i} \Q \mid U_i = \Spec R_i, \; g_i \; \text{étale} \}_{i \in I}
	\end{equation}
	with perfect obstruction theories 
	\begin{equation}
		\label{potoncover}
		\{\varphi_i \colon E_i^\bullet \to \LL_{U_i}\}_{i \in I}
	\end{equation}	
	constructed on each $U_i$ by $\varphi_{U_i}$ as above.
	For each $i$, there is an exact triangle
	\begin{equation}
		\label{idist}
		\rph(\qq, q^*F)[1]|_{U_i} \xrightarrow{p_i} \rph(\sss, \qq)|_{U_i} \xrightarrow{\varepsilon_i} E_{i \bullet} \xrightarrow{\rho_i} \rph(\qq, q^*F)[2]|_{U_i}.
	\end{equation}
	Here, $p_i \coloneqq p_{U_i}$, $E^\bullet_{i} \coloneqq E_{U_i}^\bullet$, and $E_{i \bullet} \coloneqq (E_i^{\bullet})^\vee.$
	\vspace{3pt}
	
	Note that as $\Q$ is separated, the fiber product 
	$U_{ij} \coloneqq U_i \times_\Q U_j$
	is affine.
	
	\begin{theor}
		\label{quotsemipot}
		The étale affine cover $(\ref{cover})$ together with perfect obstruction theories $(\ref{potoncover})$ on it defines an almost perfect obstruction theory on $\Q$.
	\end{theor}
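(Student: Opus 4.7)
The plan is to verify the two conditions of Definition~\ref{APOT}, using the relative Ext sheaf $\extpi{3}(\qq,\sss)$ itself as the global obstruction sheaf and the fibered products $V := U_i\times_{\Q} U_j$ as the charts where the gluing isomorphisms live. Since $\Q$ is separated, each such $V$ is affine, so the construction of Section~\ref{locally} applies directly on $V$ as well. Because $\pi\colon Y\times\Q\to\Q$ is proper and $\qq,\sss$ are perfect on $Y\times\Q$, the sheaf
$$Ob_{\Q} := \extpi{3}(\qq,\sss)$$
is coherent on $\Q$. Base change for $\pi$ along the étale map $U_i\to\Q$ identifies its restriction with $\extpi{3}(\qq,\sss)|_{U_i}$, and Proposition~\ref{obstrsheaf} supplies the required isomorphism $\psi_i\colon Ob_{U_i}\simto Ob_{\Q}|_{U_i}$ as $h^1(\gamma_i)$ for the $\gamma_i$ arising in diagram~(\ref{twodisttr}).

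For condition~(ii) of Definition~\ref{APOT}, set $V = U_{ij}$. The crucial observation is that the lift $p$ in diagram~(\ref{diagr}) is \emph{unique}, by the vanishing used in the proof of Lemma~\ref{vanishing} together with Lemma~\ref{exun}. Applying this uniqueness on the affine $V$, both $p_i|_V$ and $p_j|_V$ must coincide with the map $p_V$ constructed directly on $V$. Hence $(E_{i\bullet}|_V,\varepsilon_i|_V,\rho_i|_V,\gamma_i|_V)$ and $(E_{j\bullet}|_V,\varepsilon_j|_V,\rho_j|_V,\gamma_j|_V)$ are two cone completions of one and the same morphism $p_V$. Proposition~\ref{conechoices} then produces an isomorphism
$$\eta\colon E_{j\bullet}|_V \simto E_{i\bullet}|_V$$
satisfying $\eta\circ\varepsilon_j|_V = \varepsilon_i|_V$, $\rho_i|_V\circ\eta = \rho_j|_V$, and $\gamma_i|_V\circ\eta = \gamma_j|_V$. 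Dualizing, set $\eta_{ij} := \eta^{\vee}\colon E_i^{\bullet}|_V \simto E_j^{\bullet}|_V$.

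It remains to check the two compatibilities required by Definition~\ref{APOT}(ii). Compatibility with the cotangent complex: dualizing $\eta\circ\varepsilon_j|_V=\varepsilon_i|_V$ and post-composing with $\varphi^{\der}|_V$ yields $\varphi_j|_V\circ\eta_{ij} = \varphi_i|_V$, since by construction $\varphi_\bullet=\varphi^{\der}\circ\varepsilon_\bullet^{\vee}$. Compatibility with the obstruction-sheaf gluing: applying $h^1$ to the identity $\gamma_i|_V\circ\eta=\gamma_j|_V$ and using $\psi_\bullet = h^1(\gamma_\bullet)$ gives
$$h^1(\eta_{ij}^{\vee}) \;=\; h^1(\eta) \;=\; h^1(\gamma_i|_V)^{-1}\circ h^1(\gamma_j|_V) \;=\; \psi_i^{-1}|_V\circ\psi_j|_V,$$
which is exactly the required gluing condition on $h^1$.

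The main subtlety in the argument is conceptual rather than computational: Proposition~\ref{conechoices} does not assert uniqueness of $\eta$, so one must check that Definition~\ref{APOT}(ii) is flexible enough to only demand the \emph{existence} of an $\eta_{ij}$ intertwining both structures. This is indeed the case, and the construction above produces a single $\eta_{ij}$ that simultaneously takes care of the cotangent-complex compatibility and the obstruction-sheaf compatibility, because Proposition~\ref{conechoices} delivers an isomorphism respecting $\varepsilon$, $\rho$, and $\gamma$ at once. Hence the data $\{U_i\to\Q,\varphi_i\}$ satisfy Definition~\ref{APOT}, finishing the proof of Theorem~\ref{quotsemipot}.
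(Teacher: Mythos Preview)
Your proof is correct and follows essentially the same approach as the paper: both take $Ob_{\Q}=\extpi{3}(\qq,\sss)$ with $\psi_i=h^1(\gamma_i)$, use the uniqueness of the lift $p$ to conclude $p_i|_{U_{ij}}=p_j|_{U_{ij}}$, and then invoke Proposition~\ref{conechoices} to obtain an isomorphism of cones respecting $\varepsilon$ and $\gamma$ simultaneously, which furnishes both compatibilities of Definition~\ref{APOT}(ii). The only cosmetic difference is that the paper works directly with $\eta_{ij}\colon E_{i\bullet}|_{U_{ij}}\simto E_{j\bullet}|_{U_{ij}}$ on the dual side, whereas you build $\eta$ in the opposite direction and then dualize; your extra remarks on coherence of $Ob_{\Q}$ and on the non-uniqueness of $\eta$ are valid observations that the paper leaves implicit.
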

	\begin{proof}
		Put  
		$$Ob_{\Q} \coloneqq \extpi{3}(\qq, \sss).$$ 
		By Proposition $\ref{obstrsheaf}$, 
		restricted to each chart $U_i$, it recovers the obstruction sheaf of the corresponding perfect obstruction theory on $U_i$. For each $i \in I$ there is an isomorphism
		$$\psi_i \colon  Ob_{U_i} \simto Ob_{\Q}|_{U_i},$$
		with $\psi_i = h^1(\ga_i)$, see the notation in diagram~(\ref{twodisttr}) for $U = U_i$. Hence, the first condition in Definition~$\ref{APOT}$ of an almost perfect obstruction theory is satisfied. 
		It remains to check that for each $i, j$ there exists a map 
		$$\eta_{ij} \colon E_{i\bullet}|_{U_{ij}} \simto E_{j\bullet}|_{U_{ij}} $$
		compatible with maps to the cotangent complex and
		such that $h^1(\eta_{ij}) = \psi_j|_{\uij}^{-1} \circ \psi_i|_{\uij}$.
		Note that uniqueness of the map $p$ in diagram $(\ref{diagr})$ implies an equality of morphisms
		$$p_i|_{U_{ij}} = p_j|_{U_{ij}} \eqqcolon p_{ij}. $$
		Hence, both $E_{i\bullet}|_{U_{ij}}$ and $E_{j\bullet}|_{U_{ij}}$ are choices of $\Cone(p_{ij})$ and hence by Proposition~\ref{conechoices} applied to ${U = \uij}$, there exists the desired $\eta_{ij}$. Indeed, by Proposition~\ref{conechoices}, such $\eta_{ij}$ satisfies $\eta_{ij} \circ \varepsilon_i|_{\uij} = \varepsilon_j|_{\uij}$ which ensures compatibility with the maps to the cotangent complex. It also satisfies $\ga_i|_{\uij} = \ga_j|_{\uij} \circ \eta_{ij}$ which, after taking first cohomology, yields $h^1(\eta_{ij}) = \psi_j|_{\uij}^{-1} \circ \psi_i|_{\uij}$.
	\end{proof}
	
	Theorem $\ref{quotsemipot}$ yields a virtual class 
	\begin{equation}
		\label{virtclass}
		[\Q]^{\vir} \in A_0(\Q)
	\end{equation}	
	of virtual dimension zero and a global virtual structure sheaf
	\begin{equation*}
		\oo_\Q^{\vir} \in K_0(\Q).
	\end{equation*}
	
	\begin{rem}
		In Theorem \ref{quotsemipot} we have considered an arbitrary affine étale cover (\ref{cover}). For any two affine étale covers the corresponding almost perfect obstruction theories are equivalent in the sense of Definition~\ref{sameAPOT} as there is a third APOT with respect to an affine étale cover refining the first two.
	\end{rem}

	\begin{rem}
		\label{quotglkthclass}
		According to Remark $\ref{glkthcl}$, the almost perfect obstruction theory from Theorem~$\ref{quotsemipot}$ on $\Q$ has a global class in K-theory
		$$[T_\Q] - [Ob_\Q] = [\extpi{0}(\sss, \qq)] -[\extpi{3}(\qq, \sss)] \in K_0(\Q)$$ which can be lifted to a class $E_\bullet \in K^0(\Q)$
		\begin{equation}
			\label{kth1}
			E_\bullet \coloneqq \rph(\qq, q^*F) + \rph(\sss, \qq) = \rph(q^*F, q^*F) - \rph(\sss, \sss)
		\end{equation}	
		where we omit the square brackets for appearance convenience. The equality is obtained by exploiting the universal sequence of $\Q$. The class above can also be rewritten as
		\begin{equation}
			\label{kth2}
			E_\bullet = \rph(q^*F, \qq) + \rph(\qq, q^*F) - \rph(\qq, \qq) \in K^0(\Q).
		\end{equation}
	\end{rem}
	
	\begin{rem}
		Note that in the case when the $3$-fold $Y$ is Calabi-Yau, the obstruction sheaf of the APOT from Theorem \ref{quotsemipot} is
		$$\extpi{3}(\qq, \sss) \simeq \extpi{3}(\qq, \sss \otimes \omega_\pi) \simeq \Omega_{\Q}, $$
		where the second isomorphism is by \cite[Equation (3)]{LehnQuot} and holds for an arbitrary $Y$. This is consistent with Proposition~\ref{cyafflocal}, stating that in this case the local perfect obstruction theories are symmetric. 
	\end{rem}
	We end this section by showing that in rank one case the maps $p_i$ are induced from a global map $p$ which yields a global perfect obstruction theory that agrees with the one in \cite{thomas}. Recall that the perfect obstruction theory on $\Hilb_Y^n$ in \cite{thomas} is 
	\begin{equation}
		\label{thomaspot}
		(\rph(\sss, \sss)_0[1])^\vee \to \LL_{\Hilb_Y^n},
	\end{equation}
	constructed via the Atiyah class. Here, $\rph(\sss, \sss)_0$ is the pushforward of the Cone of
	$$\id \colon \rh(\oo, \oo) \to \rh(\sss, \sss), $$
	which is the splitting of the trace map \cite{huybrechts2006fourier}. We will abuse notation and denote by $\id$ also its pushforward via $\pi$.
	Without loss of generality we assume that $F = \oo_Y$.
	\begin{prop}
		\label{pglobal}
		There exists a global map 
		$$p \colon \rph(\qq, \oo)[1] \to \rph(\sss, \qq),$$
		such that for each $i$, $p|_{U_i} = p_i$. Moreover, the composition
		\begin{equation}
			\label{globalpotoy}
			(\Cone(p))^\vee \to (\rph(\sss, \qq))^\vee \to \LL_{\Hilb_Y^n} 
		\end{equation}
		is a perfect obstruction theory which can be identified with (\ref{thomaspot}).
	\end{prop}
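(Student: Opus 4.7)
The plan is to construct the map $p$ globally by exploiting the fact that in rank one the truncation isomorphism $(\ref{righttrunciso})$ extends from affine charts to the whole Hilbert scheme, and then to identify the resulting perfect obstruction theory with the Thomas POT $(\ref{thomaspot})$ by globalizing the argument of Proposition~\ref{localcomparison}.

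The crucial observation is that in rank one, the isomorphisms of Lemma~\ref{sstep1} hold globally: their proofs use only adjunction for the closed embedding $\iota$ of the universal subscheme into $Y\times\Hilb_Y^n$ and Grothendieck duality via Lemma~\ref{dualext}, neither of which requires affineness. Combined with the distinguished triangle induced by the universal sequence and the vanishings in Lemma~\ref{locextfqqf}, this yields a global quasi-isomorphism
$$\rph(\sss, \qq) \simto \tau^{\geq 0}(\rph(\qq, \qq)[1]).$$
Since Lemma~\ref{locextfqqf} implies $\rph(\qq, \oo)[1]$ is a vector bundle concentrated in degree $2$, the morphism $\xi_1$ factors uniquely through $\tau^{\geq 0}(\rph(\qq, \qq)[1])$; composing with the inverse of the above isomorphism defines
$$p\colon \rph(\qq, \oo)[1]\to \rph(\sss, \qq),$$
and by construction $\xi_2\circ p = \xi_1$. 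On each affine étale chart $U_i$, the unique morphism with this property is $p_i$ (by the uniqueness argument following Lemma~\ref{vanishing}), so $p|_{U_i} = p_i$. Hence $\Cone(p)$ is a perfect complex globalizing the local $E_{i\bullet}$, and the composition~(\ref{globalpotoy}) is a perfect obstruction theory of virtual dimension zero by Proposition~\ref{potlocally} applied étale locally.

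To identify the resulting POT with $(\ref{thomaspot})$, I would globalize the construction of the isomorphism $\mu$ from the proof of Proposition~\ref{localcomparison}. The trace splitting $\id\colon \oo\to \rh(\sss, \sss)$ is defined on $Y\times\Hilb_Y^n$; the commutativity of the diagrams $(\ref{tracecomm})$ and $(\ref{refine})$ is obtained by applying $R\pi_*$ to diagrams commutative by naturality of the trace map and the universal sequence, and the commutativity of the triangle $(**)$ in $(\ref{bigdiagr})$ follows from the same truncation argument using the now-global $(\ref{righttrunciso})$. Carrying out the construction globally produces a morphism $\mu\colon \Cone(p)\to \rph(\sss, \sss)_0[1]$ which restricts on each $U_i$ to the isomorphism of Proposition~\ref{localcomparison}; it is therefore itself a global isomorphism, and the identification it induces makes the maps to $\LL_{\Hilb_Y^n}$ in~(\ref{globalpotoy}) and~(\ref{thomaspot}) coincide by commutativity of the lower square of the global analog of $(\ref{finaldiag})$.

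The main obstacle is essentially bookkeeping: one must confirm that every constituent diagram entering Proposition~\ref{localcomparison} admits a global analog without recourse to affineness. The rank one assumption is essential here, since in higher rank the isomorphism $(\ref{iso})$ fails, $(\ref{righttrunciso})$ no longer holds globally, and no truncation-based construction of a global $p$ is available; this is why an \emph{almost} perfect obstruction theory is the most that can be obtained in the general case of Theorem~\ref{quotsemipot}.
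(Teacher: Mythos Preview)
Your approach is correct but differs from the paper's. You construct $p$ directly by globalizing the truncation isomorphism $(\ref{righttrunciso})$: since the proofs of Lemma~\ref{sstep1} and Lemma~\ref{locextfqqf} use only base change, adjunction, and Grothendieck duality (no affineness), the isomorphism $\rph(\sss,\qq)\simto\tau^{\geq 0}(\rph(\qq,\qq)[1])$ holds on all of $\Hilb_Y^n$, and $p$ is then the unique lift of $\xi_1$ through it; the relation $\xi_2\circ p=\xi_1$ and uniqueness on charts give $p|_{U_i}=p_i$ immediately. The paper instead starts from the octahedral axiom applied to the diagram $(\ref{globfindiag})$ to produce a global $\tilde p$ whose cone is \emph{by construction} identified with $\rph(\sss,\sss)_0[1]$; it then shows $\tilde p|_{U_i}$ and $p_i$ are both shifted cocones of the same $\varepsilon_i$, hence differ by an automorphism $b_i$ of the vector bundle $\extpi{3}(\qq,\oo)|_{U_i}$, and glues the $b_i$ to a global $b$ to set $p\coloneqq\tilde p\circ b^{-1}$. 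Your route is cleaner for the existence of $p$, while the paper's route gives the comparison with Thomas's obstruction theory essentially for free; you pay for that by having to rerun the diagram chase of Proposition~\ref{localcomparison} globally. Both work, and your observation that every ingredient of that chase (the trace diagrams $(\ref{tracecomm})$, $(\ref{refine})$, the truncation argument for $(**)$) is available globally is correct.
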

	\begin{proof}
		Consider the map between distinguished triangles (compare it with diagrams (\ref{compare}) and (\ref{finaldiag})  in the local case)
		\begin{center}
			\begin{equation}
				\label{globfindiag}
				\begin{tikzcd}
					{\rph(\oo, \oo)} \arrow[d, "\id"'] \arrow[r] & {\rph(\sss, \oo)} \arrow[d, equals] \arrow[r] & {\rph(\qq, \oo)[1]} \arrow[d, "\tilde{p}", dotted] \arrow[r] & {\rph(\oo, \oo)[1]} \arrow[d, "{\id[1]}"] \\
					{\rph(\sss, \sss)} \arrow[r]                 & {\rph(\sss, \oo)} \arrow[r]           & {\rph(\sss, \qq)} \arrow[d, dotted, "{\tilde{\varepsilon}}"] \arrow[r]                & {\rph(\sss, \sss)[1]} \arrow[d]           \\
					&                                       & \Cone(\tilde{p}) \arrow[r, equals]                                   & {\rph(\sss, \sss)_0[1]}                  
				\end{tikzcd}
			\end{equation}
		\end{center}
		By the octahedral axiom, there exists a map $\tilde{p}$ such that $\Cone(\tilde{p})$ can be identified with $\rph(\sss, \sss)_0[1]$. We denote the map to $\Cone(\tilde{p})$ by $\tilde{\varepsilon}$.
		By commutativity, $\tilde{\varepsilon}$ coincides with the composition map
		\begin{equation}
			\label{globcomp}
			\rph(\sss, \qq) \to \rph(\sss, \sss)[1] \to \rph(\sss, \sss)_0[1].
		\end{equation}
		In particular, for an affine chart $U_i \to \Hilb_Y$, the restriction $\tilde{\varepsilon}|_{U_i}$ coincides with the restriction of (\ref{globcomp}) to $U_i$. 
		By the commutativity of the lower square of diagram~(\ref{finaldiag}) in the proof of Proposition~\ref{localcomparison} in the local case for  $U_i$, this restriction also coincides with $\varepsilon_i$.  
		Therefore, for each $i$ the maps $p_i$ and $\tilde{p}|_{U_i}$ are maps corresponding to shifted Cones of $\tilde{\varepsilon}|_{U_i} = \varepsilon_i$. As two choices of a Cone differ by an isomorphism, there exists an automorphism 
		$$b_i \colon \rph(\qq, \oo)[1]|_{U_i} \simto \rph(\qq, \oo)[1]|_{U_i},$$
		such that $\tilde{p}|_{U_i} = p_i \circ b_i$.
		As $\rph(\qq, \oo)[1] \simeq \extpi{3}(\qq, \oo)[-2]$ is a shifted vector bundle, $b_i$ can be viewed as an element in $$\Hom(\extpi{3}(\qq, \oo)|_{U_i}, \extpi{3}(\qq, \oo)|_{U_i}) = \sheafhom(\extpi{3}(\qq, \oo), \extpi{3}(\qq, \oo))(U_i).$$
		Moreover, $b_i = h^2(b_i) = h^2(p_i)^{-1} \circ h^2(\tilde{p}|_{U_i})$. Therefore, as by the proof of Proposition~\ref{quotsemipot} for every $i$ and $j$ there is an identification $p_i|_{\uij} = p_j|_{\uij}$, we also get 
		$b_i|_{\uij} = b_j|_{\uij}$ for each $i$ and $j$. Therefore, the maps $\{b_i\}_i$ can be glued to a global map $b$. Put
		$$p \coloneqq \tilde{p} \circ b^{-1}.$$ Then $(\ref{globalpotoy})$ is a perfect obstruction theory, as it is a perfect obstruction theory locally. $\Cone(p)$ can be identified with $\Cone(\tilde{p})$ via $b$ and hence also with $\rph(\sss, \sss)_0$. The maps to the cotangent complex agree by the commutativity of the lower square in diagram~(\ref{globfindiag}).
	\end{proof}
	\begin{rem}
		The proof of Lemma \ref{sstep3} also work globally, and hence the complexes of the perfect obstruction theories $(\ref{thomaspot})$ and $(\ref{globalpotoy})$ on $\Hilb_Y^n$ can be identfied with $\tau^{\leq 1}\rph(\sss, \qq)$.
	\end{rem}
	\section{Smooth pullbacks, equivariant Jouanolou trick, Siebert's formula and virtual torus localization for almost perfect obstruction theories} \label{s5}
	
	
	\subsection{Smooth pullbacks of (almost) perfect obstruction theories}
	We recall following \cite[Section 2.2]{kuhnliuthimm} how to pull back an (almost) perfect obstruction theory along a smooth morphism.
	\begin{lem}{\cite[Lemma 2.3.3]{kuhnliuthimm}}
		\label{kltvanishing}
		Let $A$ be an affine scheme acted on by a torus $T$. Let $\mathcal{V}$ be a $T$-equivariant vector bundle on $A$ and $E \in \mathcal{D}^{T, \leq a}(A)$ for some $a \in \mathbb{Z}$. Then $\Ext^i(\mathcal{V}, E) = 0$ for all $i > a$.
	\end{lem}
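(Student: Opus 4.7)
The plan is to reduce the statement to graded algebra by exploiting the affineness of $A$. Since $A = \Spec R$ is affine with a torus action, the abelian category of $T$-equivariant quasi-coherent sheaves on $A$ is equivalent to the category of graded $R$-modules, with grading by the character lattice of $T$; $T$-equivariant morphisms correspond to degree-preserving $R$-module maps, and $T$-equivariant vector bundles correspond to finitely generated graded projective $R$-modules.

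First I would write
$$\Ext^i(\mathcal{V}, E) \;=\; H^i R\Gamma^T\bigl(A,\, \mathcal{V}^\vee \otimes E\bigr),$$
where $\Gamma^T$ denotes the $T$-invariant global sections functor. Since $\mathcal{V}$ is a vector bundle, the tensor product $\mathcal{V}^\vee \otimes E$ is underived and therefore still lies in $\mathcal{D}^{T, \leq a}(A)$. It then remains to argue that $R\Gamma^T$ does not increase cohomological amplitude on $\mathcal{D}^{T}(A)$.

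For this I would decompose $\Gamma^T = (-)_0 \circ \Gamma$: the functor $\Gamma(A, -)$ is exact on coherent sheaves since $A$ is affine, and extracting the weight-zero summand $(-)_0$ of a graded $R$-module is manifestly exact, being the projection onto one direct summand. Hence $\Gamma^T$ itself is exact on equivariant coherent sheaves, so the cohomology of $R\Gamma^T(\mathcal{V}^\vee \otimes E)$ is concentrated in degrees $\leq a$, giving the vanishing for $i > a$.

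The only genuine content, such as it is, lies in setting up the equivalence between $\mathcal{D}^T(A)$ and the derived category of graded $R$-modules compatibly with internal $R\sheafhom$ and $R\Gamma^T$; once this dictionary is in place the conclusion is immediate from exactness of the degree-zero summand functor. An equivalent route that bypasses $R\Gamma^T$ is to observe directly that $\mathcal{V}$ corresponds to a graded projective module $P$, so $\Hom_{R,\mathrm{gr}}(P, -)$ is exact on graded modules, yielding $H^i \RHom_T(\mathcal{V}, E^\bullet) = \Hom_{R,\mathrm{gr}}(P, H^i(E^\bullet)) = 0$ for $i > a$.
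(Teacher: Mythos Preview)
The paper does not supply its own proof of this lemma; it is simply quoted from \cite[Lemma~2.3.3]{kuhnliuthimm}. Your argument is correct and is the standard one: on an affine scheme with torus action, equivariant coherent sheaves are graded modules, the global sections functor is exact by affineness, taking the degree-zero summand is exact, and tensoring with a locally free sheaf is exact, so $\RHom_T(\mathcal{V},-)$ has no higher cohomology beyond that already present in the input complex. Either of the two routes you outline (via $R\Gamma^T$ or directly via graded projectivity of the module corresponding to $\mathcal{V}$) is fine.
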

	
	\begin{cor}
		\label{splitting}
		Let $f \colon A \to X$ be a smooth morphism between schemes with $A$ affine. Then the set of splittings of the natural morphism $\LL_A \to \Omega_f$ is nonempty. Moreover, it is a torsor under the finite-dimensional vector space $\Hom(\Omega_f, f^*\LL_X)$.
	\end{cor}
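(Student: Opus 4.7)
The plan is to use the cotangent-complex triangle of the smooth morphism $f$ and to reduce existence of a splitting to an $\Ext^1$-vanishing, which follows directly from Lemma~\ref{kltvanishing}. First I would note that smoothness of $f\colon A\to X$ identifies $\LL_{A/X}$ with the locally free sheaf $\Omega_f$ in degree zero, giving a distinguished triangle
$$f^*\LL_X \to \LL_A \to \Omega_f \to f^*\LL_X[1]$$
in which the second arrow is the natural morphism of the statement.

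Applying $\RHom(\Omega_f,-)$ to this triangle produces the long exact sequence
$$\Hom(\Omega_f, f^*\LL_X) \to \Hom(\Omega_f, \LL_A) \xrightarrow{q_*} \Hom(\Omega_f, \Omega_f) \xrightarrow{\delta} \Ext^1(\Omega_f, f^*\LL_X).$$
A splitting of $\LL_A \to \Omega_f$ is exactly a preimage of $\id_{\Omega_f}$ under $q_*$, so existence is equivalent to $\delta(\id_{\Omega_f}) = 0$, for which it suffices to show $\Ext^1(\Omega_f, f^*\LL_X) = 0$. I would obtain this vanishing from Lemma~\ref{kltvanishing} applied with the trivial torus action: $\Omega_f$ is a vector bundle on the affine scheme $A$, and $f^*\LL_X$ lies in $\DD^{\leq 0}(A)$ since $\LL_X \in \DD^{[-1,0]}(X)$ and $f^*$ is exact. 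The lemma then yields $\Ext^i(\Omega_f, f^*\LL_X) = 0$ for every $i>0$, giving existence of a splitting.

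For the torsor structure, once $\Ext^1(\Omega_f, f^*\LL_X) = 0$ the map $q_*$ is surjective, and its fibre over $\id_{\Omega_f}$ is a torsor under $\ker(q_*)$. From the long exact sequence this kernel is the image of $\Hom(\Omega_f, f^*\LL_X) \to \Hom(\Omega_f, \LL_A)$, and this latter map is injective: its kernel would lie in $\Ext^{-1}(\Omega_f, \Omega_f)$, which vanishes because $\Omega_f$ is a vector bundle concentrated in degree zero. Hence the set of splittings is a torsor under $\Hom(\Omega_f, f^*\LL_X)$, with finite-dimensionality inherited from the ambient context in which the corollary will be applied. The only substantive input is the single application of Lemma~\ref{kltvanishing}, so I do not expect any serious obstacle.
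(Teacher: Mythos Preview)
Your proof is correct and follows essentially the same route as the paper: both use the cotangent triangle of $f$, apply $\Hom(\Omega_f,-)$, and invoke Lemma~\ref{kltvanishing} (with trivial torus) to kill $\Ext^1(\Omega_f, f^*\LL_X)$ for existence and $\Hom(\Omega_f,\Omega_f[-1])$ for the torsor structure. Your remark that finite-dimensionality is ``inherited from the ambient context'' is honest---the paper's own proof does not address it either, and in fact this claim is not needed anywhere in the sequel.
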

	\begin{proof}
		Note that the sheaf of relative differentials $\Omega_f$ is a vector bundle as $f$ is smooth. Consider the distinguished triangle
		\begin{equation}
			\label{disttr}
			f^*\LL_X \to \LL_A \to \Omega_f. 
		\end{equation}
		By Lemma $\ref{kltvanishing}$,  $\Hom(\Omega_f, f^*\LL_X[1]) =0.$ In particular, the connecting homomorphism in (\ref{disttr}) vanishes. Also, $\Hom(\Omega_f, \Omega_f[-1]) = 0.$
		\begin{center}
			\begin{tikzcd} [row sep = 6pt] 
				& \Omega_f \arrow[d, equals] \arrow[ld, "\exists"', dotted] &          \\
				\LL_A \arrow[r] & \Omega_f \arrow[r, "0"]                           & f^*\LL_X[1]
			\end{tikzcd}
		\end{center}
		The statement follows now from the long exact sequence obtained by applying $\Hom_{\DD(A)}(\Omega_f, -)$ to the distinguished triangle (\ref{disttr}), which gives
		\begin{equation}
			0  \to \Hom(\Omega_f, f^*\LL_X) \to \Hom(\Omega_f, \LL_A) \to \Hom(\Omega_f, \Omega_f) \to \Hom(\Omega_f, f^*\LL_X[1]) = 0. {\qedhere} 
		\end{equation}
	\end{proof}
	\begin{prop}
		\label{potpullback}
		Let $f \colon A \to X$ be a smooth morphism between schemes with $A$ affine and let $\varphi \colon E^\bullet \to \LL_X$ be a perfect obstruction theory on $X$. Then to each splitting of the natural map $\LL_A \to \Omega_f$ as in Corollary \ref{splitting} we can associate a perfect obstruction theory $\tilde{\varphi} \colon \tilde{E^\bullet} \to \LL_A$ on $A$ such that $f^*[X]^{\vir} = [A]^{\vir}$.
	\end{prop}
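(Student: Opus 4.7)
The plan is to build $\tilde{E}^\bullet$ from $f^*E^\bullet$ and $\Omega_f$, using the splitting to identify $\LL_A$ with a direct sum. Concretely, by Corollary~\ref{splitting}, a splitting $\sigma\colon \Omega_f \to \LL_A$ of the natural map $\LL_A \to \Omega_f$ combined with the canonical map $\alpha\colon f^*\LL_X \to \LL_A$ yields an isomorphism
$$(\alpha,\sigma)\colon f^*\LL_X \oplus \Omega_f \xrightarrow{\sim} \LL_A$$
in $\mathcal{D}(A)$ (the two maps agree on cohomology because the connecting map of the distinguished triangle $f^*\LL_X \to \LL_A \to \Omega_f$ vanishes). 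I would then define
$$\tilde{E}^\bullet \coloneqq f^*E^\bullet \oplus \Omega_f,\qquad \tilde{\varphi} \coloneqq (f^*\varphi)\oplus \sigma \colon \tilde{E}^\bullet \to \LL_A,$$
where $f^*\varphi$ is viewed as landing in $\LL_A$ via $\alpha$ and $\sigma$ plays the role of identifying the summand $\Omega_f$.

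The first step is to check that $\tilde{\varphi}$ is a perfect obstruction theory of perfect amplitude $[-1,0]$. Since $f^*E^\bullet$ is perfect of amplitude $[-1,0]$ (smooth pullback preserves amplitude of perfect complexes) and $\Omega_f$ is a vector bundle in degree $0$ (as $f$ is smooth), the direct sum $\tilde{E}^\bullet$ is again perfect of amplitude $[-1,0]$. For the obstruction theory conditions, I would use the above splitting of $\LL_A$: on $h^{-1}$ the direct summand $\Omega_f$ contributes nothing, so $h^{-1}(\tilde{\varphi})$ identifies with $f^* h^{-1}(\varphi)$, which is surjective because $f^*$ is exact and $h^{-1}(\varphi)$ is surjective; on $h^0$ the map splits as the direct sum of $f^*h^0(\varphi)$ (an iso) and $\id_{\Omega_f}$, so $h^0(\tilde{\varphi})$ is an isomorphism.

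The second step is the identity $f^*[X]^{\vir} = [A]^{\vir}$. Here I would invoke compatibility of virtual fundamental classes under smooth pullback: the intrinsic normal cone satisfies $\mathfrak{C}_A \simeq f^*\mathfrak{C}_X$ as cone stacks over $A$ (because $f$ is smooth and the relative intrinsic normal cone $\mathfrak{C}_{A/X}$ vanishes), while the vector bundle stack $\mathfrak{E}_{\tilde{E}^\bullet}$ of $\tilde{E}^\bullet$ is $f^*\mathfrak{E}_{E^\bullet} \times_A [\Omega_f/0]$; applying the Gysin pullback $0^!_{\mathfrak{E}_{\tilde{E}^\bullet}}$ and using that the Gysin pullback for the added trivial factor $[\Omega_f/0]$ is trivial on cycles that already sit in $f^*\mathfrak{C}_X$, one recovers $f^*[X]^{\vir}$. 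Equivalently, this is a direct application of the functoriality of virtual pullbacks under smooth morphisms.

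The main subtlety I expect is not any individual step but simply recording carefully that the splitting $\sigma$ is what allows $\tilde{\varphi}$ to land in $\LL_A$ itself (rather than just in a complex quasi-isomorphic to it), so that $\tilde{\varphi}$ is a bona fide morphism in $\mathcal{D}(A)$ whose induced map on cohomology is easy to read off; once this identification is fixed, both the obstruction-theory check and the virtual-class comparison reduce to the direct-sum structure.
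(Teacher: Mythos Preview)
Your proposal is correct and follows essentially the same approach as the paper: both define $\tilde{E}^\bullet = f^*E^\bullet \oplus \Omega_f$ and use the splitting to identify $\LL_A \simeq f^*\LL_X \oplus \Omega_f$, then define $\tilde{\varphi}$ as $(f^*\varphi,\id)$ through this identification. For the virtual class comparison the paper is slightly more concise, observing that $\varphi$ and $\tilde{\varphi}$ are compatible in the sense of \cite[Definition~5.8]{Behrend_1997} and then invoking \cite[Corollary~4.9]{manolachevirtpullb}; your cone-stack sketch is the content behind that citation.
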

	\begin{proof}
		Consider the distinguished triangle (\ref{disttr}) and note that a choice of a splitting of the natural map $\LL_A \to \Omega_f$ yields an isomorphism
		$\LL_A \simeq f^*\LL_X \oplus \Omega_f.$
		Define the desired perfect obstruction theory by putting $$\tilde{E}^\bullet \coloneqq f^*E^\bullet \oplus \Omega_f $$ and 
		\begin{equation*}
			\tilde{\varphi} \colon \tilde{E}^\bullet \xrightarrow{(f^*\varphi, \id)} f^*\LL_X \oplus \Omega_f \simeq \LL_A  
		\end{equation*}
		Note that $\tilde{\varphi}$ is a perfect obstruction theory as $\varphi$ is and
		$Ob_A = f^* Ob_X$. 
		By construction, the perfect obstruction theories $\varphi$ and $\tilde{\varphi}$ are compatible in the sense of \cite[Definition 5.8]{Behrend_1997}. Hence, by \cite[Corollary~4.9]{manolachevirtpullb}, we have the desired compatibility of virtual classes.
	\end{proof}
	
	\begin{convention}
		A perfect obstruction theory $\tilde{\varphi}$ on $A$ constructed as in the proof of Proposition~\ref{potpullback} will be referred to as a pullback obstruction theory of $\varphi$ via$~f$.
	\end{convention} 
	
	The following Proposition is an analog of \cite[Proposition 2.2.2]{kuhnliuthimm} in Chow.
	\begin{prop}
		\label{semipotpullback}
		Let $f \colon Y \to X$ be a smooth morphism between schemes and let $ \varphi = \{U_i, \varphi_i \colon E^\bullet_i \to \LL_{U_i} \}_{i \in I}$ be an almost perfect obstruction theory on $X$. Then there exists an almost perfect obstruction theory $$\tilde{\varphi} = \{\widetilde{U_i}, \tilde{\varphi_i} \colon E^\bullet_i \to \LL_{\widetilde{U}_i} \}_{i \in I}$$ on $Y$ such that ${f^*[X]^{\vir} = [Y]^{\vir}}$.
	\end{prop}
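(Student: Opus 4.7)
The plan is to pull back the étale cover along $f$, apply Proposition~\ref{potpullback} on an affine refinement, and verify the almost perfect obstruction theory axioms. Form the base changes $\widetilde{U}_i := U_i \times_X Y$: each projection $\widetilde{U}_i \to Y$ is étale (base change of $U_i \to X$) and each $g_i \colon \widetilde{U}_i \to U_i$ is smooth (base change of $f$). To meet the affineness hypothesis of Corollary~\ref{splitting} and Proposition~\ref{potpullback}, I would refine further by affine étale covers $\{V_i^\alpha \to \widetilde{U}_i\}_\alpha$; the composites $V_i^\alpha \to Y$ form an affine étale cover of $Y$. For each $(i,\alpha)$, let $g_i^\alpha \colon V_i^\alpha \to U_i$ denote the resulting smooth morphism, choose a splitting of $\LL_{V_i^\alpha} \to \Omega_{g_i^\alpha}$ as in Corollary~\ref{splitting}, and apply Proposition~\ref{potpullback} to obtain a perfect obstruction theory
\[
\tilde{\varphi}_i^\alpha \colon (g_i^\alpha)^{*} E_i^\bullet \oplus \Omega_{g_i^\alpha} \to \LL_{V_i^\alpha}
\]
with obstruction sheaf $(g_i^\alpha)^{*} Ob_{U_i}$.

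Declare $Ob_Y := f^{*} Ob_X$. Pulling back the isomorphisms $\psi_i \colon Ob_{U_i} \simto Ob_X|_{U_i}$ along $g_i^\alpha$ yields isomorphisms $\tilde{\psi}_i^\alpha \colon Ob_{V_i^\alpha} \simto Ob_Y|_{V_i^\alpha}$, establishing condition (i) of Definition~\ref{APOT}. For condition (ii), given a pair $(i,\alpha)$, $(j,\beta)$, choose an affine étale chart $\widetilde{V} \to V_i^\alpha \times_Y V_j^\beta$ whose projection to $U_i \times_X U_j$ factors through the étale chart $V$ on which the APOT on $X$ supplies $\eta_{ij} \colon E_i^\bullet|_V \simto E_j^\bullet|_V$. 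Pulling back $\eta_{ij}$ to $\widetilde{V}$ and taking direct sum with the natural identification of the relative differential summands produces a candidate isomorphism $\tilde{\eta}_{(i,\alpha),(j,\beta)}$ of pullback complexes; the identity $h^1(\tilde{\eta}^\vee) = (\tilde{\psi}_i^\alpha)^{-1} \circ \tilde{\psi}_j^\beta$ then follows by pulling back the corresponding relation on $V$. By Lemma~\ref{lemequivapot}, this APOT may subsequently be identified with one indexed by the coarser cover $\{\widetilde{U}_i \to Y\}$.

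For the virtual class identity, Proposition~\ref{potpullback} gives $(g_i^\alpha)^{*} [U_i]^{\vir} = [V_i^\alpha]^{\vir}$ on each chart. By construction $[Y]^{\vir}$ restricts to $[V_i^\alpha]^{\vir}$ on each $V_i^\alpha$, and by flat base change $f^{*}[X]^{\vir}$ restricts to $(g_i^\alpha)^{*}[U_i]^{\vir}$. Hence the two classes agree locally on the cover $\{V_i^\alpha \to Y\}$, and therefore coincide globally through the Chang--Li construction $[Y]^{\vir} = 0^!_{Ob_Y}[c_Y]$, whose ingredients $c_Y \subset Ob_Y$ are themselves determined étale-locally and are manifestly compatible with pullback along smooth morphisms.

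The main obstacle will be verifying condition (ii) compatibility with the cotangent complex maps: the pullback construction in Proposition~\ref{potpullback} depends on a choice of splitting of $\LL_{V_i^\alpha} \to \Omega_{g_i^\alpha}$, so the naive direct-sum isomorphism $\eta_{ij} \oplus \id$ need not intertwine the two pullback obstruction-theory maps to $\LL_{\widetilde{V}}$ on the nose. The discrepancy is controlled by the torsor structure on splittings from Corollary~\ref{splitting} and can be absorbed into a correction of $\tilde{\eta}$ that does not alter its action on $h^1$, thereby preserving the required gluing of obstruction sheaves.
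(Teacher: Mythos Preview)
Your approach is essentially the paper's --- pull back the \'etale cover, apply Proposition~\ref{potpullback} chartwise, and set $Ob_Y=f^*Ob_X$ --- but with notable differences in what is made explicit.

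You are more careful than the paper on two points. First, you correctly observe that $\widetilde U_i=U_i\times_X Y$ need not be affine, so Corollary~\ref{splitting} and Proposition~\ref{potpullback} do not literally apply there; your refinement to affine charts $V_i^\alpha$ is the right fix, whereas the paper simply applies Proposition~\ref{potpullback} on $\widetilde U_i$ without comment. Second, you explicitly address condition~(ii) of Definition~\ref{APOT} and the splitting-dependence issue, which the paper omits entirely. Your proposed correction is sound: the two restricted splittings of $\LL_{\widetilde V}\to\Omega$ differ by some $\delta\in\Hom(\Omega,g^*\LL_U)$, and since $\Cone(\varphi_j)\in\DD^{\le -2}$, Lemma~\ref{kltvanishing} on the affine $\widetilde V$ shows $\delta$ lifts through $g^*\varphi_j$ to a $\delta'\in\Hom(\Omega,g^*E_j^\bullet)$; taking $\tilde\eta=\eta_{ij}\oplus\id$ corrected by the off-diagonal $\delta'$ makes the triangle with $\LL_{\widetilde V}$ commute and leaves $h^1(\tilde\eta^\vee)$ unchanged because the $\Omega$-summand contributes nothing to $h^1$ of the dual.

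Two cautions. Your appeal to Lemma~\ref{lemequivapot} to pass back to the cover $\{\widetilde U_i\}$ is misplaced: that lemma is about passing to a \emph{sub}cover, and $\{\widetilde U_i\}$ is not a subcover of $\{V_i^\alpha\}$; moreover there need not exist any APOT indexed by $\{\widetilde U_i\}$ at all, since the splittings may fail. Simply keep the APOT on $\{V_i^\alpha\}$ and accept a mismatch with the literal index set in the statement. For the virtual class, ``agree \'etale-locally hence globally'' is not immediate, since Chow groups do not form an \'etale sheaf. The paper's route here is cleaner and is what your final clause gestures at: work directly with the global definition $[Y]^{\vir}=0^!_{Ob_Y}[c_Y]$, note that $c_Y=f^{-1}c_X$ inside $Ob_Y=f^*Ob_X$ (this identity \emph{is} \'etale-local, hence follows from your chartwise analysis), and then use compatibility of the Chang--Li Gysin map with flat pullback to conclude
\[
f^*[X]^{\vir}=f^*0^!_{Ob_X}[c_X]=0^!_{f^*Ob_X}f^*[c_X]=0^!_{Ob_Y}[c_Y]=[Y]^{\vir}.
\]
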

	More precisely, we can associate such an almost perfect obstruction theory to each set of splittings as in Corollary $\ref{splitting}$ for each chart.
	\begin{proof}
		Consider the étale cover $\{\widetilde{U_i} \to Y\}_{i \in I}$ of $Y$ with $\widetilde{U_i} \coloneqq U_i \times_X Y$.
		For each $i$ consider a pullback perfect obstruction theory of $\varphi_i$ via $f_i$ 
		$$\widetilde{\varphi_i} \colon \tilde{E_i^\bullet} \coloneqq f_i^*E_i^\bullet \oplus \Omega_{f_i} \xrightarrow{(f_i^* \varphi_i, \id)} f_i^*\LL_{U_i} \oplus \Omega_{f_i} \simeq \LL_{\widetilde{U_i}}$$
		on $\widetilde{U_i}$ constructed in the proof of Proposition $\ref{potpullback}$,
		with $Ob_{\widetilde{U_i}} \simeq f_i^*Ob_{U_i}$. This data forms an almost perfect obstruction theory on $Y$ with obstruction sheaf
		$Ob_{Y} = f^*Ob_{X}. $
		We have the following chain of equalities
		\begin{equation}
			\label{compvirtclass}
			f^*[X]^{\vir} = f^*0^!_{Ob_X}[c_X] = 0^!_{f^* Ob_X}f^*[c_X] = 0^!_{Ob_Y}[c_{Y}] = [Y]^{\vir}.
		\end{equation}
		The second equality holds as Gysin pullback is compatible with flat pullbacks \cite{kresch}. The third equality holds as by \cite[Proof 2.2.4]{kuhnliuthimm} there is an equality $c_{Y} = f^{-1}c_X$ of closed substacks in $Ob_{Y}$ and hence $f^*[c_X] = [c_{Y}] \in A_*(Ob_{Y})$.
	\end{proof}
	
	\begin{convention}
		We will refer to an almost perfect obstruction theory $\tilde{\varphi}$ constructed as in the proof of Proposition $\ref{semipotpullback}$ as a pullback almost perfect obstruction theory of $\varphi$ via$~f$.
	\end{convention}

	\begin{rem}
		Theorems $\ref{potpullback}$ and $\ref{semipotpullback}$ are also true in the equivariant setting, as the proofs work equivariantly.
	\end{rem}
	
	\subsection{Torus-equivariant Jouanolou trick} \label{jouanolou}
	
	The original result of Jouanolou \cite[Lemma 1.5]{jouanolou} states:
	
	\begin{theor}
		Let $X$ be a quasi-projective scheme over an affine scheme. Then there exists an affine bundle (i.$\:$e.$\:$a vector bundle torsor) $A \to X$ with $A$ affine.
	\end{theor}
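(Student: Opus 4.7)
The plan is classical: handle $\mathbb{P}^n$ by an explicit construction of rank-one projectors, then reduce the general quasi-projective case by pulling back along an embedding and removing the boundary.

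\textbf{Step 1 (projective space).} I would define
\[
J_n \;=\; \{\, M \in \operatorname{Mat}_{(n+1)\times(n+1)} : M^2 = M,\ \operatorname{tr}(M) = 1 \,\},
\]
the scheme of rank-one projectors, which is a closed subscheme of affine $(n+1)^2$-space, hence affine. Sending a projector to its image yields a morphism $J_n \to \mathbb{P}^n$ whose fibre over a line $\ell \subset \mathbb{C}^{n+1}$ is the affine space of splittings of the surjection $\mathbb{C}^{n+1} \twoheadrightarrow \mathbb{C}^{n+1}/\ell$, modelled on $\Hom(\mathbb{C}^{n+1}/\ell,\ell)$. This exhibits $J_n \to \mathbb{P}^n$ as a vector bundle torsor.

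\textbf{Step 2 (reduction to projective $X$).} Let $X$ be quasi-projective over an affine base $S$. I would fix a locally closed embedding $X \hookrightarrow \mathbb{P}^n_S$ with schematic closure $\bar{X} \subset \mathbb{P}^n_S$, so $\bar{X}$ is projective over $S$. Base-changing $J_n$ to $S$ and pulling back along the closed embedding $\bar{X} \hookrightarrow \mathbb{P}^n_S$ produces a vector bundle torsor $\bar{A} \to \bar{X}$ whose total space is closed in the affine scheme $(J_n)_S = J_n \times_{\Spec\mathbb{Z}} S$, and is therefore affine. In particular the proposition already holds for projective $X$.

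\textbf{Step 3 (removing the boundary).} To pass from $\bar{X}$ to $X$, I would consider $A \coloneqq \bar{A} \setminus \pi^{-1}(\bar{X}\setminus X) \to X$, which is still a vector bundle torsor by restriction. After replacing the embedding by a sufficiently high Veronese, one can arrange the boundary $\bar{X}\setminus X$ to be supported on the zero locus of a single section $s$ of a very ample line bundle on $\bar{X}$. Pulling $s$ back along the affine morphism $\bar{A} \to \bar{X}$ gives a regular function $f \in \Gamma(\bar{A},\mathcal{O}_{\bar{A}})$, and then
\[
A \;=\; \bar{A} \setminus \{f = 0\} \;=\; \Spec\bigl(\Gamma(\bar{A},\mathcal{O}_{\bar{A}})_{f}\bigr)
\]
is a distinguished affine open of $\bar{A}$, as required.

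The main obstacle is Step 3: one has to arrange that the boundary of the chosen embedding is cut out by a \emph{single} equation after pulling back to the Jouanolou torsor, since removing a higher-codimension closed subset from an affine scheme would leave a merely quasi-affine total space. This is what forces the careful use of a very ample re-embedding rather than an arbitrary projective closure; Steps 1 and 2 are essentially formal bookkeeping once the Jouanolou torsor for $\mathbb{P}^n$ is in hand.
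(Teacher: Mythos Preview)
The paper does not prove this statement; it merely quotes it from Jouanolou. Your Steps 1 and 2 are correct and classical. The gap is in Step 3: a Veronese re-embedding does not change the codimension of the boundary $Z=\bar{X}\setminus X$ inside $\bar{X}$, so it cannot turn a high-codimension $Z$ into the zero locus of a single section. Take $X=\mathbb{P}^2\setminus\{p\}$: for any projective embedding of $\bar{X}=\mathbb{P}^2$ (Veronese or otherwise) the boundary is a point of codimension two, and no nonzero section of any line bundle on $\bar{X}$ vanishes precisely there. Hence $\pi^{-1}(Z)\subset\bar{A}$ also has codimension two and its complement is only quasi-affine, exactly the failure you anticipated but did not avoid.

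Two standard repairs. (a) Change the compactification rather than the embedding: blow up $\bar{X}$ along (a subscheme supported on) $Z$ so that the new boundary is the exceptional divisor $E$, an effective Cartier divisor; then $\pi^{-1}(E)$ is Cartier in the new affine $\bar{A}$ and its complement is a genuine affine open. (b) More directly, bypass closures: choose sections $s_0,\dots,s_N$ of a power of an ample line bundle on $X$ with each $X_{s_i}$ affine and $\bigcup_i X_{s_i}=X$; these define a morphism $\phi\colon X\to\mathbb{P}^N_S$ which is \emph{affine} (the preimage of the $i$-th standard chart is $X_{s_i}$), and then $A=X\times_{\mathbb{P}^N_S}(J_N)_S$ is affine because $A\to(J_N)_S$ is the base change of the affine morphism $\phi$ and $(J_N)_S$ is affine. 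Option (b) is essentially Jouanolou's original argument. A smaller point: even when $Z=V(s)$ for a section of a very ample line bundle, the pullback $\pi^*s$ is a section of a possibly nontrivial line bundle on $\bar{A}$, not literally a regular function; the conclusion survives because the complement of an effective Cartier divisor in an affine scheme is affine, but your displayed formula $A=\Spec\bigl(\Gamma(\bar{A},\mathcal{O}_{\bar{A}})_f\bigr)$ should be replaced accordingly.
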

	Thomason extended this result to an arbitrary quasicompact and quasiseparated scheme with an ample family of line bundles \cite[Proposition 4.4]{weibel}. Theorem $\ref{tequiv}$ below is an equivariant version of this result. We use the strategy from \cite[Proof 2.5.3]{kuhnliuthimm} presenting an elegant proof generously communicated to the author by D. Rydh.
	
	\begin{theor}{(Totaro, $GL_n$-equivariant Jouanolou trick)}
		\label{totaro}
		Let $X$ be a stack of the form $[\tilde{W} / GL_n]$ with $\tilde{W}$ quasi-affine. Then there exists an affine bundle $E = [W/ GL_n] \to X$ with $W$ affine.  
	\end{theor}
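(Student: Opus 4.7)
The plan is to reduce the stacky statement to a concrete scheme-theoretic construction: the goal is to build a $GL_n$-equivariant affine bundle $\pi \colon W \to \tilde{W}$ with $W$ affine, from which $E \coloneqq [W/GL_n] \to [\tilde{W}/GL_n] = X$ is the desired affine bundle. The construction is an equivariant version of the following classical observation: a quasi-affine scheme written as the complement of the zero locus of finitely many regular functions on an affine scheme carries a canonical affine bundle, namely the scheme of ``dual vectors'' pairing with those functions to give $1$.

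First, let $Z \coloneqq \Spec \Gamma(\tilde{W}, \oo_{\tilde{W}})$ be the affine hull of $\tilde{W}$. Since $\tilde{W}$ is quasi-affine, the canonical morphism $\tilde{W} \to Z$ is an open immersion, and the induced $GL_n$-action on $\Gamma(\tilde{W}, \oo)$ is locally finite, so $\tilde{W} \hookrightarrow Z$ becomes $GL_n$-equivariant. Let $I \subset \Gamma(Z, \oo_Z)$ be the $GL_n$-invariant ideal cutting out $Z \setminus \tilde{W}$. Using that every $GL_n$-module over $\CC$ is a union of finite-dimensional subrepresentations, choose a finite-dimensional $GL_n$-stable subspace $S \subset I$ whose zero locus in $Z$ coincides with $Z \setminus \tilde{W}$; this is possible whenever $I$ is finitely generated, which holds for $\tilde{W}$ of finite type over $\CC$.

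Next, define
$$W \coloneqq \{(x, s) \in Z \times S : s(x) = 1\},$$
a closed subscheme of the affine scheme $Z \times S$; in particular $W$ is affine. For $(x, s) \in W$, some element of $S$ does not vanish at $x$, so the first projection factors through $\tilde{W}$, giving $\pi \colon W \to \tilde{W}$. The fibre of $\pi$ over $x$ is the affine hyperplane $\{s \in S : s(x) = 1\}$, a torsor under the linear hyperplane $\{s \in S : s(x) = 0\}$, so $\pi$ is an affine bundle. Equivariance of $\pi$ under the diagonal $GL_n$-action on $Z \times S$ follows from the identity $(g \cdot s)(g \cdot x) = s(g^{-1} g \cdot x) = s(x) = 1$, which holds because the $GL_n$-action on $S \subset \Gamma(Z, \oo_Z)$ is by functorial pullback. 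Passing to quotient stacks delivers the required $E = [W/GL_n] \to X$.

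The main delicate point is the existence of $S$: it hinges on local finiteness of algebraic $GL_n$-actions on quasi-coherent sheaves (classical for affine algebraic groups over $\CC$) together with finite generation of $I$, which rests on Noetherianity of $Z$ and hence on an implicit finite-type assumption on $\tilde{W}$. This is the streamlining that Rydh's viewpoint, cited from the Kuhn--Liu--Thimm paper, presumably packages cleanly.
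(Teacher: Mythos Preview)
Your construction is correct and is precisely the argument Totaro gives in the cited proof of \cite[Theorem~1.1, $(2)\Rightarrow(3)$]{totaro_resolution}; the paper simply points to that reference rather than spelling it out, and notes that the resulting map is an $\Aff_{r-1}$-torsor (with $r=\dim S$ in your notation), hence an affine bundle.

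One small imprecision worth flagging: you justify the existence of the finite-dimensional $GL_n$-stable subspace $S$ via finite generation of $I$, which you in turn attribute to Noetherianity of $Z=\Spec\Gamma(\tilde W,\oo_{\tilde W})$ under a finite-type hypothesis on $\tilde W$. That implication can fail: the ring of global functions on a finite-type quasi-affine scheme need not be finitely generated. Fortunately you do not need it. Since $\tilde W$ is quasi-compact and open in $Z$, there are finitely many $f_1,\dots,f_k\in\Gamma(Z,\oo_Z)$ with $\tilde W=\bigcup_i D(f_i)$; taking $S$ to be the $GL_n$-span of $\{f_1,\dots,f_k\}$ (finite-dimensional by local finiteness of the action) already gives $V(S)=Z\setminus\tilde W$ set-theoretically, which is all your construction of $W$ requires.
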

	\begin{proof}
		By the proof of implication $(2) \implies (3)$ of \cite[Theorem 1.1]{totaro_resolution}, there exists an $\Aff_{r-1}$-torsor $E = [W/ GL_n] \to X$ with $W$ affine for a certain $r \in \mathbb{N}$, where $\Aff_{r-1} \simeq GL(r-1) \ltimes (G_a)^{r-1}$. Hence, it is an affine bundle.
	\end{proof}
	
	\begin{theor}{\cite[Theorem A]{gross-stack}}
		\label{gross}
		Let $X$ be a quasi-compact quasi-separated algebraic stack. The following are equivalent:
		\begin{enumerate}
			\item $X$ has affine stabilizer groups at closed points and satisfies the resolution property.
			\item $X$ is isomorphic to a quotient stack $[W/ GL_n]$, where $W$ is a quasi-affine scheme with an action of $GL_n$ for some $n \geq 0$.
		\end{enumerate}
	\end{theor}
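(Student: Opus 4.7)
The plan is to prove both implications separately, with $(2) \Rightarrow (1)$ being short and $(1) \Rightarrow (2)$ being the substantive content. For $(2) \Rightarrow (1)$, suppose $X \simeq [W/GL_n]$ with $W$ quasi-affine. Then the stabilizer at any closed point is a closed subgroup scheme of $GL_n$, hence affine. For the resolution property, a coherent sheaf on $X$ corresponds to a $GL_n$-equivariant coherent sheaf $\F$ on $W$. Since $W$ embeds as a quasi-compact open in $\Spec \Gamma(W, \oo_W)$, and since on an affine $GL_n$-scheme every equivariant coherent sheaf is a quotient of a finite direct sum of twists by standard $GL_n$-representations (a consequence of Thomason's equivariant resolution theorem for $GL_n$), one obtains a surjection onto $\F$ from vector bundles on $X$ pulled back along the classifying morphism $X \to BGL_n$.

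The substantial direction is $(1) \Rightarrow (2)$. The strategy I would pursue is to build the quotient presentation as a frame bundle. Using the resolution property, construct, for $n$ sufficiently large, a vector bundle $V$ of rank $n$ on $X$ whose fibers carry \emph{faithful} stabilizer actions at every closed point. Then $W := \underline{\mathrm{Isom}}(\oo_X^n, V)$ is a $GL_n$-torsor over $X$ with $X \simeq [W/GL_n]$, and faithfulness forces $W$ to be an algebraic space (no nontrivial stabilizers). To produce such a faithful $V$, I would stratify $X$ by conjugacy type of stabilizer (a finite stratification, since stabilizers are affine and of bounded dimension over a quasi-compact stack), build a faithful bundle locally on each stratum, and then use the resolution property to glue by embedding these local candidates as direct summands of a sufficiently large ambient bundle.

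The remaining, and most delicate, step is to verify that $W$ is quasi-affine. The plan is to invoke an equivariant Serre-type criterion: use the resolution property on $X$, together with determinantal and symmetric power constructions applied to a rich enough collection of vector bundles, to produce a family of $GL_n$-equivariant line bundles on $W$ whose global sections separate points and tangent vectors. The main obstacle will be executing all three requirements \emph{simultaneously}: the bundle $V$ must be faithful everywhere, and the line bundle family on $W$ must be ample enough to define a quasi-affine embedding into affine space, both under the single assumption of the resolution property on $X$. Making the interplay between affineness of stabilizers, faithfulness of $V$, and ampleness of the $GL_n$-equivariant line bundles on $W$ precise is the heart of the matter, and is where the argument in \cite{gross-stack} concentrates its technical effort.
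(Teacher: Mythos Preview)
The paper does not give its own proof of this theorem: it is stated with a citation to \cite[Theorem A]{gross-stack} and immediately used as a black box in the proof of Theorem~\ref{tequiv}. There is therefore no in-paper argument to compare your proposal against.

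That said, your outline is broadly in the spirit of Gross's actual proof. The frame-bundle idea is correct: one does construct a vector bundle $V$ on $X$ whose associated $GL_n$-torsor of frames is representable and quasi-affine. Where your sketch is vague is precisely where the real work lies. In Gross's argument one does not stratify by stabilizer type and glue; instead one uses the resolution property directly to produce a single ``tensor generator'' $V$ (a vector bundle such that every coherent sheaf is a quotient of a finite direct sum of tensor powers of $V$ and $V^\vee$), and it is this tensor-generating property, rather than an ad hoc faithfulness check, that simultaneously forces the frame bundle to be an algebraic space and to be quasi-affine. Your plan to verify quasi-affineness via an equivariant Serre criterion on line bundles is not how Gross proceeds and would be difficult to execute as stated; the tensor-generator formulation is what makes the quasi-affineness step go through cleanly. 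So your proposal identifies the right object (the frame bundle) but not quite the right mechanism for controlling it.
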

	
	\begin{theor}
		\label{tequiv}
		Let $X$ be a quasi-separated quasi-compact algebraic space acted on by a torus $T$ and satisfying the $T$-equivariant resolution property. Then there exists a $T$-equivariant affine bundle $A \to X$ with $A$ affine.
	\end{theor}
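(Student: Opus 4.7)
The plan is to reduce to the $GL_n$-equivariant statement (Theorem~\ref{totaro}) by applying Gross's theorem (Theorem~\ref{gross}) to the quotient stack $[X/T]$. First I would verify that $[X/T]$ satisfies the hypotheses of Theorem~\ref{gross}: quasi-compactness and quasi-separatedness are inherited from $X$ together with the affineness of $T$; the stabilizer group at any closed point of $[X/T]$ is a closed subgroup of $T$, hence affine (even diagonalisable); and the $T$-equivariant resolution property of $X$ is, by definition, the resolution property of $[X/T]$. Gross's theorem then yields a presentation $[X/T] \simeq [\tilde{W}/GL_n]$ with $\tilde{W}$ quasi-affine, and Theorem~\ref{totaro} produces an affine bundle $E = [W/GL_n] \to [X/T]$ with $W$ affine.

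Next I would set $A \coloneqq X \times_{[X/T]} E$. Affine bundles are stable under base change, so $A \to X$ is an affine bundle, and it is $T$-equivariant because it arises by pulling back a morphism of stacks along the $T$-equivariant map $X \to [X/T]$ (where $[X/T]$ carries the trivial $T$-action, so that the $T$-action on $X$ lifts uniquely to $A$). In particular $A$ is an algebraic space, and the remaining task is to prove that it is affine.

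For this I would introduce the auxiliary space $\tilde{A} \coloneqq X \times_{[X/T]} W$ and exploit two descriptions. Since $\tilde{A} \to W$ is the pullback of the $T$-torsor $X \to [X/T]$ along $W \to [X/T]$, it is itself a $T$-torsor; as $T$ is affine and $W$ is affine, $\tilde{A}$ is affine. On the other hand, $\tilde{A} \simeq A \times_E W$, which realises $\tilde{A} \to A$ as the pullback of $W \to E = [W/GL_n]$, hence a $GL_n$-torsor. Writing $\tilde{A} = \Spec R$ with a free $GL_n$-action, the affine GIT quotient $\Spec R^{GL_n}$ agrees with the geometric quotient $A$ by linear reductivity of $GL_n$, so $A$ is affine as desired.

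The main obstacle I anticipate is the bookkeeping with stacks and torsors: identifying $\tilde{A}$ simultaneously as a $T$-torsor over $W$ and a $GL_n$-torsor over $A$, and verifying that the $T$-action produced on $A$ by the fibre-product construction matches the natural one inherited from $X$. The core geometric inputs — affineness of torsors under an affine group over an affine base, and affineness of quotients of affine schemes by free actions of linearly reductive groups — are standard, so once the stacky identifications are clean the argument falls out.
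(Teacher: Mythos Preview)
Your proposal is correct and follows essentially the same route as the paper: apply Gross's theorem to $[X/T]$, then Totaro's theorem to obtain an affine bundle $[W/GL_n]\to[X/T]$ with $W$ affine, and pull back to $X$. The only cosmetic difference is that where the paper cites Totaro's remark that $[\tilde A/GL_n]$ is affine when $\tilde A$ is affine, you spell this out via the $GL_n$-torsor $\tilde A\to A$ and linear reductivity; your notation $(\tilde W,W,\tilde A,A)$ corresponds to the paper's $(W,Q,\tilde Q,[\tilde Q/GL_n])$.
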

	
	\begin{proof} The stabilizer groups at closed points can be viewed as closed subgroups of $T$, so they are closed subvarieties in the affine variety $T$ and hence affine.
		Hence, by Theorem $\ref{gross}$ of Gross, there exists a quasi-affine scheme $W$ with a $GL_n$-action such that $[X/T] \simeq [W/ GL_n]$. Hence, by Theorem~$\ref{totaro}$ of Totaro 
		there exists an affine scheme $Q$ with a $GL_n$ action and an affine bundle 
		$$e \colon [Q/ GL_n] \to [X/T] \simeq [W/GL_n]. $$ Consider the following fibered diagram
		\begin{center}
			\begin{tikzcd}
				{[\tilde{Q}/ GL_n]} \arrow[d] \arrow[r] & X \arrow[d]             \\
				{[Q/ GL_n]} \arrow[r, "e"]              & {[X/T] \simeq [W/GL_n],}
			\end{tikzcd}
		\end{center}
		
		where, $\tilde{Q} \coloneqq Q \times_{[X/T]} X$ and $\tilde{Q} \to Q$ is a $T$-torsor. Then, $\tilde{Q}$ is also an affine scheme. By \cite[Remark (2), p.2]{totaro_resolution}, $[\tilde{Q}/GL_n]$ is an affine scheme as well. The horizontal arrows in the diagram above are affine bundles and the vertical arrows are $T$-torsors. Hence the upper horizontal morphism is the desired $T$-equivariant affine bundle with $A \coloneqq  [\tilde{Q}/GL_n]$ affine.
	\end{proof}
	
	\subsection{Siebert's formula for almost perfect obstruction theories} \label{siebertformula} The classical formula of Siebert \cite[Theorem 4.6]{siebert} states that given a perfect obstruction theory $\varphi \colon E^\bullet \to \LL_X$ the corresponding virtual fundamental class can be expressed as 
	\begin{equation}
		\label{siebform}
		[X]^{\vir} = [s(E_\bullet) \cap c_F(X)]_{\vd},
	\end{equation}
	where $E_\bullet = E^{\bullet \vee}$, $\vd$ is the virtual dimension of $\varphi$ and $$c_F(X) \coloneqq c(T_Y|_X) \cap s(C_XY) \in A_*(X),$$ for any embedding of $X$ into a smooth scheme $Y$, is the Fulton's canonical class (see \cite[Example 4.2.6]{Fulton_Intersection_Theory}). 
	
	We aim to prove a similar formula to $(\ref{siebform})$ for almost perfect obstruction theories under Assumption~$\ref{siebassumpt}$ below working equivariantly.

	Let $X$ be a scheme acted on by a torus $T$ and
	\begin{equation} 
		\label{oursemipot}
		\varphi = \{U_i, \varphi_i \colon E_i^\bullet \to \LL_{U_i}\}_{i \in I}
	\end{equation} 
	be a $T$-equivariant almost perfect obstruction theory on $X$. Consider the following assumption:
	\begin{assumption}
		\label{siebassumpt}
		\begin{enumerate}
			\item \label{assumpt1} The class $[T_X] - [Ob_X] \in K_0^T(X)$ lies in the image of the natural map ${K^{0, T}(X) \to K_0^T(X)}$ and there exists a preimage $$E_\bullet \in K^{0, T}(X)$$ with $E_\bullet|_{U_i} \simeq E_i^\bullet \in K^{0, T}(U_i)$ for each $i \in I$.
			\item \label{assumpt2} There exists a $T$-equivariant embedding $X \into Y$ into a smooth scheme $Y$ which admits a $T$-equivariant affine bundle $B \to Y$ with $B$ affine 
			\begin{center}
				\begin{tikzcd}
					A \arrow[r, hook] \arrow[d, "a"'] & B \arrow[d, "b"] \\
					X \arrow[r, hook]                 & Y               
				\end{tikzcd}
			\end{center}
			such that a pullback $a^* \varphi$ of the almost perfect obstruction theory $\varphi$ via the induced affine bundle $a \colon A \to X$ 
			(see Proposition $\ref{semipotpullback}$)
			gives a perfect obstruction theory on $A$ with K-theory class $$a^*E_\bullet + T_{A/X} \in K^{0, T}(A).$$
		\end{enumerate}
	\end{assumption}
	
	\begin{theor}
		\label{apotpotsiebert}
		Let $X$ be a scheme acted on by a torus $T$ and $\varphi$ a $T$-equivariant almost perfect obstruction theory $(\ref{oursemipot})$ on $X$ satisfying Assumption $\ref{siebassumpt}$. Then
		$$[X]^{\vir} = [s(E_\bullet) \cap c_F(X)]_{\vd} \in A_*^T(X), $$
		where $E_\bullet$ denotes the class from Assumption $\ref{siebassumpt}$. 
	\end{theor}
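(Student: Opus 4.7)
The plan is to pull the entire setup back along the $T$-equivariant affine bundle $a \colon A \to X$ provided by Assumption~\ref{siebassumpt}(\ref{assumpt2}), reduce to the classical $T$-equivariant Siebert formula on $A$, and descend the resulting identity using the isomorphism $a^* \colon A_*^T(X) \simto A_{*+d}^T(A)$, where $d \coloneqq \rk T_{A/X}$. This reduction is exactly what Assumption~\ref{siebassumpt} is tailored for: part~(\ref{assumpt2}) simultaneously gives a model on which the classical formula applies and upgrades the pullback APOT to a genuine POT with a specified K-theory class.

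First I would invoke Proposition~\ref{semipotpullback} to obtain $a^*[X]^{\vir} = [A]^{\vir}$, while Assumption~\ref{siebassumpt}(\ref{assumpt2}) identifies the pullback APOT on $A$ with a $T$-equivariant perfect obstruction theory of $K^{0,T}$-class $a^*E_\bullet + T_{A/X}$. Since $A$ is a closed $T$-subscheme of the smooth affine scheme $B$, the classical $T$-equivariant Siebert formula applies, giving
\begin{equation*}
[A]^{\vir} = \bigl[s(a^*E_\bullet + T_{A/X}) \cap c_F(A)\bigr]_{\vd + d} \in A^T_{\vd + d}(A).
\end{equation*}
I would then compare Fulton's class on $A$ with that on $X$: because the square in Assumption~\ref{siebassumpt}(\ref{assumpt2}) is cartesian with $b \colon B \to Y$ a smooth $T$-equivariant affine bundle, one has $C_{A/B} \simeq a^* C_{X/Y}$ and $T_B|_A \simeq T_{A/X} \oplus a^*T_Y|_X$, hence
\begin{equation*}
c_F(A) = c(T_{A/X}) \cdot a^*c_F(X).
\end{equation*}

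Finally, combining this with the multiplicativity of total Segre classes on $K$-theory, $s(a^*E_\bullet + T_{A/X}) = a^*s(E_\bullet) \cdot s(T_{A/X})$, and cancelling the factor $s(T_{A/X}) \cdot c(T_{A/X}) = 1$, would yield
\begin{equation*}
\bigl[s(a^*E_\bullet + T_{A/X}) \cap c_F(A)\bigr]_{\vd + d} = a^*\bigl[s(E_\bullet) \cap c_F(X)\bigr]_{\vd},
\end{equation*}
and injectivity of $a^*$ would finish the argument. The main technical obstacle I anticipate is setting up the classical Siebert formula cleanly in the $T$-equivariant Chow group on $A$: the intrinsic normal cone, the embedding into the vector bundle stack coming from the POT, and Kresch's Gysin map must all be upgraded equivariantly, which is routine but needs to be spelled out. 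The affine-bundle computation of $c_F(A)$ and the $K$-theoretic Chern/Segre manipulations are then standard consequences of the flatness and $T$-equivariance of $a$.
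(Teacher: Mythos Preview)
Your proposal is correct and follows essentially the same route as the paper: pull back along the affine bundle $a$, apply the classical equivariant Siebert formula to the resulting POT on $A$, compare $c_F(A)$ with $a^*c_F(X)$ via the cone pullback and the tangent bundle exact sequence, cancel $s(T_{A/X})c(T_{A/X})$, and conclude by the isomorphism $a^*$ on equivariant Chow groups. The only cosmetic difference is that the paper uses the exact sequence $0 \to T_{B/Y}|_A \to T_B|_A \to b^*T_Y|_A \to 0$ rather than a splitting $T_B|_A \simeq T_{A/X} \oplus a^*T_Y|_X$, but this is immaterial since only the Chern class identity is needed.
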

	
	\begin{proof}
		We use the notation from Assumptions $\ref{siebassumpt}$ above, throughout the proof we work equivariantly. Let $$\psi \colon \tilde{E}^\bullet \to \LL_A$$ be a pullback perfect obstruction theory of $\varphi$ via the affine bundle $a$. By Proposition~$\ref{semipotpullback}$,
		\begin{equation}
			\label{l1}
			a^*[X]^{\vir} = [A]^{\vir}.
		\end{equation} Siebert formula for $\psi$ on $A$ gives
		\begin{equation}
			\label{l2}
			[A]^{\vir} = [s(\tilde{E}_\bullet) \cap c_F(A)]_{\vd + r},
		\end{equation}
		where $r$ is the relative dimension of the smooth morphism $a$. As $a$ is an affine bundle, the pullback $$a^* \colon A_*^T(X) \to A_*^T(A)$$ is an isomorphism by an equivariant extension of \cite[\href{https://stacks.math.columbia.edu/tag/0GUB}{Tag 0GUB}]{stacks-project}. Hence, by $(\ref{l1})$ and $(\ref{l2})$, it suffices to check that
		\begin{equation}
			\label{suff}
			a^*[s(E_\bullet) \cap c_F(X)]_{\vd} = [s(\tilde{E}_\bullet) \cap c_F(A)]_{\vd + r} \in A_{\vd + r}^T(A).
		\end{equation}
		By assumption,
		$$[\tilde{E}_\bullet] = a^*E_\bullet + [T_{A/X}] \in K^{0,T}(X).$$
		Hence, 
		\begin{equation}
			\label{l3}
			s(\tilde{E}_\bullet) \cap c_F(A) = a^*s(E_\bullet)s(T_{A/X})c(T_B|_A) \cap s(C_AB)
		\end{equation}
		By \cite[Proposition 4.2 (b)]{Fulton_Intersection_Theory}, 
		\begin{equation}
			\label{conepullback}
			s(C_AB) = a^*s(C_XY).
		\end{equation}
		The short exact sequence of vector bundles
		$$0 \to T_{B/Y}|_A \to T_B|_A \to b^*T_Y|_A \to 0 $$ gives
		\begin{equation}
			\label{eqcherncl}
			c(T_B|_A) = c(T_{B/Y}|_A)c(b^*T_Y|_A) = c(T_{A/X})c(a^*T_Y|_X)
		\end{equation}
		Therefore in view of $(\ref{conepullback})$ and $(\ref{eqcherncl})$ the right hand side of $(\ref{l3})$ can be rewritten as 
		$$a^*s(E_\bullet)s(T_{A/X})c(T_B|_A) \cap s(C_AB) = a^*(s(E_\bullet)c(T_Y|_X) \cap s(C_XY)).$$
		Hence, we get
		$$s(\tilde{E}_\bullet) \cap c_F(A) = a^*(s(E_\bullet)c(T_Y|_X) \cap s(C_XY)) = a^*(s(E_\bullet) \cap c_F(X)), $$
		and therefore also $(\ref{suff})$. 
	\end{proof}
	\subsection{Virtual torus localization for almost perfect obstruction theories}
	We will prove the torus localization formula for almost perfect obstruction theories of Kiem \cite[Theorem 4.5]{kiem} (stated for semi-perfect obstruction theories) replacing \cite[Assumption 4.3]{kiem} by Assumption $\ref{toruslocassumpt}$ which will hold for the Quot scheme of points.
	
	More precisely, \cite[Assumption 4.3]{kiem} requires existence of a two-term complex on the fixed locus which plays a role of the virtual normal bundle. However, to state the formula itself, one only needs its class $N^{\vir}$ in $K$-theory which, as in the case of the Quot scheme of points, might be easier to construct. We are inspired by \cite[Theorem 2.4.3]{kuhnliuthimm} which is an analogous statement in K-theory.
	
	
	
	Let $X$ be a scheme acted on by a torus $T$ and
	\begin{equation} 
		\label{toursemipot}
		\varphi = \{U_i, \: \varphi_i \colon E_i \to \LL_{U_i}\}_{i \in I}
	\end{equation} 
	be a $T$-equivariant almost perfect obstruction theory on $X$. Recall from \cite{kiem2020localizing}, that the APOT on $X$ induces an APOT on the fixed locus $X^T$ 
	\begin{equation}
		\label{apotfixloc}
		\varphi|_{X^T}^{\fix} \coloneqq \{U_i^T = X^T \times_X U_i, \: \varphi_i|_{U_i^T}^{\fix} \colon E_i|_{U_i^T}^{\fix} \to \LL_{U_i^T}\}_{i \in I}
	\end{equation}
	obtained from $\varphi$ by taking fixed parts of the torus action of the restriction to the fixed locus on each étale chart. To prove the torus localization formula we need the following assumption.
	\begin{assumption}
		\label{toruslocassumpt}
		\begin{enumerate}
			\item \label{tassumpt1} The class $[T_X] - [Ob_X] \in K_0^T(X)$ lies in the image of the natural map ${K^{0, T}(X) \to K_0^T(X)}$ and there exists a preimage $$E_\bullet \in K^{0, T}(X)$$ with $E_\bullet|_{U_i} \simeq E_{U_i} \in K^{0, T}(U_i)$ for each $i \in I$. 
			
			\item \label{tassumpt2} There exists a $T$-equivariant affine bundle $a \colon A \to X$ with $A$ affine such that a pullback $a^* \varphi$ of the almost perfect obstruction theory $\varphi$ via $a$ (see Proposition $\ref{semipotpullback}$) gives a perfect obstruction theory  
			$$\varphi_A \colon E_A^{\bullet} \oplus \Omega_{A/X} \to \LL_A$$
			on $A$ such that 
			$$(E_A^{\bullet})^{\vee} = a^*E_{\bullet} \in K^{0, T}(A).$$
		\end{enumerate}
	\end{assumption}
	
	Under Assumption~\ref{toruslocassumpt}, we have the following fibered diagram
	\begin{center}
		\begin{tikzcd}
			A^T \arrow[r, hook] & F_A \arrow[r, hook] \arrow[d, "a|_{F_A}"'] & A \arrow[d, "a"] \\
			& X^T \arrow[r, hook]                          & X               
		\end{tikzcd}
	\end{center}
	where we have put $F_A \coloneqq a^{-1}(X^T)$. All horizontal arrows are closed embeddings.
	Before proving the torus localization formula, we check the following statement.
	\begin{lem}
		\label{fixpotpullback}
		The APOT obtained as a pullback of $\varphi|^{\fix}_{X^T}$ via $a|_{F_A}$ is a perfect obstruction theory.
	\end{lem}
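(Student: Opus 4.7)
The plan is to exhibit a global perfect obstruction theory on $F_A$, produced from the global POT $\varphi_A$ supplied by Assumption~\ref{toruslocassumpt}(\ref{tassumpt2}), and to show that it refines the pullback APOT $(a|_{F_A})^*(\varphi|_{X^T}^{\fix})$ in the sense of Definition~\ref{sameAPOT}. Once such a refinement is produced, the pullback APOT is equivalent to a genuine global perfect obstruction theory, which is precisely what the statement asserts.

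First I would invoke Assumption~\ref{toruslocassumpt}(\ref{tassumpt2}) to obtain the $T$-equivariant global POT
$$\varphi_A \colon E_A^\bullet \oplus \Omega_{A/X} \to \LL_A$$
realizing the pullback of $\varphi$ via $a$. Restricting $\varphi_A$ along the closed immersion $F_A \hookrightarrow A$ and taking the $T$-fixed direct summand of the resulting complex (and of $\LL_A|_{F_A}$, which maps canonically to $\LL_{F_A}$) yields, in the standard way, a $T$-equivariant global perfect obstruction theory $\varphi_A^{\fix}$ on $F_A$.

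Next I would verify, chart by chart on the étale cover $\{\tilde{U}_i^T \coloneqq F_A \times_{X^T} U_i^T \to F_A\}_{i \in I}$ underlying the pullback APOT, that the restriction of $\varphi_A^{\fix}$ to $\tilde{U}_i^T$ is canonically isomorphic to the pullback POT of $\varphi_i|_{U_i^T}^{\fix}$ along $a|_{\tilde{U}_i^T}$. This rests on the observation that $T$-equivariant smooth pullback and restriction to the $T$-fixed locus both respect the decomposition of equivariant complexes into $T$-weight subcomplexes, so that the operations ``take the fixed part'' and ``pull back along $a$'' commute up to canonical isomorphism. Since $\varphi_A|_{\tilde{U}_i}$ is the pullback of $\varphi_i$ along $a|_{\tilde{U}_i}$ by construction, applying the fix construction to $\varphi_A|_{\tilde{U}_i}$ and then restricting to $\tilde{U}_i^T$ produces precisely the pullback of $\varphi_i|_{U_i^T}^{\fix}$.

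These chart-wise identifications glue the local POTs of $(a|_{F_A})^*(\varphi|_{X^T}^{\fix})$ to the single global POT $\varphi_A^{\fix}$ on $F_A$, yielding the claim. The main subtlety I expect is arranging compatible splittings of the relative cotangent sequences used in the pullback POT construction of Proposition~\ref{potpullback}: one needs to start from $T$-equivariant splittings for $\LL_A \to \Omega_{A/X}$ (guaranteed by the equivariant analogue of Corollary~\ref{splitting}) and verify that their fixed-part restrictions furnish the splittings implicitly used in forming the pullback POTs on each $\tilde{U}_i^T$. This bookkeeping, rather than the abstract gluing, is where the argument is most delicate.
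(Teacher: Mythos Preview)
Your proposal is correct and follows essentially the same route as the paper's proof: both arguments produce the global perfect obstruction theory $(\varphi_A|_{F_A})^{\fix}$ on $F_A$ from the assumed global POT $\varphi_A$, and then verify chart by chart that the local POTs of the pullback APOT $(a|_{F_A})^*(\varphi|_{X^T}^{\fix})$ are the \'etale restrictions of this single global POT. The paper organizes the chart-wise comparison via a cube diagram of fibered squares and makes explicit the intermediate identification $(a_U^*E_U^\bullet)|_{W_A}\simeq a_W^*(E_U^\bullet|_{U^T})$ (so that the complex on each chart is pulled back from a scheme with trivial $T$-action, justifying the weight decomposition), but the logic is the same as yours; your remark about compatible splittings is a fair description of the bookkeeping involved.
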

	\begin{proof}
		This follows from the assumption that the pullback of our APOT $\varphi$ via $a$ is a perfect obstruction theory $E_A^\bullet \oplus \Omega_{A/X}$. Restricting it to $F_A$ and taking fixed part of the first direct summand will yield the APOT of interest, which is then also a POT. 
		
		More precisely, consider a $T$-equivariant étale affine chart $g \colon U \to X$ and the perfect obstruction theory $\varphi_U \colon E_U^\bullet \to \LL_U$ on $U$ from the definition of the APOT $\varphi$. Consider the following diagram where all the squares are fibered
		\begin{center}
			\begin{equation}
				\label{cubediagram}
				\begin{tikzcd}
					&  &                                                         & W_A \arrow[rr, hook] \arrow[ld, "g_W"] \arrow[dd, "a_W", near end] &                   & U_A \arrow[dd, "a_U"] \arrow[ld, "g_A"] \\
					A^T \arrow[rr, "\iota^\prime", hook] &  & F_A \arrow[rr, "\iota_F", near end, hook, crossing over] \arrow[dd, "a|_{F_A}"'] &                                            & A \arrow[dd, "a", near end, crossing over] &                                  \\
					&  &                                                         & U^T \arrow[rr, hook] \arrow[ld, "g^T"]                  &                   & U \arrow[ld, "g"]                     \\
					&  & X^T \arrow[rr, "\iota", hook]                           &                                            & X.                 &                                 
				\end{tikzcd}
			\end{equation}
		\end{center}
		
		Here, $U_A \coloneqq A \times_X U $ and $W_A \coloneqq F_A \times_A U_A$, both spaces are affine as $A$ and $U$ are affine. The fiber product $U^T \coloneqq X^T \times_X U$ is also the fixed locus under the torus action on $U$ as $g$ is étale. 
		
		By construction, the APOT $\varphi|_{X^T}^{\fix}$ on $X^T$ consists of an affine étale chart ${g^T \colon U^T \to X^T}$ and a POT 
		$$\varphi_{U^T} \coloneqq \varphi_U|_{U^T}^{\fix} \colon E_U^{\bullet}|_{U^T}^{\fix} \to \LL_{U^T}$$
		on $U^T$ for each affine étale chart $U \to X$ and POT $\varphi_U$. 
		
		A pullback APOT of $\varphi|_{X^T}^{\fix}$ via $a|_{F_A}$ to $F_A$ then consists of perfect obstruction theories 
		\begin{equation}
			\label{pot1}
			a_W^* \varphi_{U^T} \colon  a_W^*(E_U^\bullet|_{U^T}^{\fix}) \oplus \Omega_{W_A/U^T} \to \LL_{W_A}
		\end{equation}
		on $W_A$ for each affine étale chart $U^T \to X^T$ (see diagram $(\ref{cubediagram})$ for the notation).
		
		By commutativity of $(\ref{cubediagram})$, $$(a_U^* \varphi_U)|_{W_A} \simeq a_W^*(E_U^\bullet|_{U^T}).$$
		Hence, $(a_U^* \varphi_U)|_{W_A}$ is trivial on the torus orbit, an we can take fixed parts. Moreover,
		$$(a_U^* \varphi_U)|_{W_A}^{\fix} \simeq a_W^*(E_U^\bullet|_{U^T})^{\fix} \simeq a_W^*(E_U^\bullet|_{U^T}^{\fix}). $$ Therefore, the POT $a_W^* \varphi_{U^T}$ is isomorphic to the POT 
		\begin{equation}
			\label{pot2}
			(a_U^* \varphi_U)|_{W_A}^{\fix} \colon 	(a_U^* E^{\bullet}_U)|_{W_A}^{\fix} \oplus \Omega_{W_A/U^T} \to \LL_{W_A}.
		\end{equation}
		
		Recall that $\varphi_A$ is the POT obtained as a pullback of the APOT $\varphi$ via $a$. By assumption, it is of the form 
		$$\varphi_A \colon E_A^{\bullet} \oplus \Omega_{A/X} \to \LL_A.$$
		We also have isomorphisms $g_A^* E_{A}^{\bullet} \simeq a_U^* E_U^{\bullet}$ and  $g_A^* \varphi_A \simeq a_U^* \varphi_U$.
		
		Hence, the POT $(\ref{pot2})$ is isomorphic to the POT $(g_A^* \varphi_A)|_{W_A}^{\fix}$ which is isomorphic to the POT $$(g_W^* (\varphi_A|_{F_A}))^{\fix} \simeq g_W^*((\varphi_A|_{F_A})^{\fix}).$$
		
		So, we see that each étale local POT $(\ref{pot1})$ of the construction of the APOT $(a|_{F_A})^*\varphi|_{X^T}^{\fix}$ is the pullback to the corresponding chart of the (global) POT on $F_A$
		
		$$(\varphi_A|_{F_A})^{\fix} \colon (E_A^{\bullet})|_{F_A}^{\fix} \oplus \Omega_{F_A/ X^T} \to \LL_{F_A}.$$
		This exactly means that the APOT $(a|_{F_A})^*\varphi|_{X^T}^{\fix}$ coincides with the POT $(\varphi_A|_{F_A})^{\fix}$.
		%
		%
	\end{proof}
	We are now ready to prove the torus localization formula.
	
	\begin{theor}
		\label{localization}
		Let $X$ be a scheme acted on by a torus $T$, and $\iota \colon X^T \into X$ be the embedding of the fixed locus. Let $\varphi$ be a $T$-equivariant almost perfect obstruction theory $(\ref{toursemipot})$ satisfying Assumption~$\ref{toruslocassumpt}$. Then
		$$[X]^{\vir} = \iota_* \frac{[X^T]^{\vir}}{e(N^{\vir})}. $$
		Here, $[X^T]^{\vir}$ is obtained from the APOT (\ref{apotfixloc}) on the fixed locus and $N^{\vir} \coloneqq E_{\bullet}|_{X_T}^{mov} \in K^{0, T}(X). $ 
	\end{theor}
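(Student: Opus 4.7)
The plan is to reduce the formula, via the $T$-equivariant affine bundle $a \colon A \to X$ of Assumption~\ref{toruslocassumpt}, to the classical Graber--Pandharipande / Kiem virtual torus localization applied to the honest perfect obstruction theory $\varphi_A$ on $A$. Because $a$ is a $T$-equivariant affine bundle, the pullback $a^*$ on equivariant Chow (after inverting the equivariant parameters that make $e(N^{\vir})$ invertible) is injective, so it will suffice to establish the identity after applying $a^*$. Proposition~\ref{semipotpullback} gives $a^*[X]^{\vir}=[A]^{\vir}$. Base-changing to $X^T$ yields a further equivariant affine bundle $a|_{F_A} \colon F_A \to X^T$, and Lemma~\ref{fixpotpullback} identifies the pullback of $\varphi|_{X^T}^{\fix}$ along it with the POT $(\varphi_A|_{F_A})^{\fix}$, whence $(a|_{F_A})^*[X^T]^{\vir}=[F_A]^{\vir}$. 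Taking $T$-fixed parts fibrewise, $a_T \coloneqq a|_{A^T} \colon A^T \to X^T$ is another equivariant affine bundle, and by the same reasoning $a_T^*[X^T]^{\vir}=[A^T]^{\vir}$.

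Next we apply the classical torus localization to $\varphi_A$ on $A$:
\[
[A]^{\vir} \;=\; (\iota_{A^T/A})_* \frac{[A^T]^{\vir}}{e(N^{\vir}_A)},
\]
where $N^{\vir}_A$ is the moving part of $(E_A^\bullet \oplus \Omega_{A/X})^{\vee}|_{A^T}$. By Assumption~\ref{toruslocassumpt}, $(E_A^{\bullet})^{\vee} = a^*E_\bullet$ in $K^{0,T}(A)$, so the moving part of this summand at $A^T$ is $a_T^*N^{\vir}$. The moving part of the tangential summand $T_{A/X}|_{A^T}$ should equal the normal bundle $N_{A^T/F_A}$: since $F_A \to X^T$ is an affine bundle with fibrewise $T$-action, its $T$-fixed locus is exactly $A^T$, and the non-fixed weights of $T_{A/X}|_{A^T}$ are precisely the fibre directions transverse to $A^T$. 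Together this gives
\[
N^{\vir}_A \;=\; a_T^*N^{\vir}+N_{A^T/F_A} \quad \text{in } K^{0,T}(A^T).
\]

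Factoring $\iota_{A^T/A}=\iota_{F_A/A}\circ \iota_{A^T/F_A}$ and applying the Atiyah--Bott self-intersection formula to the smooth closed equivariant embedding $\iota_{A^T/F_A}$ (whose normal bundle has only moving weights, so its equivariant Euler class is invertible after localization), combined with the projection formula, absorbs the inner pushforward against $1/e(N_{A^T/F_A})$ and yields
\[
[A]^{\vir} \;=\; (\iota_{F_A/A})_* \frac{[F_A]^{\vir}}{e\bigl((a|_{F_A})^*N^{\vir}\bigr)}.
\]
Combining $(a|_{F_A})^*[X^T]^{\vir}=[F_A]^{\vir}$ with flat base change along the cartesian square formed by $\iota \colon X^T \hookrightarrow X$ and $a \colon A \to X$ rewrites the right-hand side as $a^*\iota_*\bigl([X^T]^{\vir}/e(N^{\vir})\bigr)$; cancelling $a^*$ gives the theorem.

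The main technical obstacle will be the $K$-theoretic decomposition $N^{\vir}_A = a_T^*N^{\vir}+N_{A^T/F_A}$, which hinges on identifying $A^T$ as the fixed locus of the fibrewise $T$-action on $F_A$ (so that $T_{A/X}|_{A^T}^{mov} = N_{A^T/F_A}$ as equivariant bundles) and on the mild variant of Lemma~\ref{fixpotpullback} comparing $\varphi_A|_{A^T}^{\fix}$ with the pullback of $\varphi|_{X^T}^{\fix}$ along $a_T$. Everything else should be formal manipulation with the projection formula, flat base change, and affine-bundle invariance of equivariant Chow.
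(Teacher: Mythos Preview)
Your strategy matches the paper's: reduce via the affine bundle $a$ to Graber--Pandharipande localization for the honest POT $\varphi_A$ on $A$, factor the fixed-locus inclusion through $F_A=a^{-1}(X^T)$, and conclude by injectivity (in fact isomorphism) of $a^*$ on equivariant Chow together with flat base change. The one substantive difference is how you pass from $[A^T]^{\vir}$ to $[F_A]^{\vir}$. The paper applies virtual localization a \emph{second} time, now to the POT $(\varphi_A|_{F_A})^{\fix}$ on $F_A$ supplied by Lemma~\ref{fixpotpullback}, obtaining directly
\[
[F_A]^{\vir}=\iota'_*\frac{[A^T]^{\vir}}{e(T_{A/X}|_{A^T}^{mov})},
\]
with virtual normal bundle $N^{\vir}_{F_A}=T_{A/X}|_{A^T}^{mov}$. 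Your route instead asserts that $a_T\colon A^T\to X^T$ is itself an affine bundle, invokes a variant of Lemma~\ref{fixpotpullback} along $a_T$ to get $a_T^*[X^T]^{\vir}=[A^T]^{\vir}$, and then uses classical Atiyah--Bott for the regular embedding $A^T\hookrightarrow F_A$ (smooth over $X^T$) to identify $\iota'_*(a_T^*[X^T]^{\vir}/e(N_{A^T/F_A}))$ with $(a|_{F_A})^*[X^T]^{\vir}=[F_A]^{\vir}$. Both arguments are valid; the paper's is slightly more economical since the second virtual localization already packages the regular-embedding and normal-bundle identifications you isolate as separate obstacles, and it avoids needing the extra $a_T$-variant of Lemma~\ref{fixpotpullback}.
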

	
	\begin{proof}
		We first apply Graber-Pandharipande virtual torus localization formula \cite{graber1997localization} to the perfect obstruction theory $\varphi_A$ on $A$. Denote by $\iota_A \colon A^T \into A$ the embedding of the fixed locus and note that by assumption, 
		$$N_{A^T}^{\vir} = (a^*E_{\bullet} + T_{A/X})|_{A^T}^{mov} \in K^{0, T}(A).$$
		Therefore,
		\begin{equation}
			\label{agrploc}
			[A]^{\vir} = \iota_{A*} \frac{[A^T]^{\vir}}{e(N^{\vir}_{A^T})} = \iota_{A*} \frac{[A^T]^{\vir}}{e(T_{A/X}|_{A^T}^{mov})e(a^*E_\bullet|_{A^T}^{mov})}
		\end{equation}
		Put $F_A \coloneqq a^{-1}(X^T)$ so that $\iota_A$ factors as
		\begin{center}
			\begin{tikzcd}
				A^T \arrow[r, "\iota^\prime", hook] & F_A \arrow[r, "\iota_F", hook] \arrow[d, "a|_{F_A}"'] & A \arrow[d, "a"] \\
				& X^T \arrow[r, "\iota", hook]                          & X               
			\end{tikzcd}
		\end{center}
		with $\iota^\prime$ and $\iota_F$ being closed embeddings. Note that
		\begin{equation}
			a^*E_\bullet|_{A^T}^{mov} = \iota^{\prime*} (a|_{F_A})^*(E_\bullet|_{X^T}^{mov})
		\end{equation} 
		hence, the right hand side of $(\ref{agrploc})$ can be rewritten as 
		\begin{equation}
			\label{l4}
			(\iota_F)_*\left(\iota^\prime_* \frac{[A^T]^{\vir}}{e(T_{A/X}|_{A^T}^{mov})\iota^{\prime*} (a|_{F_A})^*e(E_\bullet|_{X^T}^{mov})}\right) =
			(\iota_F)_*\left(\frac{1}{(a|_{F_A})^*e(E_\bullet|_{X^T}^{mov})} \iota^\prime_* \frac{[A^T]^{\vir}}{e(T_{A/X}|_{A^T}^{mov})}\right)
		\end{equation}
		where we use the projection formula for $\iota^\prime$.
		
		We next apply the localization formula to the perfect obstruction theory on $F_A$ obtained as a pullback via $a|_{F_A}$ of the APOT $\varphi|_{X^T}^{\fix}$ on $X^T$. It is a perfect obstruction theory by Lemma~\ref{fixpotpullback}.
		Note that $N^{\vir}_{F_A} = T_{A/X}|_{A^T}^{mov}$ by construction of the APOT on $X^T$. Hence,
		\begin{equation}
			[F_A]^{\vir} = \iota^\prime_* \frac{[A^T]^{\vir}}{e(T_{A/X}|_{A^T}^{mov}).}
		\end{equation}
		Hence, we can rewrite the right hand side of $(\ref{l4})$ as
		\begin{equation}
			(\iota_F)_* \frac{[F_A]^{\vir}}{(a|_{F_A})^*e(E_\bullet|_{X^T}^{mov})} = 
			(\iota_F)_* \frac{(a|_{F_A})^*[X^T]^{\vir}}{(a|_{F_A})^*e(E_\bullet|_{X^T}^{mov})} = 
			a^* \iota_* \frac{[X^T]^{\vir}}{e(E_\bullet|_{X^T}^{mov})}.
		\end{equation}
		Putting all together, we get
		$$a^*[X]^{\vir} = [A]^{\vir} = a^* \iota_* \frac{[X^T]^{\vir}}{e(E_\bullet|_{X^T}^{mov})}.$$
		Now by an equivariant extension of \cite[\href{https://stacks.math.columbia.edu/tag/0GUB}{Tag 0GUB}]{stacks-project}, $a^* \colon A_*^T(X) \simto A_*^T(A)$ is an isomorphism. Hence, the desired formula follows.
	\end{proof}
	
	\section{Virtual invariants and their computation in the toric case} \label{s6}
	\subsection{Virtual invariants}
	The virtual cycle $(\ref{virtclass})$ on $\Q$ 
	$$[\Q]^{vir} \in A_0(\Q)$$ allows one to define virtual invariants of Quot schemes of points corresponding to a locally free sheaf by
	\begin{equation}
		\label{ninv}
		\DT^n_{Y, F} \coloneqq \deg [\Q]^{vir}. 
	\end{equation}
	We consider the generating series:
	\begin{equation} \label{genser}
		\DT_{Y, F}(q) \coloneqq 1+ \sum_{n \geq 1} q^n \DT^n_{F, Y}. 
	\end{equation}
	The goal of this section is to obtain an explicit formula for $(\ref{genser})$ in the case when $Y$ is toric and $F$ has a torus-equivariant structure using the virtual torus localization (Theorem~$\ref{localization}$) and Siebert's formula for almost perfect obstruction theories (Theorem~$\ref{apotpotsiebert})$.
	
	\subsection{Applying torus localization, Siebert formula and Jouanolou trick to $\Q$.} \label{readyforloc}
	
	We aim to show that in the case when the $3$-fold $Y$ is a toric variety and the locally free sheaf $F$ has a torus-equivariant structure, we can perform the torus-equivariant Jouanolou trick (Theorem~\ref{tequiv}) on $\Q$ and apply the virtual localization formula (Theorem $\ref{localization}$) and Siebert formula for APOT (Theorem ${\ref{apotpotsiebert}}$) to the almost perfect obstruction theory from Theorem $\ref{quotsemipot}$.
	\vspace{6pt}
	
	Let $Y$ be a smooth toric $3$-fold with $\T = (\mathbb{C}^*)^3$-action
	$\sigma_Y \colon Y \times \T \to Y $
	and $F$ a locally free sheaf with a torus-equivariant structure
	\begin{equation}
		\label{equivstr}
		\mu \colon \sigma^*F \simeq pr_Y^*F, 
	\end{equation}
	where $pr_Y \colon Y \times \T \to Y$ is the projection morphism.
	\vspace{5pt}
	
	By \cite{Fasola_2021, newatclricolfi}, the torus action on $Y$ induces an action on $\Q$. We will need the explicit construction. For a scheme $S$, denote by $p_S \colon S \times Y \to S$ the projection to $S$ and by $F_S$ the pullback of $F$ to $S \times Y$ via the second projection. For an element $t \in \T(S)$, which we view as a map $t \colon S \to \T$, denote by $\sigma_t \colon S \times Y \to Y$ the composition $\sigma_Y \circ (t \times \id_Y)$ and by
	$$\alpha_t \colon S \times Y \to S \times Y$$
	the isomorphism given by $(p_S, \sigma_t)$.
	We denote by $$\mu_t \colon \alpha_t^* F_S \simeq F_S$$ the pullback $(t \times \id)^*\mu$ of the equivariant structure (\ref{equivstr}).
	The map
	\begin{equation}
		\sigma_Q(S) \colon	\T(S) \times \Q(S) \to \Q(S)
	\end{equation}
	is defined by sending a pair $(t, [F_S \xrightarrowdbl{v} Q_S]) \in \Q(S)$ to the composition 
	$$[F_S \xrightarrow{\mu_t^{-1}} \alpha_t^*F_S \xrightarrowdbl{\aaa_t^*v} \alpha_t^*Q_S].$$

	As $Y$ is toric, there exists a $\T$-equivariant ample line bundle on $Y$. We will denote it by $\oo_Y(1)$. By the Proof of \cite[Theorem 2.2.4]{huybrechts_lehn}, for $m \gg 0$ there exists a closed embedding 
	\begin{equation}
		\label{embgr}
		\iota \colon \Q \into \Grass(n, \HH^0(F(m)))
	\end{equation}
	sending the quotient $[F \onto Q]$ to the quotient $[\HH^0(Y, F(m)) \onto \HH^0(Y, Q(m))]$. Over a scheme $S$, it maps a quotient $[F_S \xrightarrowdbl{v} Q_S]$ to the quotient $$[p_{S*}F_S(m) \xrightarrowdbl{p_{S*}v(m)} p_{S*}Q_S(m)],$$ where $p_S \colon S \times Y \to S$ is the projection morphism.
	
	We also have an induced $\T$-action on the Grassmannian $\Gr \coloneqq \Grass(n, \HH^0(F(m)))$. 
	For a scheme $S$ define the morphism
	$$\sigma_{\Gr}(S) \colon \T(S) \times \Gr(S) \to \Gr(S) $$
	sending a pair $(t, [p_{S*}F_S(m) \xrightarrowdbl{u} \mathcal{E}])$ to the composition  
	$$[p_{S*}F_S(m) \xrightarrow[\sim]{p_{S*}(\mu_t^{-1})} p_{S*}\aaa_t^* F_S(m) \simeq p_{S*}\aaa_{t *}\aaa_t^* F_S(m) \simeq p_{S*}F_S(m) \xrightarrowdbl{u} \mathcal{E}]. $$ 
	In the first isomorphism by abuse of notation the equivariant structure on $\oo_Y(1)$ is also involved. The second isomorphism is by definition of $\aaa_t$ as $p_S = p_S \circ \aaa_t$ and the third isomorphism is constructed using the inverse of the adjunction unit $F_S \simeq \aaa_{t *}\aaa_t^* F_S$.
	
	\begin{prop}
		\label{equivemb}
		The closed embedding $(\ref{embgr})$ is $\T$-equivariant. 
	\end{prop}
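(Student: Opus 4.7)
The plan is to verify equivariance directly on the functor of points. A morphism of schemes between $\T$-schemes is $\T$-equivariant if and only if, for every test scheme $S$, the induced map on $S$-valued points is $\T(S)$-equivariant. So it suffices to show that for every $S$, the composition $\iota(S) \circ \sigma_{\Q}(S)$ agrees with $\sigma_{\Gr}(S) \circ (\id_{\T(S)} \times \iota(S))$ as maps $\T(S) \times \Q(S) \to \Gr(S)$.

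Fix $S$, $t \in \T(S)$, and a point $[F_S \xrightarrowdbl{v} Q_S] \in \Q(S)$. Applying first $\sigma_{\Q}(S)$ and then $\iota(S)$ yields the quotient of $p_{S*}F_S(m)$ induced by
\begin{equation*}
c_1 \;\coloneqq\; p_{S*}\!\bigl((\aaa_t^*v)(m)\bigr) \circ p_{S*}\bigl(\mu_t^{-1}(m)\bigr) \colon p_{S*}F_S(m) \longrightarrow p_{S*}\aaa_t^*Q_S(m).
\end{equation*}
Applying instead $\iota(S)$ and then $\sigma_{\Gr}(S)$ yields the quotient induced by
\begin{equation*}
c_2 \;\coloneqq\; p_{S*}v(m) \circ \bigl(p_{S*}\eta_{F_S}(m)\bigr)^{-1} \circ p_{S*}\bigl(\mu_t^{-1}(m)\bigr) \colon p_{S*}F_S(m) \longrightarrow p_{S*}Q_S(m),
\end{equation*}
where $\eta \colon \id \to \aaa_{t*}\aaa_t^*$ is the adjunction unit (an isomorphism since $\aaa_t$ is an isomorphism), and we have used the identification $p_{S*}\aaa_t^*F_S(m) = p_{S*}\aaa_{t*}\aaa_t^*F_S(m)$ coming from $p_S \circ \aaa_t = p_S$.

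The key step is to apply naturality of $\eta$ to the morphism $v \colon F_S \to Q_S$, which gives the commutative square $(\aaa_{t*}\aaa_t^* v) \circ \eta_{F_S} = \eta_{Q_S} \circ v$. Pushing this square forward along $p_S$, twisting by $\oo_Y(m)$, and identifying $p_{S*}\aaa_{t*} = p_{S*}$, one obtains
\begin{equation*}
p_{S*}\!\bigl((\aaa_t^*v)(m)\bigr) \circ p_{S*}\eta_{F_S}(m) \;=\; p_{S*}\eta_{Q_S}(m) \circ p_{S*}v(m),
\end{equation*}
so that $c_1 = p_{S*}\eta_{Q_S}(m) \circ c_2$. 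Since $p_{S*}\eta_{Q_S}(m)$ is an isomorphism, $c_1$ and $c_2$ have the same kernel inside $p_{S*}F_S(m)$ and therefore define the same $S$-point of $\Gr$, proving equivariance.

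The only real obstacle is bookkeeping: one has to keep track simultaneously of the equivariant structure $\mu$ on $F$, the one on $\oo_Y(1)$ (implicit in the twist by $m$), the base change identities for the $S$-morphism $\aaa_t$, and the naturality of the adjunction unit. Everything else is formal, because the two compositions differ only by an isomorphism at the level of target sheaves, and in the Grassmannian only the kernel of the surjection matters.
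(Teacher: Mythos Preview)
Your proof is correct and follows essentially the same route as the paper: both argue on $S$-points, compute the two compositions, and use the naturality of the adjunction unit $\eta \colon \id \to \aaa_{t*}\aaa_t^*$ (what the paper calls ``functoriality of the adjunction morphism'') to show the two surjections differ only by an isomorphism on the target and hence define the same point of $\Gr$. Your write-up is in fact slightly more explicit about the last step (same kernel), but the argument is the same.
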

	\begin{proof}
		We need to check that for each scheme $S$, the following identity hold
		\begin{equation}
			\label{tocheckequivemb}
			\iota(S) \circ \sigma_Q(S) = \sigma_{\Gr}(S) \circ (\id_{\T(S)} \times \iota(S)).
		\end{equation}
		A pair $(t, [F_S \xrightarrowdbl{u} Q_S]) \in \T(S) \times \Q(S)$ is mapped via the left hand side of (\ref{tocheckequivemb}) to the composition quotient
		\begin{equation}
			\label{lhs}
			[p_{S*}F_S(m) \xrightarrow[\sim]{p_{S*}(\mu_t^{-1})} p_{S *} \aaa_t^* F_S(m) \xrightarrowdbl{p_{S*}\aaa_t^*(u(m))} p_{S*}\aaa_t^*Q_S(m) ]. 
		\end{equation}
		Via the right hand side of (\ref{tocheckequivemb}) the pair $(t, [F_S \xrightarrowdbl{u} Q_S])$ is mapped to
		\begin{align*}
			&\sigma_{\Gr}(t, [p_{S*}F_S(m) \onto p_{S*}Q_S(m)]) \\ 
			=& [p_{S*}F_S(m) \xrightarrow[\sim]{p_{S*}(\mu_t^{-1})} p_{S *} \aaa_t^* F_S(m) = p_{S *} \aaa_{t*} \aaa_t^* F_S(m) \simto p_{S*}F_S(m) \xrightarrowdbl{p_{S*}(u(m))} p_{S*}\aaa_t^*Q_S(m)].
		\end{align*}
		It agrees with (\ref{lhs}) as the diagram 
		
		\begin{center}
			\begin{tikzcd}
				p_{S*}F_S(m) \arrow[r, "p_{S*}(u(m))", two heads] \arrow[d, "adj_{F_S}"', "\simeq"]    & p_{S*}Q_S(m) \arrow[d, "\simeq"', "adj_{Q_S}"]                    \\
				p_{S*}\aaa_{t*}\aaa_{t}^*F_S(m) \arrow[r, two heads] \arrow[d, equals] & p_{S*}\aaa_{t*}\aaa_{t}^*Q_S(m) \arrow[d, equals] \\
				p_{S*}\aaa_{t}^*F_S(m) \arrow[r, two heads]                    & p_{S*}\aaa_{t}^*Q_S(m)                   
			\end{tikzcd}
		\end{center}
		is commutative by the functoriality of the adjunction morphism.
		%
		%
		%
		%
	\end{proof}
	
	\begin{rem}
		\label{grquotient}
		The torus action morphism $\sigma_{\Gr} \colon \T \times \Gr \to \Gr$ described in Proposition~\ref{equivemb} is given by an element in $\Gr(\T \times \Gr)$, which is the image of the identity map $\id \in \T \times \Gr(\T \times \Gr)$ under $\sigma_{\Gr}(\T \times \Gr)$. It is given by the quotient on $\T \times \Gr$ obtained as a composition
		\begin{equation}
			\label{sgasquotient}
			p_{*}F_{\T \times \Gr}(m) \xrightarrow[\sim]{p_{*}(\mu_{\id}^{-1})} p_{*}\aaa^* F_{\T \times \Gr}(m) \simeq p_{*}\aaa_{*}\aaa^* F_{\T \times \Gr}(m) \simeq p_{*}F_{\T \times \Gr}(m) \xrightarrowdbl{q^*u} q^* \qq_{\Gr} .
		\end{equation}
		Here, $\aaa \coloneqq \aaa_{\id}$ and $p \coloneqq p_{\T \times \Gr}$ in the notation of the proof of previous Proposition. The last morphism is the pullback of the universal quotient $u$ on $\Gr$ via the second projection $q \colon \T \times \Gr \to \Gr$.
		%
	\end{rem}
	\begin{lem}
		\label{grunivequiv}
		The universal sequence 
		$$0 \to \sss_{\Gr} \to H^0(Y, F(m)) \otimes \oo_{\Gr} \to \qq_{\Gr} \to 0  $$
		on $\Grass(n, \HH^0(F(m)))$ carries a $\T$-equivariant structure.
	\end{lem}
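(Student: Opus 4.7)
The plan is to construct the $\T$-equivariant structure first on the trivial middle term of the universal sequence, then transport it to $\qq_{\Gr}$ using the universal property of the Grassmannian, and finally to $\sss_{\Gr}$ by taking kernels. Write $V = \HH^0(Y, F(m))$. Since $\oo_Y(1)$ and $F$ carry $\T$-equivariant structures, so does $F(m)$, and pushing forward along the structure map $Y \to \pt$ endows $V$ with a $\T$-representation $\rho \colon \T \times V \to V$. This $\rho$ defines an automorphism
$$\theta_V \colon \sigma_{\Gr}^*(V \otimes \oo_{\Gr}) = V \otimes \oo_{\T \times \Gr} \simto V \otimes \oo_{\T \times \Gr} = pr_{\Gr}^*(V \otimes \oo_{\Gr}),$$
and this is a $\T$-equivariant structure on the trivial bundle $V \otimes \oo_{\Gr}$, its cocycle condition being inherited directly from that of $\mu$.

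Next, I would produce a $\T$-equivariant structure on $\qq_{\Gr}$ by unwinding Remark~\ref{grquotient}. After identifying $p_{\T \times \Gr *}F_{\T \times \Gr}(m)$ with $V \otimes \oo_{\T \times \Gr}$ (valid for $m \gg 0$ by cohomology and base change together with the fact that $F_{\T \times \Gr}$ is pulled back from $Y$), the composition of the first three arrows in (\ref{sgasquotient}) is precisely $\theta_V$, while the final arrow is $q^*$ applied to the universal surjection $u \colon V \otimes \oo_{\Gr} \onto \qq_{\Gr}$, and $q = pr_{\Gr}$. In other words, the quotient $\sigma_{\Gr}^* u$ is obtained from $pr_{\Gr}^* u$ by precomposing with $\theta_V$. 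By the universal property of $\Gr$, this data determines a unique isomorphism
$$\theta_Q \colon \sigma_{\Gr}^*\qq_{\Gr} \simto pr_{\Gr}^*\qq_{\Gr}$$
making the obvious square with $\theta_V$ and the two pullbacks of $u$ commute. Taking kernels then yields an isomorphism $\theta_S \colon \sigma_{\Gr}^*\sss_{\Gr} \simto pr_{\Gr}^*\sss_{\Gr}$ compatible with the exact sequence.

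Finally, to complete the proof I would verify the cocycle condition for $\theta_Q$ on $\T \times \T \times \Gr$. Pulling back the defining square of $\theta_Q$ along the two maps $\T \times \T \times \Gr \to \T \times \Gr$ coming from the action and projection, and using the already-established cocycle identity for $\theta_V$ (which holds because $\theta_V$ arises from the genuine $\T$-representation $\rho$), one sees that the two candidate isomorphisms $\sigma^*\qq_{\Gr} \to pr^*\qq_{\Gr}$ on $\T \times \T \times \Gr$ both fit into the same commutative square with the pullbacks of $u$, and hence coincide by the uniqueness part of the universal property of $\Gr$. The cocycle condition for $\theta_S$ then follows automatically from those for $\theta_Q$ and $\theta_V$ via the kernel description. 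The main subtlety, though essentially bookkeeping, is the careful identification in Remark~\ref{grquotient} of the composite of adjunction and equivariance isomorphisms with the representation $\rho$; once this is in place, every other step is formal and controlled by the universal property of the Grassmannian.
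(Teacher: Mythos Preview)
Your proposal is correct and follows essentially the same route as the paper: both arguments use Remark~\ref{grquotient} to identify the quotient representing $\sigma_{\Gr}$ with $pr_{\Gr}^*u$ precomposed by the isomorphism ``$a$'' (your $\theta_V$), and then invoke the universal property of $\Gr$ to obtain the equivariant structure on $\qq_{\Gr}$ and hence on the whole sequence. Your write-up is more explicit than the paper's---in particular you spell out the cocycle verification via uniqueness in the universal property, which the paper leaves implicit---but the underlying argument is the same.
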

	\begin{proof}
		By Remark \ref{grquotient}, the action morphism $\sigma_{\Gr}$ is given via the quotient $(\ref{sgasquotient})$. On the other hand, by the universal property, it is given by the pullback of the universal quotient $u$ via $\sigma_{\Gr}$. Hence, these two quotients are isomorphic, and we have the commutative diagram where we use the notation from Remark~\ref{grquotient}
		\begin{center}
			\begin{tikzcd}
				p_*F_{\T \times \Gr} \arrow[r, "\sigma_{\Gr}^*u"] \arrow[d, "\simeq", "a"'] & \sigma_{\Gr}^*\qq_{\Gr} \arrow[d, "\simeq"] \\
				p_*F_{\T \times \Gr} \arrow[r, "q^*u"]                                & q^*\qq_{\Gr}                             
			\end{tikzcd}
		\end{center}
		giving the $\T$-equivariant structure on the universal quotient and hence then also on the whole universal sequence. Here, $a$ denotes the composition of the isomophisms from $(\ref{sgasquotient})$.
	\end{proof}
	
	\begin{prop}
		\label{resolutpropsatisf}
		The Quot scheme $\Q$ and the Grassmannian $\Gr \coloneqq \Grass(n, \HH^0(F(m)))$ satisfy the $\T$-equivariant resolution property.
	\end{prop}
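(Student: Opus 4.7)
The plan is to reduce both statements to the resolution property on the projective space of a $\T$-representation, and then transfer the property along $\T$-equivariant closed embeddings. The crucial input is the chain of $\T$-equivariant closed embeddings
\[
\Q \hookrightarrow \Gr \hookrightarrow \mathbb{P}\bigl(\wedge^n \HH^0(Y, F(m))\bigr),
\]
where the first is Proposition~\ref{equivemb} and the second is the Plücker embedding, which is functorial and hence automatically $GL(\HH^0(Y, F(m)))$-equivariant, and in particular $\T$-equivariant.

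First, I would check that for any finite-dimensional $\T$-representation $V$, the projective space $\mathbb{P}(V)$ satisfies the $\T$-equivariant resolution property. Given a $\T$-equivariant coherent sheaf $\F$ on $\mathbb{P}(V)$, Serre's theorem yields a surjection
\[
\HH^0(\mathbb{P}(V), \F(k)) \otimes \oo_{\mathbb{P}(V)}(-k) \twoheadrightarrow \F
\]
for $k \gg 0$. Since $\oo_{\mathbb{P}(V)}(1)$ carries a natural $\T$-equivariant structure inherited from $V$, and $\HH^0(\mathbb{P}(V), \F(k))$ is a finite-dimensional $\T$-representation (as the space of global sections of a $\T$-equivariant coherent sheaf on a projective $\T$-scheme), both sides are $\T$-equivariant vector bundles and the evaluation map is equivariant.

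Next, I would establish the following transfer principle: if $j \colon Z \hookrightarrow X$ is a $\T$-equivariant closed embedding and $X$ has the $\T$-equivariant resolution property, then so does $Z$. Indeed, for a $\T$-equivariant coherent sheaf $\G$ on $Z$, the pushforward $j_*\G$ is $\T$-equivariant coherent on $X$ and therefore admits an equivariant surjection $\mathcal{E} \twoheadrightarrow j_*\G$ from a $\T$-equivariant vector bundle. Pulling back via $j$ and using the canonical isomorphism $j^* j_* \G \simeq \G$ for closed embeddings produces the required equivariant surjection $j^*\mathcal{E} \twoheadrightarrow \G$.

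Applying the transfer principle twice—first to $\Gr \hookrightarrow \mathbb{P}(\wedge^n \HH^0(Y, F(m)))$ and then to $\Q \hookrightarrow \Gr$—completes the proof. There is no serious obstacle here: the statement is standard once one has both equivariant embeddings, and the only verification required beyond Proposition~\ref{equivemb} is the $\T$-equivariance of the Plücker embedding, which is immediate from its functoriality.
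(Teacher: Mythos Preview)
Your proposal is correct and takes essentially the same approach as the paper: both arguments boil down to producing a $\T$-equivariant very ample line bundle on $\Gr$ (and hence on $\Q$) and then running Serre's argument equivariantly. The only cosmetic difference is that the paper obtains $\oo_{\Gr}(1)$ directly as $\det\qq_{\Gr}$ via Lemma~\ref{grunivequiv}, whereas you obtain it by restricting $\oo_{\mathbb{P}}(1)$ along the Pl\"ucker embedding---but these are the same line bundle, and your transfer principle via $j^*j_*\G\simeq\G$ is just a clean repackaging of the restriction step the paper performs by hand.
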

	\begin{proof}
		We need to check that for each $\T$-equivariant coherent sheaf $\F$ on $\Q$ or on $\Grass(n, \HH^0(F(m)))$ there exists a $\T$-equivariant (possibly infinite) resolution of locally free sheaves. 
		
		By Lemma \ref{grunivequiv}, 
		$\oo_{\Gr}(1) = \det \qq_{\Gr}$ is $\T$-equivariant. Hence, its restriction to $\Q$ is also $\T$-equivariant and very ample on $\Q$. It will be denoted by $\oo_{\Q}(1)$.
		The proof now goes as in the non-equivariant case in \cite{griffiths1978principles} using that for $k \gg 0$ the sheaf $\F(k)$ is globally generated. It remains to check that the natural evaluation map
		\begin{equation}
			\label{evmap}
			\HH^0(\Q, \F(k)) \otimes \oo_{\Q} \to \F(k)
		\end{equation} 
		for $\Q$ and a similar map for $\Grass(n, \HH^0(F(m)))$ are $\T$-equivariant. As the natural evaluation map is the counit of the adjunction with respect to the map to the point, it follows from adjunction that the needed maps are $\T$-equivariant.
	\end{proof}
	
	\begin{prop}[{\cite[Section 9.1]{Fasola_2021}}]
		\label{torunivseq}
		A $\T$-equivariant structure on $F$ induces a canonical $\T$-equivariant structure on the universal short exact sequence 
		$$0 \to \sss \to q^*F \to \qq \to 0, $$ on $\Q \times Y$ where $q \colon \Q \times Y \to Y$ is the second projection.
	\end{prop}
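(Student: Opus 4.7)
The plan is to read off the $\T$-equivariant structure on the universal sequence directly from the universal property of $\Q$ together with the definition of $\sigma_Q$. Once $\qq$ has been equipped with a $\T$-equivariant structure compatible with the quotient map, the bundle $q^*F$ acquires one by pulling back $\mu$ along the $\T$-equivariant projection $q \colon \Q \times Y \to Y$ (the $\T$-action on $\Q \times Y$ being the natural one built out of $\sigma_Q$ and $\sigma_Y$), and the subsheaf $\sss$ inherits one as the kernel of the resulting equivariant surjection $q^*F \onto \qq$. The problem therefore reduces to constructing a canonical $\T$-equivariant structure on $\qq$.

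The key step parallels the argument of Lemma~\ref{grunivequiv}. By the very construction of $\sigma_Q$, applied to the tautological $(\T \times \Q)$-point $(p_1, p_2)$ of $\T \times \Q$, one obtains on $\T \times \Q \times Y$ the quotient
\[
F_S \xrightarrow{\mu_{p_1}^{-1}} \aaa_{p_1}^* F_S \onto \aaa_{p_1}^* p_{23}^* \qq,
\]
where $S = \T \times \Q$, $F_S$ denotes the pullback of $F$ to $S \times Y$, and $p_{23} \colon \T \times \Q \times Y \to \Q \times Y$ is the projection onto the last two factors. The universal property of $\Q$ then gives a canonical isomorphism of quotients identifying $(\sigma_Q \times \id_Y)^*(q^*F \onto \qq)$ with the displayed quotient, and in particular a canonical isomorphism of sheaves
\[
(\sigma_Q \times \id_Y)^* \qq \;\simeq\; \aaa_{p_1}^* p_{23}^* \qq.
\]

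Writing the $\T$-action $\Sigma \colon \T \times \Q \times Y \to \Q \times Y$ on $\Q \times Y$ as the composition of $\sigma_Q \times \id_Y$ with an endomorphism of $\T \times \Q \times Y$ of the form $\aaa_{p_1}^{\pm 1}$ (the sign dictated by the convention for the $\T$-action on $Y$), and applying this endomorphism to the isomorphism above, one obtains the desired isomorphism $\Sigma^* \qq \simeq p_{23}^* \qq$ by using that $\aaa_{p_1}$ and its inverse compose to the identity. The cocycle condition reduces to the fact that $\sigma_Q$ is a group action (which forces a cocycle among the $\aaa_t$) together with the cocycle satisfied by $\mu$, both interlaced through the Yoneda identification above. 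The main technical point is this bookkeeping: one must set up the convention for the action $\Sigma$ and compose the canonical isomorphisms furnished by the universal property and by $\mu$ so that they assemble into a genuine equivariant structure rather than one twisted by an automorphism of the action groupoid.
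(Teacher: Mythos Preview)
The paper does not give its own proof of this proposition; it is stated with a citation to \cite[Section 9.1]{Fasola_2021}. Your argument---extracting the equivariant structure on $\qq$ from the universal property, exactly as the paper does for $\qq_{\Gr}$ in Lemma~\ref{grunivequiv}---is the standard one and agrees with the cited reference.

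The only extra wrinkle, which you correctly identify, is that $\qq$ lives on $\Q\times Y$ rather than on $\Q$, so the universal property yields $(\sigma_Q\times\id_Y)^*\qq\simeq\alpha_{p_1}^*p_{23}^*\qq$ rather than directly $\Sigma^*\qq\simeq p_{23}^*\qq$ for the diagonal action $\Sigma$. Your sketch of how to close this gap is slightly imprecise: if one writes $\Sigma=(\sigma_Q\times\id_Y)\circ\alpha_{p_1}$ and pulls back along $\alpha_{p_1}$, the right-hand side becomes $(\alpha_{p_1}^2)^*p_{23}^*\qq$, not $p_{23}^*\qq$, so the two copies of $\alpha_{p_1}$ do not simply cancel. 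The cancellation \emph{does} go through once the sign in $\Sigma=(\sigma_Q\times\id_Y)\circ\alpha_{p_1}^{\pm1}$ is matched to the paper's convention for $\sigma_Q$ (which, as defined, moves the support of the quotient by $t^{-1}$), but this is exactly the bookkeeping you flag as ``the main technical point'' and leave unwritten; it should be carried out rather than asserted.
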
	
	
	\begin{prop} \label{equiv}
		The almost perfect obstruction theory $\varphi$ on $\Q$ constructed in Section~$\ref{globally}$ admits a canonical $\T$-equivariant structure.
	\end{prop}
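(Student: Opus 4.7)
The plan is to revisit every step of the construction of the APOT from Section~\ref{globally} inside the $\T$-equivariant derived category, relying on the fact that the whole construction depends only on (i) the universal sequence on $Y \times \Q$, which is equivariant by Proposition~\ref{torunivseq}, (ii) uniqueness statements for maps in the derived category whose Hom-vanishing hypotheses are insensitive to the $\T$-action, and (iii) cone and truncation functors, all of which preserve equivariance.

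First I would produce a $\T$-equivariant affine étale cover $\{U_i \to \Q\}$, for instance by pulling back a $\T$-invariant affine open cover of $\Gr = \Grass(n, \HH^0(F(m)))$ under the $\T$-equivariant embedding $\iota$ of Proposition~\ref{equivemb}, such a cover being furnished by the standard open affines with respect to a $\T$-eigenbasis of $\HH^0(Y, F(m))$. On each chart $U_i$, Proposition~\ref{torunivseq} equips the universal sequence on $Y \times U_i$ with its canonical $\T$-equivariant structure; every complex appearing in diagrams~(\ref{diagr}) and~(\ref{twodisttr}) then lies in $\DD^\T(U_i)$ and the morphisms $\xi_1, \xi_2, \xi_3$ are equivariant. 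The vanishing of Lemma~\ref{vanishing} rests only on the nullity of $\HH^3$ of a coherent sheaf on an affine scheme, so its equivariant analogue is automatic, and the uniqueness of the factoring map $p_i$ in diagram~(\ref{diagr}) inside $\DD^\T(U_i)$ forces $p_i$ to be $\T$-equivariant; hence $E_{i\bullet} = \Cone(p_i)$ inherits a canonical equivariant structure making the triangle~(\ref{thetr}) equivariant. The reduced Atiyah class of the equivariant universal sequence being equivariant, the derived obstruction theory $\varphi^{\der}$ in~(\ref{explderobstr}) lies in $\DD^\T(\Q)$ and its composition with $\varepsilon_i^\vee$ yields an equivariant $\varphi_i \colon E_i^\bullet \to \LL_{U_i}$. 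Finally, the obstruction sheaf $Ob_\Q = \extpi{3}(\qq, \sss)$ is manifestly equivariant, and the comparison maps $\psi_i = h^1(\gamma_i)$ will be chosen equivariantly by applying the octahedral axiom inside $\DD^\T(U_i)$ to diagram~(\ref{twodisttr}).

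The hardest part will be the equivariance of the gluing isomorphisms $\eta_{ij}$ on $U_{ij}$, since these are only determined up to automorphism. Uniqueness of $p_i$ in $\DD^\T(U_{ij})$ gives $p_i|_{U_{ij}} = p_j|_{U_{ij}}$ as equivariant morphisms, so $E_{i\bullet}|_{U_{ij}}$ and $E_{j\bullet}|_{U_{ij}}$ are equivariant cones of the same equivariant map. I would then rerun the proof of Proposition~\ref{conechoices} inside $\DD^\T(U_{ij})$: the required vanishings of $\Hom_{\DD(U)}(\rph(\qq, q^*F)[2], \rph(q^*F, \qq)[d])$ for $d = 0, 1$ between shifted equivariant vector bundles on an affine base remain valid in the equivariant setting (by Lemma~\ref{kltvanishing} or a direct $\HH^{>0}$-vanishing argument), so once $\gamma_i$ and $\gamma_j$ are fixed equivariantly the adjustment $u_1$ producing $\eta_{ij} = \id + u_1 \circ \rho$ is the unique equivariant morphism realising the comparison; this forced uniqueness is precisely what guarantees equivariance and the compatibilities with $\varepsilon_i, \gamma_i$. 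Canonicity of the resulting $\T$-equivariant APOT then follows from the canonicity of each ingredient, once the $\T$-structure on $F$ is fixed.
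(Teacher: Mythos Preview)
Your proposal is correct and follows essentially the same strategy as the paper: both arguments observe that every ingredient of the construction in Sections~\ref{locally}--\ref{globally} (the universal sequence, the maps $\xi_i$, the unique lift $p_i$, the cone $E_{i\bullet}$, the map $\gamma_i$, the reduced Atiyah class, and the gluing isomorphisms $\eta_{ij}$) lives canonically in the equivariant derived category once the universal sequence is equivariant (Proposition~\ref{torunivseq}), and that the Hom-vanishings underlying uniqueness are unaffected by the $\T$-action.

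The only notable difference is the source of the $\T$-equivariant affine \'etale cover: you obtain it concretely by pulling back the standard $\T$-invariant affine opens of the Grassmannian (with respect to a $\T$-eigenbasis of $\HH^0(Y,F(m))$) along the equivariant closed embedding of Proposition~\ref{equivemb}, whereas the paper invokes the general existence theorem of Alper--Hall--Rydh. Your route is more elementary and self-contained; the paper's route is shorter to state. You are also more explicit than the paper about why the $\eta_{ij}$ are equivariant, correctly pointing to Lemma~\ref{kltvanishing} for the relevant equivariant Hom-vanishings on an affine base, which the paper leaves implicit in the phrase ``by construction''.
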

	\begin{proof}
		We first note that the  $\T$-equivariant structure on the universal sequence of $\Q$ from Proposition $\ref{torunivseq}$ makes the standard obstruction theory on $\Q$
		\begin{equation}
			\label{equivderobstr}
			\varphi^{\der} \colon (\rpl \rh(\sss, \qq))^\vee \to \LL_\Q
		\end{equation}
		discussed in Section $\ref{derobstr}$  to be $\T$-equivariant. The argument is as in the proof of \cite[Proposition 9.2]{Fasola_2021} and follows as $\varphi^{\der}$ is constructed from the reduced Atiyah class which can be viewed as a $\T$-equivariant extension.
		
		By \cite{alperhallrydh}, there exists an affine étale $\T$-equivariant cover
		$$\{U_i \to \Q\}_{i \in I}.$$
		
		By construction in Section~\ref{locally} together with Proposition~\ref{torunivseq}, the diagram~(\ref{twodisttr}) for $U = U_i$ is in $\DD^{\T}(U_i)$. Therefore for each $i \in I$, $\psi_i = h^1(\gamma_i)$ is in $\Coh^{\T}(U_i)$ and the morphism $\varphi_i$ is in $\DD^{\T}(U_i)$, as $(\ref{equivderobstr})$ is equivariant. By construction, for each $i, j \in I$, the isomorphisms $\eta_{ij}$ on the double intersections and the corresponding compatibilities are in $\DD^{\T}(\uij)$
%
	\end{proof}
	
	\begin{prop}
		\label{pullbackispot}
		For any $\T$-equivariant smooth morphism $a \colon A \to \Q$ with $A$ affine, a pullback of the $\T$-equivariant almost perfect obstruction theory $\varphi$ discussed in Proposition $\ref{equiv}$ is a $\T$-equivariant perfect obstruction theory of the form 
		$${\varphi}_A \colon E^\bullet_A \oplus \Omega_{a} \to \LL_A$$
		with $E_A^\bullet \in \DD^{\T}(A)$.
	\end{prop}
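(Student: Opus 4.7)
The strategy is to re-run the étale-local construction of Section~\ref{locally} directly on $A$, exploiting that $A$ is affine, and then to match the outcome with the pullback APOT of Proposition~\ref{semipotpullback}.

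First, I would pull back the universal sequence along $\id_Y \times a$ to obtain
$$0 \to \sss_A \to q_A^*F \to \qq_A \to 0$$
on $Y \times A$. The vanishings in Lemma~\ref{perfect}, Lemma~\ref{locextfqqf}, and crucially Lemma~\ref{vanishing} used only the affineness of the base, so they apply verbatim with $U = A$. Repeating the construction of $(\ref{pconstr})$ produces a unique morphism
$$p_A \colon \rph(\qq_A, q_A^*F)[1] \to \rph(\sss_A, \qq_A)$$
in $\DD(A)$. By Proposition~\ref{torunivseq} the entire diagram~(\ref{diagr}) lives in $\DD^{\T}(A)$, so uniqueness forces $p_A \in \DD^{\T}(A)$. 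Setting $E_{A\bullet} \coloneqq \Cone(p_A)$ and $E_A^\bullet \coloneqq (E_{A\bullet})^\vee$, Proposition~\ref{potlocally} shows that $E_A^\bullet$ is perfect of amplitude $[-1,0]$, of virtual dimension zero, and that the composition
$$E_A^\bullet \xrightarrow{\varepsilon_A^\vee} (\rph(\sss_A, \qq_A))^\vee \xrightarrow{a^*\varphi^{\der}} a^*\LL_{\Q}$$
satisfies conditions (1)--(2) of Proposition~\ref{criterion} with $\LL_A$ replaced by $a^*\LL_{\Q}$.

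Next, I would promote this to a perfect obstruction theory targeting $\LL_A$. Since $a$ is smooth, there is a distinguished triangle $a^*\LL_{\Q} \to \LL_A \to \Omega_a$, and an equivariant analog of Corollary~\ref{splitting} (whose proof invokes Lemma~\ref{kltvanishing} on the affine $T$-scheme $A$) provides a $\T$-equivariant splitting $\LL_A \simeq a^*\LL_{\Q} \oplus \Omega_a$. The map
$$\varphi_A \colon E_A^\bullet \oplus \Omega_a \xrightarrow{(a^*\varphi^{\der}\circ\varepsilon_A^\vee,\,\id)} a^*\LL_{\Q} \oplus \Omega_a \simeq \LL_A$$
is then the desired $\T$-equivariant perfect obstruction theory, and by construction $E_A^\bullet \in \DD^{\T}(A)$.

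Finally, I would identify $\varphi_A$ with a pullback of the APOT $\varphi$. Choose a $\T$-equivariant étale affine cover $\{U_i \to \Q\}_{i \in I}$ as in the proof of Proposition~\ref{equiv}, put $A_i \coloneqq A \times_\Q U_i$ with induced smooth morphism $a_i \colon A_i \to U_i$, and note that by the uniqueness in Lemma~\ref{exun} one has $p_A|_{A_i} = a_i^* p_i$. Proposition~\ref{conechoices} applied to each affine chart $A_i$ then produces isomorphisms $E_A^\bullet|_{A_i} \simeq a_i^* E_i^\bullet$ compatible with the morphisms to the cotangent complex and with the isomorphisms $\psi_i$ of obstruction sheaves, so that $\varphi_A$ refines, and is equivalent to, the pullback APOT $\{A_i, a_i^*\varphi_i \oplus \id_{\Omega_{a_i}}\}$ in the sense of Definition~\ref{sameAPOT}. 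The main technical point is ensuring that $p_A$ exists and is unique globally on $A$; this is where affineness of $A$ is essential, since otherwise Lemma~\ref{vanishing} fails and one would obtain only local POTs glued into a non-trivial APOT.
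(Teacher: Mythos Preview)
Your proposal is correct and follows essentially the same approach as the paper: construct $E_A^\bullet$ by re-running Section~\ref{locally} on the affine $A$, use the $\T$-equivariant splitting of $\LL_A$ to define $\varphi_A$, and then verify that its restriction to each chart $\tilde U_i = A \times_{\Q} U_i$ agrees with the chart data of the pullback APOT. The paper simply asserts the last step ``by construction'', whereas you spell it out via the uniqueness of $p$ and Proposition~\ref{conechoices}; this extra care is welcome but does not constitute a different route.
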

	\begin{proof}
		A $\T$-equivariant pullback of the $\T$-equivariant almost perfect obstruction theory $$\varphi = \{U_i, \varphi \colon E^\bullet_i \to \LL_{U_i}\}$$ via $a$ consists of an étale affine $\T$-equivariant cover 
		$$\{\tilde{U}_i \to A\}_{i \in I} \quad \text{with} \quad \tilde{U}_i = U_i \times_\Q A$$
		and $\T$- equivariant perfect obstruction theories 
		$$\tilde{\varphi}_i \colon a_i^*E^\bullet_{U_i} \oplus \Omega_{a_i} \to \LL_{\tilde{U_i}}$$
		where $a_i \colon \tilde{U}_i \to U_i$ are the corresponding projections.
		
		Define the complex $E_A^\bullet \in \DD^{\T}(A)$ constructed on $A$ (which is affine) as in Section $\ref{locally}$. It comes with the composition 
		$$E^\bullet_A \to (R\sheafhom_{\pi_A}(\sss_A, \qq_A))^\vee \to a^*\LL_\Q$$
		where $\pi_A \colon Y \times A \to A$ is the second projection and $\sss_A$ and $\qq_A$ are the pullbacks of the universal kernel and cokernel, respectively.
		As in the proof of Proposition $\ref{potpullback}$, we have a $\T$-equivariant splitting $$\LL_A \simeq a^*\LL_\Q \oplus \Omega_a$$ and hence the $\T$-equivariant morphism
		$${\varphi}_A \colon E^\bullet_A \oplus \Omega_{a} \to \LL_A $$ can be defined.
		Note that by construction for each $i \in I$ we have $${\varphi}_A|_{\tilde{U_i}} = \tilde{\varphi}_i$$ which shows that the pullback of $\varphi$ via $a$ is a $\T$-equivariant perfect obstruction theory given by ${\varphi}_A$.
	\end{proof}
	
	We summarize this section with the following Corollary.
	
	\begin{cor}
		\label{summary}
		For $Y$ a toric $3$-fold and $F$ a torus-equivariant locally free sheaf on it, the torus-equivariant Jouanolou trick (Theorem $\ref{tequiv}$) can be performed on $\Q$ and on $\Grass(n, H^0(F(m))$.
		Moreover, the $\T$-equivariant Siebert formula for APOT (Theorem~${\ref{apotpotsiebert}}$) and the virtual localization formula (Theorem~$\ref{localization}$) can be applied to the APOT on $\Q$ from Theorem~$\ref{quotsemipot}$.
	\end{cor}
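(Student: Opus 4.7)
The plan is to proceed by checking one by one the hypotheses of Theorems~\ref{tequiv}, \ref{apotpotsiebert}, and~\ref{localization}, assembling pieces that have already been set up in Section~\ref{readyforloc}.

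For the $\T$-equivariant Jouanolou trick, both $\Q$ and $\Gr \coloneqq \Grass(n, H^0(Y, F(m)))$ are quasi-projective, hence quasi-separated and quasi-compact algebraic spaces. The $\T$-actions on both were constructed explicitly above, and the $\T$-equivariant resolution property is exactly the content of Proposition~\ref{resolutpropsatisf}. Therefore Theorem~\ref{tequiv} applies and produces $\T$-equivariant affine bundles $A \to \Q$ and $B \to \Gr$ with $A$ and $B$ affine.

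For the Siebert formula and the virtual localization formula, the task is to verify Assumptions~\ref{siebassumpt} and~\ref{toruslocassumpt}. Part~(1) is common to both: one takes the class $E_\bullet = \rph(\qq, q^*F) + \rph(\sss, \qq) \in K^{0, \T}(\Q)$ from Remark~\ref{quotglkthclass}, which is canonically $\T$-equivariant via Proposition~\ref{torunivseq}. By the construction of the APOT in Theorem~\ref{quotsemipot} via the cone sequence~(\ref{thetr}), the restriction $E_\bullet|_{U_i}$ in $K^{0, \T}(U_i)$ coincides with the class $E_i^\bullet$ of the local perfect obstruction theory; so Assumption~\ref{siebassumpt}(\ref{assumpt1}) and Assumption~\ref{toruslocassumpt}(\ref{tassumpt1}) both hold.

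For Assumption~\ref{siebassumpt}(\ref{assumpt2}), the plan is to use the $\T$-equivariant closed embedding $\iota \colon \Q \into \Gr$ from Proposition~\ref{equivemb} together with the Jouanolou affine bundle $b \colon B \to \Gr$, and to put $A \coloneqq \Q \times_{\Gr} B$. Then $a \colon A \to \Q$ is a $\T$-equivariant affine bundle with $A$ affine, so Proposition~\ref{pullbackispot} identifies the pullback APOT with a POT of the form $E_A^\bullet \oplus \Omega_{a}$; the K-theory identity $[E_A^\bullet] = a^* E_\bullet$ follows from the fact that $E_A^\bullet$ is constructed on $A$ from the pullback of the universal sequence via flat base change, matching the explicit expression for $E_\bullet$ of Remark~\ref{quotglkthclass}. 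For Assumption~\ref{toruslocassumpt}(\ref{tassumpt2}) we simply take the $\T$-equivariant affine bundle $a \colon A \to \Q$ produced directly by Theorem~\ref{tequiv}, and invoke Proposition~\ref{pullbackispot} together with the same K-theory compatibility to conclude that $(E_A^\bullet)^\vee = a^* E_\bullet$ in $K^{0, \T}(A)$. The only potential subtlety, and the one piece worth spelling out, is precisely this K-theory match between the locally-constructed $E_A^\bullet$ and the global class $E_\bullet$ after pullback; but because both are expressed through the same derived pushforwards of the (pulled-back) universal sequence, this is a direct application of flat base change and the fact that $R\pi_*$ commutes with the $\T$-equivariant structure.
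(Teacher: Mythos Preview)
Your proposal is correct and follows essentially the same route as the paper: invoke Proposition~\ref{resolutpropsatisf} for the resolution property to apply Theorem~\ref{tequiv}, use Remark~\ref{quotglkthclass} for the global $K$-theory class, and combine Proposition~\ref{equivemb} with the Jouanolou bundle $B\to\Gr$ and Proposition~\ref{pullbackispot} to verify the second parts of the assumptions. The only cosmetic difference is that for Assumption~\ref{toruslocassumpt}(\ref{tassumpt2}) you invoke a separate Jouanolou bundle directly on $\Q$, whereas the paper reuses the same $A=\Q\times_{\Gr}B$ that already served for Assumption~\ref{siebassumpt}(\ref{assumpt2}); either choice works.
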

	\begin{proof}
		By Proposition \ref{resolutpropsatisf}, the $\T$-equivariant resolution property is satisfied for $\Q$ and $\Gr$, and hence the $\T$-equivariant Jouanolou trick (Therem~\ref{tequiv}) can be performed on these spaces.
		
		By Proposition $\ref{equiv}$, the APOT $\varphi$ on $\Q$ is $\T$-equivariant. We first check Assumption~$\ref{siebassumpt}$. 
		By Remark $\ref{quotglkthclass}$, $\varphi$ has a global class $E_\bullet$ in $K^{0, \T}(\Q)$ and hence the first condition in Assumption~$\ref{siebassumpt}$ is satisfied. We next check the second condition. By Proposition $\ref{equivemb}$, there is a $\T$-equivariant closed embedding $\Q \into \Gr$ into the Grassmanian. By the Jouanolou trick (Theorem~\ref{tequiv}) for $\Gr$, 
		there exists a $\T$-equivariant affine bundle $B \to \Gr$ with $B$ affine. Hence, the base change $A \coloneqq \Q \times_{\Gr} B \to \Q$ is also $\T$-equivariant affine with $A$ affine (being a closed subscheme of an affine scheme $B$). Therefore by Proposition~$\ref{pullbackispot}$, $a^* \varphi$ is a $\T$-equivariant perfect obstruction theory of the form $$\varphi_A \colon E^\bullet_A \oplus \Omega_{A/ \Q} \to \LL_A,$$ where $E^\bullet_A$ is constructed as in Section~\ref{locally}. By construction,  $(E^\bullet_A)^{\vee} = a^*E_\bullet$ in $K^{0, \T}(\Q)$. Hence, the second condition in Assumption~$\ref{siebassumpt}$ to apply the Siebert's formula for APOT is satisfied. We have actually also shown that the Assumption~\ref{toruslocassumpt} to apply the localization formula for APOT is also satisfied.
		%
		%
	\end{proof}
	\begin{cor}
		\label{quotvirtclexplicit}
		The virtual class on $\Q$ induced by the almost perfect obstruction theory $\varphi$ from Theorem~\ref{quotsemipot} satisfies the relation
		\begin{equation}
			[\Q]^{\vir} = [s(E_\bullet) \cap c_F(\Q)]_0 \in A_0(\Q),
		\end{equation}
		where $E_\bullet = \rph(q^*F, q^*F) - \rph(\sss, \sss) \in K^{0}(\Q)$ is the global class (\ref{kth1}). 
	\end{cor}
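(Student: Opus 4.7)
The plan is to apply Theorem~\ref{apotpotsiebert} (the Siebert formula for APOTs) with the trivial torus, since the virtual dimension of $\varphi$ equals zero and that formula then reads precisely $[\Q]^{\vir} = [s(E_\bullet) \cap c_F(\Q)]_0 \in A_0(\Q)$ as soon as Assumption~\ref{siebassumpt} is verified. So it suffices to check both clauses of that assumption non-equivariantly.

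For clause (1) I would invoke Remark~\ref{quotglkthclass}, which already exhibits $E_\bullet = \rph(q^*F, q^*F) - \rph(\sss, \sss) \in K^0(\Q)$ as a lift of $[T_\Q] - [Ob_\Q]$. On each affine étale chart $U_i \to \Q$ of the APOT the restriction $E_\bullet|_{U_i}$ coincides with the class of the two-term complex $E_i^\bullet$ produced in Section~\ref{locally}: indeed, the two equivalent presentations of $E_\bullet$ recorded in (\ref{kth1}) and (\ref{kth2}) are exactly the K-theoretic incarnations of the distinguished triangle (\ref{thetr}) that defines $E_i^\bullet$ via the Cone of $p_i$.

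For clause (2), I would use the closed embedding $\iota \colon \Q \into \Gr := \Grass(n, H^0(Y, F(m)))$ of (\ref{embgr}) for $m \gg 0$ into the smooth projective Grassmannian. The classical Jouanolou trick produces an affine bundle $b \colon B \to \Gr$ with $B$ affine; setting $A := \Q \times_\Gr B$, the base change $a \colon A \to \Q$ is an affine bundle and $A$ is affine, being a closed subscheme of the affine scheme $B$. Transcribing Proposition~\ref{pullbackispot} with trivial torus — that is, running the construction of Section~\ref{locally} directly on the affine scheme $A$ to obtain a complex $E_A^\bullet$ and then invoking Proposition~\ref{potpullback} — yields a perfect obstruction theory ${\varphi}_A \colon E_A^\bullet \oplus \Omega_{A/\Q} \to \LL_A$ on $A$ that coincides with the pullback of $\varphi$ via $a$, and satisfies $(E_A^\bullet)^\vee = a^*E_\bullet$ in $K^0(A)$. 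This verifies clause (2).

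With both clauses in place, Theorem~\ref{apotpotsiebert} delivers the identity in the degree-zero component of $A_*(\Q)$, which is all of $A_0(\Q)$ since $\vd = 0$. I do not anticipate any new technical obstacle: the only substantive step is matching the pullback APOT with the global perfect obstruction theory produced by the Cone construction on the affine $A$, and this is precisely the content of Proposition~\ref{pullbackispot} with the torus action forgotten.
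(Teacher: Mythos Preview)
Your proposal is correct and follows essentially the same approach as the paper: the paper's proof is the single line ``Apply Siebert's formula for APOT (Theorem~\ref{apotpotsiebert}) to the APOT on $\Q$'', and the verification of Assumption~\ref{siebassumpt} that you spell out is exactly the non-equivariant specialization of the argument given in Corollary~\ref{summary} (global $K^0$-class from Remark~\ref{quotglkthclass}, embedding into the Grassmannian, Jouanolou trick, and Proposition~\ref{pullbackispot}). You have simply made explicit what the paper leaves to the reader by pointing to the preceding corollary.
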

	\begin{proof}
		Apply Siebert's formula for APOT (Theorem~\ref{apotpotsiebert}) to the APOT on $\Q$ constructed in Theorem~\ref{quotsemipot}.
	\end{proof}
	\begin{rem}
		\label{virtclcoinc}
		By Corollary~\ref{quotvirtclexplicit}, $[\Q]^{\vir}$ corresponding to the APOT from Theorem~\ref{quotsemipot} agrees with the virtual class on $\Q$ in cases when it was already constructed.
	\end{rem}
	\subsection{Comparing two almost perfect obstruction theories on $\Q^{\T}$}
	
	Recall that the fixed locus $\Q^\T$ carries a canonical almost perfect obstruction theory $\varphi|_{\Q^{\T}}^{\fix}$ induced by $\varphi$, see~$(\ref{apotfixloc}).$
	%
	It gives rise to the virtual class
	\begin{equation}
		\label{fixlocvirtclsemipot}
		[\Q^\T]^{\vir} \in A_*^{\T}(\Q^\T).
	\end{equation}
	It has also a global class in $K^{0, \T}(\Q^\T)$ in the sense of Convention~\ref{K^0class}
	\begin{equation}
		\label{fixlocglkth}
		E_\bullet^\T \coloneqq E_\bullet|_{\Q^\T}^{\fix},
	\end{equation}
	where $E_\bullet$ is the $\T$-equivariant lift of the class $(\ref{kth2})$. 
	
	\begin{lem}
		\label{applysiebfixapot}
		The $\T$-equivariant Siebert formula for APOT (Theorem ${\ref{apotpotsiebert}}$) can be applied to the almost perfect obstruction theory $\varphi|_{\Q^{\T}}^{\fix}$ on $\Q^{\T}$.
	\end{lem}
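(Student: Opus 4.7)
The plan is to verify Assumption~\ref{siebassumpt} for $\varphi|_{\Q^{\T}}^{\fix}$ on $\Q^{\T}$ by taking $\T$-fixed loci of the ambient geometry already constructed for $\Q$ in Corollary~\ref{summary}.

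For the first condition in Assumption~\ref{siebassumpt}, the natural candidate is the class $E_\bullet^{\T}$ defined in~(\ref{fixlocglkth}), namely the fixed part of the $\T$-equivariant lift $E_\bullet$ from Remark~\ref{quotglkthclass}. Since $\varphi|_{\Q^{\T}}^{\fix}$ is defined chart by chart by restricting $\varphi$ and taking fixed parts, as in~(\ref{apotfixloc}), the identity $E_\bullet^{\T}|_{U_i^{\T}} \simeq E_i^{\bullet}|_{U_i^{\T}}^{\fix}$ in $K^{0,\T}(U_i^{\T})$ will be immediate from the construction; this produces the required preimage of $[T_{\Q^{\T}}] - [Ob_{\Q^{\T}}]$ in $K^{0,\T}(\Q^{\T})$.

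For the second condition, I would reuse the auxiliary geometry from Corollary~\ref{summary} after passing to $\T$-fixed loci. The $\T$-equivariant closed embedding $\Q \into \Gr = \Grass(n, H^0(F(m)))$ of Proposition~\ref{equivemb} restricts to a $\T$-equivariant closed embedding $\Q^{\T} \into \Gr^{\T}$, and $\Gr^{\T}$ is smooth since $\Gr$ is. The $\T$-equivariant affine bundle $B \to \Gr$ with $B$ affine produced in Corollary~\ref{summary} induces, by taking fixed loci, an affine bundle $B^{\T} \to \Gr^{\T}$ with $B^{\T}$ affine (its fibres are fixed loci of affine spaces under linear torus actions, hence affine spaces themselves). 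Base changing along $\Q^{\T} \into \Gr^{\T}$ yields a $\T$-equivariant affine bundle $a_{\T} \colon A^{\T} \to \Q^{\T}$ with $A^{\T}$ affine, naturally identified with the $\T$-fixed locus of $a \colon A \to \Q$.

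To finish, I would invoke Lemma~\ref{fixpotpullback}: the pullback APOT of $\varphi|_{\Q^{\T}}^{\fix}$ along $a_{\T}$ coincides with $(\varphi_A|_{A^{\T}})^{\fix}$, where $\varphi_A$ is the POT on $A$ from Corollary~\ref{summary}, and is thus itself a POT. Its class in $K^{0,\T}(A^{\T})$ is $(a^*E_\bullet + T_{A/\Q})|_{A^{\T}}^{\fix} = a_{\T}^* E_\bullet^{\T} + T_{A^{\T}/\Q^{\T}}$, which matches the expression required in Assumption~\ref{siebassumpt}. The step that most deserves attention is the commutation of $\T$-fixed-locus formation with the relevant base changes and with the K-theory class attached to the pulled-back POT; both follow from the functoriality of fixed-point loci under $\T$-equivariant morphisms together with the compatibilities already established for $\varphi$ in the proof of Corollary~\ref{summary}.
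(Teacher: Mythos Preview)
Your approach differs from the paper's in a way that creates unnecessary complications, and your invocation of Lemma~\ref{fixpotpullback} is not quite right.

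The paper does not pass to $\Gr^{\T}$ or $B^{\T}$ at all. It simply embeds $\Q^{\T}$ into the \emph{full} Grassmannian $\Gr$ (which is already smooth), keeps the same affine bundle $B\to\Gr$, and base-changes along $\Q^{\T}\hookrightarrow\Gr$ to obtain $F_A\coloneqq a^{-1}(\Q^{\T})\to\Q^{\T}$. This $F_A$ is automatically affine (closed in $A$) and $F_A\to\Q^{\T}$ is automatically an affine bundle (base change of $B\to\Gr$). Lemma~\ref{fixpotpullback} is stated and proved precisely for the map $a|_{F_A}\colon F_A\to\Q^{\T}$, so it applies verbatim and yields the required POT with K-theory class $(a|_{F_A})^*E_\bullet^{\T}+T_{F_A/\Q^{\T}}$.

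Your route instead uses $a_{\T}\colon A^{\T}\to\Q^{\T}$, which is a \emph{different} map: $A^{\T}$ is a proper closed subscheme of $F_A$ in general. Two issues arise. First, the claim that $B^{\T}\to\Gr^{\T}$ is an affine bundle is not as immediate as you suggest: you would need to argue that the $\T$-action on the affine-bundle fibers is compatible with a linear structure and that the fixed loci have locally constant dimension over $\Gr^{\T}$ (true, but it requires linear reductivity of $\T$ and some care, not just ``fixed loci of affine spaces are affine spaces''). Second, and more seriously, Lemma~\ref{fixpotpullback} identifies the pullback of $\varphi|_{\Q^{\T}}^{\fix}$ along $a|_{F_A}$ with $(\varphi_A|_{F_A})^{\fix}$, not along $a_{\T}$ with $(\varphi_A|_{A^{\T}})^{\fix}$. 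To use $A^{\T}$ you would have to redo the chart-by-chart comparison in the proof of Lemma~\ref{fixpotpullback} with $A^{\T}$ in place of $F_A$, checking in particular that $(\Omega_{A/\Q}|_{A^{\T}})^{\fix}\simeq\Omega_{A^{\T}/\Q^{\T}}$. This can be done, but it is extra work that the paper's choice of $F_A$ sidesteps entirely.
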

	\begin{proof} 
		By $(\ref{fixlocglkth})$, the first condition in Assumption $\ref{siebassumpt}$ is satisfied. We use the notation from the proof of Corollary $\ref{summary}$ and the proof itself. We have a $\T$-equivariant embedding of $\Q^{\T}$ into the Grassmannian $\Gr$. Moreover, there is a $\T$-equivariant affine bundle $b \colon B \to \Gr$ with $B$ affine. We denote its restriction to $\Q^{\T}$ by $F_A$. Everything is depicted in the following diagram where all squares are fibered
		\begin{center}
			\begin{tikzcd}
				F_A \arrow[d, "a|_{F_A}"] \arrow[r, hook] & A \arrow[d, "a"] \arrow[r, hook] & B \arrow[d, "b"] \\
				\Q^{\T} \arrow[r, hook]                   & \Q \arrow[r, hook]               & \Gr.           
			\end{tikzcd}
		\end{center}
		As the diagram is fibered, $F_A \into A$ is a closed embedding. As $A$ is affine, $F_A$ is also affine. The restriction $a|_{F_A}$ is also affine. By Lemma $\ref{fixpotpullback}$, the pullback APOT of $\varphi|_{\Q^{\T}}^{\fix}$ via $a|_{F_A}$ is a perfect obstruction theory. By construction, the corresponding class in $K^{0, \T}(\Q^\T)$ is 
		$(a|_{F_A})^*E^{\T}_\bullet + T_{F_A/\Q^{\T}}$, and Assumption~\ref{siebassumpt} is satisfied.
	\end{proof}
	
	To describe the second almost perfect obstruction theory on $\Q^{\T}$ we first recall from \cite{Fasola_2021} the description of the fixed locus of the Quot scheme $\Q^\T$ under the action of the torus $\T \simeq (\mathbb{C}^*)^3$. Denote by ${\bf n}$ a generic tuple $\{n_{\alpha}| \alpha \in \Delta(Y)\}$ of non-negative integers, where $\Delta(Y)$ is the set of vertices of the polytope corresponding to the toric variety $Y$. Set $|{\bf n}| = \sum_{\alpha \in \Delta(Y)} n_{\alpha}$. By $$Y = \bigcup_{\aaa \in \Delta(Y)} U_\aaa$$ we denote the standard cover of affine charts of $Y$.
	
	\begin{lem}[{\cite[Lemma 9.4]{Fasola_2021}}]
		\label{fixed}
		There is a scheme-theoretic identity
		$$\Q^\T = \disj_{|{\bf n}| = n} {\prodd}_{\alpha \in \Delta(Y)} \Quot_{U_{\alpha}}(F|_{U_{\alpha}}, n_{\alpha})^\T . $$
	\end{lem}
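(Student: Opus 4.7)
The plan is to reduce the description of $\Q^{\T}$ to a decomposition of any $\T$-fixed quotient according to the affine toric cover of $Y$. The first observation I would use is that if $[F \twoheadrightarrow Q]$ represents a $\T$-fixed closed point of $\Q$, then the support of $Q$ is a $\T$-invariant zero-dimensional subscheme of $Y$, hence set-theoretically contained in $Y^{\T}$. Since the fixed points of $Y$ under $\T$ are in bijection with the vertices $\Delta(Y)$, and each such fixed point lies in a unique affine chart $U_{\alpha}$, we obtain a canonical decomposition of the scheme-theoretic support of $Q$ into disjoint pieces indexed by $\Delta(Y)$.

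Next I would upgrade this from closed points to families. Given an $S$-valued point $[F_S \twoheadrightarrow Q_S]$ of $\Q^{\T}$, the sheaf $Q_S$ is flat over $S$ with zero-dimensional $\T$-equivariant support. The key step is to produce a canonical splitting $Q_S \simeq \bigoplus_{\alpha \in \Delta(Y)} Q_{S,\alpha}$, where $Q_{S,\alpha}$ is supported on $S \times \{p_{\alpha}\} \subset S \times U_{\alpha}$; one way to do this is to note that the $\T$-equivariant structure on $Q_S$ forces its set-theoretic support to lie in $S \times Y^{\T}$, and since $Y^{\T}$ is a disjoint union of the isolated points $p_{\alpha}$, the decomposition of the support induces a decomposition of $Q_S$ into a direct sum of sheaves, each of which is naturally a quotient of $F_S|_{S \times U_{\alpha}}$. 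Letting $n_{\alpha}(s) \coloneqq \chi(Q_{S,\alpha}|_s)$, the flatness of $Q_S$ over $S$ ensures these are locally constant, giving the decomposition of $\Q^{\T}$ into open-and-closed pieces indexed by tuples $\mathbf{n} = (n_{\alpha})$ with $|\mathbf{n}| = n$.

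This assignment defines a morphism
\begin{equation*}
\Q^{\T} \longrightarrow \disj_{|\mathbf{n}| = n} \prodd_{\alpha \in \Delta(Y)} \Quot_{U_{\alpha}}(F|_{U_{\alpha}}, n_{\alpha})^{\T}.
\end{equation*}
Its inverse is given by ``glueing'': starting from a tuple $([F|_{U_{\alpha}} \twoheadrightarrow Q_{\alpha}])_{\alpha}$ with each $Q_{\alpha}$ of length $n_{\alpha}$ supported at the fixed point $p_{\alpha}$, the sheaves $Q_{\alpha}$ extend by zero to $Y$ and their direct sum $Q = \bigoplus_{\alpha} Q_{\alpha}$ is a quotient of $F$ of length $n$. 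Verifying that these two assignments are inverse to one another in families and $\T$-equivariantly yields the claimed scheme-theoretic identity.

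The main obstacle I anticipate is making the decomposition of $Q_S$ work functorially over an arbitrary base $S$ (not just a point), since a priori the decomposition of a coherent sheaf with $\T$-equivariant zero-dimensional support into pieces supported at each fixed point requires a careful argument combining flatness over $S$ with the fact that $Y^{\T}$ is a disjoint set of closed points; once this is established, the rest is essentially formal, since extension by zero from the affine charts and glueing in the other direction are clearly mutually inverse constructions, and compatibility with taking $\T$-fixed subfunctors follows because the direct sum decomposition is canonical and hence $\T$-equivariant.
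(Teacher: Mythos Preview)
The paper does not give its own proof of this lemma: it is stated as a citation of \cite[Lemma 9.4]{Fasola_2021} and used without further argument. Your proposal is the standard proof and is correct; the decomposition of a $\T$-fixed quotient according to the toric fixed points, made functorial in families via flatness and the disjointness of the support, together with the inverse gluing map by extension by zero, is exactly how the cited reference proceeds.
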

	%

	
	For a fixed tuple $\n = \{ n_\aaa\}_{\aaa \in \Delta(Y)}$ and $\aaa \in \Delta(Y)$ the open embedding 
	\begin{equation}
		\label{indsemipot}
		\Quot_{\ua}(F|_{\ua}, n_\aaa) \into \Quot_Y(F, n_\aaa)
	\end{equation}
	equips $\Quot_{\ua}(F|_{\ua}, n_\aaa)$ and hence also $\Quot_{\ua}(F|_{\ua}, n_\aaa)^\T$ with a $\T$-equivariant almost perfect obstruction theory. By Lemma $\ref{fixed}$, it induces an almost perfect obstruction theory on $\Q^\T$ considering on each component the product of the almost perfect obstruction theories induced by $(\ref{indsemipot})$. We denote this APOT by $\varphi_{Q^\T}^\prime$. Denote the resulting virtual class by 
	\begin{equation}
		\label{primevirtclass}
		[\Q^{\T}]^{\vir \prime} \in A_*^{\T}(\Q^\T).
	\end{equation}
	Note that 
	\begin{equation}
		\label{primevc}
		[\Q^{\T}]^{\vir \prime} = \sum_{\n} \prod_{\aaa} [\Quot_{\ua}(F|_{\ua}, n_\aaa)^{\T}]^{\vir}
	\end{equation}
	Denote by 
	$$0 \to \sss_\aaa \to F_\aaa \to \qq_\aaa \to 0 $$
	the universal sequence of $\Quot_{\ua}(F|_{\ua}, n_\aaa)^\T$ and by 
	$$\pi_\alpha \colon \ua \times \Quot_{\ua}(F|_{\ua}, n_\aaa)^\T \to \Quot_{\ua}(F|_{\ua}, n_\aaa)^\T$$ 
	the second projection. Then as in Remark $\ref{quotglkthclass}$, the global class in $K^{0, \T}(\Quot_{\ua}(F|_{\ua}, n_\aaa)^\T)$ of the almost perfect obstruction theory on $\Quot_{\ua}(F|_{\ua}, n_\aaa)^\T$ induced by the embedding $(\ref{indsemipot})$ is the fixed part of
	\begin{equation}
		E_{\alpha \bullet} \coloneqq R \sheafhom_{\pi_\aaa}(F_\alpha, \qq_\alpha) + R \sheafhom_{\pi_\aaa}(\qq_\alpha, F_\alpha) - R \sheafhom_{\pi_\aaa}(\qq_\alpha, \qq_\alpha).
	\end{equation}
	Denote by 
	$$p_\aaa \colon \Q^\T_\n \to \Quot_{\ua}(F|_{\ua}, n_\aaa)^\T$$
	the projection onto the $\alpha$-part where $\Q^\T_\n$ denotes the component of the fixed locus corresponding to the tuple $\n$.
	Then the induced almost perfect obstruction theory $\varphi_{Q^\T}^\prime$ on $\Q^\T$ 
	has a global K-theory class
	\begin{equation}
		E_\bullet^\prime \coloneqq \sum_{\n} \left(\sum_{\aaa \in \Delta(Y)} p_\aaa^* E_{\alpha \bullet} \right)^{\fix} \in K^{0, \T}(\Q^\T)
	\end{equation}
	Its K-theoretic virtual normal bundle is
	\begin{equation}
		\label{primevnb}
		N^{\prime \vir} \coloneqq \sum_{\n} \left(\sum_{\aaa \in \Delta(Y)} p_\aaa^* E_{\alpha \bullet} \right)^{mov} = \sum_{\n} \sum_{\aaa \in \Delta(Y)} p_\aaa^* N_{\aaa}^{\vir}.
	\end{equation}
	Here, $N_{\aaa}^{\vir} \coloneqq E_{\alpha \bullet}^{mov} $ is the K-theoretic virtual normal bundle corresponding to the APOT on $\Quot_{\ua}(F|_{\ua}, n_\aaa)$.
	\begin{lem}
		\label{applytsiebapot}
		The $\T$-equivariant Siebert formula for APOT (Theorem~\ref{apotpotsiebert}) can be applied to the almost perfect obstruction theory $\varphi_{Q^\T}^\prime$ on $\Q^\T$.
	\end{lem}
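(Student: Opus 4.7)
The plan is to verify the two conditions of Assumption \ref{siebassumpt} for the APOT $\varphi_{\Q^\T}^\prime$ on $\Q^\T$. The first condition is immediate: the class $E_\bullet^\prime \in K^{0,\T}(\Q^\T)$ has been constructed explicitly above and, by its definition as a sum of pulled-back local classes $p_\aaa^* E_{\aaa \bullet}$, it restricts on each étale chart of $\varphi_{\Q^\T}^\prime$ to the expected perfect complex of the component POT.

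For the second condition, the key observation is that $\varphi_{\Q^\T}^\prime$ does not arise from the single global embedding $\Q \into \Gr$ used in Lemma \ref{applysiebfixapot}; instead, by Lemma \ref{fixed}, it is built chart-by-chart from the open immersions $\Quot_{U_\aaa}(F|_{U_\aaa}, n_\aaa) \into \Quot_Y(F, n_\aaa)$. Consequently, I will use a different smooth ambient scheme. For each vertex $\aaa \in \Delta(Y)$, the $\T$-equivariant embedding of Proposition \ref{equivemb} restricts to a $\T$-equivariant locally closed embedding of $\Quot_{U_\aaa}(F|_{U_\aaa}, n_\aaa)$ into the smooth $\T$-variety $\Gr_\aaa \coloneqq \Grass(n_\aaa, H^0(Y, F(m)))$. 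Taking products over $\aaa$ and the disjoint union over tuples $\n$ with $|\n| = n$ yields a $\T$-equivariant embedding of $\Q^\T$ into a smooth $\T$-scheme $Y^\prime$.

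Next, I would observe that the $\T$-equivariant resolution property for $Y^\prime$ follows from the one for each factor $\Gr_\aaa$, verified as in Proposition \ref{resolutpropsatisf}. Applying the $\T$-equivariant Jouanolou trick (Theorem \ref{tequiv}) then produces a $\T$-equivariant affine bundle $B^\prime \to Y^\prime$ with $B^\prime$ affine. Base-changing along $\Q^\T \into Y^\prime$ gives a $\T$-equivariant affine bundle $a^\prime \colon F_A \to \Q^\T$ with $F_A$ affine, since $F_A$ is a closed subscheme of the affine scheme $B^\prime$.

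It remains to check that the pullback of $\varphi_{\Q^\T}^\prime$ via $a^\prime$ is a perfect obstruction theory with K-theory class $a^{\prime *} E_\bullet^\prime + T_{F_A/\Q^\T}$. I would argue this chart-by-chart: each factor $\varphi|_{\Quot_{U_\aaa}(F|_{U_\aaa}, n_\aaa)^\T}^{\fix}$ is induced by the global APOT on $\Quot_Y(F, n_\aaa)$ restricted along the open immersion, and the reasoning of Lemma \ref{fixpotpullback} applied in this product setting produces a POT after pulling back. The main bookkeeping obstacle I anticipate is tracking compatibilities between the product-and-disjoint-union structure of the fixed locus and the affine bundle at the level of equivariant K-theory; the essential geometric inputs are however already provided by Lemma \ref{fixed}, Proposition \ref{equivemb}, Theorem \ref{tequiv}, and Lemma \ref{fixpotpullback}.
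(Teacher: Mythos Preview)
Your proposal is correct and follows essentially the same approach as the paper: verify Assumption~\ref{siebassumpt}(1) via the explicit class $E_\bullet^\prime$, embed each component $\prod_\aaa \Quot_{U_\aaa}(F|_{U_\aaa}, n_\aaa)^\T$ into a product of Grassmannians $\prod_\aaa \Grass(n_\aaa, H^0(F(m)))$, apply the equivariant Jouanolou trick, and finish as in Lemma~\ref{applysiebfixapot} using Lemma~\ref{fixpotpullback}. The only minor point worth tightening is that your embedding into $Y^\prime$ is in fact \emph{closed} (not merely locally closed), since each $\Quot_{U_\aaa}(F|_{U_\aaa}, n_\aaa)^\T$ is a clopen piece of the closed subscheme $\Quot_Y(F, n_\aaa)^\T \subset \Quot_Y(F, n_\aaa)$; this is needed so that the base change $F_A$ is a closed subscheme of the affine $B^\prime$ and hence affine.
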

	\begin{proof}
		By the discussion above, $\varphi_{Q^\T}^\prime$ has a global class in $K^{0, \T}(\Q^\T)$ and the first condition in Assumption \ref{siebassumpt} is satisfied. For the second condition, consider the $\T$-equivariant embedding given on the component $\n$ in the decomposition of the fixed locus from Lemma~$\ref{fixed}$ as a composition 
		$$\prod_{\aaa}\Quot_{\ua}(F|_{\ua}, n_\aaa)^\T \into \prod_{\aaa}\Quot_Y(F, n_\aaa)^\T \into \prod_{\aaa}\Quot_Y(F, n_\aaa) \into \prod_{\aaa}\Grass(\HH^0(F(m)), n_\aaa).$$
		We next perform the torus-equivariant Jouanolou trick (Theorem~\ref{tequiv}) componentwise on the product of the Grassmannians and complete the proof as in the one of Lemma \ref{applysiebfixapot}.  
	\end{proof}
	\begin{prop}
		\label{kupzeroeq1}
		There is an equality in $K^{0, \T}(\Q^\T_\n)$
		\begin{equation}
			E_\bullet|_{\Q^\T_\n} = \sum_{\aaa \in \Delta(Y)} p_\aaa^* E_{\alpha \bullet}.
		\end{equation}
	\end{prop}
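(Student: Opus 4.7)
The plan is to use the second expression for the global K-theory class given in Remark~\ref{quotglkthclass}, namely
$$E_\bullet = \rph(q^*F, \qq) + \rph(\qq, q^*F) - \rph(\qq, \qq) \in K^{0,\T}(\Q),$$
and to compare it term by term with $\sum_{\aaa} p_\aaa^* E_{\aaa \bullet}$ after restriction to the connected component $\Q^\T_\n$. By definition, $\Q^\T_\n = \prod_\aaa \Quot_{\ua}(F|_{\ua}, n_\aaa)^\T$, and the key geometric input, which I would extract from the identification in Lemma~\ref{fixed}, is that the universal quotient $\qq$ on $\Q \times Y$ restricted to $\Q^\T_\n \times Y$ decomposes canonically as
$$\qq|_{\Q^\T_\n \times Y} \;\simeq\; \bigoplus_{\aaa \in \Delta(Y)} (\id \times \iota_\aaa)_* \, \tilde p_\aaa^*\, \qq_\aaa,$$
where $\iota_\aaa \colon \ua \hookrightarrow Y$ denotes the open inclusion and $\tilde p_\aaa \colon \Q^\T_\n \times \ua \to \Quot_{\ua}(F|_{\ua}, n_\aaa)^\T \times \ua$ is induced by the projection $p_\aaa$. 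This decomposition holds because a $\T$-fixed quotient is supported on the (isolated) torus-fixed points, and the unique fixed point of $\ua$ differs from that of $\ub$ whenever $\aaa \neq \beta$.

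Once the decomposition is in place, I would observe that for $\aaa \neq \beta$ the two sheaves $(\id \times \iota_\aaa)_* \tilde p_\aaa^*\qq_\aaa$ and $(\id \times \iota_\beta)_* \tilde p_\beta^*\qq_\beta$ have disjoint set-theoretic supports on $\Q^\T_\n \times Y$, hence
$$\rh\bigl((\id \times \iota_\aaa)_* \tilde p_\aaa^* \qq_\aaa,\ (\id \times \iota_\beta)_* \tilde p_\beta^* \qq_\beta\bigr) = 0.$$
This kills all cross terms in the triple $\rph(\qq,\qq)|_{\Q^\T_\n}$ and reduces it to a direct sum of diagonal contributions.

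The next step is a localisation/base-change argument: for each $\aaa$, using $(\id \times \iota_\aaa)^* \dashv (\id \times \iota_\aaa)_*$ for the open immersion, I would rewrite each diagonal contribution as $R\pi_*$ of $R\sheafhom$ on $\Q^\T_\n \times \ua$, and then apply flat base change along the square
\begin{center}
\begin{tikzcd}
\Q^\T_\n \times \ua \arrow[r, "\tilde p_\aaa"] \arrow[d] & \Quot_{\ua}(F|_{\ua}, n_\aaa)^\T \times \ua \arrow[d, "\pi_\aaa"] \\
\Q^\T_\n \arrow[r, "p_\aaa"] & \Quot_{\ua}(F|_{\ua}, n_\aaa)^\T
\end{tikzcd}
\end{center}
to identify the result with $p_\aaa^* \rph_{\pi_\aaa}(\qq_\aaa, \qq_\aaa)$. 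The same argument applied to $\rph(q^*F, \qq)$ and $\rph(\qq, q^*F)$ (where the role of $F$ on $Y$ is replaced by $F|_{\ua}$ after restriction, since $q^*F$ is already globally defined) yields the respective pullbacks $p_\aaa^* \rph_{\pi_\aaa}(F_\aaa, \qq_\aaa)$ and $p_\aaa^* \rph_{\pi_\aaa}(\qq_\aaa, F_\aaa)$. Summing these and subtracting the $\qq$--$\qq$ contribution gives exactly $\sum_\aaa p_\aaa^* E_{\aaa \bullet}$.

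The main obstacle I anticipate is bookkeeping: verifying that the decomposition of $\qq|_{\Q^\T_\n \times Y}$ is canonical (not just pointwise), and pushing the various adjunction / base change identifications through in the equivariant derived category. Once the vanishing from disjoint supports is in hand, though, the remaining manipulations are formal and the identity drops out.
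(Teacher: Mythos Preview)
Your proposal is correct and follows essentially the same approach as the paper's proof: decompose the restricted universal quotient $\qq$ as a direct sum over $\aaa$ using that the supports are disjoint, kill the cross terms in $\rph(\qq,\qq)$ by disjoint support, and then use flat base change along the projection $p_\aaa$ to identify each diagonal piece with the pullback of the local class. The only cosmetic difference is that the paper writes the summands as $\sigma_\aaa^* j_{\aaa *}\qq_\aaa$ (pull back along the projection after extending by zero on $Y \times \Quot_\aaa^\T$) whereas you write $(\id\times\iota_\aaa)_*\tilde p_\aaa^*\qq_\aaa$; these agree by flat base change for the open immersion, and the remaining manipulations are identical.
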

	Recall that $E_\bullet$ is the $\T$-equivariant lift of the class defined in Remark $\ref{quotglkthclass}.$
	\begin{proof}
		Just in this proof for appearance convenience we will by abuse of notation denote by 
		$$0 \to \sss \to q^*F \to \qq \to 0 $$
		the universal sequence restricted to the $\n$-th component of the fixed locus $\Q^\T$ and by $q$ and $\pi$ the projections from $Y \times \Q^\T_\n$ to the first and the second factor, respectively.
		It then suffices to prove the following identities
		\begin{align}
			\rph(F, \qq) = \sum_{\aaa \in \Delta(Y)} p_\aaa^* R\sheafhom_{\pi_\aaa}(F_\aaa, \qq_\aaa),\\
			\rph(\qq, F) = \sum_{\aaa \in \Delta(Y)} p_\aaa^* R\sheafhom_{\pi_\aaa}(\qq_\aaa, F_\aaa),\\
			\rph(\qq, \qq) = \sum_{\aaa \in \Delta(Y)} p_\aaa^* R\sheafhom_{\pi_\aaa}(\qq_\aaa, \qq_\aaa).
		\end{align}
		Consider the following diagram where we also fix notation for some projections and inclusions
		\begin{center}
			\begin{tikzcd}
				\ua \times \Q^\T_\n \arrow[r] \arrow[d, "j_\n"', hook]             & \ua \times \Quot_\alpha^\T \arrow[d, "j_\aaa", hook] \arrow[dd, "\pi_\aaa", bend left=60] \\
				Y \times \Q^\T_\n \arrow[d, "\pi"'] \arrow[r, "\sigma_\aaa"] & Y \times \Quot_\alpha^\T \arrow[d, "\tilde{\pi}_\aaa"]                                    \\
				\Q^\T_\n \arrow[r, "p_\aaa"]                                 & \Quot_\alpha^\T.                                                                          
			\end{tikzcd}
		\end{center}
		
		Here, $\Quot_\aaa^\T \coloneqq \Quot_{\ua}(F|_{\ua}, n_\aaa)^\T $. We also denote by $q_\aaa \colon Y \times \Quot_\aaa^\T \to Y$ the first projection so that $q = q_\aaa \circ \sigma_\aaa$. Note that 
		$$\qq = \bigoplus_{\aaa \in \Delta(Y)} \sigma_\aaa^* j_{\aaa *} \qq_\aaa $$
		as the supports of $\qq_\aaa$'s do not intersect. Now
		
		\begin{align*}
			\rph(q^*F, \qq) &= \rph(q^*F, \bigoplus_{\aaa \in \Delta(Y)} \sigma_\aaa^* j_{\aaa *} \qq_\aaa) 
			= \rpl \left( \bigoplus_\aaa R\sheafhom(q^*F, \sigma_\aaa^* j_{\aaa *} \qq_\aaa)\right) \\
			&= \bigoplus_\aaa \rpl(q^*F^\vee \otimes \sigma_\aaa^* j_{\aaa *} \qq_\aaa) 
			= \bigoplus_\aaa \rpl(\sigma_\aaa^*q_\aaa^*F^\vee \otimes \sigma_\aaa^* j_{\aaa *} \qq_\aaa) \\
			&= \bigoplus_\aaa \rpl\sigma_\aaa^*(q_\aaa^*F^\vee \otimes  j_{\aaa *} \qq_\aaa) 
			= \bigoplus_\aaa p_\aaa^* R\tilde{\pi_\aaa}_*(q_\aaa^*F^\vee \otimes  j_{\aaa *} \qq_\aaa)\\
			&= \bigoplus_\aaa p_\aaa^* R\tilde{\pi_\aaa}_*j_{\aaa *}(j_\aaa^*q_\aaa^*F^\vee \otimes  \qq_\aaa) 
			= \bigoplus_{\aaa} p_\aaa^* R\pi_{\aaa *} R \sheafhom(F_\aaa, \qq_\aaa)
		\end{align*}
		We have used flat base change applied to $p_\aaa$.
		
		\begin{align*}
			\rph(\qq, \qq) &= \rpl \left( \bigoplus_{\aaa, \beta} R \sheafhom(\sigma_\aaa^* j_{\aaa *} \qq_\aaa, \sigma_\beta^* j_{\beta *} \qq_\beta)\right) 
			= \bigoplus_\aaa \rpl \sigma_\aaa^* j_{\aaa *} R \sheafhom(\qq_\aaa, \qq_\aaa) \\
			&= \bigoplus_\aaa p_\aaa^* R \tilde{\pi}_\aaa j_{\aaa *} R \sheafhom(\qq_\aaa, \qq_\aaa) 
			= \bigoplus_\aaa p_\aaa^* R \pi_{\aaa *}R\sheafhom(\qq_\aaa, \qq_\aaa)
		\end{align*}
		
		Here we use that 
		$$R \sheafhom(\sigma_\aaa^* j_{\aaa *} \qq_\aaa, \sigma_\beta^* j_{\beta *} \qq_\beta) = 0 $$ for $\aaa \neq \beta$ as the sheaves $\sigma_\aaa^* j_{\aaa *} \qq_\aaa$ and $\sigma_\beta^* j_{\beta *} \qq_\beta$ have disjoint support.
		
		The remaining identity is shown analogously.
	\end{proof}
	\begin{cor}
		\label{comparison1}
		There are equalities in $K^{0, \T}(\Q^\T)$
		$$E_\bullet^\T = E_\bullet^{\prime} \quad \text{and} \quad N^{\vir} = N^{\prime \vir} $$
		and an equality in $A_*^{\T}(\Q^\T)$
		$$[\Q^\T]^{\vir} = [\Q^\T]^{\vir \prime}.$$
		Here, $N^{\vir} \coloneqq E_\bullet|_{\Q^\T}^{mov}$.
	\end{cor}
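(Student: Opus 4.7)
The plan is to derive all three equalities from Proposition~\ref{kupzeroeq1} together with the Siebert formula for APOTs (Theorem~\ref{apotpotsiebert}). I first dispatch the two K-theoretic equalities, and then deduce the equality of virtual classes as a formal consequence.

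\textbf{K-theoretic equalities.} First I would decompose $\Q^{\T} = \disj_{|\n| = n} \Q^{\T}_\n$ via Lemma~\ref{fixed}. By Proposition~\ref{kupzeroeq1}, on each component $\Q^{\T}_\n$ there is an identity
\[
E_\bullet|_{\Q^{\T}_\n} \;=\; \sum_{\aaa \in \Delta(Y)} p_\aaa^* E_{\alpha \bullet} \quad \in K^{0,\T}(\Q^{\T}_\n).
\]
Taking fixed parts of both sides under the torus action and summing over the tuples $\n$ gives
\[
E_\bullet^{\T} \;=\; \sum_{\n} \left( \sum_{\aaa \in \Delta(Y)} p_\aaa^* E_{\alpha \bullet} \right)^{\fix} \;=\; E_\bullet^{\prime}
\]
by definition of $E_\bullet^{\prime}$. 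Taking moving parts instead and using the definition~(\ref{primevnb}) gives $N^{\vir} = N^{\prime \vir}$.

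\textbf{Equality of virtual classes.} By Lemma~\ref{applysiebfixapot}, the $\T$-equivariant Siebert formula (Theorem~\ref{apotpotsiebert}) applies to $\varphi|_{\Q^{\T}}^{\fix}$, and by Lemma~\ref{applytsiebapot} it also applies to $\varphi_{\Q^{\T}}^{\prime}$. Applied to each APOT, it yields
\[
[\Q^{\T}]^{\vir} = [s(E_\bullet^{\T}) \cap c_F(\Q^{\T})]_{\vd} \quad \text{and} \quad [\Q^{\T}]^{\vir \prime} = [s(E_\bullet^{\prime}) \cap c_F(\Q^{\T})]_{\vd}.
\]
Since the Fulton canonical class $c_F(\Q^{\T})$ depends only on the underlying scheme $\Q^{\T}$ and not on the chosen APOT, and since $E_\bullet^{\T} = E_\bullet^{\prime}$ has already been established, both right-hand sides agree. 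Hence $[\Q^{\T}]^{\vir} = [\Q^{\T}]^{\vir \prime}$.

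The main obstacle is not in this corollary itself but already absorbed into Proposition~\ref{kupzeroeq1} (the sheaf-theoretic decomposition using disjointness of supports of the fixed quotients) and into the two lemmas verifying that Assumption~\ref{siebassumpt} holds for the two APOTs under consideration. Once those are in place, the corollary is essentially a direct application of the Siebert formula together with the pointwise K-theoretic decomposition.
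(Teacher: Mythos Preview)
Your proof is correct and follows essentially the same route as the paper: derive the K-theoretic equalities by taking fixed and moving parts in Proposition~\ref{kupzeroeq1}, then use Lemmas~\ref{applysiebfixapot} and~\ref{applytsiebapot} to apply the Siebert formula (Theorem~\ref{apotpotsiebert}) to both APOTs and conclude equality of virtual classes from the K-theoretic equality. Your write-up is slightly more explicit than the paper's (spelling out the Siebert formula and the intrinsic nature of $c_F$), but there is no substantive difference in approach.
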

	\begin{proof}
		The first two equalities follow by taking fixed and moving parts, respectively, in Proposition~$\ref{kupzeroeq1}$. By Lemmas $\ref{applysiebfixapot}$ and $\ref{applytsiebapot}$, Siebert's formula for APOT (Theorem~$\ref{apotpotsiebert}$) can be applied to $\Q^\T$ and the two almost perfect obstruction theories. Therefore to compare the virtual classes it suffices to compare the global classes in $K^{0, \T}(\Q^\T)$ corresponding to these almost perfect obstruction theories. They are equal by the first part of the Corollary.
	\end{proof}
	
	Therefore, by Corollary \ref{comparison1} together with $(\ref{primevc})$ and $(\ref{primevnb})$
	\begin{equation}
		\label{step2}
		\frac{[\Q^\T]^{\vir}}{e(N^{\vir})} = \frac{[\Q^\T]^{\vir \prime}}{e(N^{\prime \vir})} = \sum_{\bf n} \prod_\alpha \frac{[\Quot_{\ua}(F|_{\ua}, n_\aaa)^\T]^{\vir}}{e(N_\alpha^{\vir})}.
	\end{equation}

	
	\subsection{Computation on 
		the local Quot scheme} \label{complocquot}
	Take an affine chart $\ua \into Y$ from the standard atlas on the toric variety $Y$ and consider
	$$Q_{\aaa} \coloneqq \Quot_{U_{\alpha}}(F|_{U_{\alpha}}, n_\aaa). $$
	Consider the almost perfect obstruction theory on $Q_\aaa$ induced from the APOT on $\Quot_Y(F, n_\aaa)$ from Theorem~$\ref{quotsemipot}$ via the inclusion $Q_\aaa \into \Quot_Y(F, n_\aaa)$. Denote by 
	\begin{equation}
		\label{univaaa}
		\sss_\alpha \into F_\aaa \onto \qq_\aaa
	\end{equation}
	the universal sequence on $\ua \times Q_\aaa$ corresponding to $Q_\aaa$. From now on, $\sss$ and $\qq$ denote the universal kernel and cokernel, respectively, corresponding to $\Quot_Y(F, n_\aaa)$. 
	Consider the commutative diagram with the fibered square
	\begin{center}
		\begin{tikzcd}
			U_\alpha \times Q_\alpha \arrow[rd, "\pi_\alpha"'] \arrow[r, hook] & Y \times Q_\alpha \arrow[d] \arrow[r, hook] & Y \times \Quot_Y(F, n_\aaa) \arrow[d, "\pi"] \\
			& Q_\alpha \arrow[r, hook]                                 & \Quot_Y(F, n_\aaa).                         
		\end{tikzcd}
	\end{center}
	Note that $\qq|_{Y \times Q_\aaa} = \qq_\aaa$. 
	Also, $R\sheafhom(\qq, \sss)|_{Y \times \ua} \simeq R\sheafhom(\qq_\aaa, \sss_\aaa)$ and hence $$\rph(\qq, \sss)|_{Q_\aaa} \simeq \rphalpha(\qq_\alpha, \sss_\alpha).$$
	Similar isomorphisms hold whenever $\qq$ is one of the entries in $\rph(-, -)$. Therefore for the construction of our almost perfect obstruction theory on $Q_\aaa$ we can consider perfect obstruction theories on each affine étale chart constructed via the pullback of the universal sequence (\ref{univaaa}) as in Proposition \ref{potlocally}. In particular, the obstruction sheaf is 
	\begin{equation}
		\label{obua}
		Ob_{Q_\aaa} \simeq \extpialpha{3}(\qq_\alpha, \sss_\alpha).
	\end{equation}

	Similarly as in the previous section, there is a torus $\T = (\mathbb{C}^*)^3$ acting on $Q_\aaa$, and we can apply the torus localization formula (Theorem~\ref{localization}) to the APOT on $Q_\aaa$ induced by the inclusion $Q_\aaa \into \Quot_Y(F, n_\aaa)$.  
	
	There is also a $\T^\prime \coloneqq (\mathbb{C}^*)^r$ action $\sigma_{Q_\aaa}$ on $Q_\aaa$ obtained by scaling the fibers of $F_\aaa \coloneqq F|_{U_{\alpha}}$, and it commutes with the $\T$-action, see \cite{Fasola_2021}. We recall the construction of $\sigma_{Q_\aaa}$.
	First note that, as $\ua \simeq \aff$, there is a (non-canonical) isomorphism
	$$F|_{\ua} \simeq \bigoplus_{i = 1}^r \oo_\ua \otimes \lambda_i \in K^{\T \times \T^\prime}(Q_\aaa), $$
	where $\lambda_i \in K^{\T \times \T^\prime}(\pt)$ are the torus weights.
	Consider a scheme $S$ and ${s = (s_i)_{1 \leq i \leq r} \in \T^\prime(S) = Aut_{\oo_S}(\oo_S)^r}$. Define the action
	$$\sigma_{Q_\aaa}(S) \colon \T^\prime(S) \times Q_\aaa(S) \to Q_\aaa(S) $$
	sending the pair $(s, [\bigoplus_{i = 1}^r \oo_{\ua \times S}\otimes \lambda_i \xrightarrowdbl{\kappa} \qq_S]) \in \T^\prime(S) \times Q_\aaa(S)$ to the composition
	$$\bigoplus_{i = 1}^r \oo_{\ua} \boxtimes \oo_S \boxtimes \lambda_i \xrightarrow{\bigoplus_{i = 1}^r \id \boxtimes s_i^{-1} \boxtimes \id} \bigoplus_{i = 1}^r \oo_{\ua} \boxtimes \oo_S \boxtimes \lambda_i \xrightarrowdbl{\kappa} \qq_S.$$
	Note that the action morphism $\sigma_{Q_\aaa} \colon \T^\prime \times Q_\aaa \to Q_\aaa$ which can be considered as an element in $Q_\aaa(\T^\prime \times Q_\aaa)$ is equivalently defined via a quotient on $\ua \times \T^\prime \times Q_\aaa$ which is the composition
	\begin{equation}
		\label{sqquotient}
		\bigoplus_{i = 1}^r \oo_{\ua \times Q_\aaa} \boxtimes \oo_{\T^\prime} \boxtimes \lambda_i \xrightarrow[\sim]{\bigoplus_{i = 1}^r \id \boxtimes u_i^{-1} \boxtimes \id} \bigoplus_{i = 1}^r \oo_{\ua \times Q_\aaa} \boxtimes \oo_{\T^\prime} \boxtimes \lambda_i \xrightarrowdbl{u_{\T^\prime}} \qq_{\T^\prime}.
	\end{equation}
	Here, $(u_i)_{1 \geq i \geq r} \in \T^\prime(\T^\prime) = Aut_{\oo_{\T^\prime}}(\oo_{\T^\prime})^r$ is the universal automorphism and $u_{\T^\prime}$ is the pullback of the universal quotient to $\ua \times \T^\prime \times Q_\aaa$. 
	
	\begin{lem}
		The almost perfect obstruction theory on $Q_\aaa$ induced by the inclusion $Q_\aaa \into \Q$ is $\T \times \T^{\prime}$-equivariant.
	\end{lem}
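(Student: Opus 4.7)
The strategy is to mimic the proof of Proposition~\ref{equiv}, augmenting the $\T$-action by the additional $\T^\prime$-action. First I would check that the universal sequence $(\ref{univaaa})$ on $U_\alpha \times Q_\alpha$ carries a canonical $\T \times \T^\prime$-equivariant structure. The $\T$-equivariant structure is the restriction to $U_\alpha \times Q_\alpha$ of the one furnished by Proposition~\ref{torunivseq}. For the $\T^\prime$-equivariant structure, I would unpack the definition of $\sigma_{Q_\alpha}$: formula $(\ref{sqquotient})$ exhibits the pullback of the universal quotient under the action morphism as isomorphic (via the diagonal automorphism $\bigoplus_i \id \boxtimes u_i^{-1} \boxtimes \id$) to the pullback via the projection $q \colon \T^\prime \times Q_\alpha \to Q_\alpha$; invoking the universal property of $Q_\alpha$ as in the proof of Lemma~\ref{grunivequiv}, this produces the desired $\T^\prime$-equivariant structure on $\qq_\alpha$, and hence on the full short exact sequence. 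The two equivariant structures commute because the $\T$-action on $Y$ and the $\T^\prime$-action scaling the fibers of $F|_{U_\alpha}$ commute.

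Next I would deduce that the standard derived obstruction theory $\varphi^{\der}|_{Q_\alpha}$ of $(\ref{explderobstr})$ is $\T \times \T^\prime$-equivariant. This follows exactly as in the proof of Proposition~\ref{equiv} (which itself mirrors \cite[Proposition 9.2]{Fasola_2021}), since $\varphi^{\der}$ is constructed from the reduced Atiyah class, which can be viewed as a $\T \times \T^\prime$-equivariant extension once the universal sequence is $\T \times \T^\prime$-equivariant.

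Then I would invoke \cite{alperhallrydh} to choose a $\T \times \T^\prime$-equivariant affine étale cover $\{U_i \to Q_\alpha\}_{i \in I}$. On each such chart, every step of the construction in Section~\ref{locally} goes through equivariantly: the complexes $\rph(\qq_\alpha, F_\alpha)|_{U_i}$, $\rph(\sss_\alpha, \qq_\alpha)|_{U_i}$ and the morphisms $\xi_1, \xi_2, \xi_3$ in diagram $(\ref{diagr})$ are in $\DD^{\T \times \T^\prime}(U_i)$, the vanishing in Lemma~\ref{vanishing} takes place equivariantly, and uniqueness of $p$ forces $p_i \in \DD^{\T \times \T^\prime}(U_i)$. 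Consequently the full diagram $(\ref{twodisttr})$ lies in $\DD^{\T \times \T^\prime}(U_i)$, so each $\varphi_i \colon E_i^\bullet \to \LL_{U_i}$ is $\T \times \T^\prime$-equivariant and so is the isomorphism $\psi_i = h^1(\gamma_i)$.

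The only remaining point is the gluing datum. On the double intersections $U_{ij}$, which we may take to be $\T \times \T^\prime$-equivariant affine étale charts of $U_i \times_{Q_\alpha} U_j$, the uniqueness of the map $p$ gives $p_i|_{U_{ij}} = p_j|_{U_{ij}}$ equivariantly, so the isomorphisms $\eta_{ij}$ produced in Theorem~\ref{quotsemipot} via Proposition~\ref{conechoices} can be chosen in $\DD^{\T \times \T^\prime}(U_{ij})$, with the required compatibilities holding equivariantly. The mildest obstacle is verifying the existence of such a common $\T \times \T^\prime$-equivariant refinement; this is precisely what \cite{alperhallrydh} provides, after which everything follows.
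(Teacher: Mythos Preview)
Your proposal is correct and follows essentially the same approach as the paper: both reduce to showing that the universal sequence $(\ref{univaaa})$ carries a $\T \times \T^\prime$-equivariant structure, obtaining the $\T^\prime$-part from the description $(\ref{sqquotient})$ of the action morphism together with the universal property (exactly as in Lemma~\ref{grunivequiv}). The paper's proof is more terse, simply noting that once the universal sequence is equivariant the local construction of Proposition~\ref{potlocally} goes through; you spell out the remaining steps (equivariance of $\varphi^{\der}$, the equivariant étale cover via \cite{alperhallrydh}, equivariance of diagram~$(\ref{twodisttr})$, and the gluing data) by following the template of Proposition~\ref{equiv}, which is exactly what the paper's ``by the discussion above'' is pointing to.
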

	\begin{proof}
		By the discussion above, the perfect obstruction theories on the affine étale charts that yield our APOT can be constructed via pullbacks of the universal sequence (\ref{univaaa}) as in Proposition \ref{potlocally}. It is therefore sufficient to check that the universal sequence (\ref{univaaa}) on $Q_\aaa$ can be made $\T \times \T^\prime$ equivariant. It can be made $\T$-equivariant as in the case of $\Q$. Hence it suffices to check it can be made $\T^\prime$-equivariant.
		
		By the discussion above, the action morphism $\sigma_{Q_\aaa}$ is given via the quotient $(\ref{sqquotient})$. On the other hand, via the universal property, it is given via the pullback $\sigma_{Q_\aaa}^*u$ of the universal quotient $u$ on $\ua \times Q_\aaa$. Hence, the two quotients $(\ref{sqquotient})$ and $\sigma_{Q_\aaa}^*u$ are isomorhic, and this isomorphism provides the equivariant structure on the universal quotient and hence also on the whole universal sequence. 
	\end{proof}
	We next show that the APOT on $Q_\aaa$ discussed above is equivariantly symmetric, the statement is made precise in Lemma~\ref{locetalsym} below. 
	
	Note that as $\ua \simeq \aff$, we have $\omega_\ua \simeq {\bf t} \otimes \oo_\ua \in K_0^{\T}$, where ${\bf t} \coloneqq t_1 t_2 t_3$ with $K^{\T \times \T^\prime}(pt) = \mathbb{Z}[t_1^{\pm}, t_2^{\pm}, t_3^{\pm}, w_1^{\pm}, \dots, w_r^{\pm}].$ Hence, $$\omega_{\pi_\alpha} \simeq {\bf t} \otimes \oo_{\ua \times Q_\aaa} \in K_0^{\T \times \T^\prime}(\ua \times Q_\aaa).$$ By abuse of notation, we always denote by ${\bf t}$ the appropriate pullback.   	
	
	Consider a $\T \times \T^\prime$-equivariant étale morphism $a_i \colon W_i \to Q_\aaa $ with $W_i$ affine. Denote by ${\varphi_i \colon E^\bullet_i \to \LL_{W_i}}$ the $\T \times \T^\prime$-equivariant POT on $W_i$ constructed from the universal sequence as in Proposition $\ref{potlocally}$. Put $E_{i \bullet} \coloneqq (E_i^\bullet)^\vee$ and denote by $\pi_i \colon \ua \times W_i \to W_i$ the second projection. Note that 
	
	\begin{equation}\label{omegapi}
		\omega_{\pi_i} = (\id \times a_i)^* \omega_{\pi_\aaa} \simeq (\id \times a_i)^* ({\bf t} \otimes \oo_{\ua \times Q_\aaa}) \simeq {\bf t} \otimes \oo_{\ua \times W_i}. 
	\end{equation}
	
	
	\begin{lem}
		\label{locetalsym}
		For every i, there is the equivariant symmetry
		\begin{equation}
			E_{i \bullet} \simeq  {\bf t}^{-1} \otimes E^\bullet_i [-1].
		\end{equation}
	\end{lem}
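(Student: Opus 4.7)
The plan is to adapt the proof of Proposition~\ref{cyafflocal} to the present equivariant and non-Calabi--Yau setting, where the only new ingredient is tracking the equivariant twist coming from the relative dualizing sheaf. By (\ref{omegapi}), one has $\omega_{\pi_i} \simeq {\bf t} \otimes \oo_{U_\alpha \times W_i}$ as $\T \times \T'$-equivariant line bundles, so equivariant Grothendieck duality applied to the smooth proper morphism $\pi_i$ of relative dimension $3$ yields, for any $A, B \in \DD^{\T \times \T'}(U_\alpha \times W_i)$, a natural isomorphism
$$R\sheafhom_{\pi_i}(A,B)^\vee \simeq {\bf t} \otimes R\sheafhom_{\pi_i}(B,A)[3].$$

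With this identity in hand, I would apply the composite functor ${\bf t}^{-1} \otimes R\sheafhom(-,\oo_{W_i})[-1]$ entry by entry to the pullback to $W_i$ of diagram~(\ref{twodisttr}). As in the Calabi--Yau case, a term-by-term check will show that each entry is sent to another entry of the (restricted) diagram: $R\sheafhom_{\pi_i}(\qq,q^*F)[1]$ is swapped with $R\sheafhom_{\pi_i}(q^*F,\qq)[1]$, $R\sheafhom_{\pi_i}(\sss,\qq)$ is swapped with $R\sheafhom_{\pi_i}(\qq,\sss)[2]$, while $R\sheafhom_{\pi_i}(\qq,\qq)[1]$ is sent to itself. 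By functoriality of Grothendieck duality and the commutativity of the diagram, the top-row distinguished triangle is carried onto the right-column distinguished triangle, with $p_i^\vee[-1]\otimes{\bf t}^{-1}$ identified with $z_i$ (and vice versa). Under this operation the object $E_{i\bullet}$ is carried to ${\bf t}^{-1} \otimes E_{i\bullet}^\vee[-1]$.

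Both $E_{i\bullet}$ and ${\bf t}^{-1} \otimes E_{i\bullet}^\vee[-1]$ therefore sit as the third term of the (restricted) right-column distinguished triangle in diagram~(\ref{twodisttr}), and uniqueness of the cone, made precise equivariantly by the equivariant analogue of Proposition~\ref{conechoices}, produces a $\T \times \T'$-equivariant isomorphism
$$E_{i\bullet} \simeq {\bf t}^{-1} \otimes E_{i\bullet}^\vee[-1] = {\bf t}^{-1} \otimes E_i^\bullet[-1],$$
which is exactly the claimed equivariant symmetry. The main technical point to be careful about is that all identifications above be performed in the equivariant derived category $\DD^{\T \times \T'}(W_i)$; this is immediate once one knows that the construction of diagram~(\ref{twodisttr}) and of the morphisms $p_i$, $\gamma_i$, $z_i$ is $\T \times \T'$-equivariant, which follows from the equivariance of the universal sequence established earlier in the subsection.
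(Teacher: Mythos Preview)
Your approach is essentially the same as the paper's, but there is one technical slip that needs fixing. The morphism $\pi_i \colon U_\alpha \times W_i \to W_i$ is \emph{not} proper, since $U_\alpha \simeq \mathbb{A}^3$; hence Grothendieck duality in the form you invoke does not apply to $\pi_i$ directly, and the displayed identity $R\sheafhom_{\pi_i}(A,B)^\vee \simeq {\bf t} \otimes R\sheafhom_{\pi_i}(B,A)[3]$ is false for general $A,B$. The paper deals with this by applying Grothendieck duality to the proper projection $\bar{\pi}_i \colon Y \times W_i \to W_i$ instead, and then using the identification $R\sheafhom_{\pi_i}(-,-) \simeq R\sheafhom_{\bar{\pi}_i}(-,-)$ which holds whenever $\qq_i$ appears as one of the arguments (since $\qq_i$ is supported on $U_\alpha \times W_i$, see the discussion at the beginning of Section~\ref{complocquot}). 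With this adjustment your argument goes through verbatim and matches the paper's proof; your invocation of the equivariant analogue of Proposition~\ref{conechoices} to pin down the isomorphism is a valid way to phrase what the paper states more tersely.
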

	\begin{proof}
		The proof is a slight modification of the proof of Proposition \ref{cyafflocal} in the Calabi--Yau case, we explain the necessary modifications. Denote by 
		\begin{equation}
			\label{uunivseqi}
			0 \to \sss_i \to F_i \to \qq_i \to 0
		\end{equation}
		the pullback of the universal sequence (\ref{univaaa}) to $\ua \times W$. 
		Consider the commutative diagram obtained from (\ref{uunivseqi}) analogously to diagram~(\ref{twodisttr})
		\begin{center}
			\begin{equation} \label{itwodisttr}
				\begin{tikzcd}
					{\rpih(\qq_i, F_i)[1]} \arrow[d, equals] \arrow[r, "{p_i}"] & {\rpih(\sss_i, \qq_i)} \arrow[d] \arrow[r, "{\varepsilon_i}"] & {E_{i \bullet}} \arrow[d, "\gamma_i"] \\
					{\rpih(\qq_i, F_i)[1]} \arrow[r]           & {\rpih(\qq_i, \qq_i)[1]} \arrow[r]  \arrow[d]              & {\rpih(\qq_i, \sss_i)[2]} \arrow[d, "q_i"] \\
					& {\rpih(F_i, \qq_i)[1]}  \arrow[r, equals] & {\rpih(F_i, \qq_i)[1].}  
				\end{tikzcd}
			\end{equation}
		\end{center}
		%
		Similarly to the Calabi--Yau case, we have the duality
		\begin{align*}
			(\rpih(F_i, &\qq_i)[1])^{\vee}[-1] \otimes \ttt^{-1} \simeq \rpih(\qq_i, F_i \otimes \omega_{\pi_i}[2])[-1] \otimes \ttt^{-1} \\ &\simeq \rpih(\qq_i, F_i \otimes \ttt)[1] \otimes \ttt^{-1} \simeq  \rpih(\qq_i, F_i)[1] 
		\end{align*}
		and analogously for all other entries in diagram (\ref{itwodisttr}). Note that the first isomorphism in the chain of isomorphisms above is by Grothendieck duality with respect to the smooth $3$-dimensional morphism $\bar{\pi}_i \colon Y \times W_i \to W_i$ which is the projection to the second factor. The second isomorphism is due to~$(\ref{omegapi})$.
		
		Therefore applying dual, then shifting by $[-1]$ and tensoring by $\ttt^{-1}$ to all entries and morphisms in diagram $(\ref{itwodisttr})$, yields the initial diagram. Therefore, 
		\begin{equation}
			\label{eupdot}
			E_{i \bullet} = E^\bullet_i[-1] \otimes \ttt^{-1}. \qedhere
		\end{equation} 
	\end{proof}

	Consider now the critical perfect obstruction theory (see Section \ref{crit}) on $Q_\aaa$. 
	By \cite{Fasola_2021}, it is $\T \times \T^\prime$-equivariant, also the equivariant symmetry stated analogously to the one in Lemma~\ref{locetalsym} holds, see \cite[Diagram 3.15]{Fasola_2021}. This POT on $Q_\aaa$ induces a POT on the fixed locus $Q_\aaa^{\T \times \T^\prime}$. 
	
	The fixes locus $Q_\aaa^{\T \times \T^\prime}$ also carries an APOT induced from the APOT on $Q_\aaa$ discussed in the beginning of subsection~\ref{complocquot}.
	%

	
	\begin{prop}
		\label{comparewithcrit}
		The APOT and the 
		POT on $Q_\aaa^{\T \times \T^\prime}$ possess the same virtual class and the same K-theoretic virtual normal bundle.
	\end{prop}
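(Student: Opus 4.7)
The plan has three main ingredients. First, apply the $\T \times \T^\prime$-equivariant Siebert formula (Theorem~\ref{apotpotsiebert}) to both obstruction theories on $Q_\aaa^{\T \times \T^\prime}$. This reduces the equality of virtual classes to the equality of the global K-theory classes in $K^{0, \T \times \T^\prime}(Q_\aaa)$ after restriction and taking fixed parts. The same comparison in K-theory gives the equality of virtual normal bundles on taking the moving parts.

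Second, I would verify the hypotheses of Siebert's formula for each obstruction theory, analogously to Lemma~\ref{applysiebfixapot}. For the critical POT the global K-theory class is $E^{crit}_{r, n_\aaa}$ itself, and the equivariant embedding of $Q_\aaa$ into the smooth non-commutative Quot scheme (Proposition~\ref{quotcrit}) combined with the equivariant Jouanolou trick (Theorem~\ref{tequiv}) provides the required affine resolution. For the APOT on $Q_\aaa$ the global K-theory class is
\begin{equation*}
E_{\aaa \bullet} = \rphalpha(F_\aaa, \qq_\aaa) + \rphalpha(\qq_\aaa, F_\aaa) - \rphalpha(\qq_\aaa, \qq_\aaa),
\end{equation*}
and the required affine resolution is obtained by embedding $Q_\aaa$ into a Grassmannian followed by the Jouanolou trick, exactly as in Lemma~\ref{applysiebfixapot}.

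Third, I would establish the equality $E^{crit}_{r, n_\aaa} = E_{\aaa \bullet}$ in $K^{0, \T \times \T^\prime}(Q_\aaa)$. Both classes are equivariantly symmetric with the same twist $\ttt = t_1 t_2 t_3$, by Lemma~\ref{locetalsym} and \cite[Diagram 3.15]{Fasola_2021}, and both have the same tangent sheaf $T_{Q_\aaa}$. The explicit identification is then obtained using the $\T$-equivariant Koszul resolution of the origin in $\ua \simeq \aff$ to rewrite the $\rphalpha(-,-)$ terms of $E_{\aaa \bullet}$ in the same form as the critical POT expression given in \cite[Section 3]{Fasola_2021}, matched in terms of the tautological rank-$n$ bundle on $Q_\aaa$, the trivial rank-$r$ bundle associated to $F_\aaa$, and the torus weights of $\T$ acting on $\aff$. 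The main obstacle is the careful bookkeeping of equivariant weights in this matching step; once it is carried out, Siebert's formula yields the equality of virtual classes, and the moving part of the common K-theory class yields the equality of K-theoretic virtual normal bundles.
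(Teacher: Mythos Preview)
Your plan is sound, but the paper takes a shorter route that bypasses the explicit K-theory comparison you propose in step three.

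The paper's key observation is that, by \cite[Lemma~3.5]{Fasola_2021}, the fixed locus $Q_\aaa^{\T \times \T^\prime}$ is a finite set of reduced isolated points. It therefore suffices to compare the two obstruction theories pointwise. At a point $x$ (working on an affine \'etale chart $W$), any $\T \times \T^\prime$-equivariant obstruction theory $E^\bullet_W$ satisfying the symmetry $E_{W\bullet} \simeq \ttt^{-1} \otimes E^\bullet_W[-1]$ automatically has
\[
E^\bullet_W|_x = h^0(E^\bullet_W)|_x - h^{-1}(E^\bullet_W)|_x = \Omega_W|_x - T_W|_x \otimes \ttt,
\]
since $h^0(E^\bullet_W) = \Omega_W$ for any obstruction theory and the symmetry forces $h^{-1}(E^\bullet_W) = \ttt \otimes (h^0(E^\bullet_W))^\vee$. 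This expression depends only on the intrinsic cotangent sheaf and the twist $\ttt$, not on the specific obstruction theory. Since both the APOT and the critical POT are equivariantly symmetric with the same twist (Lemma~\ref{locetalsym} and \cite[Diagram~3.15]{Fasola_2021}), their K-theory classes agree at every fixed point, and one concludes by taking fixed and moving parts and applying Siebert's formula as you do.

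Your approach would establish the stronger statement $E_{\aaa \bullet} = E^{crit}_{r,n_\aaa}$ globally in $K^{0,\T \times \T^\prime}(Q_\aaa)$ via the Koszul resolution, which is correct and known (this is essentially the content of the vertex computations in \cite{Fasola_2021}), but it requires exactly the bookkeeping you flag as the main obstacle. The paper's argument sidesteps that computation entirely by exploiting zero-dimensionality of the fixed locus together with the symmetry.
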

	\begin{proof}
		First recall that by \cite[Lemma 3.5]{Fasola_2021} the fixed locus $Q_\aaa^{\T \times \T^\prime}$ is smooth and, moreover, consists of finite number reduced isolated points. Therefore, it is sufficient to check the needed equalities over one point $x \in Q_\aaa^{\T \times \T^\prime}$. 
		
		We consider an étale morphism $\epsilon_W \colon W \to Q_\aaa$ with $W$ affine, and $x_W \in W$ a preimage of~$x$. The pullback to $W$ via $\epsilon_W$ of both obstruction theories are perfect obstruction theories which by Lemma~\ref{locetalsym} and \cite{Fasola_2021} are equivariantly symmetric.
		If now $\varphi_{E^\bullet_W} \colon E^\bullet_W \to \LL_W$ is one of these perfect obstruction theories, then the following equalities in $K_0^{\T \times \T^\prime}(x_W)$ hold
		\begin{align*}
			&E^\bullet_W|_{x_W} = h^0(E^\bullet_W|_{x_W}) - h^{-1}(E^\bullet_W|_{x_W}) = h^0(E^\bullet_W)|_{x_W} - h^{-1}(((E^\bullet_W)^\vee[1] \otimes \ttt)|_{x_W}) \\  &= \Omega_W|_{x_W} - h^{-1}((E^\bullet_W)|_{x_W}^\vee[1]) \otimes \ttt = \Omega_W|_{x_W} - h^0((E^\bullet_W)|_{x_W})^\vee \otimes \ttt =  \Omega_W|_{x_W} - T_W|_{x_W} \otimes \ttt.
		\end{align*}
		We have used that $E^\bullet_W|_{x_W}$ is a complex of vector spaces and hence taking dual commutes with taking cohomology. Therefore both perfect obstruction theories restricted to $x_W$ have the same class in $K_0^{\T \times \T^\prime}(x_W) \simeq K^{0, \T \times \T^\prime}(x_W).$

		
		Hence, there is also equality in $K_0^{\T \times \T^\prime}(x) \simeq K^{0, \T \times \T^\prime}(x)$ of the restrictions to $x \in Q_\aaa^{\T \times \T^\prime}$ of the K-theory classes of the initial obstruction theories on $Q_\aaa^{\T \times \T^\prime}$.
		Taking the moving parts of these restrictions yields the equality of the K-theoretic virtual normal bundles of the APOT and the POT.
		Taking the fixed parts yields the equality of the K-theory classes of the APOT and the POT in $K^{0, \T \times \T^\prime}(x)$. Finally, by applying Siebert formula for APOT (Theorem~\ref{apotpotsiebert}) and the Siebert formula for POT \cite{siebert} gives the equality of virtual classes.    	
	\end{proof}
	By applying torus localization formula for APOT (Theorem~\ref{localization}) on $Q_\aaa$ with respect to $\T$ and $\T \times \T^\prime$ we get the following equalities which will be used in the computation of the invariants
	\begin{equation}
		\label{step3}
		\frac{[Q_\alpha^\T]^{\vir}}{e(N_\alpha^{\vir})} = [Q_\aaa]^{\vir} =  \frac{[Q_\alpha^{\T \times \T^\prime}]^{\vir}}{e(N_\alpha^{\prime \vir})}.
	\end{equation}
	Note that by Proposition \ref{comparewithcrit} we can consider the right hand side in the equality above coming from the critical perfect obstruction theory on $Q_\aaa$.
	
	\subsection{Computation of the invariants}
	We are ready to compute the generating series $(\ref{genser})$ in the toric case.
	\begin{theor} \label{comptoric}
		Let $(Y, F)$ be a pair consisting of a smooth projective toric 3-fold $Y$ and a $\T$-equivariant locally free sheaf $F$ of rank $r$. Then
		$$\DT_{Y, F}(q) = \M((-1)^rq)^{r \int_Y c_3(T_Y \otimes \omega_Y)}.$$
	\end{theor}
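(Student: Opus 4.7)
The plan is to reduce the global invariant to a product over vertices of the toric polytope by virtual torus localization, identify each vertex factor with the computation of Fasola--Monavari--Ricolfi \cite{Fasola_2021} on $\Quot_{\aff}(\affor, n)$, and reassemble the total exponent via Atiyah--Bott localization on $Y$.

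First, I would invoke the APOT torus localization formula (Theorem~\ref{localization}), whose hypotheses are verified in Corollary~\ref{summary}. Combined with the fixed-locus factorization~(\ref{step2}), this rewrites
\begin{equation*}
\DT^n_{Y,F} = \sum_{|\n| = n} \prod_{\aaa \in \Delta(Y)} \deg \frac{[\Quot_{\ua}(F|_{\ua}, n_\aaa)^\T]^{\vir}}{e(N_\aaa^{\vir})},
\end{equation*}
which upon passing to the generating series factorizes as $\DT_{Y,F}(q) = \prod_{\aaa \in \Delta(Y)} Z_\aaa(q)$, where $Z_\aaa(q)$ is the local series at the vertex $\aaa$.

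Next, I would compute each local factor $Z_\aaa(q)$. The auxiliary torus $\T^\prime = (\CC^*)^r$ scaling the summands of $F|_{\ua} \simeq \bigoplus_{i=1}^r \oo_{\ua} \otimes \lambda_i$ commutes with $\T$, and by \cite[Lemma~3.5]{Fasola_2021} its fixed locus on $Q_\aaa \coloneqq \Quot_{\ua}(F|_{\ua}, n_\aaa)$ is finite and reduced. Using identity~(\ref{step3}) together with Proposition~\ref{comparewithcrit}, I would replace the APOT-based integrand by the one induced by the critical symmetric POT on $Q_\aaa$, which is exactly the setup of \cite[Theorem~9.7]{Fasola_2021}. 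Reading off their result yields
\begin{equation*}
Z_\aaa(q) = \M((-1)^r q)^{-r\frac{(s_1^\aaa + s_2^\aaa)(s_1^\aaa + s_3^\aaa)(s_2^\aaa + s_3^\aaa)}{s_1^\aaa s_2^\aaa s_3^\aaa}},
\end{equation*}
where $(s_1^\aaa, s_2^\aaa, s_3^\aaa)$ are the $\T$-weights on $T_\aaa Y$; the $\T^\prime$-weights $\lambda_i$ cancel, as they must for a non-equivariant invariant.

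Finally, since $c_1^\T(\omega_Y)|_{\aaa} = -s_1^\aaa - s_2^\aaa - s_3^\aaa$, the splitting principle gives
\begin{equation*}
\frac{c_3^\T(T_Y \otimes \omega_Y)|_{\aaa}}{e^\T(T_\aaa Y)} = -\frac{(s_1^\aaa + s_2^\aaa)(s_1^\aaa + s_3^\aaa)(s_2^\aaa + s_3^\aaa)}{s_1^\aaa s_2^\aaa s_3^\aaa},
\end{equation*}
so summing over $\aaa \in \Delta(Y)$ and applying Atiyah--Bott localization on $Y$ identifies the total exponent with $r \int_Y c_3(T_Y \otimes \omega_Y)$, producing the claimed formula. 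The main obstacle is the vertex-wise reduction from the APOT-based local factor to the critical POT used in \cite{Fasola_2021}: this is precisely the content of Proposition~\ref{comparewithcrit} and the identity~(\ref{step3}), which exploit the equivariant symmetry of both obstruction theories on the isolated $\T \times \T^\prime$-fixed points.
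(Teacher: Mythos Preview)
Your proposal is correct and follows essentially the same route as the paper: apply Theorem~\ref{localization} (justified by Corollary~\ref{summary}), factorize over vertices via~(\ref{step2}), pass to the $\T\times\T^\prime$-fixed locus via~(\ref{step3}) and Proposition~\ref{comparewithcrit}, and then invoke \cite[Theorem~9.7]{Fasola_2021}. The only difference is cosmetic: the paper stops at the observation that one has landed on the same expression as in \cite{Fasola_2021}, whereas you spell out the vertex series and the Atiyah--Bott reassembly of the exponent explicitly.
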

	\begin{proof} 
		Put $Q \coloneqq \Q$ and $Q_\alpha \coloneqq \Quot_{U_{\alpha}}(F|_{U_{\alpha}}, n_{\alpha}).$ Then
		\begin{equation}
			\label{final}
			[Q]^{\vir} = \frac{[Q^\T]^{\vir}}{e(N^{\vir})} = 
			\sum_{\bf n} \prod_\alpha \frac{[Q_\alpha^\T]^{\vir}}{e(N_\alpha^{\vir})} = \sum_{\bf n} \prod_\alpha \frac{[Q_\alpha^{\T \times \T^\prime}]^{\vir}}{e(N_\alpha^{\prime \vir})}. 
		\end{equation}
		Recall that $$N^{\vir} = E_\bullet|_{\Q^\T}^{mov}.$$ 
		The first equality is by virtual torus localization (Theorem~\ref{localization}) applied to APOT $\varphi$, see Corollary~$\ref{summary}$. The second equality is by $(\ref{step2})$, the third equality is by $(\ref{step3})$. In $(\ref{final})$ we have arrived at the same expression as in the computation in the case when $F$ is simple and rigid in the proof of \cite[Theorem 9.7]{Fasola_2021}. Hence, we get the desired formula. 
		%
	\end{proof}	
	\section{Computation of the virtual invariants in the general case} \label{s7}
	We compute the virtual invariants for $\Q$ in the general case for a pair $(Y, F)$, where $Y$ is a smooth projective $3$-fold and $F$ a locally free sheaf on it. This extends Theorem \ref{comptoric} completing the proof of \cite[Conjecture 3.5]{Fasola_2021}.
	
	\subsection{Double point cobordism} 
	We recall some facts from the theory of double point cobordism following \cite{Lee-Pandharipande-cobordism}. 
	
	\begin{definition}
		Let $Y$ be a smooth quasi-projective variety of pure dimension. A morphism $\pi \colon Y \to \mathbb{P}^1$ is a double point degeneration over $0 \in \p^1$ if $\pi^{-1}(0) = A \cup B$ with $A$ and $B$ smooth Cartier divisors intersecting transversally along a smooth divisor.
	\end{definition}
	Denote by $D$ the intersection $A \cap B$ in the definition above and by $\mathbb{P}(\pi) \to D$ the projective bundle $\mathbb{P}(\mathcal{O}_D \oplus {N}_{D/A}) \to D$. 
	By \cite{Lee-Pandharipande-cobordism}, $N_{D/A} \otimes N_{D/B} \simeq \oo_D$, hence, $\mathbb{P}(\pi)$ is also isomorphic to $\mathbb{P}(\mathcal{O}_D \oplus {N}_{D/B})$.
	
	\begin{definition}
		Let $\mathcal{M}_{n, r}$ be the set of classes of isomorphic pairs $[Y, E]$, where $Y$ is a smooth quasi-projective variety of dimension $n$ and $E$ is a vector bundle on $Y$ of rank $r$. Denote by $\mathcal{M}_{n, r}^+$ the free abelian group generated by these classes.
	\end{definition}
	
	\begin{definition}
		\label{dpr}
		Let $\pi \colon Y \to \mathbb{P}^1$ be a double point degeneration with $\pi^{-1}(0) = A \cup B$ and $E$ a vector bundle on $Y$ of rank $r$. Let $\xi \in \mathbb{P}^1$ be a regular value of $\pi$.  Then the associated double point relation is  
		$$[Y_\xi, E|_{Y_\xi}] - [A, E|_A] -[B, E|_B] + [\mathbb{P}(\pi), E|_{\mathbb{P}(\pi)}].$$
		Here, $Y_\xi = \pi^{-1}(\xi)$, the bundle $E|_{\mathbb{P}(\pi)}$ is the pullback of $E$ via the composition $\mathbb{P}(\pi) \to D \subset Y$.
	\end{definition}
	
	Denote by $\mathcal{R}_{n, r} \subset \mathcal{M}_{n, r}^+$ the subgroup of all double point relations.
	\begin{definition}
		The double point cobordism theory for bundles on varieties is defined by
		$$\omega_{n, r} \coloneqq \mathcal{M}_{n, r}^+ / \mathcal{R}_{n, r}.$$
	\end{definition}
	
	We next recall the construction of the $\mathbb{Q}$-basis of $\omega_{n, r}$, which is the main result of \cite{Lee-Pandharipande-cobordism}. 
	\begin{definition}
		A partition pair of size $n$ and type $r$ is a pair $(\lambda, \mu)$ where
		\begin{enumerate}[(i)]
			\item $\lambda$ is a partition of $n$,
			\item $\mu$ is a sub-partition of $\lambda$ of length $\ell(\mu) \leq r$.
		\end{enumerate}
		The set of all partition pairs of size $n$ and type $r$ is denoted by $\mathcal{P}_{n, r}$.
	\end{definition}	
	\begin{rem}
		The sub-partition condition means $\mu$ is obtained by deleting parts in $\lambda$. Sub-partitions $\mu, \mu^\prime$ are equivalent if they differ by permuting equal parts of $\lambda$.	
	\end{rem}	
	Define the map 
	$$\phi \colon \mathcal{P}_{n, r} \to \omega_{n, r}$$
	as follows. Fix $(\lambda, \mu) \in \mathcal{P}_{n, r}$ and consider $\p^\lambda \coloneqq \p^{\lambda_1} \times \dots \times \p^{\lambda_{\ell({\lambda})}}.$ For each part $m$ of $\mu$, let $L_m$ be the line bundle on $\p^\lambda$ obtained by pulling-back $\mathcal{O}_{\p^m}(1)$ via projection to the factor $\p^\lambda \to \p^m$ corresponding to the part $m$. Put
	$$\phi(\lambda, \mu) \coloneqq [\p^\lambda, \mathcal{O}_{\p^\lambda}^{r - \ell(\mu)} \oplus \bigoplus_{m \in \mu}L_m].$$
	
	\begin{theor}{\cite[Theorem 1]{Lee-Pandharipande-cobordism}}
		\label{basisvb}
		For $n, r \geq 0$, there is an isomorphism of $\mathbb{Q}$-vector spaces
		$$\omega_{n, r} \otimes_{\z} \mathbb{Q} = \bigoplus_{(\lambda, \mu) \in \mathcal{P}_{n, r}} \mathbb{Q} \cdot \phi(\lambda, \mu).$$	
	\end{theor}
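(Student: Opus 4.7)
The plan is to establish the two directions separately: first show that the cardinality $|\mathcal{P}_{n,r}|$ is an upper bound on $\dim_{\mathbb{Q}}(\omega_{n,r} \otimes \mathbb{Q})$ via Chern-number invariants, and then show that the classes $\phi(\lambda,\mu)$ span. The whole scheme is a refinement of the Levine--Pandharipande argument for the untwisted cobordism ring $\omega_n \otimes \mathbb{Q}$ (basis indexed by partitions of $n$).

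For linear independence, I would associate to each pair $(\lambda,\mu) \in \mathcal{P}_{n,r}$ a ``Chern number'' functional $I_{(\lambda,\mu)} \colon \mathcal{M}_{n,r}^+ \to \mathbb{Q}$ defined on a generator $[Y,E]$ as an integral $\int_Y P_{(\lambda,\mu)}(c_\bullet(T_Y), c_\bullet(E))$ for a suitable weighted-homogeneous polynomial of top degree $n$ in the Chern classes of $T_Y$ and $E$. The first step is to check that each such $I_{(\lambda,\mu)}$ kills every double point relation of Definition~\ref{dpr}; this is a direct computation using $A + B \sim Y_\xi$ rationally, $N_{A/Y}|_D = N_{D/B}^{-1}$, and the projective-bundle formula applied to $\mathbb{P}(\pi) \to D$. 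Once this is verified, the $I_{(\lambda,\mu)}$ descend to well-defined linear functionals on $\omega_{n,r} \otimes \mathbb{Q}$. Then one evaluates them on the $\phi(\lambda',\mu')$: because the tangent bundle of $\mathbb{P}^{\lambda'}$ splits nicely and the bundle summands are pulled-back $\mathcal{O}(1)$'s, the resulting integrals reduce to products of easily computed integrals over factors $\mathbb{P}^{\lambda'_i}$. By choosing the polynomials $P_{(\lambda,\mu)}$ carefully (ordering $\mathcal{P}_{n,r}$ lexicographically and picking monomials that detect the factor lengths and the bundle summands), the pairing matrix $[I_{(\lambda,\mu)}(\phi(\lambda',\mu'))]$ becomes upper-triangular with nonzero diagonal. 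This yields both linear independence of $\{\phi(\lambda,\mu)\}$ and the dimension bound $\dim_{\mathbb{Q}}(\omega_{n,r} \otimes \mathbb{Q}) \geq |\mathcal{P}_{n,r}|$ (lower bound from independence), together with injectivity of Chern-number evaluation.

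For the spanning, I would argue by double induction on $n$ and on the ``complexity'' of the pair $[Y,E]$ (for instance the sum of a projective embedding degree plus a desingularization invariant). Given $[Y,E]$, apply Hironaka's resolution of indeterminacies to a generic pencil of hyperplane sections of $Y$ to produce a double point degeneration $\pi \colon \tilde{Y} \to \mathbb{P}^1$ whose special fibre is $A \cup B$ with $A = \mathrm{Bl}_Z Y$ and $B = \mathbb{P}$-bundle. The double point relation expresses $[Y, E]$ modulo $\mathcal{R}_{n,r}$ as a combination of simpler pieces: the blow-up class, a projective-bundle class, and a boundary term on $\mathbb{P}(\pi)$. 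Iterating and using the projective-bundle formula to split off direct-sum factors of line bundles, one descends to a configuration in which $Y$ is a product of projective spaces and $E$ is a sum of pulled-back tautological line bundles, i.e.\ precisely the $\phi(\lambda,\mu)$. Combining this with the dimension bound from the previous paragraph gives the isomorphism.

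The main obstacle is the spanning step, specifically keeping track of the bundle under the degeneration and verifying that the inductive quantity genuinely decreases after each application of the double point relation; one must ensure the blow-up centre and the pencil are chosen compatibly with $E$ so that $E$ restricts/pulls back to something strictly simpler on each component. Once this bookkeeping is in place (which is where the Lee--Pandharipande paper does most of the technical work, extending the argument of Levine--Pandharipande from $\mathcal{M}_n$ to $\mathcal{M}_{n,r}$), the matching of dimensions forces $\{\phi(\lambda,\mu)\}_{(\lambda,\mu)\in \mathcal{P}_{n,r}}$ to be a $\mathbb{Q}$-basis.
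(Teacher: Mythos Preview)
The paper does not prove this theorem at all: it is quoted verbatim from \cite{Lee-Pandharipande-cobordism} and used as a black box in the proof of Theorem~\ref{newgenser}. So there is no ``paper's own proof'' to compare against; your proposal is effectively a sketch of the Lee--Pandharipande argument itself rather than of anything in the present paper.

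As a sketch of the actual Lee--Pandharipande proof your outline is broadly on target (Chern-number invariants for independence, iterated double point degenerations for spanning, extending the Levine--Pandharipande machinery from $\omega_n$ to pairs), though you are understating how the spanning is organised in \cite{Lee-Pandharipande-cobordism}: they do not run a single induction on a complexity invariant of $[Y,E]$, but rather first reduce via the already-known structure of $\omega_*\otimes\mathbb{Q}$ to the case where $Y$ is a product of projective spaces, and then separately handle the bundle by explicit double point relations that trade an arbitrary $E$ for sums of pulled-back $\mathcal{O}(1)$'s. The delicate point you flag (controlling $E$ through the degeneration) is indeed where the work lies, but it is handled by algebraic manipulations in $\omega_{*,r}$ rather than by a geometric complexity descent on $E$.
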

	
	\subsection{The computation of the invariants via cobordism theory.}
	Consider $\q[[t]]^*$ the multiplicative group of power series with constant term 1. Define a group homomorphism
	\begin{equation}
		\label{dtinv}
		\DT \colon (\m_{3, r}^+, +) \to (\q[[t]]^*, \cdot)
	\end{equation}
	by
	\begin{equation*}
		\label{onbasis}
		\DT([Y, F]) \coloneqq \DT_{Y, F}(q) = 1 + \sum_{n \geq 1} \DT_{Y, F}^n q^n,
	\end{equation*}
	where $\DT_{Y, F}^n$ is the $n$-th virtual invariant of $\Q$, see (\ref{ninv}).
	
	For the rest of this subsection, we make the following assumption. 
	
	\begin{assumption}
		\label{assumption}
		The group homomorphism $\DT$ respects double point relations.
	\end{assumption} 
	This assumption will be shown to follow from the degeneration formula which will be proven later. By Assumption \ref{assumption}, the homomorphism $\DT$ descends to $\omega_{3, r}$ giving a homomorphism
	$$ \DT \colon \omega_{3, r} \to \q[[t]]^* .$$
	
	This allows us to extend Theorem~\ref{comptoric} to the general case. The proof is analogous to the one for $r = 1$, see \cite[Section 14]{LEPA}.
	
	\begin{theor}
		\label{newgenser}
		Let $(Y, F)$ be a pair consisting of a smooth projective $3$-fold $Y$ and a locally free sheaf $F$ of rank $r$ on $Y$. Then
		$$\DT_{Y, F}(q) = \M((-1)^rq)^{r \int_Y c_3(T_Y \otimes \omega_Y)}.$$
	\end{theor}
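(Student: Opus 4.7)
My plan is to leverage the double point cobordism framework of Section~\ref{s7}. Both sides of the claimed identity will be interpreted as values of homomorphisms $\m_{3, r}^+ \to \q[[q]]^*$: the left-hand side is $\DT$ from~(\ref{dtinv}), while the right-hand side defines a second homomorphism $G$ by $G([Y, F]) \coloneqq \M((-1)^r q)^{r \int_Y c_3(T_Y \otimes \omega_Y)}$, extended additively to $\m_{3, r}^+$. I will show that both factor through $\omega_{3, r}$ and coincide on a $\q$-basis, which then forces $\DT = G$ and specialises to the claim.

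Granting Assumption~\ref{assumption} --- whose verification via the degeneration formula is the content of the remainder of the section --- the map $\DT$ descends to $\omega_{3, r}$. To see that $G$ also descends, it suffices to observe that $[Y, F] \mapsto \int_Y c_3(T_Y \otimes \omega_Y)$ is a double point invariant: the integrand is independent of $F$ and polynomial in the Chern classes of $T_Y$, and the claim in this form is established in the rank one case in \cite[Section~14]{LEPA}; the factor $r$ is constant along any double point relation since the rank of the bundle is preserved. Using that $\log$ identifies $(\q[[q]]^*, \cdot)$ with the $\q$-vector space $(q\q[[q]], +)$, both $\DT$ and $G$ extend uniquely to $\q$-linear maps on $\omega_{3, r} \otimes \q$.

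By Theorem~\ref{basisvb}, the $\q$-vector space $\omega_{3, r} \otimes \q$ is spanned by the classes
$$\phi(\lambda, \mu) = [\p^\lambda,\; \oo_{\p^\lambda}^{r - \ell(\mu)} \oplus \bigoplus_{m \in \mu} L_m]$$
indexed by $(\lambda, \mu) \in \pa_{3, r}$. Each $\p^\lambda$ is a product of projective spaces whose dimensions sum to $3$, hence a smooth projective toric $3$-fold, and the associated locally free sheaf is manifestly $\T$-equivariant. Therefore Theorem~\ref{comptoric} applies on every basis element and yields $\DT(\phi(\lambda, \mu)) = G(\phi(\lambda, \mu))$. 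By $\q$-linearity, $\DT = G$ on all of $\omega_{3, r} \otimes \q$, and evaluating at $[Y, F]$ gives the stated formula.

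The principal obstacle in this plan is Assumption~\ref{assumption}: one must prove that the APOT-induced invariants of Theorem~\ref{quotsemipot} satisfy a Li--Wu-type degeneration formula. This will require constructing a relative Quot scheme and a relative APOT over the stack of expanded degenerations, checking that the associated relative virtual class specialises well to both smooth and singular fibres, and matching the special-fibre class with a product of pieces attached to the two components --- the analogue of \cite{li2011good} in the APOT setting. Once this is in place, the cobordism argument outlined above closes the proof with no further input.
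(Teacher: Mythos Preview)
Your proposal is correct and follows essentially the same approach as the paper: both reduce via Assumption~\ref{assumption} to the basis elements $\phi(\lambda,\mu)$ of Theorem~\ref{basisvb}, apply Theorem~\ref{comptoric} there, and invoke the cobordism invariance of the Chern number $\int_Y c_3(T_Y\otimes\omega_Y)$. The only organizational difference is that you package the right-hand side as a second homomorphism $G$ and pass to $\q$-linear maps via $\log$, whereas the paper clears denominators by writing $k[Y,F]$ as an integer combination of basis classes and extracts the $k$-th root at the end using the normalization $\DT([Y,F])(0)=\M(0)=1$; these are equivalent manoeuvres.
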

	As before, $\M(q) = \prodd_{m \geq 1} (1 - q^m)^{-m}$ is the MacMahon function, the generating function for the number of plane partitions of non-negative integers.
	
	\begin{proof}
		By Theorem $\ref{basisvb}$, we can write
		$$k[Y, F] = \sum_{(\lambda, \mu) \in \pa_{3, r}} m_{\lambda, \mu} [\p^\lambda,\phi(\lambda, \mu)],$$ 
		with $k \ne 0$ and $m_{\lambda, \mu}$ integers for all $\mu$ and $\lambda$.
		Therefore,
		\begin{equation*}
			\DT([Y, F])^k = \prod_{(\lambda, \mu) \in \pa_{3, r}} \DT([\p^{\lambda}, \phi(\lambda, \mu)])^{m_{\lambda, \mu}} = \prod_{(\lambda, \mu) \in \pa_{3, r}} \DT_{\p^{\lambda}, \phi(\lambda, \mu) }(q)^{m_{\lambda, \mu}}.
		\end{equation*}
		
		By the computation of the invariants in the toric case Theorem~\ref{comptoric},  the row of equalities above continues as:
		$$  \prod_{(\lambda, \mu) \in \pa_{3, r}} \DT_{\p^{\lambda}, \phi(\lambda, \mu) }(q)^{m_{\lambda, \mu}} = \prod_{(\lambda, \mu) \in \pa_{3, r}} \M((-1)^rq)^{r m_{\lambda, \mu} \int_{\p^\lambda} c_3(T_{\p^\lambda} \otimes \omega_{\p^\lambda})} $$ 
		$$ = 
		\M((-1)^rq)^{r \int_{\p^\lambda} \sum_{(\lambda, \mu) \in \pa_{3, r}} m_{\lambda, \mu} c_3(T_{\p^\lambda} \otimes \omega_{\p^\lambda})}.$$
		This equals 
		$$\M((-1)^rq)^{k r \int_Y c_3(T_Y \otimes \omega_Y)},$$
		as algebraic cobordism respects Chern numbers.
		
		Together with $\DT([Y, F])(0) = 1$ and $\M(0) = 1$, we get 
		$$\DT_{Y, F}(q) = \DT([Y, F]) = \M((-1)^rq)^{r \int_Y c_3(T_Y \otimes K_Y)}$$ which completes the proof.
	\end{proof}
	
	%
	
	\subsection{Double point relation and the degeneration formula}
	
	In this subsection, we show that Assumption~\ref{assumption} follows from the degeneration formula (Theorem \ref{degf}). The proof is analogous to the rank one case in \cite{LEPA}. Consider a double point degeneration $\pi \colon X \to C = \p^1$, where $X$ is a smooth projective $4$-fold, and a locally free sheaf $E$ on $X$. Let $A \cup_D B$ be the special fiber.
	Taking a regular value $\xi \in \p^1$ yields a double point relation
	$$[Y_\xi, E|_{Y_\xi}] - [A, E|_A] -[B, E|_B] + [\mathbb{P}(\pi), E|_{\mathbb{P}(\pi)}].$$
	We need to show that the virtual invariants of $\Q$ respect it, namely that
	\begin{equation}
		\label{respdpr}
		\DT([X_\xi, E|_{X_\xi}]) \cdot \DT([A, E|_A])^{-1} \cdot \DT([B, E|_B])^{-1} \cdot \DT([\mathbb{P}(\pi), E|_{\mathbb{P}(\pi)}]) = 1,
	\end{equation}
	This relation will follow from a version of the {\it degeneration formula}, which is the following statement.
	\begin{theor}
		\label{degf}
		Let $\pi \colon X \to  C = \p^1$ be a double point degeneration, where $X$ is smooth projective $4$-fold, and $E$ a locally free sheaf on $X$.
		Then 
		\begin{equation}
			\label{degfexplicit}
			\DT([X_\xi, E|_{X_\xi}]) = \DT([A/D, E|_A]) \cdot \DT([B/D, E|_B]).
		\end{equation}
	\end{theor}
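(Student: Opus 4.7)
The plan is to adapt the Li--Wu degeneration formula \cite{li2011good} to our almost perfect obstruction theory setting, following the strategy of the rank-one case \cite{LEPA}. The core construction is a relative moduli space over the stack of expanded degenerations of $C$ carrying a relative almost perfect obstruction theory whose virtual class is deformation invariant and whose restriction to the special fiber decomposes as a product of relative virtual classes.

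First, I would introduce the stack of expanded degenerations $\C \to C$ and the associated family $\X \to \C$: the generic fiber is $X_\xi$, and the special fiber over $0$ is an expanded chain $A \cup_D \Delta_1 \cup_D \cdots \cup_D \Delta_k \cup_D B$ with $\Delta_i \simeq \p(\oo_D \oplus N_{D/A})$. Denoting by $\F$ the pullback of $E$ to $\X$, the relative Quot scheme $\qxc$ parametrizes flat families of length-$n$ quotients of $\F$ over $\C$ subject to the predeformability condition that the support of each quotient avoids the singular loci of the fibers. In parallel one defines the relative Quot schemes $\qda$ and $\quot_{\dsc \subset \B}^{\F, n}$ attached to the expanded pairs $\D \subset \A$ and $\D \subset \B$ obtained from $D \subset A$ and $D \subset B$, with the analogous condition that supports be disjoint from $\D$; degrees of their virtual classes will define the relative invariants $\DT^{n_i}_{A/D, E|_A}$ and $\DT^{n_i}_{B/D, E|_B}$.

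I would then construct a relative APOT on $\qxc/\C$ by running the construction of Sections \ref{locally}--\ref{globally} in families. The key vanishings of Lemma \ref{fqqf} and Lemma \ref{locextfqqf} continue to hold fiberwise: each quotient is zero-dimensional, and on the special fiber the predeformability condition guarantees that the quotient is supported in the smooth locus of a single irreducible component, so Serre duality on that component still yields the desired Ext-vanishings. The cone of the relative analogue of the map $p$ in \eqref{pconstr} supplies étale local relative perfect obstruction theories, which glue by the argument of Theorem \ref{quotsemipot}. Deformation invariance---which extends from the perfect case to the almost perfect case étale locally, using that APOTs pull back along flat morphisms to APOTs---identifies $\DT^n_{X_\xi, E|_{X_\xi}}$ with the degree of the virtual cycle over any point of $C$, in particular over $0$.

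The predeformability condition forces a scheme-theoretic decomposition of the central fiber
\begin{equation*}
\qxcz = \bigsqcup_{n_1 + n_2 = n} \qdaone \times \qdbtwo,
\end{equation*}
since any quotient with support disjoint from $D$ splits uniquely as a product of its pieces on $A$ and $B$ (and the expanded bubbles $\Delta_i$ contribute trivially in the absence of support). The main obstacle is then to show that the relative APOT on $\qxc$ restricts on each stratum to the exterior product of the relative APOTs on the two factors; equivalently, that the relative Ext-complexes $\rph(\sss, \qq)$ and $\rph(\qq, \F)$ split compatibly along the boundary, and that the cones built from the resulting maps $p$ agree with those constructed independently on each side. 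This is the heart of the degeneration formula: once it is in place, taking degrees and summing over $n_1 + n_2 = n$ yields $\DT^n_{X_\xi, E|_{X_\xi}} = \sum_{n_1 + n_2 = n} \DT^{n_1}_{A/D, E|_A} \cdot \DT^{n_2}_{B/D, E|_B}$, which packages into the multiplicative identity \eqref{degfexplicit}. Propagating this compatibility through the APOT gluing data will require the uniqueness of $p$ together with Proposition \ref{conechoices} applied étale-locally in the relative setting.
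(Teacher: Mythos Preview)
Your overall strategy matches the paper's, but there is a genuine gap in how you handle the central fiber. You assert a scheme-theoretic \emph{disjoint} decomposition
\[
\qxcz = \bigsqcup_{n_1+n_2=n} \qdaone \times \qdbtwo,
\]
claiming that ``the expanded bubbles $\Delta_i$ contribute trivially in the absence of support.'' This is incorrect on two counts. First, the stability condition (Definition~\ref{stadm}) explicitly requires that the support meet every component $\Delta_i$ of the expansion, so there are no empty bubbles. Second, and more seriously, a closed point of $\qxcz$ is a stable admissible quotient on some $X_0[k]$, and the splitting $(n_1,n_2)$ arises only after \emph{choosing} a marked divisor $D_m$ in the chain; different choices of $m$ give different splittings of the same quotient. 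Hence $\qxcz$ is a \emph{non-disjoint} union $\bigcup_\n \qxcm$ of closed substacks, and the components can genuinely overlap.

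The paper handles this by passing to the stack $\C^n$ of \emph{weighted} target expansions, where Li--Wu's line bundles with sections $(L_\n,s_\n)$ cut out the marked substacks $\C_0^{+,\n}\simeq Z(s_\n)$ (Proposition~\ref{lbs}, Theorem~\ref{lnsn}). The correct identity is then obtained in two steps: first $i_0^![\qxc]^{\vir}=\sum_\n \iota_{\n*}[\qxcm]^{\vir}$ via the normalization $\disj_\n \C_0^{+,\n}\to \C_0^n$ and Costello's pushforward formula (Proposition~\ref{gysin0}); second $[\qxcm]^{\vir}=[\qdaone]^{\vir}\times[\qdbtwo]^{\vir}$ by showing the obstruction sheaves and coarse intrinsic normal cones split as exterior sums/products. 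Without the weighted/marked machinery and the Costello argument, your decomposition step fails and the numerical identity does not follow.
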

	The relative virtual invariants of Quot schemes of points in the formulation of Theorem \ref{degf} above will be defined later. The analog of the following proposition is discussed in \cite[Section 0.9]{LEPA}, we will present the proof for the seek of completeness.
	
	\begin{prop}
		The degeneration formula Theorem \ref{degf} implies that the virtual invariants of Quot schemes of points respect the double point relation.
	\end{prop}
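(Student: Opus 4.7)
The plan is to invoke Theorem~\ref{degf} four times: once for the given degeneration $\pi \colon X \to \p^1$ and three more times for suitable auxiliary double point degenerations of $A$, $B$ and $\p(\pi)$ obtained by deformation to the normal cone. The resulting relative invariants will cancel in pairs to yield the desired identity (\ref{respdpr}).

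First, applying Theorem~\ref{degf} to $\pi$ itself gives $\DT([X_\xi, E|_{X_\xi}]) = \DT([A/D, E|_A]) \cdot \DT([B/D, E|_B])$. Next, I would form the smooth projective $4$-fold $\tilde{X}_A \coloneqq \Bl_{D \times \{0\}}(A \times \p^1)$ with its natural map to $\p^1$. A direct blowup computation shows that this is a double point degeneration of $A$ whose special fiber is $A \cup_D \p(\oo_D \oplus N_{D/A}) = A \cup_D \p(\pi)$: the strict transform of $A \times \{0\}$ is a copy of $A$ meeting the exceptional divisor along the section $D$. Pulling back $E|_A$ via $\tilde{X}_A \to A \times \p^1 \to A$ yields a locally free sheaf whose restriction to the strict transform is $E|_A$ and whose restriction to the exceptional divisor is $E|_{\p(\pi)}$ as required by Definition~\ref{dpr}. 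Theorem~\ref{degf} then gives
$$\DT([A, E|_A]) = \DT([A/D, E|_A]) \cdot \DT([\p(\pi)/D, E|_{\p(\pi)}]),$$
and the analogous construction with $A$ replaced by $B$ yields the symmetric identity for $\DT([B, E|_B])$.

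To handle $\DT([\p(\pi), E|_{\p(\pi)}])$ I would apply the same trick once more, now to a section $D_0 \subset \p(\pi)$ of the $\p^1$-bundle $\p(\pi) \to D$. The normal bundle $N_{D_0/\p(\pi)}$ is either $N_{D/A}$ or $N_{D/B}$ depending on the chosen section, so in either case the exceptional divisor $\p(\oo_{D_0} \oplus N_{D_0/\p(\pi)})$ is isomorphic to $\p(\pi)$. Thus $\Bl_{D_0 \times \{0\}}(\p(\pi) \times \p^1) \to \p^1$ is a double point degeneration of $\p(\pi)$ with special fiber $\p(\pi) \cup_D \p(\pi)$, and Theorem~\ref{degf} produces
$$\DT([\p(\pi), E|_{\p(\pi)}]) = \DT([\p(\pi)/D, E|_{\p(\pi)}])^2.$$
Substituting these formulae into the left hand side of (\ref{respdpr}), the relative invariants $\DT([A/D, E|_A])$ and $\DT([B/D, E|_B])$ cancel against their occurrences in $\DT([A, E|_A])$ and $\DT([B, E|_B])$, while the two factors of $\DT([\p(\pi)/D, E|_{\p(\pi)}])$ in the denominator cancel against the square in the numerator, leaving $1$.

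The only technical content to verify is that each of the three auxiliary blowups is genuinely a double point degeneration in the sense of Definition~\ref{dpr} with the claimed special fiber, and that the pullback of the given locally free sheaf restricts to the correct bundle on each irreducible component of that fiber (so that the quadruples appearing in each application of Theorem~\ref{degf} are exactly the ones appearing in the relevant double point relation). Once this geometric bookkeeping is established, the proof reduces to the algebraic cancellation above, with the substantive input being the degeneration formula itself.
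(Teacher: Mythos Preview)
Your overall strategy—apply Theorem~\ref{degf} to the given degeneration and then to three deformations to the normal cone—is exactly the paper's approach. However, there is a genuine gap in your treatment of the relative divisor in $\p(\pi)$.

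The notation $\DT([\p(\pi)/D, E|_{\p(\pi)}])$ is ambiguous: $\p(\pi) = \p(\oo_D \oplus N_{D/A})$ has two distinguished sections, say $D_- = \p(N_{D/A})$ with normal bundle $N_{D/A}^\vee$ and $D_+ = \p(\oo_D)$ with normal bundle $N_{D/A} \simeq N_{D/B}^\vee$. The pairs $(\p(\pi), D_-)$ and $(\p(\pi), D_+)$ are in general \emph{not} isomorphic, so their relative invariants need not agree. When you deform $A$ to its normal cone along $D$, the exceptional divisor meets the strict transform along $D_-$, giving $\DT([A]) = \DT([A/D]) \cdot \DT([\p(\pi)/D_-])$. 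The analogous construction for $B$ produces $\DT([B]) = \DT([B/D]) \cdot \DT([\p(\pi)/D_+])$, with the \emph{other} section. Likewise, deforming $\p(\pi)$ to the normal cone of one section yields
\[
\DT([\p(\pi)]) = \DT([\p(\pi)/D_-]) \cdot \DT([\p(\pi)/D_+]),
\]
not a square. Your displayed identity $\DT([\p(\pi)]) = \DT([\p(\pi)/D])^2$ is therefore incorrect as stated.

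Once you track the two sections separately, the cancellation goes through exactly as you describe: the factors $\DT([\p(\pi)/D_-])$ and $\DT([\p(\pi)/D_+])$ from $\DT([A])$ and $\DT([B])$ cancel against the two (distinct) factors in $\DT([\p(\pi)])$. This bookkeeping is precisely what the paper carries out in equations~(\ref{eqa})--(\ref{eab}), using the identification $N_{D/A}^\vee \simeq N_{D/B}$ to match $(\p(\oo_D \oplus N_{D/A}^\vee), \p(N_{D/A}^\vee))$ with $(\p(\oo_D \oplus N_{D/B}), \p(N_{D/B}))$.
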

	\begin{proof}
		We use the notation introduced before, the goal is to check $(\ref{respdpr})$. For brevity we omit the locally free sheaf $E$ and its restrictions as well as the square brackets when considering the homomorphism $\DT$.
		
		First note that the deformation to the normal cone \cite{Fulton_Intersection_Theory} of $D \subset A$ is a double point degeneration. The special fiber is the union $A \cup \p(N_{D/A} \oplus \mathcal{O}_D)$ intersecting at $D \subset \p(N_{D/A} \oplus \mathcal{O}_D)$ included as $\p(N_{D/A})$ with normal bundle $N_{D/A}^\vee$. Applying Theorem \ref{degf} in this case gives
		\begin{equation}
			\label{eqa}
			\DT(A) = \DT(A/D) \cdot \DT(\p(\mathcal{O}_D \oplus N_{D/A})/D).
		\end{equation}
		Similarly, 
		\begin{equation}
			\label{eqb}
			\DT(B) = \DT(B/D) \cdot \DT(\p(\mathcal{O}_D \oplus N_{D/B})/D).
		\end{equation}
		Here, $D \subset \p(\mathcal{O}_D \oplus N_{D/B})$ is included as $\p({N_{D/B}})$ with normal bundle $N_{D/B}^\vee$.
		We next apply Theorem~\ref{degf} to the deformation to the normal cone of ${D \subset \p(\mathcal{O}_D \oplus N_{D/A})}$ where $D$ is included as $\p(N_{D/A})$ with normal bundle $N_{D/A}^\vee$. This yields
		\begin{equation}
			\label{eqpi}
			\DT(\p(\pi)) = \DT (\p(\mathcal{O}_D \oplus N_{D/A})/D) \cdot \DT (\p(\mathcal{O}_D \oplus N_{D/A}^\vee)/D)
		\end{equation}
		Note that $D$ is included into $\p(\mathcal{O}_D \oplus N_{D/A}^\vee) \simeq \p(\mathcal{O}_D \oplus N_{D/B})$ as $\p(N_{D/A}^\vee) \simeq \p(N_{D/B})$ with normal bundle $N_{D/A} \simeq N_{D/B}^\vee.$
		Hence,
		\begin{equation}
			\label{eab}
			\DT(\p(\pi)) = \DT (\p(\mathcal{O}_D \oplus N_{D/A})/D) \cdot \DT (\p(\mathcal{O}_D \oplus N_{D/B})/D).
		\end{equation}	
		Now, combining equations $(\ref{degfexplicit}), (\ref{eqa}), (\ref{eqb})$ and $(\ref{eab})$ gives the needed equality
		\begin{equation}
			\DT(Y_\xi) \cdot \DT(A)^{-1} \cdot \DT(B)^{-1} \cdot \DT(\mathbb{P}(\pi)) = 1. \qedhere
		\end{equation}
	\end{proof}
	
	To prove Theorem $\ref{degf}$ we work with the relative Quot scheme of points on the stacks of expanded degenerations
	constructed by Li and Wu in \cite{li2011good}. In the sequel we recall the needed constructions partly following \cite{oberdieck2021marked} for the presentation.  
	
	
	\subsection{The stack of expanded degenerations corresponding to a pair}
	Let $(A, D)$ be a pair with $A$ a projective variety and $D \subset A$ a smooth divisor. Consider its normal bundle $N_{D/A}$ and the projective bundle
	$$\Delta = \p(N_{D/A} \oplus \oo_D) .$$
	It has two canonical sections $D_0 \coloneqq \p(N_{D/A}) \subset \Delta$ 
	and $D_{\infty} \coloneqq \p(\oo_D) \subset \Delta$, both isomorphic to $D$. For $k \geq 1$ denote by 
	$$\p_k \coloneqq \Delta \cup \Delta \cup \dots \cup \Delta $$
	the chain of $k$ copies of $\Delta$ where $D_\infty \subset \Delta$ in the $m$-th copy is glued with $D_{0}$ in the $(m+1)$-th copy for $m = 1, \dots k-1$.
	\begin{definition}
		The $k$-step expanded degeneration of $(A, D)$ for $k \geq 1$ is the pair $(A[k], D[k])$ where
		$$A[k] \coloneqq A \cup \p_k, $$
		where $D \subset A$ is glued to $D_0$ in the first copy of $\Delta$ in $\p_k$ and 
		$$D[k] \coloneqq D_\infty \subset A[k]$$
		where $D_\infty$ from the last copy of $\Delta$ in $A[k]$ is taken. 
		We call $A[k]$ the $k$-step expansion of $A$ and $D[k] \subset A[k]$ the relative divisor of the expansion.
	\end{definition}
	The $1$-step expanded degeneration of $(A, D)$ arises naturally as a special fiber of the degeneration of $A$ to the normal cone of $D$. More generally, the $k$-step expanded degeneration arises naturally by iterated degeneration to the normal cone.
	A stack of expanded degenerations $\mathscr{U}$ whose closed points are $k$-step expanded degenerations is constructed in \cite{li2011good}. Denote its universal family by 
	$$(\A, \D) \to \U. $$
	We briefly recall the construction following \cite{oberdieck2021marked}. Let $(\A_0, \D_0) \coloneqq (A, D)$ and $$\A_1 \coloneqq \Bl_{D \times 0}(A \times \mathbb{A}^1)$$ with $\D_1$ defined as the proper transform of $D \times \mathbb{A}^1$. Inductively define
	$$\A_{n} \coloneqq \Bl_{\D_{n-1} \times 0}(\A_{n-1} \times \mathbb{A}^1)$$ and put $\D_{n}$ to be the proper transform of $\D_{n-1} \times \mathbb{A}^1$. For every $n \geq 0$ there is a natural morphism 
	$$p_n \colon \A_n \to \mathbb{A}^n $$ 
	given by the composition $\A_n \to \A_{n-1} \times \mathbb{A}^1 \xrightarrow{p_{n-1} \times \id} \mathbb{A}^n$.
	
	The stack $\U$ is then constructed as follows. For a scheme $S$ set $\U(S)$ to be a family of expanded degenerations of $(A, D)$ over a scheme $S$, i.$\:$e.$\:$a pair $(\A_S, \D_S)$ where
	\begin{itemize}
		\item $\A_S$ is a scheme over $S$ with an $S$-projection $\A_S \to A \times S$
		\item $\D_S \subset \A_S$ is a Cartier divisor,
	\end{itemize}
	such that there exists an étale cover $\{S_\aaa \to S\}_\aaa$ of $S$ such that $(S_\aaa \times_S \A_S, S_\aaa \times_S \A_S)$ is isomorphic to $(f_\aaa^* \A_n, f_\aaa^* \D_n)$ for some $n \geq 0$ and morphism $f_\aaa \to \mathbb{A}^n$, and the isomorphism is compatible with the projections to $A \times S$.
	%
	
	\subsection{Stack of expanded degenerations corresponding to a simple degeneration} Let $X$ be a smooth variety and $C$ a smooth curve with a distinguished point $0 \in C$. Consider a simple degeneration 
	$$\epsilon \colon  X \to  C,$$
	i.$\,$e.$\,$a projective morphism $\epsilon$ such that its fibers $X_c$ over $c \neq 0$ are smooth and the fiber over $0 \in C$ is $X_0 = A \cup B$, a union of two smooth Cartier divisors $A$ and $B$ intersecting transversally along a smooth divisor $D \coloneqq A \cap B$.
	
	\begin{definition}
		A $k$-step expanded degeneration of the central fiber $X_0$ is the variety 
		$$X_0[k] = A \cup \mathbb{P}_k \cup B, $$
		where $\mathbb{P}_k = \Delta \cup \dots \cup \Delta$ with $\Delta = \mathbb{P}(N_{D/A} \oplus \oo_D)$ as before. 
		The component $A$ is glued via $D$ to $D_0 \subset \Delta$ of the first component of $\mathbb{P}_k$ and $B$ is glued via $D$ to $D_{\infty} \subset \Delta$ of the last component of $\mathbb{P}_k$.
	\end{definition}
	By \cite{li2011good}, there exists a stack of target expansions 
	\begin{equation}
		\label{targexp}
		\C \to C
	\end{equation}
	together with a universal family $\X \to \C$ such that for a morphism $S \to \C$ the pullback family $X_S \to S$ has fibers $X_s$ if $s \in S$ does not lie over $0$ and $X_0[k]$ for some $k$ if it lies over $0$. The stack $\C$ is a smooth irreducible zero-dimensional Artin stack and the map $(\ref{targexp})$ is an isomorphism over $C \, \setminus \, 0$.
	
	\begin{definition}
		A weighted assignment of $X_0[k]$ is a function 
		$$w \colon \{\Delta_0 \coloneqq A, \Delta_1, \dots, \Delta_k, \Delta_{k+1} \coloneqq B\} \to \mathbb{N}, $$
		where $\Delta_i$ is the $i$-th copy of $\Delta$ in $\p_{k} \subset X_0[k]$. The sum $\sum_{l = 0}^{k+1}w(\Delta_l)$ is called the total weight of $w$. A weight assignment of $X_c$, $c \neq 0$ is a single value assignment $w_s(X_s) \in \mathbb{N}$.
	\end{definition}
	Similarly to $(\ref{targexp})$, there is a stack of weight $n$ target expansions $\C^n$ and a universal family $$\X^n \to \C^n$$ such that 
	%
	%
	for a morphism $S \to \C^n$ the fibers of the pullback family $X^n_S \to S$ are pairs $(X_s, w_s)$  if $s \in S$ does not lie over $0$ and $(X_0[k], w)$ for some $k$ if it lies over $0$. Here, $w_s$ and $w$ are weighted assignments of weight $n$ of $X_s$ and $X_0[k]$, respectively. By forgetting the weights, we obtain morphisms $\X^n \to \X$ and $\C^n \to \C$ which are étale, and the following natural diagram is Cartesian, see \cite[Section 2]{li2011good} for more details.
	\begin{center}
		\begin{tikzcd}
			\X^n \arrow[r] \arrow[d] & \X \arrow[d] \\
			\C^n \arrow[r]           & \C        
		\end{tikzcd}
	\end{center}
	
	
	Put $\C_0 \coloneqq \C \times_C 0 $ and $\X_0 \coloneqq \X \times_C 0$, similarly we also get $\C^n_0$ and $\X^n_0$ for the weighted version. Denote by $$\mathscr{P}_n \coloneqq \{\n = \{n_1, n_2\} \in \mathbb{N} \times \mathbb{N} \mid n_1 + n_2 = n\}$$
	the set of possible splittings of $n$ into two parts. Such a splitting $\n$ of $n$ gives rise to the stack
	$\C^{+, \n}_0 $
	of weighted marked expansions of the central fiber together with the universal family 
	$$\X^{+, \n}_0 \to \C^{+, \n}_0.$$
	The $\CC$-valued points of $\C^{+, \n}_0 $ are triples $(X_0[k], w, D_m)$ for some $m \leq k \in \mathbb{N}$, where $${D_m \coloneqq \Delta_m \cap \Delta_{m+1} \subset X_0[k]}$$ is the $m$-th copy of the divisor $D$ and $w$ is a weighted assignment of total weight $n$ such that $\sum_{l = 0}^m w(\Delta_l) = n_1 $ and $\sum_{l = m+1}^k w(\Delta_l) = n_2.$ 
	
	\begin{prop}[\cite{li2011good}]
		\label{lbs}
		There are canonical line bundles with sections $(L_\n, s_\n)$ on $\C^n$, indexed by $\mathscr{P}_n$, such that
		\begin{enumerate}
			\item let $t \in \Gamma(\mathcal{O}_{\mathbb{A}^1}) $ be the standard coordinate function and $p \colon \C^n \to \mathbb{A}^1$ the projection from the construction, then
			$$\bigotimes_{\n \in \pn} L_\n \simeq \mathcal{O}_{\C^n} \quad \text{and} \quad \prod_{\n \in \pn}s_\n = p^* t.$$
			\item the morphism $\Phi_\n$ factors through the zero section $Z(s_\n) \subset \C^n$ giving an isomorphism $$\C_0^{+, \n} \simeq Z(s_\n) \subset \C^n.$$
		\end{enumerate}	
	\end{prop}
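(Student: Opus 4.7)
Plan: Both assertions follow from an étale-local analysis of $\C^n$ rooted in Li--Wu's explicit construction \cite{li2011good}. Near a point of $\C$ corresponding to a $k$-step expansion $X_0[k]$, the stack $\C$ admits an étale chart of the form $[\mathbb{A}^{k+1}/\mathbb{G}_m^k]$ equipped with smoothing coordinates $x_0,\dots,x_k$ — one for each node $D_0,\dots,D_k$ of the expansion — satisfying
\[
p^*t = \prod_{m=0}^{k} x_m,
\]
where $t$ is the standard coordinate on $\mathbb{A}^1$. The divisor $\{x_m = 0\}$ is the closed substack parametrizing expansions in which the $m$-th node is preserved. Because $\C^n \to \C$ is étale, the same picture holds étale-locally on $\C^n$, with closed points further decorated by a weight assignment $w$ of total weight $n$.

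Given such a chart and a weight assignment $w$ with positive interior weights (which we may assume by minimality of the expansion relative to $w$), each node $D_m$ determines a splitting
\[
\n(m) \coloneqq \Bigl(\textstyle\sum_{l \le m} w(\Delta_l),\; \sum_{l > m} w(\Delta_l)\Bigr) \in \pn,
\]
and $m \mapsto \n(m)$ is injective. For $\n \in \pn$, I would set $s_\n \coloneqq x_{m(\n)}$ on this chart whenever there exists $m(\n)$ with $\n(m(\n)) = \n$, and $s_\n \coloneqq 1$ otherwise; the corresponding line bundle $L_\n$ is trivialized on this chart. The identity $\prod_{\n \in \pn} s_\n = \prod_m x_m = p^*t$ is then immediate, and $\bigotimes_\n L_\n$ is canonically trivial, giving property (1).

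The crux is gluing these local definitions into genuine global line bundles with sections on $\C^n$. This reduces to the assertion that for two overlapping Li--Wu charts $U_1, U_2 \to \C^n$, the change-of-coordinates on $U_1 \times_{\C^n} U_2$ rescales each $x_m$ by a unit and preserves the intrinsic labeling $m \mapsto D_m$; this is built into the construction in \cite{li2011good}, where transition functions act via the structure group $\mathbb{G}_m^k$ scaling individual smoothing coordinates. Since the splittings $\n(m)$ depend only on the weighted expansion (not on any chart), the local divisors $\{s_\n = 0\}$ agree on overlaps, and the $L_\n$ glue.

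For part (2), by construction $\C_0^{+,\n}$ parameterizes triples $(X_0[k], w, D_m)$ with $\n(m) = \n$. On the local chart above this is exactly the vanishing locus of $x_{m(\n)}$ (empty when $s_\n$ is a unit), matching $Z(s_\n)$; hence $\Phi_\n$ factors through $Z(s_\n)$ and the induced map is an isomorphism. The main obstacle is the gluing step, as it requires unpacking Li--Wu's construction finely enough to verify that their étale transition functions preserve both the coordinate labeling and the weight data; the subsidiary issue of weight assignments with vanishing interior weights is handled by the standard convention that expansions are minimal relative to $w$, which guarantees injectivity of $m \mapsto \n(m)$ and thus simple zeros of the $s_\n$.
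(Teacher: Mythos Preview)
The paper does not give its own proof of this proposition: it is stated with the attribution \cite{li2011good} and used as a black box. Your proposal is therefore not being compared against an argument in the paper but against Li--Wu's original construction, and what you have written is essentially a faithful sketch of that construction: the \'etale-local model $[\mathbb{A}^{k+1}/\mathbb{G}_m^k]$ with smoothing coordinates $x_0,\dots,x_k$ satisfying $p^*t=\prod_m x_m$, the assignment of a splitting $\n(m)$ to each node, and the identification of $Z(s_\n)$ with the locus where the corresponding node persists are all correct and are exactly how Li--Wu organize the argument. The gluing step you flag as the ``main obstacle'' is indeed the substance of the claim, and it is handled in \cite{li2011good} precisely as you indicate, via the explicit $\mathbb{G}_m^k$-action on the smoothing coordinates. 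Your remark about minimality of the expansion relative to $w$ (to ensure injectivity of $m\mapsto\n(m)$) is also the right way to resolve the apparent ambiguity. In short: the paper defers entirely to \cite{li2011good}, and your outline matches that source.
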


	\subsection{The relative Quot scheme of points on the stacks of expanded degenerations}
	Let $X$ be a smooth variety and $C$ a smooth curve with a distinguished point $0 \in C$. As before, we consider $X \to C$ a simple degeneration such that $X_c$ is smooth for $c \neq 0$ and the central fiber $X_0 = A \cup B$ is a union of two smooth Cartier divisors intersecting transversally along a smooth Cartier divisor $D$.
	
	Let $F$ be a locally free sheaf on $X$. We denote by the same symbol its restrictions to $A$, to $B$ and its pullback to the $k$-step expanded degenerations $X_0[k]$ for $k \in \mathbb{N}$ via the natural projections. Denote by $\F$ the pullback of $F$ to the universal family $\X$ of the stack of expanded degenerations, to it's marked or weighted versions and also to the universal families $\A$ and $\mathscr{B}$ of expanded degenerations corresponding to pairs $(A, D)$ and $(B, D)$, respectively.  
	\begin{definition}
		\label{stadm}
		We call $Q \in Coh(A[k])$ with $\dim (\supp Q) = 0$ stable admissible if 
		\begin{enumerate}
			\item $\supp Q$ does not meet $D[k] \subset A[k]$ and the singular locus,
			\item for each $\Delta \subset A[k]$, its intersection with $\supp Q$ is nonempty.
		\end{enumerate} 
	\end{definition}
	
	Following \cite{li2011good}, we next define the relative Quot scheme for a fixed $n \in \mathbb{N}$ 
	\begin{equation}
		\label{relquotpairs}
		\qda \to \U
	\end{equation}
	on a stack of expanded degenerations corresponding to a pair $(A, D)$. For a scheme $S$, set $\qda(S)$ to be a family of relative quotients, i.$\,$e.$\,$a pair $(\mathscr{D}_S \subset \mathscr{A}_S, \psi_S)$ where
	\begin{itemize}
		\item $\mathscr{D}_S \subset \mathscr{A}_S \to S$ is a family of expanded degenerations of $(A, D)$ over $S$
		\item $\psi_S \colon \F_S \onto Q_S$ is a quotient where $\F_S$ is the pullback of $\F$ to $\mathscr{A}_S$ and $Q_S$ is $S$-flat such that for each $s \in S$ the restriction $\F_S|_s$ has zero-dimensional support of length $n$ and is stable admissible.
	\end{itemize} 
	By \cite{li2011good}, $\qda$ is a proper separated Deligne-Mumford stack.
	
	Similarly to Definition \ref{stadm}, there is also a notion of stable admissibe zero-dimensional coherent sheaves on $X_0[k]$ for $k \in \mathbb{N}$.
	For $n \in \mathbb{N}$, the relative Quot scheme 
	\begin{equation}
		\label{qxc}
		\qxc \to \C
	\end{equation}
	corresponding to the simple degeneration $X \to C$ is defined similarly to the case of pairs (\ref{relquotpairs}). Its $\CC$-valued points are zero-dimensional quotients of length $n$ on $X_0[k]$ for some $k \in \mathbb{N}$ which are stable admissible.
	By \cite{li2011good}, $\qxc$ is a proper separated Deligne-Mumford stack. Note that the morphism $(\ref{qxc})$ can be factored as 
	$$\qxc \xrightarrow{\pi_n} \C^n \to \C, $$ 
	where the second morphism is the forgetful morphism. We next define $\qxcm$ from the fibered diagram
	\begin{center}
		\begin{equation}
			\label{defqxcm}
			\begin{tikzcd}
				\qxcm \arrow[r] \arrow[d] & \qxc \arrow[d, "\pi_n"] \\
				{\C_0^{+, \n}} \arrow[r, "\tilde{\iota_\n}"]  & \C^n                   
			\end{tikzcd}
		\end{equation}
	\end{center} 
	Given a splitting $\n = (n_1, n_2) \in \pn$, there exists a ``gluing morphism''
	$$\Phi_{\n} \colon \qdaone \times \qdbtwo \to \qxcm.$$
	%
	%
	\begin{theor}\cite[Theorem 5.27]{li2011good}
		\label{lnsn}
		Let $(L_\n, s_\n)$ be line bundles with sections on $\C^n$ as in Proposition~\ref{lbs}. Then
		\begin{enumerate}
			\item $\otimes_\n \pi_n^* L_\n \simeq \mathcal{O}_{\qxc}$ and $\prod_\n \pi_n^*s_\n = \pi_n^* p^*t$
			\item $\qxcm = Z(\pi_n^* s_\n) $
			\item $\Phi_{\n}$ is an isomorphism of DM stacks.
		\end{enumerate}
	\end{theor}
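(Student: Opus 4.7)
The plan is to dispatch parts (1) and (2) as formal consequences of Proposition \ref{lbs}, and devote the main effort to part (3). For (1), pulling back the identity $\bigotimes_{\n \in \pn} L_\n \simeq \oo_{\C^n}$ along $\pi_n$ gives the first identity since $\pi_n^*$ is a symmetric monoidal functor on line bundles; applying $\pi_n^*$ to the equation $\prod_\n s_\n = p^* t$ of global sections gives the second. For (2), base-changing the closed embedding $Z(s_\n) \hookrightarrow \C^n$ along $\pi_n$ yields the closed embedding $Z(\pi_n^* s_\n) \hookrightarrow \qxc$; combining this with the identification $\C_0^{+,\n} \simeq Z(s_\n)$ from Proposition \ref{lbs} and the defining fiber square (\ref{defqxcm}) identifies $\qxcm$ with $Z(\pi_n^* s_\n)$.

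For (3), I would construct an explicit inverse to $\Phi_\n$ at the level of $T$-points for a test scheme $T$. A $T$-point of $\qxcm$ is a family $(\X_T^+ \to T, \psi_T \colon \F_T \onto Q_T)$ of stable admissible zero-dimensional quotients of length $n$ on a marked expansion, where the marked node $D_m \subset \X_T^+$ separates the chain into a left half of total weight $n_1$ and a right half of total weight $n_2$. Stable admissibility forces $\supp Q_T$ to be disjoint from the singular locus of $\X_T^+$, in particular from $D_m$. Hence the complement $\X_T^+ \setminus D_m$ is a disjoint union of two opens $U_1, U_2$ containing the left and right halves respectively, and $Q_T$ splits canonically as $Q_T = Q_T^1 \oplus Q_T^2$ with $\supp Q_T^i \subset U_i$; the quotient $\psi_T$ splits accordingly. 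The closure of the left half of $\X_T^+$ is a family of expanded degenerations of the pair $(A, D)$ with relative divisor given by $D_m$, so $(U_1, Q_T^1, \psi_T^1)$ defines a $T$-point of $\qdaone$; analogously the right half yields a $T$-point of $\qdbtwo$. This assignment is evidently functorial in $T$ and inverse to $\Phi_\n$ on $T$-points.

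The main subtlety is verifying that the resulting data on each half really define objects of the relative Quot stacks on expanded degenerations in the sense of Definition \ref{stadm}: the stability admissibility condition that every bubble $\Delta$ meets the support is preserved because, by the weight prescription $\n$, every bubble on either half is assigned a positive weight realized fiberwise by the support of $Q_T^i$. One also needs to verify that the two halves form honest families of expanded degenerations of the respective pairs, which follows from the étale-local structure of $\C_0^{+,\n}$ recalled before Proposition \ref{lbs} and from the fact that the marked node $D_m$ defines a Cartier divisor in $\X_T^+$ flat over $T$. Finally one checks that $\Phi_\n$ and the constructed inverse are mutually inverse morphisms of Deligne--Mumford stacks, which amounts to compatibility with isomorphisms of families: any automorphism of an object of $\qxcm$ must preserve the marked node $D_m$ (since it is intrinsically characterized by the splitting $\n$) and hence decomposes as a pair of automorphisms of the two halves preserving the restricted quotients, and conversely.
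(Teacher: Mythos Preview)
The paper does not give its own proof of this statement: it is quoted verbatim as \cite[Theorem 5.27]{li2011good} and used as a black box in the degeneration argument. So there is no in-paper proof to compare against.

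Your outline is broadly correct and follows the standard Li--Wu argument. Parts (1) and (2) are indeed formal consequences of Proposition~\ref{lbs} via pullback and base change, exactly as you say. For part (3), constructing the inverse by cutting at the marked node $D_m$ and using that $\supp Q_T$ avoids the singular locus is the right idea.

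One small correction in your justification of stability for the halves: you write that ``by the weight prescription $\n$, every bubble on either half is assigned a positive weight''. This is not quite the right reason. The weight assignment $w$ in the definition of $\C_0^{+,\n}$ takes values in $\mathbb{N}$ and is only constrained by the totals $n_1$, $n_2$; nothing in the marked target stack forces each individual bubble weight to be nonzero. What actually guarantees that every bubble in each half meets the support is the stable admissibility of the original quotient $Q_T$ on $\X_T^+$ (Definition~\ref{stadm}(2)): each bubble $\Delta_i$ already meets $\supp Q_T$, and since $\supp Q_T$ splits cleanly as $\supp Q_T^1 \amalg \supp Q_T^2$ along the marked node, each bubble on the left meets $\supp Q_T^1$ and each on the right meets $\supp Q_T^2$. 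So stability of the halves is inherited directly from stability of the whole, not from the weight splitting.
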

	%
	
	\subsection{Virtual classes and the degeneration formula} From now on we assume that the smooth variety $X$ is projective of dimension $4$ so that the fiber $X_c$ over $c \neq 0$ of the simple degeneration $X \to C$ and the components $A$ and $B$ in the decomposition of the central fiber $X_0 = A \cup_D B$ are smooth projective $3$-dimensional varieties. 
	\begin{rem}
		Note that in Theorem \ref{degf} we aim to prove the degeneration formula corresponding to double point degenerations. However, without loss of generality it is sufficient to consider only simple degenerations as away from the special fiber we consider only fibers over regular values.
	\end{rem}
	\begin{theor}
		\label{qdarelapot}
		The Quot scheme $\qa \coloneqq \qda \to \U$ admits a relative almost perfect obstruction theory of relative dimension zero.
	\end{theor}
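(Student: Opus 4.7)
The plan is to adapt the construction of Sections \ref{locally}--\ref{globally} to the relative setting over $\U$. Consider the universal sequence
\[
0 \to \sss \to q^{*}\F \to \qq \to 0
\]
on $\A \times_{\U} \qa$, where $q \colon \A \times_{\U} \qa \to \A$ and $\pi \colon \A \times_{\U} \qa \to \qa$ denote the two projections. The key observation is that by Definition \ref{stadm} the quotient $\qq$ is stable admissible, so its support is contained in the smooth locus of $\A \to \U$. In particular, near $\supp \qq$ the morphism $\pi$ is smooth of relative dimension $3$ and the relative dualizing sheaf $\omega_{\pi/\U}$ is a line bundle there. Consequently Lemma \ref{perfect} and Lemma \ref{locextfqqf} transfer verbatim: the complexes $\rph(\sss, \qq)$, $\rph(\qq, q^{*}\F)$ and $\rph(q^{*}\F, \qq)$ are perfect, the last two being shifted vector bundles concentrated in degrees $0$ and $3$ respectively.

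Next I would produce a standard relative obstruction theory. Using the relative Atiyah class of $\qq/\sss$ with respect to $\pi$, Grothendieck duality applied on the open locus where $\pi$ is smooth of relative dimension $3$ yields, as in Section \ref{derobstr}, a morphism
\[
\varphi^{\der}_{\U} \colon (\rph(\sss, \qq))^{\vee} \to \LL_{\qa/\U}.
\]
On any affine étale chart $U \to \qa$, the cohomological vanishing used in Lemma \ref{vanishing} (that $\HH^{3}(U, -)$ vanishes on an affine base) furnishes a unique morphism $p_{U} \colon \rph(\qq, q^{*}\F)[1]|_{U} \to \rph(\sss, \qq)|_{U}$ making the analog of diagram \eqref{diagr} commute. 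Setting $E_{U}^{\bullet} \coloneqq (\Cone(p_{U}))^{\vee}$ and composing $(\varepsilon_{U})^{\vee}$ with $\varphi^{\der}_{\U}|_{U}$ as in \eqref{defmorph} produces a relative perfect obstruction theory $\varphi_{U} \colon E_{U}^{\bullet} \to \LL_{U/\U}$ by the pointwise verification of Proposition \ref{potlocally}. The rank computation there gives $\chi(S,Q) + \chi(Q,F) = \chi(F,F) - \chi(S,S) = 0$ on each relevant smooth $3$-dimensional component of $A[k]$, so the relative virtual dimension is zero.

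To globalize, I would set $Ob_{\qa} \coloneqq \extpi{3}(\qq, \sss)$, which is well defined on all of $\qa$ since the relevant Ext computation only involves the smooth locus of $\pi$, and then follow the proof of Theorem \ref{quotsemipot}. The uniqueness of $p_{U}$ on each chart forces $p_{i}|_{U_{ij}} = p_{j}|_{U_{ij}}$ on every double intersection $U_{ij} = U_{i} \times_{\qa} U_{j}$, and Proposition \ref{conechoices} supplies the required isomorphisms $\eta_{ij} \colon E_{i\bullet}|_{U_{ij}} \simto E_{j\bullet}|_{U_{ij}}$ compatible with the maps to $\LL_{U_{ij}/\U}$ and with the canonical identifications $\psi_{i} \colon Ob_{U_{i}} \simto Ob_{\qa}|_{U_{i}}$. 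This data satisfies the evident relative version of Definition \ref{APOT}, yielding a relative APOT on $\qa \to \U$ of relative virtual dimension zero.

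The main obstacle, compared with the proof of Theorem \ref{theor1}, is that $\A \to \U$ is no longer globally smooth: the fibers $A[k]$ are nodal along the relative divisors $D[k]$, so one must be careful when invoking Grothendieck duality and the construction of the relative Atiyah class. This is precisely circumvented by the stable-admissibility hypothesis, which confines every such computation to the open locus where $\pi$ is smooth of relative dimension $3$; the arguments of Sections \ref{locally}--\ref{globally} then go through with essentially no modification.
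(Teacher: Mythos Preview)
Your proposal is correct and follows essentially the same approach as the paper: construct the standard relative obstruction theory $(\rph(\sss,\qq))^{\vee}\to\LL_{\qa/\U}$ via the reduced Atiyah class, adjust it on each affine étale chart to make it two-term exactly as in Sections~\ref{locally}--\ref{globally}, and use the stable-admissibility condition to ensure all Ext computations live on the smooth locus of $\A\to\U$ so that Grothendieck duality and the pointwise arguments of Proposition~\ref{potlocally} apply. Your write-up is in fact more detailed than the paper's proof, which simply sketches the same steps and refers back to the absolute case.
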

	\begin{proof}
		Let $$0 \to \sss_\A \to \F_\A \to \qq_\A \to 0 $$
		be the universal sequence of $\qa$. Let$\pi_\A \colon \A \times_\U \qa \to \qa$ be the projection to the second factor. There exists a relative obstruction theory on $\qa$
		\begin{equation}
			\label{relot}
			(R\sheafhom_{\pi_\A}(\sss_\A, \qq_\A))^\vee \to \LL_{\qa/ \U}
		\end{equation}
		defined via the reduced Atiyah class \cite{reducedatclstacks} similarly as in the absolute case recalled in Section \ref{derobstr}.
		The construction of the relative almost perfect obstruction theory on $\qa$ goes now as in the absolute case Theorem~\ref{quotsemipot}. Consider an affine étale cover $\{U_i \to \qa\}_i$ of $\qa$ and construct a perfect obstruction theory on each $U_i$ adjusting $(\ref{relot})$ to make it two-term.
		As in the absolute case, to show that we get a perfect obstruction theory on each $U_i$ we apply Proposition~\ref{crit}. We use that $\qa$ parametrizes quotients with support on a non-singular locus of $k$-step expansions $A[k]$.
		The local construction glues to a global almost perfect obstruction theory by an analogous argument as in the proof of Theorem~\ref{quotsemipot} for the absolute case.
	\end{proof}
	Note that as $\U$ is of dimension zero, we obtain a virtual cycle $$[\qda]^{\vir} \in A_0(\qda)$$ in degree zero. The  relative virtual invariants of Quot scheme of points corresponding to a pair $(A, D)$ are now defined as 
	$$\DT^n_{A/D, F} \coloneqq \deg[\qda]^{\vir}. $$
	We also consider the generating series 
	\begin{equation}
		\label{reldt}
		\DT_{A/D, F}(q) \coloneqq 1 + \sum_{n \geq 1} q^n \DT^n_{A/D, F} \eqqcolon \DT([A/D, F]).
	\end{equation}
	The right hand side in the formula above appears in formulation of Theorem $\ref{degf}$, and $[A/D, F]$ denotes the isomorphism class of a relative pair $D \subset A$ together with a locally free sheaf $F$ on $A$.
	\begin{theor}
		\label{apotqxc}
		The Quot scheme $\qxc \to \C^n$ admits a relative almost perfect obstruction theory of relative dimension zero.
	\end{theor}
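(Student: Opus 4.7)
The plan is to mimic the proof of Theorem \ref{qdarelapot} (and, underneath that, Theorem \ref{quotsemipot}) in this relative setting over $\C^n$. First I would write down the universal sequence
\begin{equation*}
0 \to \sss_{\X} \to \F_{\X} \to \qq_{\X} \to 0
\end{equation*}
on $\X^n \times_{\C^n} \qxc$, with second projection $\pi_{\X}$, and use the reduced Atiyah class for stacks (cf.~\cite{reducedatclstacks}) to produce the standard relative obstruction theory
\begin{equation*}
\bigl(R\sheafhom_{\pi_{\X}}(\sss_{\X}, \qq_{\X})\bigr)^\vee \to \LL_{\qxc/\C^n},
\end{equation*}
exactly as recalled in Section~\ref{derobstr} for the absolute case.

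Next I would take an affine \'etale cover $\{U_i \to \qxc\}_i$ and, on each $U_i$, repeat the three-term to two-term adjustment of Section~\ref{locally}. The key inputs---Lemma~\ref{fqqf}, Lemma~\ref{locextfqqf}, and the vanishing of $\Hom$'s between the relevant complexes used to define and show uniqueness of the map $p_i$---all remain valid fiberwise because each stable admissible quotient is zero-dimensional and \emph{supported in the smooth locus} of an expansion $X_0[k]$ or on the smooth fiber $X_c$; in particular the ambient space is, on the support, a smooth projective threefold, so Serre duality and the vanishing $\Ext^4(Q,Q)=0$ (needed to prove $\extpi{i}(\qq_{\X}, \sss_{\X}) = 0$ in the appropriate degree and thus $\HH^i(E_{i\bullet})=0$) go through unchanged. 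The virtual dimension is again zero by the Hirzebruch--Riemann--Roch computation $\chi(F,F) - \chi(S,S) = 0$ carried out in the proof of Proposition~\ref{potlocally}, applied fiberwise to the smooth threefolds appearing as components of the expansions.

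To obtain the global APOT I would set
\begin{equation*}
Ob_{\qxc} \coloneqq \sheafext^{3}_{\pi_{\X}}(\qq_{\X}, \sss_{\X}),
\end{equation*}
which restricts on each $U_i$ to the obstruction sheaf of the locally constructed POT by the exact analogue of Proposition~\ref{obstrsheaf}. The gluing data $\eta_{ij}$ on double \'etale overlaps, together with their compatibility with the maps to the cotangent complex and the identity $h^1(\eta_{ij}) = \psi_j^{-1}|_{U_{ij}} \circ \psi_i|_{U_{ij}}$, is produced by invoking Proposition~\ref{conechoices} verbatim, since uniqueness of $p$ in diagram~(\ref{diagr}) forces $p_i|_{U_{ij}} = p_j|_{U_{ij}}$ and both $E_{i\bullet}|_{U_{ij}}$ and $E_{j\bullet}|_{U_{ij}}$ are choices of the same cone.

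The step I expect to be the main obstacle is checking that working relatively over $\C^n$---which is itself a nontrivial Artin stack---does not spoil the affine-type vanishings used in Lemma~\ref{vanishing} and the uniqueness argument for $p$. Concretely, one needs the Hom groups $\Hom_{\DD(U)}(\rph(\qq,\F)[1], \rph(\F,\qq)[d])$ for $d=0,1$ to vanish on each affine \'etale chart $U \to \qxc$, and one needs the fiberwise vanishings of Lemma~\ref{fqqf} to hold for quotients whose supports sit on the smooth part of a possibly degenerate expansion. Both reduce, as in Lemma~\ref{locextfqqf}, to the statement that $\rph(\F_{\X}, \qq_{\X})$ and $\rph(\qq_{\X}, \F_{\X})$ are vector bundles on $\qxc$ concentrated in degrees $0$ and $3$; stable admissibility guarantees the support conditions needed for Serre duality and the Ext vanishings on each component, so this should go through with only bookkeeping changes.
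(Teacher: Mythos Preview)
Your proposal is correct and follows exactly the approach the paper takes: the paper's own proof is a brief sketch pointing to Theorem~\ref{qdarelapot} and Theorem~\ref{quotsemipot}, and you have supplied precisely the details it leaves implicit (the standard relative obstruction theory from the reduced Atiyah class, the local three-to-two-term adjustment using stable admissibility to ensure the support lies in the smooth locus, and the gluing via Proposition~\ref{conechoices}). Your discussion of the potential obstacle is also on point and correctly resolved.
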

	\begin{proof}
		The proof is analogous to the proof of Theorem \ref{qdarelapot} and the absolute case Theorem~\ref{quotsemipot}.
		The main idea is to adjust the standard relative obstruction theory  
		$$(R\sheafhom_{\pi_\X}(\sss_\X, \qq_\X))^\vee \to \LL_{\qxc/ \C^n}$$
		constructed from the universal sequence on $\X \times_{\C^n} \qxc$ and the reduced Atiyah class \cite{reducedatclstacks}, to make it $2$-term étale locally on an affine scheme. The almost perfect obstruction theory is then constructed by considering any étale affine cover of $\qxc$ with perfect obstruction theories on it.
	\end{proof}
	\begin{rem}
		As the map $\C^n \to \C$ is étale, Theorem~\ref{apotqxc} also gives a relative almost perfect obstruction theory on $\qxc$ over $\C$. 
	\end{rem}
	\begin{prop}
		\label{gysinc}
		Let $c \neq 0$ and $i_c \colon c \into C$ be the inclusion. Then 
		$$i_c^![\qxc]^{\vir} = [\Quot_{X_c}(F, n)]^{\vir}. $$
	\end{prop}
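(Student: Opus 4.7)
The plan is to combine two ingredients: (a) identify the fiber of $\qxc$ over $c$ with $\Quot_{X_c}(F,n)$ together with its canonical APOT from Theorem~\ref{quotsemipot}; and (b) apply a virtual pullback/Gysin compatibility along the regular embedding $i_c$ to descend the relative virtual class to the absolute one on the fiber.

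For (a), recall that the stack of target expansions $\C \to C$ is an isomorphism over $C \setminus \{0\}$, so $\C^n \to C$ is étale (in fact an isomorphism onto an open neighbourhood of $c$) near the preimage of $c \neq 0$. Consequently $\X^n \to \C^n$ coincides near $c$ with the original family $X \to C$, no expansions occur, and the base change of $\qxc \to \C^n$ along $i_c$ is scheme-theoretically $\Quot_{X_c}(F,n)$, with universal sequence obtained by restriction from the universal sequence of $\qxc$. I would then check that the construction of the relative APOT in Theorem~\ref{apotqxc}, performed in the style of Section~\ref{locally} (starting from the reduced Atiyah class and adjusting it étale-locally on affine charts via Lemma~\ref{vanishing}), is compatible with restriction to the fiber: an affine étale cover of $\qxc$ restricts to one of $\Quot_{X_c}(F,n)$, the vanishings on affine schemes used to define the maps $p$ of~(\ref{pconstr}) restrict to the analogous vanishings over $\Quot_{X_c}(F,n)$, and, since $\C^n \to C$ is étale at the relevant point, $i_c^{*}\LL_{\qxc/\C^n} \simeq \LL_{\Quot_{X_c}(F,n)}$. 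Hence the restricted relative APOT is equivalent, in the sense of Definition~\ref{sameAPOT}, to the canonical APOT of Theorem~\ref{quotsemipot}.

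For (b), the embedding $i_c$ is a regular embedding of codimension one into the smooth curve $C$, and hence so is the induced closed embedding $j \colon \Quot_{X_c}(F,n) \hookrightarrow \qxc$ (because $\C^n \to C$ is étale at $c$). The virtual class $[\qxc]^{\vir}$ is constructed via Chang--Li's $0^!_{Ob}$ applied to the cycle class $[c_{\qxc}] \in A_*(Ob_{\qxc})$ attached to the (global) obstruction sheaf $Ob_{\qxc} \simeq \extpi{3}(\qq_\X,\sss_\X)$ from the proof of Theorem~\ref{qdarelapot}. The APOT obtained in (a) on the fiber has obstruction sheaf precisely $j^*Ob_{\qxc}$, and the coarse moduli cone on the fiber is cut out as the flat pullback of $c_{\qxc}$ along an auxiliary smooth morphism used to define the Gysin map (exactly as in the argument for compatibility of $c_X$ with flat pullbacks in the proof of Proposition~\ref{semipotpullback}). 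Combining this with the commutativity of $i_c^!$ with Chang--Li's Gysin map $0^!_{Ob}$ for sheaf stacks---itself a direct consequence of the commutativity of refined Gysin maps for regular embeddings---yields
\[
i_c^{!}[\qxc]^{\vir} \;=\; i_c^{!}\,0^{!}_{Ob_{\qxc}}[c_{\qxc}] \;=\; 0^{!}_{Ob_{\Quot_{X_c}(F,n)}}[c_{\Quot_{X_c}(F,n)}] \;=\; [\Quot_{X_c}(F,n)]^{\vir}.
\]
The main obstacle I expect is justifying this Gysin/APOT compatibility for the non-flat base change $i_c$: for smooth base change it is Proposition~\ref{semipotpullback}, but here one needs the analogue of Manolache's virtual pullback in the almost perfect setting, which reduces, by the local nature of Chang--Li's construction, to the corresponding compatibility for the perfect obstruction theories on the affine étale cover defining the APOT, together with the checks in (a) that these local POTs restrict compatibly to the fiber.
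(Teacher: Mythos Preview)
Your proposal is correct and follows the same overall strategy as the paper: identify the fiber with $\Quot_{X_c}(F,n)$ equipped with the restricted APOT, and then invoke a base-change/virtual-pullback compatibility along the regular embedding $i_c$. The paper's proof is considerably more terse: it first uses the excess intersection formula for the factorization $c \hookrightarrow \C \to C$ (trivial here, since $\C \to C$ is an isomorphism away from $0$) to replace $i_c^!$ by $\tilde{i}_c^!$, and then simply cites \cite{Behrend_1997} for the statement that the relative virtual class restricts to the absolute one on the fiber, noting that the restricted relative APOT agrees with the APOT of Theorem~\ref{quotsemipot}.

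The difference is mainly one of detail rather than method. You unpack what the citation to \cite{Behrend_1997} is meant to cover---namely the compatibility of Chang--Li's Gysin map with refined Gysin pullback along a regular embedding, reduced to the local POT level---and you flag this as the genuine technical point. The paper treats it as standard and does not spell it out. Your part~(a), checking that the étale-local construction of the APOT restricts compatibly to the fiber, is exactly the content of the paper's clause ``the restriction of the relative almost perfect obstruction theory on $\qxc \to \C$ agrees with the one on $\Quot_{X_c}(F,n)$''. So your argument is a faithful expansion of the paper's, with the honest caveat you raise being precisely where the paper leans on a citation.
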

	\begin{proof}
		Consider the following fibered diagram
		\begin{center}
			\begin{tikzcd}
				{\Quot_{X_c}(F, n)} \arrow[r, hook] \arrow[d] & \qxc \arrow[d] \\
				c \arrow[d] \arrow[r, "\tilde{i_c}", hook]    & \C \arrow[d]   \\
				c \arrow[r, "i_c", hook]                      & C             
			\end{tikzcd}
		\end{center}
		
		By the excess intersection formula \cite[Theorem 6.3]{Fulton_Intersection_Theory}, 
		$i_c^![\qxc]^{\vir} = \tilde{i_c}^![\qxc]^{\vir}$. The latter class then equals $[\Quot_{X_c}(F, n)]^{\vir}$ by \cite{Behrend_1997} as the restriction of the relative almost perfect obstruction theory on $\qxc \to \C$ agrees with the one on $\Quot_{X_c}(F, n)$.
	\end{proof}
	
	\begin{prop}
		\label{gysin0}
		Let $i_0 \colon 0 \into C$ be the inclusion of $0$. Then
		$$i_0^![\qxc]^{\vir} = \sum_{\n} \iota_{\n *} [\qxcm]^{\vir}, $$
		where $\iota_\n \colon \qxcm \into \qxcz$ is the natural inclusion.
	\end{prop}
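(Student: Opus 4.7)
The plan is to express $i_0^![\qxc]^{\vir}$ as a sum of refined Gysin pullbacks along the Cartier divisors $\qxcm \subset \qxc$ via the factorization of Theorem~\ref{lnsn}(1), and then to identify each summand with $[\qxcm]^{\vir}$ by virtual-pullback compatibility of the relative APOT along a Cartier embedding.

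First, I would observe that $i_0 \colon 0 \into C$ is cut out as a Cartier divisor by the coordinate $t \in \Gamma(\oo_C)$. Via the composition $\qxc \xrightarrow{\pi_n} \C^n \to \C \to C$, the fiber $\qxcz$ is the Cartier divisor on $\qxc$ defined by the pullback of $t$ to the trivial bundle. By Theorem~\ref{lnsn}(1) this pulled-back section equals $\prod_{\n} \pi_n^* s_\n$, and by Theorem~\ref{lnsn}(2) the zero scheme of $\pi_n^* s_\n$ is $\qxcm$. Consequently $[\qxcz] = \sum_{\n} [\qxcm]$ as Cartier divisors on $\qxc$, compatibly with the isomorphism $\bigotimes_{\n} \pi_n^* L_\n \simeq \oo_{\qxc}$ of Theorem~\ref{lnsn}(1).

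Next, I would apply additivity of the refined Gysin map for sums of Cartier divisors: if $D = \sum D_i$ with $\oo_X(D) \simeq \bigotimes \oo_X(D_i)$ and compatible sections, then $i_D^! \aaa = \sum_i j_{i *} \, i_{D_i}^! \aaa$ in $A_{*-1}(|D|)$ for any $\aaa \in A_*(X)$, where $j_i \colon |D_i| \into |D|$ are the inclusions. This is immediate from $c_1(\oo_X(D)) = \sum c_1(\oo_X(D_i))$ together with refinement of each term to $|D_i|$. Applied to $\aaa = [\qxc]^{\vir}$, this yields
\[
	i_0^![\qxc]^{\vir} = \sum_{\n} \iota_{\n *} \, (\pi_n^* s_\n)^! [\qxc]^{\vir} \quad \text{in } A_0(\qxcz).
\]

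To finish, I would show that $(\pi_n^* s_\n)^! [\qxc]^{\vir} = [\qxcm]^{\vir}$. Since diagram~(\ref{defqxcm}) is Cartesian and $\tilde{\iota}_\n \colon \C_0^{+,\n} \into \C^n$ is a regular embedding of codimension one (being the zero scheme of $s_\n$, by Proposition~\ref{lbs}(2)), the relative APOT on $\qxc \to \C^n$ from Theorem~\ref{apotqxc} pulls back to a relative APOT of relative dimension zero on $\qxcm \to \C_0^{+,\n}$. Because both APOTs are constructed étale-locally from the standard relative obstruction theory associated to the universal sequence (which restricts compatibly under the Cartesian square), the pulled-back APOT coincides with the directly-constructed one. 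The claimed equality of virtual classes then follows from the APOT analogue of Manolache's virtual pullback along a Cartier divisor. The main obstacle will be precisely this last step: Manolache's formalism is phrased for genuine POTs, so one must verify that local POT pullbacks glue correctly under the APOT gluing data of Definition~\ref{APOT}. The remedy is to work on an affine étale cover of $\qxc$ where the APOT is a POT, invoke the classical Cartier virtual-pullback formula chart-by-chart, and use compatibility of the local POTs (inherited from the common relative universal sequence on $\X \times_{\C^n} \qxc$) to glue the resulting equalities into the required global identity.
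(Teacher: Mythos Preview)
Your argument is correct and follows a different route from the paper's. The paper first uses the excess intersection formula to replace $i_0^!$ by the Gysin along $\tilde{i_0}\colon \C_0 \into \C$, then invokes virtual pullback compatibility once to obtain $\tilde{i_0}^![\qxc]^{\vir}=[\qxcz]^{\vir}$, and finally splits this class into the $\n$-pieces by applying Costello's pushforward formula along the normalization $\disj_{\n}\qxcm \to \qxcz$ (which is DM of degree one). By contrast, you split the Gysin map itself at the outset via the factorization $\pi_n^*p^*t=\prod_{\n}\pi_n^*s_{\n}$ and Fulton's additivity for intersection with pseudo-divisors, and then apply virtual pullback compatibility separately along each regular embedding $\C_0^{+,\n}\into\C^n$. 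Your approach is closer in spirit to the original Li--Wu argument and has the advantage of avoiding both the normalization step and Costello's formula; the paper's approach is slightly more structural in that it isolates a single virtual class $[\qxcz]^{\vir}$ before decomposing it. The technical caveat you flag---extending Manolache's virtual pullback from POTs to APOTs---is exactly the same issue the paper glosses over when citing \cite{Behrend_1997} for $\tilde{i_0}^![\qxc]^{\vir}=[\qxcz]^{\vir}$, and your proposed \'etale-local remedy is the natural fix in both cases.
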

	\begin{proof}
		Consider the fibered diagram
		\begin{center}
			\begin{tikzcd}
				\disj_{\n} \qxcm \arrow[r,"\tilde{\nu} \coloneqq (\iota_\n)_{\n}"] \arrow[d] & \qxcz = \bigcup_{\n}\qxcm \arrow[r, hook] \arrow[d]                             & \qxc \arrow[d, "\pi_n"] \\
				{\disj_{\n} \C_0^{+, \n}} \arrow[r, "\nu"]  & {\C^n_o = \bigcup_{\n} \C_0^{+, \n}} \arrow[d] \arrow[r, "i_0^n", hook] & \C^n \arrow[d]          \\
				& \C_0 \arrow[r, "\tilde{i_0}"] \arrow[d, "pr"']                               & \C \arrow[d]            \\
				& 0 \arrow[r, "i_0", hook]                                             & C.                     
			\end{tikzcd}
		\end{center}
		First note that $\qxcz$ defined from the fibered diagram above admits an almost perfect obstruction theory relative to $\C^n_0$ restricted from the one on $\qxc \to \C^n$ or, equivalently, it can be constructed from the universal sequence in a usual way. Note that the almost perfect obstruction theories on $\qxc$ and $\qxcz$ can be considered relatively to $\C$ and $\C_0$, respectively, because the map $\C^n \to \C$ is étale.
		Therefore by \cite{Behrend_1997}, 
		\begin{equation}
			\label{virtfiber}
			\tilde{i_0}^![\qxc]^{\vir} = [\qxcz]^{\vir} 
		\end{equation}
		Similarly, there is an almost perfect obstruction theory on $\qxcm \to \C^{+, \n}_0$ for each $\n \in \pn$. We next note that by Proposition~\ref{lbs}, $\C^n_0$ is a finite union of closed substacks $\C_0^{+, \n}$ which are smooth zero-dimensional Artin stacks. Therefore, the map $\nu$ in the diagram above is the normalization and is of Deligne--Mumford type of degree one. Moreover, by Proposition~\ref{lnsn}, $\qxcz$ is a finite union of its closed substacks $\qxcm$, and the relative APOT on 
		$$\disj_{\n} \qxcm \to {\disj_{\n} \C_0^{+, \n}}, $$
		which on each component is an APOT on $\qxcm \to \C^{+, \n}_0$, can be viewed as an APOT obtained as a pullback via $\tilde{\nu}$ of the APOT on $\qxcz \to \C^n_0$.
		Therefore, Costello's pushforward formula \cite[Proposition~5.29]{manolachevirtpullb} can be applied and hence
		\begin{equation}
			\label{virtdisj}
			[\qxcz]^{\vir} = \tilde{\nu}_* [\disj_{\n} \qxcm]^{\vir} = \sum_\n \iota_{\n *} [\qxcm]^{\vir}. 
		\end{equation}
		%
		%
		%
		Finally note that as $\rk N_{\C_0/\C} = \rk N_{0/C} = 1$, the excess intersection formula for Artin stacks \cite{vistoliinttheor} gives that
		$ i_0^![\qxc]^{\vir} = \tilde{i_0}^![\qxc]^{\vir}$.  Combining this with (\ref{virtfiber}) and (\ref{virtdisj}) finishes the proof.
	\end{proof}
	\begin{prop}
		For a splitting $\n = (n_1, n_2) \in \pn$ of $n$ there is the equality of virtual classes
		$$[\qxcm]^{\vir} = [\qdaone]^{\vir} \times [\qdbtwo]^{\vir}. $$
		Here, $\qdbtwo$ is the relative Quot scheme on the stack of expanded degenerations corresponding to the pair $(B, D)$.
	\end{prop}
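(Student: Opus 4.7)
The plan is to exploit the isomorphism $\Phi_\n \colon \qdaone \times \qdbtwo \simto \qxcm$ from Theorem~\ref{lnsn}(3), reducing the task to a comparison of relative almost perfect obstruction theories on the two sides. First, I would analyze the universal family. A $\CC$-point of $\qxcm$ consists of a stable admissible quotient $[F \twoheadrightarrow Q]$ on a marked expansion $(X_0[k], w, D_m)$ with $\sum_{l \leq m}w(\Delta_l) = n_1$. Since $\supp Q$ is disjoint from $D_m$, and $X_0[k] \setminus D_m = A[m] \sqcup B[k{-}m]$ in a neighbourhood of $\supp Q$, the sheaf $Q$ decomposes canonically as $Q = Q_A \oplus Q_B$ where $Q_A$ has length $n_1$ and is supported on the $A$-side, and $Q_B$ has length $n_2$ and is supported on the $B$-side. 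This gives the decomposition of the universal sequence on $\X^{+,\n}_0 \times_{\C^{+,\n}_0} \qxcm$ as $\qq_\n \simeq \qq_\A \oplus \qq_\B$ (with matching decomposition on $\sss_\n$).

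The next step is to verify that the relative almost perfect obstruction theory of Theorem~\ref{apotqxc} on $\qxcm$ corresponds, under $\Phi_\n$, to the external product of the relative APOTs of Theorem~\ref{qdarelapot} on $\qdaone$ and $\qdbtwo$. Since these APOTs are built by adjusting the standard obstruction theory coming from the universal sequence, and this construction is functorial, it suffices to show that the relative complex $\rph(\sss_\n, \qq_\n)$ splits as the direct sum of $\rph(\sss_\A, \qq_\A)$ and $\rph(\sss_\B, \qq_\B)$ (pulled back along the two projections). This splitting follows by the same argument as in Proposition~\ref{kupzeroeq1}: the cross terms $R\sheafhom(\qq_\A, \qq_\B)$ and $R\sheafhom(\qq_\B, \qq_\A)$ vanish because the supports are disjoint, and one handles $\rph(\sss, \qq)$ via the universal sequence. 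The same is true for $\rph(\qq, q^*F)$, so by uniqueness of the map $p$ in diagram~(\ref{diagr}) applied étale locally, the Cone construction of $E_\bullet$ respects the direct sum decomposition, and the glued APOT on $\qxcm$ agrees with the product APOT on $\qdaone \times \qdbtwo$.

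Finally, the base stack $\C_0^{+, \n}$ naturally splits: a marked expansion $(X_0[k], w, D_m)$ separates into an $m$-step expansion of the pair $(A, D)$ with weight $n_1$ and a $(k{-}m)$-step expansion of $(B, D)$ with weight $n_2$, giving a canonical isomorphism $\C_0^{+, \n} \simeq \U_{A/D} \times \U_{B/D}$ which intertwines the structure morphisms of the two sides of $\Phi_\n$. Since both relative APOTs are of relative dimension zero over zero-dimensional bases, the virtual classes on the product factor as the Cartesian product of virtual classes \cite{Behrend_1997}, yielding
$$[\qxcm]^{\vir} = \Phi_{\n *}\bigl([\qdaone]^{\vir} \times [\qdbtwo]^{\vir}\bigr),$$
as required.

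The main obstacle is the second step: verifying that the local perfect obstruction theories constructed étale locally on $\qxcm$ (via the adjustment procedure of Section~\ref{locally}) match, via $\Phi_\n$, with the external product of the local perfect obstruction theories on the factors. This requires that the vanishing from Lemma~\ref{vanishing} and the uniqueness of $p$ persist under the decomposition, which is the content of the disjoint-support computation analogous to Proposition~\ref{kupzeroeq1}. Once this splitting of obstruction complexes is in hand, the rest is formal.
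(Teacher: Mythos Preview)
Your proposal is correct in outline but takes a more laborious route than the paper. You aim to identify the full almost perfect obstruction theories under $\Phi_\n$: matching the \'etale-local complexes $E_\bullet$, the maps $p$, and the maps to the cotangent complex, then invoking a product formula for virtual classes of compatible APOTs. The paper bypasses this entirely. It works directly with the Chang--Li/Kiem--Savvas definition $[X]^{\vir} = 0^!_{Ob_X}[c_X]$, and therefore only needs two ingredients: (i) the obstruction \emph{sheaf} splits as $Ob_{\qn} \simeq Ob_{\qa} \boxplus Ob_{\qb}$, which follows immediately from the disjoint-support splitting $R\sheafhom_{\pi_\n}(\qq_\n,\sss_\n) \simeq R\sheafhom_{\pi_\A}(\qq_\A,\sss_\A) \boxplus R\sheafhom_{\pi_\B}(\qq_\B,\sss_\B)$ after taking $h^3$; and (ii) the coarse intrinsic normal sheaf satisfies $c_{\qn} \simeq c_{\qa} \times c_{\qb}$, which is a general fact from \cite{Behrend_1997} requiring no obstruction-theoretic input. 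The Gysin map then factors over the product.

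The advantage of the paper's approach is that it avoids the non-functoriality of Cones: you flag as ``the main obstacle'' that the local adjustment procedure (the map $p$ and its Cone) respects the direct sum, and while your argument via uniqueness of $p$ is plausible, it is delicate to make precise. The paper sidesteps this by never comparing the two-term complexes $E_\bullet$ at all---only their $h^1$. Your approach would buy a stronger statement (full identification of APOTs, not just virtual classes), but that extra strength is not needed here.
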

	\begin{proof}
		In this proof we will for brevity use the following notation: $\qn \coloneqq \qxcm$, $\qa \coloneqq \qdaone$, $\qb \coloneqq \qdbtwo$.
		Let $\sss_\A \into F_\A \onto \qq_\A$ be the universal sequence corresponding to $\qa$ and let $\pi_\A \colon \A \times_\U \qa \to \qa$ be the second projection. We will use analogous notations for the universal sequences and the projection maps for $\qb$ and $\qn$. Note that by Theorem \ref{lnsn}, $\qn \simeq \qa \times \qb$. Also note that by construction $\X = \A \cup_\D \mathscr{B}$.
		
		Denote by $\widetilde{\sss_\A}$ and $\widetilde{\qq_\A}$ the pullbacks to 
		$$\bigchi_\A \coloneqq (\A \times_\U \qa) \times_\qa \qn \simeq \A \times_\U \qn \subset \X \times_\cz \qn \eqqcolon \bigchi_\n $$
		of $\sss_\A$ and $\qq_\A$, respectively. We will use similar notation for $\widetilde{\sss_{\mathscr{B}}}$ and $\widetilde{\qq_{\mathscr{B}}}$.
		Note that $\qq_\n \simeq \widetilde{\qq_\A} \oplus \widetilde{\qq_\B}$ as the supports of the universal quotients do not meet the divisor $\D$. Therefore also $ R\sheafhom(\qq_\n, \sss_\n) \simeq R\sheafhom(\widetilde{\qq_\A}, \widetilde{\sss_\A}) \oplus R\sheafhom(\widetilde{\qq_\B}, \widetilde{\sss_\B})$ and 
		\begin{align*}
			R\sheafhom_{\pi_\n}(\qq_\n, \sss_\n) &\simeq R\sheafhom_{\widetilde{\pi_\A}}(\widetilde{\qq_\A}, \widetilde{\sss_\A}) \oplus R\sheafhom_{\widetilde{\pi_\B}}(\widetilde{\qq_\B}, \widetilde{\sss_\B}) \\ &\simeq 
			R\sheafhom_{\pi_\A}(\qq_\A, \sss_\A) \boxplus R\sheafhom_{\pi_\B}(\qq_\B, \sss_\B).
		\end{align*}
		Here, $\widetilde{\pi_\A} \colon \bigchi_\A \to \qn$ denotes the projection, we use similar notation for $\widetilde{\pi_\B}$. Taking now third cohomology yields the relation on obstruction sheaves
		$$Ob_\qn \simeq Ob_\qa \boxplus Ob_\qb. $$
		We also have $c_\qn \simeq c_\qa \times c_\qb$ which follows from \cite{Behrend_1997}. Therefore,
		\begin{align*}
			[\qn]^{\vir} &= 0^!_{Ob_\n}[c_\qn] = 0^!_{Ob_\qa \boxplus Ob_\qb}([c_\qa] \times [c_\qb]) \\ &= 0^!_{Ob_\qa}[c_\qa] \times 0^!_{Ob_\B}[c_\qb] = [\qa]^{\vir} \times [\qb]^{\vir},
		\end{align*}
		where the compatibilities for the generalized Gysin pullback follow from the corresponding compatibilities for the usual Gysin pullback \cite{Fulton_Intersection_Theory}. 
	\end{proof}
	
	Using now that $\deg i_c^![\qxc] = \deg i_0^![\qxc]$ for $c \neq 0$ 
	together with Proposition~\ref{gysinc} and Proposition~\ref{gysin0} yields
	$$\deg[\Quot_{X_c}(F, n)]^{\vir} = \deg[\qxcz]^{\vir} = \sum_{\n \in \pn} \deg[\qdaone]^{\vir} \cdot \deg[\qdbtwo]^{\vir}, $$ which can be rewritten as 
	$$\DT^n_{X_c, F} = \sum_{\n \in \pn} \DT^{n_1}_{A/D, F} \cdot \DT^{n_2}_{B/D, F}, $$
	yielding the desired formula $$\DT_{X_c, F}(q) = \DT_{A/D, F}(q) \cdot \DT_{B/D, F}(q)$$
	completing the proof of Theorem \ref{degf}.

	\appendix
	\section{}
	
	\begin{lem}
		\label{huybrthom}
		Let $C^\bullet = (C^i \to \dots \to C^j)$ be a perfect complex on a scheme $X$ such that for each closed point $\iota_x \colon x \into X$, we have $H^i(L\iota_x^*C^\bullet) = 0$ for $i \neq [a,b]$. Then $C^\bullet$ is quasi-isomorphic to a $(b-a+1)$-term complex of locally free sheaves in degrees $[a, b]$.
	\end{lem}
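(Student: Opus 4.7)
The plan is to proceed by local arguments and a two-sided pruning of the complex, using Nakayama-type reasoning at each closed point.

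First I would use locality: both perfectness and the conclusion are local properties, so I may work in an affine neighborhood $U \subset X$ where $C^\bullet$ is quasi-isomorphic to a bounded complex $(F^c \to F^{c+1} \to \cdots \to F^d)$ of finite-rank free $\oo_U$-modules, with $c \le a$ and $d \ge b$. Since restriction to a closed point commutes with termwise restriction for complexes of flat sheaves, $H^i(\iota_x^* C^\bullet)$ is computed directly from the termwise fibers, and the hypothesis becomes vanishing of this cohomology outside $[a,b]$.

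Next I would prune from the right. Suppose $d>b$. Then $H^d(\iota_x^* C^\bullet) = \coker\bigl(F^{d-1}|_x \to F^d|_x\bigr) = 0$ at every closed $x$, so by Nakayama's lemma the differential $F^{d-1}\to F^d$ is surjective on stalks, hence surjective. Because $F^d$ is locally free (and thus locally projective), this surjection splits locally, so $K \coloneqq \ker(F^{d-1}\to F^d)$ is a local direct summand of $F^{d-1}$ and is locally free. Replacing $F^{d-1}$ by $K$ and discarding $F^d$ gives a quasi-isomorphic perfect complex in degrees $[c,d-1]$, and the hypothesis on pointwise cohomology is preserved. Iterating brings the right-hand endpoint down to $b$.

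Then I would prune from the left. Assume now the complex sits in degrees $[c,b]$ with $c<a$. The leftmost cohomology $H^c(\iota_x^* C^\bullet) = \ker\bigl(F^c|_x \to F^{c+1}|_x\bigr)$ vanishes at every closed $x$, so the differential $d^c\colon F^c\to F^{c+1}$ is fibrewise injective. A standard matrix-minors argument combined with Nakayama shows that locally some $\operatorname{rk}F^c$-minor of $d^c$ is a unit, so $d^c$ is locally a split injection and $G\coloneqq\coker(d^c)$ is locally free. Using $d^{c+1}\circ d^c=0$, the next differential factors through $G$, yielding a quasi-isomorphic complex $(G \to F^{c+2}\to\cdots\to F^b)$ of locally free sheaves in degrees $[c+1,b]$, still satisfying the pointwise vanishing hypothesis. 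Iterating $a-c$ times lands in degrees $[a,b]$, as desired.

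The main subtlety, and the point to watch, is the left pruning: pointwise injectivity of a morphism of vector bundles is a weaker condition than pointwise surjectivity, so one has to use the nonvanishing-minor/Nakayama argument to upgrade fibrewise injectivity to the existence of a locally free cokernel, rather than appealing directly to Nakayama as on the right. A cleaner alternative, if preferred, is to dualise: since $C^\bullet$ is perfect and representable by a complex of locally free sheaves, $C^{\bullet\vee}$ is again such a complex in degrees $[-d,-c]$, and the hypothesis dualises to vanishing of pointwise cohomology of $C^{\bullet\vee}$ outside $[-b,-a]$; then the right-pruning step applied to $C^{\bullet\vee}$ and a final dualisation recovers the claim.
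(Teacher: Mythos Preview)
Your proposal is correct and follows essentially the same two-sided pruning argument as the paper: Nakayama on the right to kill top terms via surjectivity with locally free kernel, and fibrewise injectivity on the left to kill bottom terms via locally free cokernel. You are somewhat more careful than the paper in justifying the left-pruning step (the minors/Nakayama or dualisation argument for local freeness of the cokernel), but the overall strategy is identical.
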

	\begin{proof}
		Restricted to each closed fiber $C^\bullet$ has cohomology in degrees from $a$ to $b$. Hence, if $j>b$ is maximal with $C^j$ non-zero, then on each fiber
		$$C^{j-1} \to C^j$$ is surjective. Hence, by Nakayama Lemma on each stalk over a closed point this map is surjective and is therefore surjective globally with locally free kernel. Replacing $C^{j-1}$ by this kernel and $C^j$ by zero we can inductively assume that $j=b$. Similarly, if $i<a$ is minimal with $C^i$ non-zero, then $C^i \to C^{i+1}$ is injective on fibers and hence injective globally, as $C^i$ and $C^j$ are locally free, with locally free cokernel. We can replace $C^{i+1}$ by this cokernel and $C^i$ by zero and assume inductively that $i=a$. Hence, $C^\bullet$ is isomorphic to a $(b-a+1)-$term complex. 
	\end{proof}
	
	\begin{lem}
		\label{finite}
		Let $X$ be a smooth scheme of dimension $d$ with a coherent sheaf $F$ on it. Assume the sequence
		$$ 0 \to E_d \to E_{d-1} \to \dots \to E_0 \to F \to 0$$ is exact with $E_i$ being locally free for $i \leq d-1$. Then $E_d$ is also locally free.
	\end{lem}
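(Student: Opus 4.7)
The plan is to work locally and invoke the Auslander--Buchsbaum--Serre theorem. Since being locally free is a stalk-local property for coherent sheaves on a locally Noetherian scheme, it suffices to show that for every point $x \in X$ the stalk $(E_d)_x$ is a free $\mathcal{O}_{X,x}$-module. By smoothness, $R := \mathcal{O}_{X,x}$ is a regular local ring of dimension at most $d$.

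Next I would break the given exact sequence into short exact sequences by introducing the syzygies $K_i := \ker(E_{i-1} \to E_{i-2})$ for $i=1,\dots,d$ (with $E_{-1} := F$), so that $K_d = E_d$ by the injectivity of $E_d \to E_{d-1}$. Pick any free resolution of $E_d$ and splice it onto the tail of the given sequence to obtain a free resolution of $F$. In this resolution the $d$-th syzygy is precisely $K_d = E_d$.

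Now I invoke Auslander--Buchsbaum--Serre: every finitely generated module over the regular local ring $R$ has projective dimension at most $\dim R \leq d$. By the standard characterization of projective dimension via syzygies, the $d$-th syzygy of any finitely generated $R$-module in a free resolution is projective. Applied to the resolution above, this forces $(E_d)_x = K_d$ to be projective, hence free since $R$ is local and $(E_d)_x$ is finitely generated. Since this holds at every stalk, $E_d$ is locally free.

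There is no substantial obstacle; the only points requiring care are that the argument applies at non-closed points as well (where $\dim \mathcal{O}_{X,x}$ may be strictly less than $d$, which only helps), and that we actually have a \emph{finite} resolution of $F$ by locally free sheaves of known length, so that the syzygy computation identifies $E_d$ with the $d$-th syzygy on the nose.
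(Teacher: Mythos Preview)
Your proof is correct and follows essentially the same route as the paper: localize at a point, use that the local ring is regular of dimension at most $d$ so that every finitely generated module has projective dimension at most $d$, and conclude that the $d$-th syzygy $(E_d)_x$ is projective, hence free. The paper phrases the bound on projective dimension via the Auslander--Buchsbaum formula together with $\mathrm{depth}(R)=\dim R$ for regular local $R$, while you invoke Serre's characterization of regularity directly; these are equivalent inputs and the argument is the same.
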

	\begin{proof}
		It suffices to check that for each closed point $x \in X$ the localization $(E_d)_x$ is free. The statement is local, so we can assume $E_d = \widetilde{M}$ on $X = Spec A$ such that $M$ is a finitely generated $A$-module with $A$ a local ring. We need to check that 
		$M$ is free. It suffices to show that its projective dimension is zero as a finitely generated projective module over a local ring is free. This follows from \cite[\href{https://stacks.math.columbia.edu/tag/00NE}{Tag 00NE}]{stacks-project} together with Auslaender--Buchsbaum formula \cite[\href{https://stacks.math.columbia.edu/tag/090U}{Tag 090U}]{stacks-project} and the fact that for a regular ring its depth coincides with its dimension.
	\end{proof}
	
	\begin{lem}
		\label{flatperfect}
		If a coherent sheaf $E$ on a product $X \times B$ with $X$ smooth of dimension $n$ is flat over $B$ then ${E \in \mathcal{D}(X \times B)}$ is perfect.
	\end{lem}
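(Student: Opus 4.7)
The statement is local on $X \times B$, so I may work in a neighborhood of an arbitrary closed point $(x,b) \in X \times B$. Choose a locally free resolution
$$ \cdots \to F_2 \to F_1 \to F_0 \to E \to 0 $$
on a neighborhood of $(x,b)$, which exists by coherence. Let $K \coloneqq \ker(F_{n-1} \to F_{n-2})$ (with the convention $F_{-1} \to F_{-2}$ meaning $F_0 \to E \to 0$), yielding the exact sequence
$$ 0 \to K \to F_{n-1} \to F_{n-2} \to \cdots \to F_0 \to E \to 0. $$
The plan is to show that $K$ is locally free; this will exhibit $E$ as quasi-isomorphic to the finite complex $[K \to F_{n-1} \to \cdots \to F_0]$ of locally free sheaves, hence perfect.

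First I would check that $K$ is flat over $B$. Since each $F_i$ is locally free, it is $B$-flat, and $E$ is $B$-flat by hypothesis. Breaking the long exact sequence into short exact sequences and repeatedly using that kernels of surjections of flat modules by flat modules are flat, one concludes inductively that $K$ is $B$-flat.

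Next, I would show that for every point $b \in B$ the restriction $K|_{X \times \{b\}}$ is locally free on $X$. Because $E$ is $B$-flat (and the $F_i$ are $B$-flat), tensoring the resolution with the residue field $\kappa(b)$ preserves exactness, giving
$$ 0 \to K|_b \to F_{n-1}|_b \to \cdots \to F_0|_b \to E|_b \to 0 $$
on $X = X \times \{b\}$, with $F_i|_b$ locally free. Since $X$ is smooth of dimension $n$, Lemma~\ref{finite} yields that $K|_b$ is locally free.

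Finally, I would apply the fiberwise flatness criterion (\cite[\href{https://stacks.math.columbia.edu/tag/00MP}{Tag 00MP}]{stacks-project}) to the local homomorphism $\mathcal{O}_{B,b} \to \mathcal{O}_{X \times B,(x,b)}$ and the finitely generated module $K_{(x,b)}$: it is $\mathcal{O}_{B,b}$-flat by the first step, and its reduction modulo $\mathfrak{m}_b$ is free over $\mathcal{O}_{X,x}$ by the second step, so $K_{(x,b)}$ is flat, hence free, over $\mathcal{O}_{X \times B,(x,b)}$. As this holds at every point, $K$ is locally free, completing the argument. The only delicate step is the last one: one must pass from fiberwise freeness plus $B$-flatness to genuine freeness on the total space, which is precisely what the fiberwise flatness criterion delivers.
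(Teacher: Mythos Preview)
Your proof is correct and follows essentially the same strategy as the paper: truncate a locally free resolution at length $n$, show the resulting kernel is $B$-flat, restrict to fibers and invoke Lemma~\ref{finite} to get fiberwise local freeness, and then apply a fiberwise flatness criterion to conclude local freeness on the total space. The only cosmetic differences are that you work locally from the start (which is harmless since perfectness is local) and cite Tag 00MP rather than Tag 039C for the final step.
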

	\begin{proof}
		As $E$ is coherent, there exists a locally free resolution ${E^\bullet \to E}$ on $X \times B$. Consider its truncation
		$$\tau^{\geq -n}(E^\bullet) = (0 \to E^n \to \dots \to E^1 \to E^0 \to 0) $$
		so that 
		\begin{equation}
			\label{truncex}
			0 \to E^n \to \dots \to E^1 \to E^0 \to E \to 0
		\end{equation}
		is exact with $E^i$ being locally free for $i = 0, 1, \dots , n-1$. Hence, these sheaves are flat over $X \times B$ and hence, as the projection to $B$ is flat, also flat over $B$. As $E$ is also flat over $B$, the remaining sheaf $E^n$ is flat over $B$ as well. 
		
		We next show $E^n$ is flat over the product $X \times B$  which would imply it is locally free as $E^n$ is coherent. This would finish the proof. By \cite[Proposition~7.39]{goerzwedhornag} the restriction of $(\ref{truncex})$ to all fibers of the projection to $B$ remains exact. Hence, by Lemma \ref{finite} as these fibers are isomorphic to $X$ and hence smooth the restriction 
		$$E^n|_{X_b}$$
		to the fiber $X_b$ over $b \in B$ is locally free for all $b \in B$. In particular the restriction $E^n|_{X_b}$ is flat over the fiber $X_b$. As $E^n$ is flat over $B$ \cite[\href{https://stacks.math.columbia.edu/tag/039C}{Tag 039C}]{stacks-project} can be applied (taking the map $f$ to be id). Therefore $E^n$ is flat over $X \times B$ and hence locally free.
	\end{proof}
	
	\begin{lem}
		\label{exun}
		Let $A \xrightarrow{\alpha} B \xrightarrow{\beta} C \xrightarrow{+1}$ be a distinguished triangle in a derived category $\mathcal{D}$. Let ${\gamma \colon D \to B}$ be a morphism such that the composition $\beta \circ \gamma$ vanishes. Then there exists a morphism $\delta \colon D \to A$ so that $\alpha \circ \delta = \gamma$. Moreover, if ${\Hom_\mathcal{D}(D, C[-1]) = 0}$ then the morphism $\delta$ is unique.
	\end{lem}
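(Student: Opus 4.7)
The plan is to apply the cohomological functor $\Hom_{\mathcal{D}}(D, -)$ to the given distinguished triangle and read off the claims from the resulting long exact sequence. Concretely, rotating the triangle $A \xrightarrow{\alpha} B \xrightarrow{\beta} C \to A[1]$ and applying $\Hom_{\mathcal{D}}(D, -)$ yields the exact sequence
\begin{equation*}
\Hom_{\mathcal{D}}(D, C[-1]) \xrightarrow{\partial} \Hom_{\mathcal{D}}(D, A) \xrightarrow{\alpha_*} \Hom_{\mathcal{D}}(D, B) \xrightarrow{\beta_*} \Hom_{\mathcal{D}}(D, C),
\end{equation*}
where $\alpha_*$ and $\beta_*$ denote post-composition with $\alpha$ and $\beta$ respectively, and $\partial$ is the connecting map induced by the shift of the triangle.

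For existence, I would use that $\beta_*(\gamma) = \beta \circ \gamma = 0$ by hypothesis, so $\gamma \in \ker(\beta_*) = \operatorname{im}(\alpha_*)$ by exactness; hence there exists $\delta \colon D \to A$ with $\alpha_*(\delta) = \alpha \circ \delta = \gamma$.

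For uniqueness under the additional hypothesis $\Hom_{\mathcal{D}}(D, C[-1]) = 0$: if $\delta$ and $\delta'$ both satisfy $\alpha \circ \delta = \gamma = \alpha \circ \delta'$, then $\alpha_*(\delta - \delta') = 0$, so $\delta - \delta' \in \ker(\alpha_*) = \operatorname{im}(\partial)$ by exactness. Since $\Hom_{\mathcal{D}}(D, C[-1]) = 0$, the image of $\partial$ is trivial, giving $\delta = \delta'$.

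There is essentially no obstacle here: the statement is a purely formal consequence of the axioms of a triangulated category (specifically, the fact that $\Hom_{\mathcal{D}}(D, -)$ is a cohomological functor). The only care needed is to correctly identify the connecting map as coming from $\Hom_{\mathcal{D}}(D, C[-1])$ rather than from some other shift, which is immediate once one writes out the rotated triangle $C[-1] \to A \to B \to C$.
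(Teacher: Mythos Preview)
Your proof is correct and follows exactly the same approach as the paper: apply the cohomological functor $\Hom_{\mathcal{D}}(D,-)$ to the distinguished triangle and read off existence and uniqueness from the resulting long exact sequence. The paper's proof is simply a terser version of what you wrote.
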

	\begin{proof}
		The statement follows from the long exact sequence obtained by applying ${\Hom_\mathcal{D}(D, -)}$ to our distinguished triangle
		\begin{equation}
			\Hom_\mathcal{D}(D, C[-1]) \to \Hom_\mathcal{D}(D, A) \to \Hom_\mathcal{D}(D, B) \to \Hom_\mathcal{D}(D, C).
			\qedhere
		\end{equation}
	\end{proof}	
		\bibliographystyle{amsplain}
		\bibliography{bib}{}

\newcommand{\noop}[1]{}
\providecommand{\bysame}{\leavevmode\hbox to3em{\hrulefill}\thinspace}
\providecommand{\MR}{\relax\ifhmode\unskip\space\fi MR }
\providecommand{\MRhref}[2]{%
  \href{http://www.ams.org/mathscinet-getitem?mr=#1}{#2}
}
\providecommand{\href}[2]{#2}
\begin{thebibliography}{10}

\bibitem{alperhallrydh}
Jarod Alper, Jack Hall, and David Rydh, \emph{A {L}una \'{e}tale slice theorem
  for algebraic stacks}, Ann. of Math. (2) \textbf{191} (2020), no.~3,
  675--738.

\bibitem{BR18}
Sjoerd~V. Beentjes and Andrea~T. Ricolfi, \emph{Virtual counts on {Q}uot
  schemes and the higher rank local {DT}/{PT} correspondence}, Math. Res. Lett.
  \textbf{28} (2021), no.~4, 967--1032.

\bibitem{Behrend_1997}
Kai Behrend and Barbara Fantechi, \emph{The intrinsic normal cone}, Invent.
  Math. \textbf{128} (1997), no.~1, 45–88.

\bibitem{bfcyinv}
\bysame, \emph{Symmetric obstruction theories and {H}ilbert schemes of points
  on threefolds}, Algebra Number Theory \textbf{2} (2008), no.~3, 313--345.

\bibitem{semipot}
Huailiang Chang and Jun Li, \emph{{Semi-perfect obstruction theory and
  Donaldson-Thomas invariants of derived objects}}, Comm. Anal. Geom.
  \textbf{19} (2011), no.~4, 807--830.

\bibitem{Fasola_2021}
Nadir Fasola, Sergej Monavari, and Andrea~T. Ricolfi, \emph{{Higher rank
  K-theoretic Donaldson-Thomas Theory of points}}, Forum Math. Sigma \textbf{9}
  (2021), no. e15, 51.

\bibitem{Fulton_Intersection_Theory}
William Fulton, \emph{{Intersection Theory}}, vol.~3, Springer-Verlag, 1984.

\bibitem{gillam}
William Gillam, \emph{Deformation of quotients on a product},
  \url{https://arxiv.org/abs/1103.5482}, 2011.

\bibitem{goerzwedhornag}
Ulrich G\"ortz and Torsten Wedhorn, \emph{Algebraic geometry {I}.
  {S}chemes---with examples and exercises}, second ed., Springer Studium
  Mathematik---Master, Springer Spektrum, Wiesbaden, 2020.

\bibitem{graber1997localization}
Tom Graber and Rahul Pandharipande, \emph{Localization of virtual classes},
  Invent. Math. \textbf{135} (1997), 487--518.

\bibitem{griffiths1978principles}
Phillip Griffiths and Joseph Harris, \emph{Principles of algebraic geometry},
  Pure and Applied Mathematics. A Wiley-Interscience Publication., John Wiley
  \& Sons, New York, 1978.

\bibitem{gross-stack}
Philipp Gross, \emph{Tensor generators on schemes and stacks}, Algebr. Geom.
  \textbf{4} (2017), no.~4, 501--522.

\bibitem{huybrechts2006fourier}
Daniel Huybrechts, \emph{{Fourier-Mukai Transforms in Algebraic Geometry}},
  Oxford Mathematical Monographs, Clarendon Press, 2006.

\bibitem{huybrechts_lehn}
Daniel Huybrechts and Manfred Lehn, \emph{The geometry of moduli spaces of
  sheaves}, 2 ed., Cambridge Mathematical Library, Cambridge University Press,
  2010.

\bibitem{Illusie}
Luc Illusie, \emph{Complexe cotangent et d\'eformations. {I}}, Lecture Notes in
  Math., vol. 239, Springer-Verlag, Berlin-New York, 1971.

\bibitem{jouanolou}
Jean-Pierre Jouanolou, \emph{Une suite exacte de {M}ayer-{V}ietoris en
  {$K$}-th\'eorie alg\'ebrique}, Algebraic {$K$}-theory, {I}: {H}igher
  {$K$}-theories, Lecture Notes in Math., vol. Vol. 341, Springer, Berlin-New
  York, 1973, pp.~293--316.

\bibitem{kiem}
Young-Hoon Kiem, \emph{Localizing virtual fundamental cycles for semi-perfect
  obstruction theories}, Internat. J. Math. \textbf{29} (2018), no.~4, 1850032,
  30.

\bibitem{kiem2020localizing}
Young-Hoon Kiem and Michail Savvas, \emph{Localizing virtual structure sheaves
  for almost perfect obstruction theories}, Forum Math. Sigma \textbf{8}
  (2020), no. e61, 36.

\bibitem{kiemsavvasapot}
\bysame, \emph{{$K$}-theoretic generalized {D}onaldson-{T}homas invariants},
  Int. Math. Res. Not. IMRN (2022), no.~3, 2123--2158, [Initially appeared as
  2021, no. 24, 19055--19090].

\bibitem{kresch}
Andrew Kresch, \emph{Cycle groups for {A}rtin stacks}, Invent. Math.
  \textbf{138} (1999), no.~3, 495--536.

\bibitem{reducedatclstacks}
Nikolas Kuhn, \emph{The {A}tiyah class on algebraic stacks}, Forum Math. Sigma
  \textbf{12} (2024), no. e100, 54.

\bibitem{kuhnliuthimm}
Nikolas Kuhn, Henry Liu, and Felix Thimm, \emph{{The 3-fold K-theoretic DT/PT
  vertex correspondence holds}}, \url{https://arxiv.org/abs/2311.15697}, 2023.

\bibitem{Lee-Pandharipande-cobordism}
Y.-P. Lee and Rahul Pandharipande, \emph{{Algebraic cobordism of bundles on
  varieties}}, J. Eur. Math. Soc. \textbf{14} (2012), 1081--1101.

\bibitem{LehnQuot}
Manfred Lehn, \emph{On the cotangent sheaf of {Q}uot-schemes}, Internat. J.
  Math. \textbf{9} (1998), no.~4, 513--522.

\bibitem{LEPA}
Marc Levine and Rahul Pandharipande, \emph{Algebraic cobordism revisited},
  Invent. Math. \textbf{176} (2009), no.~1, 63--130.

\bibitem{gysinmapcohsheaves}
Jun Li, \emph{A degeneration formula of {GW}-invariants}, J. Differential Geom.
  \textbf{60} (2002), no.~2, 199--293.

\bibitem{junlicompinv}
Jun Li, \emph{{Zero dimensional Donaldson–Thomas invariants of threefolds}},
  Geometry \& Topology \textbf{10} (2006), no.~4, 2117 -- 2171.

\bibitem{litian}
Jun Li and Gang Tian, \emph{Virtual moduli cycles and {G}romov-{W}itten
  invariants of algebraic varieties}, J. Amer. Math. Soc. \textbf{11} (1998),
  no.~1, 119--174.

\bibitem{li2011good}
Jun Li and Baosen Wu, \emph{Good degeneration of {Q}uot-schemes and coherent
  systems}, Comm. Anal. Geom. \textbf{23} (2015), no.~4, 841--921.

\bibitem{manolachevirtpullb}
Cristina Manolache, \emph{Virtual pull-backs}, J. Algebraic Geom. \textbf{21}
  (2012), no.~2, 201--245.

\bibitem{marian2024cohomologyquotschemesmooth}
Alina Marian and Andrei Neguţ, \emph{{The cohomology of the Quot scheme on a
  smooth curve as a Yangian representation}},
  \url{https://arxiv.org/abs/2307.13671}, 2024.

\bibitem{monavaripaviaricolfi2024derivedhyperquotschemes}
Sergej Monavari, Emanuele Pavia, and Andrea~T. Ricolfi, \emph{Derived hyperquot
  schemes}, \url{https://arxiv.org/abs/2409.16858}, 2024.

\bibitem{oberdieck2021marked}
Georg Oberdieck, \emph{{Marked relative invariants and GW/PT correspondences}},
  Adv. Math. \textbf{439} (2024), no. 109472, 98.

\bibitem{opquotcurvessurfaces}
Dragos Oprea and Rahul Pandharipande, \emph{Quot schemes of curves and
  surfaces: virtual classes, integrals, {E}uler characteristics}, Geom. Topol.
  \textbf{25} (2021), no.~7, 3425--3505. \MR{4372634}

\bibitem{ricolfivirtclasses}
Andrea~T. Ricolfi, \emph{Virtual classes and virtual motives of {Q}uot schemes
  on threefolds}, Adv. Math. \textbf{369} (2020), no. 107182, 32.

\bibitem{newatclricolfi}
\bysame, \emph{The equivariant {A}tiyah class}, C. R. Math. Acad. Sci. Paris
  \textbf{359} (2021), 257--282.

\bibitem{romagny}
Matthieu Romagny, \emph{Group actions on stacks and applications}, Michigan
  Math. J. \textbf{53} (2005), 209–236.

\bibitem{siebert}
Bernd Siebert, \emph{Virtual fundamental classes, global normal cones and
  {F}ulton's canonical classes}, Frobenius manifolds, Aspects Math., vol. E36,
  Friedr. Vieweg, Wiesbaden, 2004, pp.~341--358.

\bibitem{stacks-project}
The {Stacks Project Authors}, \emph{\textit{Stacks Project}},
  \url{https://stacks.math.columbia.edu}, 2025.

\bibitem{stark2}
Samuel Stark, \emph{Cosection localization and the {Q}uot scheme {${\rm
  Quot}^l_{S}(\mathcal{E})$}}, Proc. A. \textbf{478} (2022), no.~2268, Paper
  No. 20220419, 16. \MR{4529797}

\bibitem{stark1}
Samuel Stark, \emph{On the quot scheme $\mathrm{Quot}^{l}_{S}(\mathcal{E})$},
  \url{https://arxiv.org/abs/2107.03991}, 2024.

\bibitem{thomas}
Richard~P. Thomas, \emph{A holomorphic {C}asson invariant for {C}alabi-{Y}au
  3-folds, and bundles on {$K3$} fibrations}, J. Differential Geom. \textbf{54}
  (2000), no.~2, 367--438.

\bibitem{totaro_resolution}
Burt Totaro, \emph{The resolution property for schemes and stacks}, J. Reine
  Angew. Math. \textbf{577} (2004), 1--22.

\bibitem{vistoliinttheor}
Angelo Vistoli, \emph{Intersection theory on algebraic stacks and on their
  moduli spaces}, Invent. Math. \textbf{97} (1989), no.~3, 613--670.

\bibitem{weibel}
Charles Weibel, \emph{{Homotopy algebraic K-theory}}, Algebraic K-theory and
  algebraic number theory \textbf{83} (1989), 461–488.

\end{thebibliography}
	\end{document}